\numberwithin{equation}{subsection}
\renewcommand{\tocsection}[3]{%
  \indentlabel{\@ifnotempty{#2}{\bfseries\ignorespaces#1 #2\quad}}\bfseries#3}
\renewcommand{\tocsubsection}[3]{%
  \indentlabel{\@ifnotempty{#2}{\ignorespaces#1 #2\quad}}#3}
\newcommand\@dotsep{4.5}
\def\@tocline#1#2#3#4#5#6#7{\relax
  \ifnum #1>\c@tocdepth % then omit
  \else
    \par \addpenalty\@secpenalty\addvspace{#2}%
    \begingroup \hyphenpenalty\@M
    \@ifempty{#4}{%
      \@tempdima\csname r@tocindent\number#1\endcsname\relax
    }{%
      \@tempdima#4\relax
    }%
    \parindent\z@ \leftskip#3\relax \advance\leftskip\@tempdima\relax
    \rightskip\@pnumwidth plus1em \parfillskip-\@pnumwidth
    #5\leavevmode\hskip-\@tempdima{#6}\nobreak
    \leaders\hbox{$\m@th\mkern \@dotsep mu\hbox{.}\mkern \@dotsep mu$}\hfill
    \nobreak
    \hbox to\@pnumwidth{\@tocpagenum{\ifnum#1=1\bfseries\fi#7}}\par% <-- \bfseries for \section page
    \nobreak
    \endgroup
  \fi}
\renewcommand\csname r@tocindent0\endcsname{0pt}
\def\l@subsection{\@tocline{2}{0pt}{2.5pc}{5pc}{}}
    \def\paragraph{\@startsection{paragraph}{4}%
    \z@\z@{-\fontdimen2\font}%
    {\normalfont\bfseries}}
\def\subsubsection{\@startsection{subsubsection}{3}%
  \z@{.5\linespacing\@plus.7\linespacing}{-.5em}%
  {\normalfont\bfseries}}
    \definecolor{darkblue}{rgb}{0,0,.85} % not great, not terrible
    \definecolor{darkred}{rgb}{0.84,0,0}
\newtheorem{thm}{Theorem}[section]
\newtheorem{prop}[thm]{Proposition}
\newtheorem{thmi}{Theorem}
\newtheorem{propi}{Proposition}
\newtheorem{cori}{Corollary}
\newtheorem{lem}[thm]{Lemma}
\newtheorem{cor}[thm]{Corollary}
\theoremstyle{definition}
\newtheorem{example}[thm]{Example}
\newtheorem{nota}[thm]{Notation}
\newtheorem{construction}[thm]{Construction}
\newtheoremstyle{examplestyle}
  {1em}% space before
  {1em}% space after
  {\addtolength{\@totalleftmargin}{1.0em}
   \addtolength{\linewidth}{-1.0em}
   \parshape 1 1.0em \linewidth}% body font
  {}% indent
  {\bfseries}% header font
  {.}% punctuation
  {.5em}% after theorem header
  {}% header specification (empty for default)
\theoremstyle{examplestyle}
\newtheorem{rem}[thm]{Remark}
\newtheorem{remi}{Remark}
\newtheorem{defn}[thm]{Definition}
\newtheorem*{defni*}{Definition}
\newcommand{\pddiv}{\mr{sPDd}}
\DeclareMathOperator{\Spec}{Spec}
\DeclareMathOperator{\Gal}{Gal}
\DeclareMathOperator{\id}{id}
\DeclareMathOperator{\Hom}{Hom}
\DeclareMathOperator{\Spf}{Spf}
\DeclareMathOperator{\Fil}{Fil}
\DeclareMathOperator{\Spa}{Spa}
\DeclareMathOperator{\Gr}{Gr}
\newcommand{\colim@}[2]{%
  \vtop{\m@th\ialign{##\cr
    \hfil$#1\operator@font colim$\hfil\cr
    \noalign{\nointerlineskip\kern1.5\ex@}#2\cr
    \noalign{\nointerlineskip\kern-\ex@}\cr}}%
}
\newcommand{\colim}{%
  \mathop{\mathpalette\colim@{}}\nmlimits@
}
\renewcommand{\sp}{\mathrm{sp}}
\newcommand{\B}{\mathrm B}
\newcommand{\A}{\mathbb{A}}
\newcommand{\D}{\mathbb{D}}
\newcommand{\W}{\bb W}
\newcommand{\bb}[1]{\mathbb{#1}}
\newcommand{\mc}[1]{\mathcal{#1}}
\newcommand{\mbb}[1]{\mathbb{#1}}
\newcommand{\mf}[1]{\mathfrak{#1}}
\newcommand{\ol}{\overline}
\newcommand{\wh}{\widehat}
\newcommand{\wt}{\widetilde}
\newcommand{\Gm}{\mathbb{G}_{{m}}}
\newcommand{\Ga}{\mathbb{G}_{a}}
\newcommand{\hyphen}{\mathchar`-}
\newcommand{\mb}[1]{\mathbf{#1}}
\newcommand{\Z}{\mathbb{Z}}
\renewcommand{\ll}{\llbracket}
\newcommand{\rr}{\rrbracket}
\newcommand{\syn}{{\mr{syn}}}
\newcommand{\framew}{w}
\renewcommand{\email}[2][]{%
  \ifx\emails\@empty\relax\else{\g@addto@macro\emails{,\space}}\fi%
  \@ifnotempty{#1}{\g@addto@macro\emails{\textrm{(#1)}\space}}%
  \g@addto@macro\emails{#2}%
}
\newcommand{\triv}{\mathrm{triv}}
\newcommand{\stacks}[1]{\cite[\href{https://stacks.math.columbia.edu/tag/#1}{Tag~#1}]{StacksProject}}
\newcommand{\alg}{\mathrm{alg}}
\newcommand{\an}{\mathrm{an}}
\newcommand{\cat}[1]{\mathbf{#1}}
\newcommand{\ms}[1]{\mathscr{#1}}
\newcommand{\fl}{\mathrm{fl}}
\newcommand{\Zar}{\mathrm{Zar}}
\newcommand{\perf}{\mathrm{perf}}
\def\Item(#1){\item[\llap{(}\refstepcounter{enumi}$\bullet$] #1)}
\DeclareMathOperator*{\twolim}{2-lim}
\DeclareMathAccent{\wtilde}{\mathord}{largesymbols}{"65}
\newcommand*\isomto{%
        \xrightarrow{\raisebox{-0.2 em}{\smash{\ensuremath{\sim}}}}%
    }
    \newcommand*\isomfrom{%
        \xleftarrow{\raisebox{-0.2 em}{\smash{\ensuremath{\sim}}}}%
    }
    \newcommand{\ov}[1]{\overline{#1}}
    \newcommand{\et}{\mathrm{\acute{e}t}}
    \newcommand{\qsyn}{\mathrm{qsyn}}
    \newcommand{\qrsp}{\mathrm{qrsp}}
    \newcommand{\Ainf}{\mathrm{A}_\mathrm{inf}}
    \newcommand{\Acrys}{\mathrm{A}_\mathrm{crys}}
    \newcommand{\Bdr}{\mathrm{B}_\mathrm{dR}}
   \newcommand{\defeq}{\vcentcolon=}
    \newcommand{\be}{\begin{equation*}}
    \newcommand{\ee}{\end{equation*}}
    \newcommand{\bx}{\begin{equation*}\xymatrix}
    \newcommand{\ex}{\end{equation*}}
\DeclareSymbolFontAlphabet{\mathbbl}{bbold}
\newcommand{\Prism}{{\mathlarger{\mathbbl{\Delta}}}}
\renewcommand{\inf}{\mathrm{inf}}
\newcommand{\crys}{\mathrm{crys}}
\newcommand{\smallprism}{{{\mathsmaller{\Prism}}}}
\newcommand{\smallN}{{{\mathsmaller{\mc{N}}}}}
\newcommand{\mr}{\mathrm}
\newcommand{\dR}{\mathrm{dR}}
\newcommand{\cf}{\textit{cf.\ }}
\title{An integral analogue of Fontaine's crystalline functor}
\author{Naoki Imai$^{(1)}$}
\address[1]{\scriptsize Graduate School of Mathematical Sciences, The University of Tokyo,
    3-8-1 Komaba, Meguro-ku, Tokyo, 153-8914, Japan}
\email[1]{\scriptsize naoki@ms.u-tokyo.ac.jp}
\author{Hiroki Kato$^{(2)}$}
\address[2]{\scriptsize 
Institut des Hautes Études
Scientifiques, 35 route de Chartres, 91440 Bures-sur-Yvette, France
%Max-Planck-Institut F\"{u}r Mathematik, Vivatsgasse 7, 53111 Bonn, Germany
}
\address[3]{\scriptsize Department of Mathematics, Bahen Centre, University of Toronto, Toronto, ON, M5S 2E4, Canada}
\email[2]{\scriptsize hiroki@ihes.fr}
\author{Alex Youcis$^{(3)}$}
\email[3]{\scriptsize alex.youcis@gmail.com}
\date{\today}
\begin{document}
\begin{abstract}
    For a smooth formal scheme $\mf{X}$ over the Witt vectors $W$ of a perfect field $k$, we construct a functor $\mathbb{D}_\mr{crys}$ from the category of prismatic $F$-crystals $(\mc{E},\varphi_\mc{E})$ (or prismatic $F$-gauges) on $\mf{X}$ to the category of filtered $F$-crystals on $\mf{X}$. We show that $\bb{D}_\mr{crys}(\mc{E},\varphi_\mc{E})$ enjoys strong properties when $(\mc{E},\varphi_\mc{E})$ is what we call \emph{locally filtered free (lff)}. Most significantly, we show that $\mathbb{D}_\mr{crys}$ actually induces an equivalence between the category of prismatic $F$-gauges on $\mf{X}$ with Hodge--Tate weights in $[0,p-2]$ and the category of Fontaine--Laffaille modules on $\mf{X}$. Finally, we use our functor $\bb{D}_\crys$ to enhance the study of prismatic Dieudonn\'e theory of $p$-divisible groups (as initiated by Ansch\"{u}tz--Le Bras) allowing one to recover the \emph{filtered} crystalline Dieudonn\'e crystal from the prismatic Dieudonn\'e crystal. This in turn allows us to clarify the relationship between prismatic Dieudonn\'e theory and the work of Kim on classifying $p$-divisible groups using Breuil--Kisin modules.
\end{abstract}

\maketitle

\tableofcontents

\section*{Introduction}

Let $k$ be a perfect extension of $\mathbb{F}_p$, and $\mathfrak{X}$ be a smooth formal scheme over $W\defeq W(k)$ with generic fiber $X$. A question of central importance in $p$-adic Hodge theory is when a $\bb{Q}_p$-local system $\bb{V}$ on $X$ should admit `good reduction' relative to the model $\mf{X}$. The operative property singled out by Fontaine in this regard is that $\bb{V}$ is \emph{crystalline}. In this case, one may associate to $\bb{V}$ a filtered $F$-isocrystal $D_\mr{crys}(\bb{V})$ on $\mf{X}$ which one may think of as the `model' of $\bb{V}$ over $\mf{X}$.

But, for arithmetic applications, for example to the study of Shimura varieties (e.g., see \cite{IKY2}), it is important to have an \emph{integral analogue} of this picture. Namely, if $\bb{L}\subseteq \bb{V}$ is a $\bb{Z}_p$-lattice, the collection of which (as $\bb{V}$ varies) we denote $\cat{Loc}_{\bb{Z}_p}^\mr{crys}(X)$, then one would like to make sense of when $\bb{L}$ admits good reduction relative to $\mf{X}$. The question of what sort of object should model $\bb{L}$ is subtle, and there now exist several approaches.

\medskip

\noindent\textbf{Crystalline approach (\cite{Faltings89}):}
This utilizes the category $\cat{VectF}^{\varphi,\mr{sd}}(\mf{X}_\mr{crys})$ of strongly divisible filtered $F$-crystals. This theory is best behaved when the filtration is supported in the Fontaine--Laffaille range $[0,p-2]$, where they are called \emph{Fontaine--Laffaille modules} after \cite{FL82} (cf.\@ Proposition \ref{prop:filtered-F-crystals}). In particular, in \cite{Faltings89} there is constructed a fully faithful functor 
\begin{equation*}
T_\crys\colon \cat{VectF}^{\varphi,\mr{sd}}_{[0,p-2]}(\mf{X}_\crys)\to\cat{Loc}_{\Z_p,[0,p-2]}^\mr{crys}(X), 
\end{equation*}
where the target consists of those $\bb{L}$ with Hodge--Tate weights in $[0,p-2]$.

\medskip

\noindent\textbf{Prismatic approach (\cite{BhattScholzeCrystals}):} This makes use of the category $\cat{Vect}^\varphi(\mf{X}_\smallprism)$ of prismatic $F$-crystals on $\mf{X}$. The relationship to local systems is given by the functor
\begin{equation*}
T_\et\colon \cat{Vect}^\varphi(\mf{X}_\smallprism)\to \cat{Loc}^\crys_{\bb{Z}_p}(X),
\end{equation*}
(constructed in op.\@ cit.\@), which was shown to be fully faithful in \cite{GuoReinecke} or \cite{DLMS}.

\medskip

\noindent\textbf{Syntomic approach (\cite{Drinfeld}, \cite{BhattNotes}):} This utilizes the category $\cat{Vect}(\mf{X}^\mr{syn})$ of prismatic $F$-gauges. By results in \cite{GuoLi}, this also admits a fully faithful functor 
\begin{equation*}
T_\et\colon \cat{Vect}(\mf{X}^\syn)\to \cat{Loc}^\crys_{\bb{Z}_p}(X).
\end{equation*}

It is natural to ask what the precise relationship is between these three approaches. The latter two are directly related by a \emph{forgetful functor} 
\begin{equation*} \mr{R}_\mf{X}\colon \cat{Vect}(\mf{X}^\syn)\to \cat{Vect}^\varphi(\mf{X}_\smallprism),
\end{equation*}
which is shown in \cite{GuoLi} and \cite{IKY2} to be fully faithful with essential image the subcategory $\cat{Vect}^{\varphi,\mr{lff}}(\mf{X}_\smallprism)$ of so-called locally filtered free (lff) prismatic $F$-crystals (see op.\@ cit.\@).

The goal of this paper is to complete the comparisons of these three approaches by relating the crystalline approach to the prismatic and syntomic ones by defining \emph{integral analogues} of $D_\mr{crys}$
\begin{equation*}
\bb{D}_\mr{crys}\colon \cat{Vect}^\varphi(\mf{X}_\smallprism)\to \cat{VectWF}^\varphi(\mf{X}_\mr{crys}),\qquad  \bb{D}_\mr{crys}\colon\cat{Vect}(\mf{X}^\mr{syn})\to \cat{VectF}^{\varphi,\pddiv}(\mf{X}_\mr{crys});
\end{equation*}
see \S\ref{ss:category-of-filtered-f-crystals} for the definitions of $\cat{VectWF}^\varphi(\mf{X}_\mr{crys})$ and $\cat{VectF}^{\varphi,\pddiv}(\mf{X}_\mr{crys})$, but we remark that the latter agrees with $\cat{VectF}^{\varphi,\mr{sd}}(\mf{X}_\mr{crys})$ when restricted to objects with Hodge--Tate weights in the Fontaine--Laffaille range $[0,p-1]$. In the rest of the introduction, we discuss the various good properties $\bb{D}_\mr{crys}$ enjoys, which not only provide the desired bridge between these various approaches, but help clarify the structure of the categories $\cat{Vect}^\varphi(\mf{X}_\smallprism)$ and $\cat{Vect}(\mf{X}^\mr{syn})$. 

But, we immediately state that the tightest possible connection holds, namely \emph{equivalence}, in the Fontaine--Laffaille range, which clarifies the well-behavedness of Faltings's theory in this range. (See \cite[Definition 3.16]{DLMS} and \S\ref{ss:statement-of-main} for the definitions of the categories $\cat{Vect}_{[0,p-2]}^\varphi(\mf{X}_\smallprism)$ and $\cat{Vect}_{[0,p-2]}(\mf{X}^\mr{syn})$, respectively.)

\begin{thmi}[{see Theorem \ref{thm:big-equiv-diagram}}]\label{thmi:big-equiv-diagram} Suppose that $\mf{X}$ is a smooth formal $W$-scheme and that $p>2$. Then, the following diagram is commutative and all arrows are $\bb{Z}_p$-linear equivalences
\begin{equation*}
\begin{tikzcd}[sep=large]
	{\cat{Vect}^{\varphi,\mr{lff}}_{[0,p-2]}(\mf{X}_\smallprism)} && {\cat{Vect}_{[0,p-2]}(\mf{X}^\mr{syn})} \\
	{\cat{Vect}^{\varphi}_{[0,p-2]}(\mf{X}_\smallprism)} & {\cat{Loc}_{\bb{Z}_p,[0,p-2]}^{\mr{crys}}(X)} & {\cat{VectF}^{\varphi,\mr{sd}}_{[0,p-2]}(\mf{X}_\mr{crys})}
	\arrow[hook, from=1-1, to=2-1]
	\arrow["{\bb{D}_\mr{crys}}"{description}, from=1-1, to=2-3]
	\arrow["{\mr{R}_\mf{X}}"{description}, from=1-3, to=1-1]
	\arrow["{\bb{D}_\mr{crys}}"{description}, from=1-3, to=2-3]
	\arrow["{T_\et}"{description}, from=2-1, to=2-2]
	\arrow["{T_\mr{crys}}"{description}, from=2-3, to=2-2]
\end{tikzcd}
\end{equation*}
\end{thmi}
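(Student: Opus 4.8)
The plan is to deduce the statement formally from the commutativity of the diagram together with a handful of equivalences and full-faithfulness results, most of which are available from earlier in the paper or from the cited literature, via a two-out-of-three argument. I would first isolate the two commutativities encoded in the diagram. The inner triangle amounts to the compatibility $\bb{D}_\crys \cong \bb{D}_\crys \circ \mr{R}_\mf{X}$ of the two copies of $\bb{D}_\crys$, which I expect to hold by construction, the syntomic $\bb{D}_\crys$ being built through $\mr{R}_\mf{X}$. The outer square amounts to the identity $T_\crys \circ \bb{D}_\crys \cong T_\et \circ i$ on $\cat{Vect}^{\varphi,\mr{lff}}_{[0,p-2]}(\mf{X}_\smallprism)$, where $i$ denotes the inclusion; this says that $\bb{D}_\crys$ is compatible with the two \'etale realizations, which I would verify by comparing the prismatic and crystalline period structures, identifying the \'etale realization of $\bb{D}_\crys(\mc{E})$ with $T_\et(\mc{E})$ through the crystalline comparison.

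Next I would assemble the known equivalences. By \cite{GuoLi,IKY2}, $\mr{R}_\mf{X}$ is fully faithful with essential image the lff objects, and since it preserves and reflects the Hodge--Tate weight bound it restricts to an equivalence $\cat{Vect}_{[0,p-2]}(\mf{X}^\mr{syn}) \xrightarrow{\sim} \cat{Vect}^{\varphi,\mr{lff}}_{[0,p-2]}(\mf{X}_\smallprism)$. By the prismatic--crystalline comparison of \cite{DLMS,GuoReinecke}, which identifies prismatic $F$-crystals with crystalline $\bb{Z}_p$-local systems compatibly with Hodge--Tate weights, $T_\et$ restricts to an equivalence $\cat{Vect}^{\varphi}_{[0,p-2]}(\mf{X}_\smallprism) \xrightarrow{\sim} \cat{Loc}^{\mr{crys}}_{[0,p-2]}(X)$. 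Finally $T_\crys$ is fully faithful by \cite{Faltings89}, and the strong-divisibility output of $\bb{D}_\crys$ on lff objects ensures that $\bb{D}_\crys$ indeed lands in $\cat{Vect}^{\varphi,\mr{div}}_{[0,p-2]}(\mf{X}_\mr{crys})$.

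The crux, and the step I expect to be the main obstacle, is to show that the inclusion $i$ is an equivalence; equivalently, that in the Fontaine--Laffaille range every prismatic $F$-crystal is automatically lff. I would argue this locally on $\mf{X}$: working over a chart and using the Nygaard filtration, the hypotheses $p>2$ and the weight bound $p-2$ should force the relevant filtered modules to be locally free, so that the $F$-crystal underlies an $F$-gauge. This rigidity is special to the Fontaine--Laffaille range and is where the numerical hypotheses are consumed; without it there exist prismatic $F$-crystals that are not lff.

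Granting these inputs, the conclusion is formal. From $T_\crys \circ \bb{D}_\crys \cong T_\et \circ i$, with $i$ and $T_\et$ equivalences, the composite $T_\crys \circ \bb{D}_\crys$ is an equivalence; since $T_\crys$ is fully faithful it is then essentially surjective, hence an equivalence, and therefore $\bb{D}_\crys \cong T_\crys^{-1} \circ (T_\crys \circ \bb{D}_\crys)$ on the lff category is an equivalence. The syntomic $\bb{D}_\crys$ is the composite of the equivalences $\mr{R}_\mf{X}$ and this one, so it too is an equivalence, and all six arrows of the diagram are $\bb{Z}_p$-linear equivalences as claimed.
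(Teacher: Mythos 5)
Your reduction has exactly the same architecture as the paper's: the inner triangle commutes by the very definition of the syntomic $\bb{D}_\crys$ (it is defined as $\bb{D}_\crys\circ\mr{R}_\mf{X}$), the known inputs are the same ($\mr{R}_\mf{X}$ an equivalence, $T_\et$ an equivalence via Guo--Reinecke together with the DLMS identification of $\cat{Vect}^\varphi_{[0,p-2]}$ with the analytic category, $T_\crys$ fully faithful by Faltings), and the closing two-out-of-three argument is verbatim the paper's deduction. So there is no divergence of route; the problem is that the two statements carrying all the mathematical weight are left as expectations rather than proofs, and these are precisely the two propositions the paper isolates as the actual content (Propositions \ref{prop:D crys and FL} and \ref{prop:lff-FL-range}).

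Concretely: (1) the commutativity $T_\crys\circ\bb{D}_\crys\cong T_\et$ on lff objects does not follow from a soft comparison of ``period structures.'' The paper proves it by passing through Tsuji's categories $\cat{MF}^{p,\mr{cont}}_{[0,p-2],\text{free}}(\Acrys,\varphi,\Gamma_R)$ and $\cat{M}^{\tilde\xi,\mr{cont}}_{[0,p-2],\text{free}}(\Ainf,\varphi,\Gamma_R)$, and the hard point (Lemma \ref{lem:right commutes}) is a genuine filtration comparison: one must show that the filtration obtained by PD-scalar extension of the Nygaard filtration on $\phi^\ast\mc{E}(\mf{S}_R,(E))$ agrees with the one extended from $\bb{D}_\crys(\mc{E},\varphi_\mc{E})(R)$, which the paper does by identifying both with the DLMS filtration $\Fil^\bullet\mc{D}$, using a filtered basis, the second part of Lemma \ref{lem: free BK mod}, and an induction involving the monodromy operator $N_u$. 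None of this is present in your sketch. (2) Likewise, lff-ness in the Fontaine--Laffaille range is not forced formally by the bound $p-2$; the paper's proof of Proposition \ref{prop:lff-FL-range} reduces to $R=W\llbracket t_1,\ldots,t_d\rrbracket$, uses that the cokernel of $\phi^\ast\mf{M}\to\mf{M}/E^i$ is finite free over $R$ (an input following Hokaj), base-changes to $W$ so as to invoke Gao's freeness theorem for Breuil--Kisin modules over $W$ --- which is where the bound $\leqslant p-2$ is actually consumed --- and then lifts a filtered basis back to $R$ by a Nakayama and rank-counting argument. You correctly flag this as the crux, but flagging it is not proving it: as written, your proposal establishes only the formal shell of the theorem, while both load-bearing steps remain open.
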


\begin{remi}\label{rem:relation-to-TVX} We comment on the relationship of Theorem \ref{thm:big-equiv-diagram} to other literature: 
\begin{itemize}[leftmargin=.3in]
    \item In \cite{Hokaj}, Hokaj obtains the equivalence of the arrow $T_\mr{crys}$ as in Theorem \ref{thm:big-equiv-diagram}. Our proof is independent of his results, but our proof of the crucial Proposition \ref{prop:lff-FL-range} is inspired by (although simpler than) techniques in op.\@ cit.
    \item In \cite{Wur23}, W\"{u}rthen constructs an equivalence between $\cat{Vect}^\varphi_{[0,p-2]}(\mf{X}_\smallprism)$ and a certain modified category of Fontaine--Laffaille modules. The relationship to our work is unclear.
    \item In \cite{TVX}, for a perfect field $k$ of characteristic $p$ the authors construct an equivalence of $\infty$-categories (with notation as in op.\@ cit):
\begin{equation*}
\Phi_{\mr{Maz}}\colon \mc{D}_{\mr{qc},[0,p-2]}(W(k)^\mr{syn})\to \ms{DMF}^\mr{big}_{[0,p-2]}(W(k)),
\end{equation*}
which is a derived analogue of $\bb{D}_\mr{crys}$ for $\mf{X}=\Spf(W(k))$ in the Fontaine--Laffaille range.
\end{itemize}

We note that in all cases above these constructions do not address objects outside the Fontaine--Laffaille range, unlike our functor $\bb{D}_\mr{crys}$.
\end{remi}

In the rest of the introduction, we discuss other aspects of the functor $\bb{D}_\mr{crys}$ and its applications to prismatic $F$-crystals, prismatic $F$-gauges, and the theory of $p$-divisible groups.

\medskip

\paragraph*{Construction of $\bb{D}_\mr{crys}$ and applications of prismatic $F$-crystals/gauges} Fix a smooth formal $W$-scheme $\mf{X}$.\footnote{Although most of these results apply for a base formal $W$-scheme $\mf{X}$ in the sense of \cite[\S1.1.5]{IKY1}.} Our construction of the $\Z_p$-linear $\otimes$-functor, the \emph{integral analogue of ${D}_\mr{crys}$},
\begin{equation*}
\bb{D}_\crys\colon \cat{Vect}^\varphi(\mf{X}_\smallprism)\to \cat{VectWF}^{\varphi}(\mf{X}_\crys)
\end{equation*}
is achieved in a surprisingly pleasant way. Namely, we observe that for an object $(\mc{E},\varphi_\mc{E})$ of $\cat{Vect}^\varphi(\mf{X}_\smallprism)$ there is always a canonical \emph{crystalline-de Rham comparison}.

\begin{thmi}[see Theorem \ref{thm:crys-dR-comparison}]\label{thmi:crys-dr-comp}
    Let $\mf X$ be a smooth formal $W$-scheme, and $(\mc E,\varphi_\mc{E})$ a prismatic $F$-crystal on $\mf X$. Then there exists a canonical isomorphism 
    \begin{equation*}
        \iota_\mf{X}\colon \underline{\bb{D}}_\crys(\mc{E},\varphi_\mc{E})|_{\mf{X}_\mr{Zar}}\isomto \bb{D}_\dR(\mc{E},\varphi_\mc{E})
    \end{equation*}
    of vector bundles on $\mf X$. 
\end{thmi}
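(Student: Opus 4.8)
The plan is to construct $\iota_\mf{X}$ Zariski-locally on $\mf{X}$ from an explicit comparison of prisms, and then to glue, the gluing being forced by the canonicity of the construction. First I would pass to an affine open $\Spf(R)\subseteq\mf{X}$ small enough to admit a framing. Such a framing produces a Breuil--Kisin-type prism $(\fS,(E))$ over $R$ with $\fS/E\cong R$, and by the very definition of the de Rham realization one has $\bb{D}_\dR(\mc{E},\varphi_\mc{E})|_{\Spf(R)}\cong\mc{E}(\fS)/E\mc{E}(\fS)$, a finite projective $R$-module of rank equal to that of $\mc{E}$.

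The bridge between the two realizations is the $p$-completed divided power envelope $\Acrys\defeq D_{\fS}((E))^{\wedge}_{p}$, which is a crystalline prism. The Frobenius $\varphi_\fS$ followed by the structure map $\fS\to\Acrys$ gives a map of prisms $(\fS,(E))\to(\Acrys,(p))$, this being a map of prisms precisely because $\varphi_\fS(E)$ generates $p\Acrys$; hence the crystal property of $\mc{E}$ yields $\mc{E}(\Acrys)\cong\mc{E}(\fS)\otimes_{\fS,\varphi_\fS}\Acrys$. On the other hand, $\Acrys$ surjects onto the quotient by its divided power ideal, which is canonically identified with $\fS/E\cong R$, and tracing through the construction of $\ul{\bb{D}}_\crys$ via the crystalline realization functor, the value of the crystalline $F$-crystal on the canonical PD-thickening $\mf{X}$ is computed as $\mc{E}(\Acrys)\otimes_{\Acrys}R$.

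These identifications combine into the sought isomorphism, because the composite $\fS\xrightarrow{\varphi_\fS}\fS\to\Acrys\to R$ is nothing but $\fS\xrightarrow{\varphi_\fS}\fS\twoheadrightarrow\fS/E=R$, the Frobenius-twisted reduction through which $\bb{D}_\dR$ is defined:
\[
\ul{\bb{D}}_\crys(\mc{E},\varphi_\mc{E})|_{\Spf(R)}=\mc{E}(\Acrys)\otimes_{\Acrys}R\cong\mc{E}(\fS)\otimes_{\fS,\varphi_\fS}R\cong\bb{D}_\dR(\mc{E},\varphi_\mc{E})|_{\Spf(R)}.
\]
As both outer terms are finite projective $R$-modules of the same rank and every arrow is built from base changes along the prism maps together with the crystal isomorphism, this is an isomorphism of vector bundles on $\Spf(R)$.

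The hard part will be canonicity and gluing. The prism $(\fS,(E))$ and the envelope $\Acrys$ depend on the chosen framing, so I must show that the local isomorphism is independent of this choice and compatible with further localization. Here the crystal structure is indispensable: a change of framing induces an isomorphism of the associated crystalline prisms over $\Acrys$, and one must verify that the integrable connection carried by $\ul{\bb{D}}_\crys$---equivalently, the descent datum promoting $\mc{E}(\Acrys)\otimes_{\Acrys}R$ from a module to a crystal---is intertwined by $\iota$ with the canonical de Rham descent on $\bb{D}_\dR$. Establishing this compatibility is the crux; once it is in place, the local isomorphisms glue to the asserted canonical $\iota_\mf{X}$, and $\bb{Z}_p$-linear functoriality in $(\mc{E},\varphi_\mc{E})$ is immediate since every step is functorial.
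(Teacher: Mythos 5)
Your local construction is, in substance, the paper's own ``down-to-earth'' description of the comparison: Construction \ref{construction: crys dR isom with Breuil prism} passes through exactly your bridge, the Breuil prism $(S_R,(p))$ (your $\Acrys$ is the paper's $S_R$), uses the crystal property along the Frobenius-twisted prism maps out of $(\mf{S}_R,(E))$ and out of $(R,(p))$, and reduces along $\mr{sp}_\dR\colon S_R\twoheadrightarrow R$; the two endpoints are then identified with $\ul{\bb{D}}_\crys$ and $\bb{D}_\dR$ by Propositions \ref{prop: pris crys in char p} and \ref{prop:de-Rham-realization-BK-relationship}. Two bookkeeping issues in your write-up: your opening claim that $\bb{D}_\dR(\mc{E},\varphi_\mc{E})|_{\Spf(R)}\cong\mc{E}(\mf{S}_R)/E\mc{E}(\mf{S}_R)$ ``by the very definition'' is off by a Frobenius twist and is, moreover, not a definition but a statement requiring proof (it is Proposition \ref{prop:de-Rham-realization-BK-relationship}; the correct module is $\phi^\ast\mc{E}(\mf{S}_R,(E))/E$, which your final display does use); and the identification of $\ul{\bb{D}}_\crys|_{\mf{X}_\Zar}$ with $\mc{E}(S_R,(p))\otimes_{S_R}R$ hides a Frobenius twist in the structure map of the Breuil prism, which is precisely the content of the $\phi^\ast$ appearing in Proposition \ref{prop: pris crys in char p}.

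The genuine gap is the step you yourself call the crux: independence of the framing, and gluing. The rings $\mf{S}_R$ and $S_R$ do not depend on the framing, but their $\delta$-structures do, hence so do the prisms $(\mf{S}_R^{(\phi_\framew)},(E))$, $(S_R^{(\phi_\framew)},(p))$ and every crystal-property isomorphism you invoke; since the theorem asserts a \emph{canonical} isomorphism, framing-independence is not a formality but the actual mathematical content, and your proposal only asserts that it ``must be verified.'' The paper's proof of Theorem \ref{thm:crys-dR-comparison} sidesteps this entirely by being global and choice-free: it identifies the crystalline point $\rho_{\crys,\mf{X}}$ and the de Rham point $\rho^\smallprism_{\dR,\mf{X}}$ as maps of stacks $\mf{X}\to\mf{X}^\smallprism$ (a computation with Cartier--Witt divisors and $\bb{G}_a^\dR$), and then obtains $\mc{E}^\crys|_{\mf{X}_\Zar}\isomto\mc{E}^\dR$ by pullback via Proposition \ref{prop:crystalline-realization-stacky-equiv}; the Breuil-prism description is deduced \emph{a posteriori} in Proposition \ref{prop:crys-dR-comparison-using-prisms}, which is what settles the compatibility you need. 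To complete your route without stacks you would have to compare the two Frobenius lifts directly (e.g.\ via the HPD-stratification/Taylor-formula argument classically used to prove independence of the Frobenius lift, intertwined with the connection on $\ul{\bb{D}}_\crys$), and that argument is missing.
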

Here we are using the following notation:
\begin{itemize}[leftmargin=.3in]
\item $\underline{\bb{D}}_\crys(\mc{E},\varphi_\mc{E})$, the \emph{crystalline realization}, denotes the $F$-crystal $\mc{E}^\crys$ associated to $\mc{E}|_{\mf{X}_k}$, 
\item and $\bb{D}_\dR(\mc{E},\varphi_\mc{E})$, the \emph{de Rham realization}, denotes the vector bundle on $\mf{X}$ given by pullback along the de Rham point $\rho_{\mr{dR},\mf{X}}^\smallprism\colon \mf{X}\to \mf{X}^\smallprism$ as in Definition \ref{defn:de-rham-point} (see Proposition \ref{prop:de-Rham-realization-BK-relationship} for a more down-to-earth description).
\end{itemize}

Then, $\bb{D}_\crys(\mc{E},\varphi_\mc{E})$ has underlying $F$-crystal $\underline{\bb{D}}_\crys(\mc{E},\varphi_\mc{E})$ and obtains a filtration on its restriction to $\mf{X}_\Zar$ via the crystalline-de Rham comparison, and the Nygaard filtration on $\phi^\ast \mc{E}$.

\begin{remi} In the case when $\mf{X}=\Spf(\Z_p)$ this is essentially contained in \cite{BhattLurieAbsolute} (see Proposition 3.6.6 of op.\@ cit.\@). A more novel aspect of our study is a concrete reinterpretation of this comparison using Breuil and Breuil--Kisin prisms, see Construction \ref{construction: crys dR isom with Breuil prism} and Proposition \ref{prop:crys-dR-comparison-using-prisms}, which is central to our study of $\bb{D}_\mr{crys}$.
\end{remi}

\begin{remi}
In the case when $\mf{X}=\Spf(W)$, one may view the crystalline-de Rham isomorphism as an integral refinement of \cite[\S 1.2.7]{KisinFCrystal} (cf.\@ Remark \ref{rem:Kisin agreement}). 
\end{remi}

As indicated by our choice of terminology, $\bb{D}_\crys$ forms a functorial lattice in $D_\mr{crys}$ in a way made precise by the following theorem.

\begin{thmi}[{see Theorem \ref{thm:integral-dcrys-strongly-divisible-and-rational-agreeance}}] There is a canonical identification
\begin{equation*}
\bb{D}_\crys[\nicefrac{1}{p}]\isomto D_\crys\circ T_\et.
\end{equation*}
\end{thmi}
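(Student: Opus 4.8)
The plan is to produce the canonical identification by matching, separately and then compatibly, the underlying $F$-isocrystals and the filtrations of the two rational filtered $F$-isocrystals $\bb{D}_\crys(\mc{E},\varphi_\mc{E})[\nicefrac{1}{p}]$ and $D_\crys(T_\et(\mc{E},\varphi_\mc{E}))$ attached to a prismatic $F$-crystal $(\mc{E},\varphi_\mc{E})$, where $D_\crys$ is applied to the associated crystalline $\bb{Q}_p$-local system. An object of $\cat{VectWF}^\varphi(\mf{X}_\crys)[\nicefrac{1}{p}]$ is the datum of an $F$-isocrystal on $\mf{X}_\crys$ together with a filtration on its realization over $\mf{X}_\Zar$; hence it suffices to construct a canonical isomorphism of the underlying $F$-isocrystals and then to check that it carries the one filtration onto the other.

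For the underlying $F$-isocrystals I would invoke the crystalline comparison. By construction $\underline{\bb{D}}_\crys(\mc{E},\varphi_\mc{E})$ is the $F$-crystal $\mc{E}^\crys$ associated to $\mc{E}|_{\mf{X}_k}$, so by the prismatic--crystalline comparison its rationalization is the crystalline realization of $\mc{E}$, i.e.\ the value of $\mc{E}$ on the crystalline prism with $p$ inverted. On the other hand $T_\et(\mc{E},\varphi_\mc{E})$ is the \'etale realization, and Fontaine's $D_\crys$ of a crystalline local system is, as an $F$-isocrystal, cut out by the crystalline comparison isomorphism inside $\Bcrys$. Base-changing $\mc{E}$ to $\Acrys$ exhibits both $\mc{E}^\crys[\nicefrac{1}{p}]$ and the $F$-isocrystal underlying $D_\crys(T_\et(\mc{E}))$ as specializations of the single module $\mc{E}\otimes\Bcrys$, and the $\varphi$-equivariant comparison between these two specializations yields the desired canonical isomorphism of $F$-isocrystals. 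In the arithmetic case $\mf{X}=\Spf(W)$ this is precisely the recovery of $D_\crys$ from the Breuil--Kisin module of \cite{KisinFCrystal}; for general smooth $\mf{X}$ it is the relative crystalline comparison.

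For the filtrations the key input is Theorem \ref{thmi:crys-dr-comp}. By construction the filtration defining $\bb{D}_\crys$ is, via the crystalline--de Rham comparison $\iota_\mf{X}$, the transport of the Nygaard filtration on $\phi^\ast\mc{E}$ to the de Rham realization $\bb{D}_\dR(\mc{E},\varphi_\mc{E})$. The filtration on $D_\crys(T_\et(\mc{E}))$ is the Hodge filtration, which is visible on the de Rham realization $D_\dR(T_\et(\mc{E}))$ through Fontaine's de Rham comparison in its relative form, using the filtration $\Fil^i\Bdr$. Thus this step reduces to the de Rham comparison for prismatic $F$-crystals: that $\bb{D}_\dR(\mc{E})[\nicefrac{1}{p}]$ equipped with its Nygaard-induced filtration agrees with $D_\dR(T_\et(\mc{E}))$ equipped with its Hodge filtration, compatibly with the identification of underlying bundles coming from $\iota_\mf{X}$ and the crystalline comparison of the previous step. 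In other words, one must know that the Nygaard filtration computes the Hodge filtration on de Rham cohomology, which I would cite in the requisite relative form.

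The main obstacle is this last, filtration, step: reconciling the intrinsically prismatic Nygaard filtration with Fontaine's Hodge filtration on $D_\dR$ canonically and functorially over an arbitrary smooth $\mf{X}$. The difficulty is twofold---one must track filtrations through both $\iota_\mf{X}$ and the relative de Rham comparison, and one must check that the two identifications of underlying modules (crystalline, governing the $F$-isocrystal, and de Rham, governing the filtration) are mutually compatible, which is exactly the coherence encoded by $\iota_\mf{X}$. A convenient way to organize the argument is to first establish the isomorphism over the arithmetic point $\Spf(W)$, where it is Kisin's computation (cf.\ the Remark after Theorem \ref{thmi:crys-dr-comp}), and then to globalize by appealing to the functoriality of all the comparison isomorphisms together with the relative de Rham comparison, so that the filtered identification holds Zariski-locally on $\mf{X}$ and glues to the asserted canonical isomorphism.
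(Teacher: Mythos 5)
Your overall architecture --- identify the underlying $F$-isocrystals via the relative crystalline comparison, then use $\iota_\mf{X}$ to reduce the filtration statement to a comparison of the Nygaard-induced filtration on $\bb{D}_\dR(\mc{E},\varphi_\mc{E})[\nicefrac{1}{p}]$ with the Hodge filtration on $D_\dR(T_\et(\mc{E},\varphi_\mc{E}))$ --- is exactly how the paper organizes the proof of Theorem \ref{thm:integral-dcrys-strongly-divisible-and-rational-agreeance} (the $F$-isocrystal identification is likewise quoted there from the proof of \cite[Theorem 4.8]{GuoReinecke}). The genuine gap is that you leave the decisive step as a citation: ``one must know that the Nygaard filtration computes the Hodge filtration\ldots which I would cite in the requisite relative form.'' No such reference exists in the generality needed; that statement \emph{is} the content of the theorem, and the paper has to prove it. Concretely, the paper works over a small affine $\Spf(R)$ and passes to relative period rings: it introduces $\varphi$-modules over $\B_\dR^+=\B_\dR^+(\wt{R})$, proves in Lemma \ref{lem: D crys on the ring B} (by choosing a filtered basis and computing both filtrations explicitly) that $D_\crys$ followed by scalar extension along $\wt{\psi}$ agrees with the Nygaard-type functor $\mr{Nyg}_\dR$ applied to $\mc{E}(\Ainf(\wt{R}),(\tilde{\xi}))\otimes\B_\dR^+$, then relates this to the Nygaard filtration on $\phi^\ast\mc{E}(\mf{S}_R,(E))$ (which computes $\Fil^\bullet_{\bb{D}_\crys}$ by Proposition \ref{prop:de-Rham-realization-BK-relationship}) using flatness of $\mf{S}_R\to\B_\dR^+$, and finally descends the filtered identification along the faithfully flat map $R[\nicefrac{1}{p}]\to\wt{R}[\nicefrac{1}{p}]$. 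This same computation is also what resolves the coherence issue you correctly flag (that the crystalline identification governing the $F$-isocrystal and the de Rham identification governing the filtration are the same map): the paper checks that the isomorphism produced over $\wt{R}$ agrees with the one of Guo--Reinecke.

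Your proposed fallback --- establish the statement over $\Spf(W)$, where it is Kisin's computation, and then ``globalize by appealing to functoriality'' --- would fail. Checking the equality of two filtrations Zariski-locally on $\mf{X}$ is legitimate, but the local statement over a small affine $\Spf(R)$ is not a consequence of the statement over $\Spf(W)$: there is no map between $\Spf(R)$ and $\Spf(W)$ along which functoriality of the comparison isomorphisms would transport the filtered identification, so the case of the arithmetic point gives you nothing over a general $R$. Each local case requires the relative period-ring argument above (or an equivalent substitute such as \cite[Lemma 4.31]{DLMS}, which the paper only invokes later, in Lemma \ref{lem:right commutes}, at a point where Theorem \ref{thm:integral-dcrys-strongly-divisible-and-rational-agreeance} is already available). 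In short, the proposal reduces the theorem correctly to its hard part, but that part is left unproved.
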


The functor $\bb{D}_\mr{crys}$ enjoys even more favorable properties when restricted to the full subcategory $\cat{Vect}^{\varphi,\mr{lff}}(\mf{X}_\smallprism)$ of lff prismatic $F$-crystals as in \cite[Definition 1.24]{IKY2}.

\begin{propi}[{see Proposition \ref{prop:filtered-equiv}}]\label{propi:lff-iff-Dcrys-divisible} Suppose that $\mf{X}$ is a smooth formal $W$-scheme. Then, if $(\mc{E},\varphi_\mc{E})$ is an lff prismatic $F$-crystal on $\mf{X}$, then $\bb{D}_\mr{crys}(\mc{E},\varphi_\mc{E})$ belongs to $\cat{VectF}^{\varphi,\pddiv}(\mf{X}_\mr{crys})$.
\end{propi}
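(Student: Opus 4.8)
The plan is to prove both implications by reducing to an explicit computation on a Breuil--Kisin prism, where lffness and strong divisibility each become a transparent condition on the Nygaard filtration. First I would note that both conditions are Zariski-local on $\mf{X}$: lffness is local by definition, and strong divisibility of a weakly filtered $F$-crystal amounts to the generation condition $\sum_i p^{-i}\varphi(\Fil^i)=\mc{M}$ on the value over each affine, which may be checked locally. So I may assume $\mf{X}=\Spf(R)$ is affine and choose, after a framing, a Breuil--Kisin prism $(\mf{S},(E))$ with a Frobenius lift. The prismatic $F$-crystal becomes a pair $(M,\varphi_M)$ with $M$ finite projective over $\mf{S}$ and $\varphi_M\colon\varphi^\ast M\to M$ an isogeny with $E$-power torsion cokernel, carrying the Nygaard filtration $\Fil^i_N\varphi^\ast M=\varphi_M^{-1}(E^iM)$; lffness of $(\mc{E},\varphi_\mc{E})$ is precisely filtered freeness of $(\varphi^\ast M,\Fil^\bullet_N)$, i.e.\ the existence of a basis adapted to $\Fil^\bullet_N$ with prescribed jumps.

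Next I would invoke the concrete crystalline--de Rham comparison of Construction \ref{construction: crys dR isom with Breuil prism} and Proposition \ref{prop:crys-dR-comparison-using-prisms} to describe $\bb{D}_\crys(\mc{E},\varphi_\mc{E})$ over the associated Breuil prism: its underlying $F$-crystal is $M$ base-changed to the Breuil ring, its Frobenius is induced by $\varphi_M$, and via $\iota_\mf{X}$ the Hodge filtration on $\bb{D}_\dR(\mc{E},\varphi_\mc{E})$ is the image of $\Fil^\bullet_N$. The essential mechanism is that in the Breuil ring $\varphi(E)$ equals $p$ up to a unit; consequently the prismatic divided Frobenius $\varphi_i\colon\Fil^i_N\varphi^\ast M\to M$, $x\mapsto E^{-i}\varphi_M(x)$, matches the crystalline divided Frobenius $p^{-i}\varphi_\crys|_{\Fil^i}$ under the comparison. (In particular the divisibility $\varphi_\crys(\Fil^i)\subseteq p^i$ holds for \emph{every} prismatic $F$-crystal, so $\bb{D}_\crys$ always lands in $\cat{VectWF}^{\varphi,\mr{div}}(\mf{X}_\crys)$.) Thus strong divisibility of $\bb{D}_\crys(\mc{E},\varphi_\mc{E})$ is equivalent to surjectivity of the total prismatic divided Frobenius $\bigoplus_i\Fil^i_N\varphi^\ast M\to M$.

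It then remains to identify this surjectivity with filtered freeness. If $(\varphi^\ast M,\Fil^\bullet_N)$ is filtered free, an adapted basis makes the divided Frobenius visibly surjective, giving strong divisibility; via the equivalence $\mr{R}_\mf{X}$ this also matches the value of the gauge-level $\bb{D}_\crys$ on $\cat{Vect}(\mf{X}^\mr{syn})$. Conversely, given surjectivity, I would pass to the special fibre: by $(p,E)$-adic completeness of $\mf{S}$ and Nakayama it suffices to work on $\Gr^\bullet_N\varphi^\ast M$ over $\Gr^\bullet_E\mf{S}$. Since $\varphi_M$ is an isogeny the graded pieces of the divided Frobenius carry the expected ranks, so surjectivity forces $\Gr^\bullet_N\varphi^\ast M$ to be locally free of those ranks over $\Gr^\bullet_E\mf{S}$; lifting a homogeneous basis through Nakayama produces an adapted basis, i.e.\ filtered freeness, hence lffness.

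The main obstacle is the middle step: establishing the exact dictionary between the $E$-adic prismatic divided Frobenius and the $p$-adic crystalline one, which rests on the Breuil-prism bookkeeping (the unit $\varphi(E)/p$) and on matching the Nygaard and Hodge filtrations under $\iota_\mf{X}$. One must also verify that the resulting filtration assembles into a genuine filtered $F$-crystal---Griffiths transversality with respect to the connection---so that $\bb{D}_\crys(\mc{E},\varphi_\mc{E})$ lies in $\cat{VectF}^{\varphi,\mr{div}}(\mf{X}_\crys)$ and not merely in $\cat{VectWF}^{\varphi,\mr{div}}(\mf{X}_\crys)$; this is exactly the additional structure that filtered freeness supplies. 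The final Nakayama lifting is routine given $(p,E)$-adic completeness.
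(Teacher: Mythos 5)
Your core mechanism for the divisibility statement is essentially the paper's own: reduce to a Breuil--Kisin prism, identify lffness with filtered freeness of the Nygaard filtration (Proposition \ref{prop:filtered-equiv}, (1)$\Leftrightarrow$(2)), use the Breuil-prism form of the crystalline--de Rham comparison (Construction \ref{construction: crys dR isom with Breuil prism}, Proposition \ref{prop:crys-dR-comparison-using-prisms}) to see $\Fil^\bullet_{\bb{D}_\crys}$ as the image of the Nygaard filtration, and exploit that $\phi(E)$ is $p$ times a unit so that an adapted basis makes the divided Frobenius surjective (Lemmas \ref{lem: free BK mod} and \ref{lem:filtered-free-implies-str-div}). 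The genuine gap is Griffiths transversality. Membership in $\cat{VectF}^{\varphi,\mr{div}}(\mf{X}_\crys)$ requires $\nabla_{\mc F}(\Fil^r_{\mc F})\subseteq \Fil^{r-1}_{\mc F}\otimes\Omega^1_{\mf{X}/W}$ \emph{integrally}, and your closing claim that this ``is exactly the additional structure that filtered freeness supplies'' is false: local splitness is a purely module-theoretic condition and says nothing about how the connection moves the filtration. In the paper GT comes from two inputs absent from your outline: (i) Theorem \ref{thm:integral-dcrys-strongly-divisible-and-rational-agreeance}, identifying $\bb{D}_\crys[\nicefrac{1}{p}]$ with $D_\crys\circ T_\et$, so that GT holds after inverting $p$ because Fontaine's $D_\crys$ satisfies it --- this is also the only place smoothness of $\mf{X}$ is used, and the fact that your argument never invokes smoothness is a symptom of the missing ingredient, since for general base formal $W$-schemes the paper only proves the weaker equivalence with $\cat{VectNF}^{\varphi,\mr{div},\mr{gtf}}$; and (ii) the upgrade from rational to integral GT via graded $p$-torsion-freeness, i.e.\ $(\mc{F}\otimes\Omega^1)\cap(\Fil^{r-1}_{\mc F}\otimes\Omega^1)[\nicefrac{1}{p}]=\Fil^{r-1}_{\mc F}\otimes\Omega^1$, in Proposition \ref{prop:ptf-implies-loc-free}. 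No amount of Breuil-prism bookkeeping produces GT from the prismatic side alone.

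Two further points. First, your converse deduces filtered freeness from surjectivity of the total divided Frobenius via the unexplained assertion that the graded pieces ``carry the expected ranks''; as stated this is not an argument. The paper's corresponding implication ((4)$\Rightarrow$(3)) requires the Faltings morphism: packaging the divided Frobenii into $\phi^\ast\wt{\mc{F}}_{\mf X}\to\mc{F}_{\mf X}$, using gtf for injectivity and strong divisibility for surjectivity, identifying $\wt{\mc{F}}_{\mf X}/p$ with $\bigoplus_r\Gr^r(\Fil^\bullet_{\mc F})/p$, and lifting projectivity modulo $p$. Your version is nonetheless reparable because your hypothesis already includes local splitness of $\Fil^\bullet_{\bb{D}_\crys}$, after which your Nakayama lifting is exactly Lemma \ref{lem: free BK mod} --- though that lemma is not ``routine'': it rests on the Rees-algebra comparison of Lemma \ref{lem:derived Rees tensor identification}. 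Second, your parenthetical that $\bb{D}_\crys$ ``always lands in $\cat{VectWF}^{\varphi,\mr{div}}$'' conflates the automatic containment $\varphi(\Fil^i)\subseteq p^i$ with strong divisibility (an equality) and with the weakly filtered property (rational GT and rational splitness); the latter two are theorems, not formal consequences of the construction.
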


Combining Proposition \ref{propi:lff-iff-Dcrys-divisible} with the forgetful functor
\begin{equation*}
\mr{R}_\mf{X}\colon \cat{Vect}(\mf{X}^\mr{syn})\isomto \cat{Vect}^{\varphi,\mr{lff}}(\mf{X}_\smallprism)
\end{equation*}
(see \cite[Remark 6.3.4]{BhattNotes} and \cite[Proposition 1.28]{IKY2}), we obtain a functor
\begin{equation}\label{eqi:Dcrys-syntomic}\tag{1}
\bb{D}_\mr{crys}\circ \mr{R}_\mf{X}\colon \cat{Vect}(\mf{X}^\mr{syn})\to \cat{VectF}^{\varphi,\pddiv}(\mf{X}_\mr{crys}),
\end{equation}
which we also denote $\bb{D}_\mr{crys}$. Using ideas of Faltings, we may show that \eqref{eqi:Dcrys-syntomic} is exact when restricted to $\cat{Vect}^{\mr{sd}}(\mf{X}^\mr{syn})$. This latter category is, by definition, those prismatic $F$-gauges whose image under $\bb{D}_\mr{crys}$ lies in $\cat{VectF}^{\varphi,\mr{sd}}(\mf{X}_\mr{crys})$.

Using this, we obtain the following result used heavily in \cite{IKY2}.

\begin{propi}[{see Proposition \ref{prop:G_X-bi-exact}}] The equivalence
\begin{equation*}
\mr{R}_\mf{X}\colon \cat{Vect}^{\mr{sd}}(\mf{X}^\mr{syn})\isomto\cat{Vect}^{\varphi,\mr{lff},\mr{sd}}(\mf{X}_\smallprism)
\end{equation*}
given by the forgetful functor is bi-exact. 
\end{propi}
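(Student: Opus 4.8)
The plan is to prove bi-exactness by checking separately that the equivalence $\mr{R}_\mf{X}$ preserves and reflects admissible short exact sequences. Recall that in both $\cat{Vect}(\mf{X}^\mr{syn})$ and $\cat{Vect}^{\varphi,\mr{lff}}(\mf{X}_\smallprism)$ a short sequence is declared exact precisely when it becomes a locally split short exact sequence of underlying modules after evaluation at every point of the relevant stack; on the syntomic side this amounts to exactness of the underlying prismatic $F$-crystal together with exactness of the Nygaard-filtered data. The functor $\mr{R}_\mf{X}$ merely discards the latter while retaining the prismatic $F$-crystal, so it visibly carries an admissible exact sequence of $F$-gauges to an admissible exact sequence of lff prismatic $F$-crystals. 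This gives preservation with no work.

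The content is the reflecting direction. Fix a short sequence $\Sigma\colon 0\to A\to B\to C\to 0$ in $\cat{Vect}(\mf{X}^\mr{syn})$ and assume $\mr{R}_\mf{X}(\Sigma)$ is admissibly exact in $\cat{Vect}^{\varphi,\mr{lff}}(\mf{X}_\smallprism)$. Then the underlying prismatic $F$-crystals of $A,B,C$ form a short exact sequence, and it remains only to verify exactness of the filtration data carried by the $F$-gauges. Here I would invoke the crystalline--de Rham comparison $\iota_\mf{X}$ of Theorem \ref{thmi:crys-dr-comp}, which identifies the Hodge filtration on $\bb{D}_\dR$ — hence the filtration recorded by the syntomic structure — with the filtration placed on $\underline{\bb{D}}_\crys$ in the construction of $\bb{D}_\crys$. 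Thus the filtered data of $\Sigma$ is exact as soon as the corresponding filtered pieces of $\bb{D}_\crys(\mr{R}_\mf{X}(-))$ are.

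To produce the latter, note that since $\mr{R}_\mf{X}(\Sigma)$ is exact, the underlying crystalline realizations — computed by restriction to $\mf{X}_k$ followed by the crystalline functor, both of which preserve exactness — form a short exact sequence of $F$-crystals. By Proposition \ref{propi:lff-iff-Dcrys-divisible} the three objects $\bb{D}_\crys(\mr{R}_\mf{X}(A)),\bb{D}_\crys(\mr{R}_\mf{X}(B)),\bb{D}_\crys(\mr{R}_\mf{X}(C))$ lie in $\cat{Vect}^{\varphi,\mr{div}}(\mf{X}_\mr{crys})$, and the strong divisibility underlying the exactness of \eqref{eqi:Dcrys-syntomic} then forces the Hodge filtration to be strict along this sequence, so that $\Fil^i\bb{D}_\crys(\mr{R}_\mf{X}(A))\to \Fil^i\bb{D}_\crys(\mr{R}_\mf{X}(B))\to \Fil^i\bb{D}_\crys(\mr{R}_\mf{X}(C))$ is short exact for every $i$. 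Transporting back along $\iota_\mf{X}$ shows the filtration data of $\Sigma$ is exact; combined with exactness of the underlying prismatic crystals, this yields that $\Sigma$ is admissibly exact, giving reflection.

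The main obstacle is precisely this strictness step: deducing exactness of the filtered pieces from exactness of the underlying objects. This is where the lff hypothesis is indispensable, since by Proposition \ref{propi:lff-iff-Dcrys-divisible} it is exactly what guarantees that the targets are strongly divisible, and where the Faltings-style argument behind the exactness of \eqref{eqi:Dcrys-syntomic} does the real work. In general a sequence of filtered objects that is exact on underlying modules need not be strict, so without strong divisibility the reflection of exactness — and hence bi-exactness — would fail.
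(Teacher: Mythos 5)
Your outline gets the shape right---preservation is clear, and reflection should come down to the Faltings-style strictness that powers the exactness of $\bb{D}_\mr{crys}$ on lff crystals (Proposition \ref{prop:dcrys-exact})---but there is a genuine gap at the pivot of your argument. The filtration ``recorded by the syntomic structure'' is \emph{not} the Hodge filtration on $\bb{D}_\dR$: an object of $\cat{Vect}(\mf{X}^\mr{syn})$ encodes (after descent along Frobenius) the full Nygaard filtration $\Fil^\bullet_\mr{Nyg}(\phi^\ast\mc{E})$ as a filtered module over $(\mc{O}_\smallprism,\Fil^\bullet_{\mc{I}_\smallprism})$, whereas the crystalline--de Rham comparison $\iota_\mf{X}$ identifies $\Fil^\bullet_{\bb{D}_\crys}$ only with the \emph{image} filtration $\ov{\Fil}^\bullet_\mr{Nyg}$ on $\phi^\ast\mc{E}/\mc{I}_\smallprism\cong \bb{D}_\dR(\mc{E},\varphi_\mc{E})$. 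Consequently, exactness of the sequences $\Fil^i\bb{D}_\crys(\mr{R}_\mf{X}(A))\to \Fil^i\bb{D}_\crys(\mr{R}_\mf{X}(B))\to \Fil^i\bb{D}_\crys(\mr{R}_\mf{X}(C))$ only controls the filtered data modulo $\mc{I}_\smallprism$; it does not formally yield exactness of $0\to \Fil^r_\mr{Nyg}(\phi^\ast\mc{E}_1)\to \Fil^r_\mr{Nyg}(\phi^\ast\mc{E}_2)\to \Fil^r_\mr{Nyg}(\phi^\ast\mc{E}_3)\to 0$ for all $r$, which is what exactness in $\cat{Vect}(\mf{X}^\mr{syn})$ actually requires. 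In general a map of filtered modules can be strict after reduction modulo an ideal without being strict, so ``transporting back along $\iota_\mf{X}$'' is not a no-op: this is precisely the step your proposal skips.

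The paper fills exactly this hole. It first uses the Rees-module results \cite[Corollary 1.18]{IKY2}, \cite[Proposition 1.5]{IKY2}, and the descent-along-Frobenius argument of \cite[Proposition 1.28]{IKY2} to reduce bi-exactness to exactness of the full Nygaard-filtered pieces; it then localizes to Breuil--Kisin prisms via \cite[Proposition 1.16]{IKY1} and invokes Proposition \ref{prop:dcrys-exact} \emph{and its proof}. The lift from the mod-$E$ statement to the full Nygaard filtration is where lff-ness is used a second time: one has $\Fil^r_\mr{Nyg}(\phi^\ast\mf{M})\cap E\,\phi^\ast\mf{M}=E\,\Fil^{r-1}_\mr{Nyg}(\phi^\ast\mf{M})$ (the key computation in Lemma \ref{lem:derived Rees tensor identification}), so surjectivity of $\Fil^r_\mr{Nyg}(\phi^\ast\mf{M}_2)\to \Fil^r_\mr{Nyg}(\phi^\ast\mf{M}_3)$ follows from surjectivity modulo $E$ by descending induction on $r$, while left- and middle-exactness are formal from the $E$-torsion-freeness of $\mf{M}_3$. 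Your remaining ingredients---exactness of the $\bb{D}_\crys$-filtrations from strong divisibility, i.e., from the Faltings morphism being an isomorphism together with \cite[Theorem 2.1 (3)]{Faltings89}---do coincide with the paper's core mechanism, so once you insert the lifting step your argument becomes essentially the paper's proof; without it, the reflection direction is incomplete.
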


\begin{comment}
We finally observe that our study of $\bb{D}_\mr{crys}$ gives rise to the following syntomic refinement of cohomological results obtained in \cite{Faltings89}.

\begin{propi}[{see Proposition \ref{prop: Gauge coh}}]
    Let $f\colon \ms{X}\to\Spec(W)$ be smooth and proper. Then, for an object $\mc{V}$ of $\cat{Vect}_{[0,a]}(\widehat{\ms X}^\mr{syn})$ and $b$ in $\mbb N$ with $a+b< p-2$, there is a canonical isomorphism
    \begin{equation*}
         H^b_\et\left(\ms{X}_{\ov{K}},T_\et(\mc{V})^\alg/p^n\right)\isomto T_\et\left(R^bf_\ast\mc{V}/p^n\right),
    \end{equation*}
of Galois representations, for any $n$ in $\bb{N}\cup\{\infty\}$.
\end{propi}

\medskip
\end{comment}

\paragraph*{Applications to $p$-divisible groups}
We finally describe the ways in which the functor $\bb{D}_\mr{crys}$ can be used to clarify the relationship between $p$-divisible groups and various types of Dieudonn\'e theory when $p>2$. To this end, let us fix $\mf{X}$ to be a base formal $W$-scheme (see \cite[\S1.1.5]{IKY1}).

Denote by $\cat{BT}_p(\mf{X})$ the category of $p$-divisible groups over $\mf{X}$. There are then several approaches to use $p$-adic Hodge theory to classify objects of $\cat{BT}_p(\mf{X})$ via various `Dieudonn\'e theories':
\begin{enumerate}[leftmargin=.3in]
\item the \emph{filtered crystalline Dieudonn\'e functor}  of Grothendieck--Messing
\begin{equation*}
\bb{D}\colon \cat{BT}_p(\mf{X})\to \cat{VectF}^{\varphi,\mr{sd}}_{[0,1]}(\mf{X}_\crys),
\end{equation*}
which is an (anti-)equivalence by results of Grothendieck, Messing, and de Jong (see \cite{deJongCrystalline}),
 \item the \emph{Breuil--Kisin--Kim Dieudonn\'e functor} (see \cite{KimBK})
 \begin{equation*}
 \mf{M}\colon \cat{BT}_p(\mf{X})\to \cat{Vect}^\varphi_{[0,1]}(\mf{S}_R,\nabla^0)
 \end{equation*}
 when $\mf{X}=\Spf(R)$ for a formally framed $W$-algebra $R$ (see \cite[\S1.1.5]{IKY1} for the definition of framed algebra, and Definition \ref{defn:Kisin-modules} for the definition of the target category), which is an (anti-)equivalence by \cite{KimBK},

\item the \emph{prismatic Dieudonn\'e functor} of Ansch\"{u}tz--Le Bras
\begin{equation*}
\mc{M}_\smallprism\colon \cat{BT}_p(\mf{X})\isomto \cat{Vect}_{[0,1]}^\varphi(\mf{X}_\smallprism),
\end{equation*}
which is an (anti-)equivalence by \cite{AnschutzLeBrasDD} (cf.\@ Theorem \ref{thm:ALB-equiv}).
\end{enumerate}

The relationship between these various Dieudonn\'e theories is important to establish the connection between historical results and modern methods (e.g., as is needed in the study of Shimura varieties in \cite{IKY2}). That said, this relationship is far from clear given the different ways in which these functors are constructed. 

In \cite[Theorem 4.44]{AnschutzLeBrasDD}, it is demonstrated that there is a canonical identification
\begin{equation*}
\underline{\bb{D}}(H)\simeq \underline{\bb{D}}_\mr{crys}(\mc{M}_\smallprism(H)),
\end{equation*}
in $\cat{Vect}^\varphi(\mf{X}_\mr{crys})$, for an object $H$ of $\cat{BT}_p(\mf{X})$. Here, recall, $\underline{\bb{D}}_\mr{crys}(\mc{M}_\smallprism(H))$ is the underlying $F$-crystal of $\bb{D}_\mr{crys}(\mc{M}_\smallprism(H))$. We further denote by $\underline{\bb{D}}(H)$ the underlying $F$-crystal of $\bb{D}(H)$. In particular, this identification ignores filtrations, which is equivalent (by Grothendieck--Messing theory) to only remembering the special fiber $H_k$ of the $p$-divisible group $H$. Thus, it is desirable to upgrade this to a filtered identification.

\begin{thmi}[{see Theorem \ref{thm:ALB-dJ-comparison}}]\label{thmi:ALB-dJ-comparison} There is an identification of functors $\cat{BT}_p(\mf{X})\to \cat{VectF}^{\varphi,\mr{sd}}_{[0,1]}(\mf{X}_\mr{crys})$
\begin{equation*}
\bb{D}\simeq \bb{D}_\mr{crys}\circ \mc{M}_\smallprism.
\end{equation*}
\end{thmi}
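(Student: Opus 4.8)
The plan is to reduce the filtered comparison to the \'etale realization, where both sides are pinned down by the Tate module of $H$, and then to invoke the full faithfulness of Faltings's functor $T_\mr{crys}$ to descend the resulting isomorphism back to the category of filtered $F$-crystals. Throughout I would use that, since $p>2$, the range $[0,1]$ lies in the Fontaine--Laffaille range $[0,p-2]$, so that every object in sight is strongly divisible and lies in the categories of Theorem \ref{thm:big-equiv-diagram}; in particular $\mc{M}_\smallprism(H)$ is lff and $\bb{D}_\crys(\mc{M}_\smallprism(H))$ lies in $\cat{VectF}^{\varphi,\mr{div}}_{[0,1]}(\mf{X}_\crys)$ by Proposition \ref{prop:filtered-equiv}.

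First I would record that both functors $\bb{D}$ and $\bb{D}_\crys\circ\mc{M}_\smallprism$ land in $\cat{VectF}^{\varphi,\mr{div}}_{[0,1]}(\mf{X}_\crys)$, on which Faltings's functor
\[
T_\crys\colon \cat{VectF}^{\varphi,\mr{div}}_{[0,p-2]}(\mf{X}_\crys)\to \cat{Loc}^\crys_{\bb{Z}_p,[0,p-2]}(X)
\]
is fully faithful by \cite{Faltings89}. Since a fully faithful functor reflects natural isomorphisms, it then suffices to produce a natural isomorphism $T_\crys\circ\bb{D}\simeq T_\crys\circ\bb{D}_\crys\circ\mc{M}_\smallprism$ of functors $\cat{BT}_p(\mf{X})\to\cat{Loc}^\crys_{\bb{Z}_p}(X)$: it lifts uniquely to the desired natural isomorphism $\bb{D}\simeq\bb{D}_\crys\circ\mc{M}_\smallprism$.

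Next I would evaluate each composite. On one hand, the commutativity of the diagram in Theorem \ref{thm:big-equiv-diagram} gives $T_\crys\circ\bb{D}_\crys\simeq T_\et$ on lff prismatic $F$-crystals, whence $T_\crys\circ\bb{D}_\crys\circ\mc{M}_\smallprism\simeq T_\et\circ\mc{M}_\smallprism$, which by the \'etale compatibility of prismatic Dieudonn\'e theory \cite{AnschutzLeBrasDD} is canonically the Tate module $T_p(H)$ (with the duality and twist normalization of that reference). On the other hand, $T_\crys\circ\bb{D}$ is, by the classical compatibility of crystalline Dieudonn\'e theory with the \'etale Tate module (via \cite{deJongCrystalline} and Fontaine--Laffaille theory), also canonically $T_p(H)$. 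Matching these two identifications of the Tate module --- keeping careful track of duals and Tate twists so that the two normalizations coincide --- produces the required natural isomorphism $T_\crys\circ\bb{D}\simeq T_\crys\circ\bb{D}_\crys\circ\mc{M}_\smallprism$.

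Finally I would check that the resulting isomorphism $\bb{D}\simeq\bb{D}_\crys\circ\mc{M}_\smallprism$ refines, on underlying $F$-crystals, the Ansch\"{u}tz--Le Bras identification $\underline{\bb{D}}(H)\simeq\underline{\bb{D}}_\crys(\mc{M}_\smallprism(H))$ quoted above; this is a compatibility with the forgetful functor to $\cat{Vect}^\varphi(\mf{X}_\crys)$ and follows by tracing through the same \'etale realizations. The main obstacle is exactly the middle step: although both $T_\crys\circ\bb{D}(H)$ and $T_\et\circ\mc{M}_\smallprism(H)$ are abstractly $T_p(H)$, the two compatibilities are established by genuinely different mechanisms (Grothendieck--Messing deformation theory versus the prismatic comparison of \cite{AnschutzLeBrasDD}), and reconciling them into a single natural transformation with consistent normalization is where the real work lies. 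An alternative, more computational route would instead transport the Nygaard-induced Hodge filtration on the de Rham realization $\bb{D}_\dR(\mc{M}_\smallprism(H))$ through the crystalline--de Rham comparison of Theorem \ref{thm:crys-dR-comparison} and compare it directly with the Grothendieck--Messing Hodge filtration, working Zariski-locally through the Breuil--Kisin prism and Kim's functor \cite{KimBK}; I expect the \'etale route above to be cleaner.
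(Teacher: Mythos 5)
Your étale-descent strategy is sound in spirit, but it does not prove the theorem in the generality in which it is stated, and this is a genuine gap rather than a cosmetic one. Both pillars of your argument --- the commutative diagram of Theorem \ref{thm:big-equiv-diagram} (giving $T_\crys\circ\bb{D}_\crys\simeq T_\et$ on lff objects) and the full faithfulness of Faltings's $T_\crys$ from \cite{Faltings89} --- are only available when $\mf{X}$ is \emph{smooth} over $W$, since Faltings's $\cat{MF}^\nabla$-theory is a theory for small algebras. The theorem, however, is asserted for an arbitrary base formal $W$-scheme $\mf{X}$ (in \S\ref{ss:relationship-to-DD-theory} the ring $R$ is a formally framed base $W$-algebra), a class which includes non-smooth examples such as $W\llbracket t_1,\ldots,t_d\rrbracket$ and completed local rings. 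The paper itself records your exact argument as a remark immediately after Theorem \ref{thm:ALB-dJ-comparison}: when $R$ is small, the statement follows from Theorem \ref{thm:big-equiv-diagram}, since $T^\ast_\crys(\bb{D}_\crys(\mc{M}_\smallprism(H)))\simeq T_p(H)$ by Proposition \ref{prop:D crys and FL} together with \cite[Proposition 3.35]{DLMS}, while $T^\ast_\crys(\bb{D}(H))\simeq T_p(H)$ by \cite[Corollary 5.3 and \S5.4]{KimBK}. So in the smooth case your proof is correct (and these are the citations that pin down your ``normalization'' worry); but the extra generality is not decorative: it is precisely what is used downstream to prove Proposition \ref{prop:Kim-prismatic-comp} for arbitrary formally framed $R$ and Corollary \ref{cor:F-gauges-BK-modules} for power series rings.

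The paper's actual proof bypasses Fontaine--Laffaille theory entirely and is closer to the ``computational route'' you sketch and set aside at the end. Starting from the Ansch\"{u}tz--Le Bras identification $\underline{\bb{D}}_\crys\circ\mc{M}_\smallprism\simeq\underline{\bb{D}}$ of underlying $F$-crystals, it remains only to match $\Fil^1_{\bb{D}_\crys}$ with the Hodge filtration $\Fil^1_{H,\mr{Hodge}}$. This is checked after base change along the faithfully flat cover $R\to\wt{R}$ with $\wt{R}$ perfectoid: by the crystal property and Lemma \ref{lem:[0,1]-fil} both filtrations are determined by their evaluations on $\Acrys(\wt{R})\twoheadrightarrow\wt{R}$, where the prismatic side becomes $\phi^\ast\mc{M}_\smallprism(H)(\Ainf(\wt{R}),(\xi))\otimes_{(\Ainf(\wt{R}),\Fil^\bullet_\xi)}(\Acrys(\wt{R}),\Fil^\bullet_\mr{PD})$ by Proposition \ref{prop:lff-FL-range} and Remark \ref{rem:filtered-tensor-product-computation}, and the crystalline side is identified with the same filtered module via Lau's functor $M^\mr{Lau}$ and Proposition \ref{prop:ALB-Lau-SW} (i.e., via $\Phi^\mr{cris}_{\wt{R}}$ and $\lambda^\ast$ of \cite{LauDieu}). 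Note also that your proposed alternative ``through the Breuil--Kisin prism and Kim's functor \cite{KimBK}'' would be circular within the paper's architecture: the comparison of $\mc{M}_\smallprism$ with Kim's $\mf{M}$ (Proposition \ref{prop:Kim-prismatic-comp}) is itself deduced from this very theorem. The perfectoid/Lau route is what simultaneously avoids the smoothness hypothesis and this circularity; to salvage your approach one would instead need a fully faithful $T_\crys$ over general base $W$-algebras, which is known only in special cases (cf.\@ \cite{LiuMoonPatel}, \cite{Hokaj}).
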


With respect to the Breuil--Kisin--Kim Dieudonn\'e functor, when $R=W$, it is shown in \cite[Proposition 5.18]{AnschutzLeBrasDD} that there is an identification of functors $\cat{BT}_p(R)\to \cat{Vect}^\varphi_{[0,1]}(\mf{S}_R)$
\begin{equation*}
\mf{M}\simeq \mr{ev}_{\mf{S}_R}\circ \mc{M}_\smallprism,
\end{equation*}
where $\mr{ev}_{\mf{S}_R}$ is the functor given by evaluation on the Breuil--Kisin prism $(\mf{S}_R,(E))$. But for applications, it is desirable to have versions of this identification for arbitrary $R$, which then necessitates the consideration of connections.

Using Theorem \ref{thmi:ALB-dJ-comparison}, and an intermediate study of Breuil's Dieudonn\'e functor (see \S\ref{ss:Breuil-DD-functor}), we are able to make such an identification.

\begin{propi}[{see Proposition \ref{prop:Kim-prismatic-comp}}] For any formally framed $W$-algebra $R$, there is a canonical identification of functors $\cat{BT}_p(R)\to\cat{Vect}^\varphi_{[0,1]}(\mf{S}_R,\nabla^0)$
\begin{equation*}
\mf{M}\simeq \mr{ev}_{\mf{S}_R}^\mr{K}\circ \mc{M}_\smallprism . 
\end{equation*}
\end{propi}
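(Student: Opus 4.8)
The plan is to compare the two functors in two stages: first match their underlying Breuil--Kisin modules, and then match the connections, the latter being the genuinely new content. Both $\mf{M}$ and $\mr{ev}_{\mf{S}_R}^\mr{K}\circ\mc{M}_\smallprism$ take values in $\cat{Vect}^\varphi_{[0,1]}(\mf{S}_R,\nabla^0)$, and upon forgetting the connection $\nabla^0$ both should recover the Breuil--Kisin module $\mr{ev}_{\mf{S}_R}(\mc{M}_\smallprism(H))$ obtained by evaluating the prismatic Dieudonn\'e crystal on the Breuil--Kisin prism $(\mf{S}_R,(E))$. For Kim's functor this is the relative Breuil--Kisin classification of \cite{KimBK} together with the Ansch\"{u}tz--Le Bras comparison \cite[Proposition 5.18]{AnschutzLeBrasDD}, which I would promote from $R=W$ to an arbitrary formally framed $R$ using compatibility of $\mc{M}_\smallprism$ and $\mf{M}$ with base change along the framing. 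Granting this, the proposition reduces to the assertion that the connection $\nabla^0$ attached by Kim agrees with the connection attached via the prismatic crystal structure encoded in $\mr{ev}_{\mf{S}_R}^\mr{K}$.

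To compare the connections I would route both through Breuil's Dieudonn\'e functor of \S\ref{ss:Breuil-DD-functor}. On the one hand, Kim's connection $\nabla^0$ on $\mf{M}(H)$ is by construction the connection whose base change along $\mf{S}_R\to S_R$ to the Breuil ring coincides with the canonical connection on the Breuil module associated to $H$; this is precisely the mechanism by which one descends a connection from the Breuil ring to $\mf{S}_R$ and is what pins down the target category $\cat{Vect}^\varphi_{[0,1]}(\mf{S}_R,\nabla^0)$. On the other hand, the connection recorded by $\mr{ev}_{\mf{S}_R}^\mr{K}$ is manufactured from the stratification of the prismatic crystal $\mc{M}_\smallprism(H)$ relative to the Breuil--Kisin prism, and by the crystalline--de Rham comparison realized through the Breuil and Breuil--Kisin prisms (Construction \ref{construction: crys dR isom with Breuil prism} and Proposition \ref{prop:crys-dR-comparison-using-prisms}) its base change to the Breuil ring is the connection carried by the filtered $F$-crystal $\bb{D}_\crys(\mc{M}_\smallprism(H))$ viewed as a crystal on $\mf{X}_\crys$.

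It therefore suffices to show that these two connections over the Breuil ring agree, and here Theorem \ref{thmi:ALB-dJ-comparison} does the decisive work: it supplies a canonical identification $\bb{D}(H)\simeq\bb{D}_\crys(\mc{M}_\smallprism(H))$ in $\cat{VectF}^{\varphi,\mr{div}}_{[0,1]}(\mf{X}_\crys)$, hence in particular of the underlying crystalline connections. Since the Breuil module of $H$ together with its connection is recovered from the filtered crystalline Dieudonn\'e module $\bb{D}(H)$ by evaluation on the Breuil prism (the comparison between Grothendieck--Messing/crystalline Dieudonn\'e theory and Breuil's theory reviewed in \S\ref{ss:Breuil-DD-functor}), the crystalline connection appearing on the prismatic side is exactly the Breuil connection underlying Kim's $\nabla^0$. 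Chasing these identifications then yields the desired canonical isomorphism $\mf{M}\simeq \mr{ev}_{\mf{S}_R}^\mr{K}\circ\mc{M}_\smallprism$, compatibly with Frobenius and connection.

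The main obstacle I expect is the bookkeeping of the several connections in play: one must verify that the connection extracted from the prismatic stratification by $\mr{ev}_{\mf{S}_R}^\mr{K}$ genuinely base-changes to the crystalline connection of $\bb{D}_\crys(\mc{M}_\smallprism(H))$, i.e. that the crystalline--de Rham comparison through the Breuil and Breuil--Kisin prisms is horizontal for the relevant connections along the framing coordinates. This requires tracking the framing variables carefully and checking that the prismatic structure maps producing $\nabla^0$ are compatible with those defining the crystalline connection. Note that the $R=W$ case of \cite[Proposition 5.18]{AnschutzLeBrasDD} carries no connection and so offers no guidance at this point, which is precisely why the intermediate passage through Breuil's functor, anchored by Theorem \ref{thmi:ALB-dJ-comparison}, is essential.
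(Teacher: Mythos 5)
Your core argument is the paper's argument: the paper also reduces the statement, via the fully faithful embedding \eqref{eq:BK-to-Breuil-functor} of \cite[Lemma 6.5]{KimBK}, to an identification in the Breuil-module category $\cat{VectF}^\varphi_{[0,1]}(S_R,\nabla^0)$, and obtains that identification by combining Remark \ref{rem:filtered-tensor-product-computation} (your ``base change of the Nygaard-filtered evaluation to the Breuil ring computes $\bb{D}_\crys(\mc{M}_\smallprism(H))(S_R,(p))$''), Theorem \ref{thm:ALB-dJ-comparison}, and the definition of $\mc{M}^\mr{Br}(H)$. So your stages two and three are exactly right, and your instinct that Theorem \ref{thm:ALB-dJ-comparison} carries the decisive weight is correct.

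However, your stage one is both wrong as justified and unnecessary. You cannot promote \cite[Proposition 5.18]{AnschutzLeBrasDD} from $R=W$ to a general formally framed $R$ ``by compatibility with base change along the framing'': base-change compatibility transports an identification over $R$ to $R$-algebras, whereas here you would need to go the other way, and a general $R$ (together with its $p$-divisible groups) is not a base change of $W$; the framing is a formally \'etale map from a Tate algebra, not a map from $W$. This is precisely why the paper insists that the relative statement ``necessitates the consideration of connections'' and offers no shortcut through the $R=W$ case. The fix is to delete stage one entirely: since $\mf{M}(H)$ is \emph{defined} as the unique preimage of $\mc{M}^\mr{Br}(H)$ under the fully faithful functor \eqref{eq:BK-to-Breuil-functor}, the identification of underlying Breuil--Kisin modules (together with Frobenii and connections) follows at once from your Breuil-level comparison; no independent module comparison is needed. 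Relatedly, the ``main obstacle'' you flag at the end --- horizontality of the crystalline--de Rham comparison along the framing coordinates --- is vacuous by construction: the connection in $\mr{ev}^\mr{K}_{\mf{S}_R}$ is \emph{defined} as the pullback of $\nabla_{\underline{\bb{D}}_\crys(\mc{M}_\smallprism(H))}$ through the isomorphism of Proposition \ref{prop:crystalline-realization-Kisin-comparison}, so there is no separate horizontality check; all the content sits in the filtered identifications of Remark \ref{rem:filtered-tensor-product-computation} and Theorem \ref{thm:ALB-dJ-comparison}.
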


Here $\mr{ev}_{\mf{S}_R}^\mr{K}(\mc{M}_\smallprism(H))$ is the evaluation of $\mc{M}_\smallprism(H)$ on $(\mf{S}_R,(E))$ equipped with the connection on $\phi^\ast\mc{M}_\smallprism(\mf{S}_R,(E))/E$ coming from the identification of this with $\bb{D}_\mr{crys}(H)$ (via Theorem \ref{thmi:ALB-dJ-comparison}).

As an application of this result and work of Kim, we obtain the following strengthening of a result of Ito (cf.\@ \cite[Theorem 6.1.3]{Ito1}).

\begin{cori}[{Corollary \ref{cor:F-gauges-BK-modules}}] Let $R$ be a formally framed $W$-algebra. Then, there are equivalences of categories
\begin{equation*}
\begin{tikzcd}
	{\cat{Vect}_{[0,1]}(R^\mr{syn})} & {\cat{Vect}_{[0,1]}^\varphi(R_\smallprism)} & {\cat{Vect}^{\varphi}_{[0,1]}(\mf{S}_R,\nabla^0).}
	\arrow["{\mr{R}_\mf{X}}", from=1-1, to=1-2]
	\arrow["\sim"', from=1-1, to=1-2]
	\arrow["\sim"', from=1-2, to=1-3]
	\arrow["{\mr{ev}^{\mr{K}}_{\mf{S}_R}}", from=1-2, to=1-3]
\end{tikzcd}
\end{equation*}
If $R=W\ll t_1,\ldots,t_d\rr$ for some $d\geqslant 0$, then there are equivalences
\begin{equation*}
\begin{tikzcd}
	{\cat{Vect}_{[0,1]}(R^\mr{syn})} & {\cat{Vect}_{[0,1]}^\varphi(R_\smallprism)} & {\cat{Vect}^{\varphi}_{[0,1]}(\mf{S}_R,(E)).}
	\arrow["{\mr{R}_\mf{X}}", from=1-1, to=1-2]
	\arrow["\sim"', from=1-1, to=1-2]
	\arrow["\sim"', from=1-2, to=1-3]
	\arrow["{\mr{ev}_{\mf{S}_R}}", from=1-2, to=1-3]
\end{tikzcd}
\end{equation*}
\end{cori}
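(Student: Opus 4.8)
The plan is to build both diagrams by assembling three (anti-)equivalences that are already available---the forgetful functor $\mr{R}_\mf{X}$, the prismatic Dieudonn\'e functor $\mc{M}_\smallprism$ of Ansch\"utz--Le Bras, and the Breuil--Kisin--Kim functor $\mf{M}$---and gluing them using the compatibility recorded in Proposition \ref{prop:Kim-prismatic-comp} together with Proposition \ref{prop:lff-FL-range}. Throughout we use the standing hypothesis $p>2$, so that $[0,1]\subseteq [0,p-2]$ lands in the Fontaine--Laffaille range.

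For the arrow $\mr{R}_\mf{X}\colon \cat{Vect}_{[0,1]}(R^\mr{syn})\to \cat{Vect}^\varphi_{[0,1]}(R_\smallprism)$, recall from \cite{GuoLi} and \cite{IKY2} that $\mr{R}_\mf{X}$ is an equivalence of $\cat{Vect}(\mf{X}^\mr{syn})$ onto the lff subcategory $\cat{Vect}^{\varphi,\mr{lff}}(\mf{X}_\smallprism)$; as it preserves Hodge--Tate weights, it restricts to an equivalence onto $\cat{Vect}^{\varphi,\mr{lff}}_{[0,1]}(\mf{X}_\smallprism)$. It then suffices to know that every object of $\cat{Vect}^\varphi_{[0,1]}(R_\smallprism)$ is automatically lff, and this is precisely Proposition \ref{prop:lff-FL-range} applied to the range $[0,1]$. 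Hence the two subcategories coincide and the first arrow is an equivalence.

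For the arrow $\mr{ev}^\mr{K}_{\mf{S}_R}$ I would run a two-out-of-three argument. By \cite{AnschutzLeBrasDD} the prismatic Dieudonn\'e functor $\mc{M}_\smallprism\colon \cat{BT}_p(R)\isomto \cat{Vect}^\varphi_{[0,1]}(R_\smallprism)$ is an (anti-)equivalence, and by \cite{KimBK} so is $\mf{M}\colon \cat{BT}_p(R)\isomto \cat{Vect}^\varphi_{[0,1]}(\mf{S}_R,\nabla^0)$. Proposition \ref{prop:Kim-prismatic-comp} supplies an identification $\mf{M}\simeq \mr{ev}^\mr{K}_{\mf{S}_R}\circ \mc{M}_\smallprism$, so $\mr{ev}^\mr{K}_{\mf{S}_R}$ is identified with the composite $\mf{M}\circ \mc{M}_\smallprism^{-1}$ of two contravariant equivalences and is therefore a (covariant) equivalence. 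Combined with the previous step this proves the first displayed chain for an arbitrary formally framed $R$.

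It remains to treat the case $R=W\ll t_1,\ldots,t_d\rr$, where I would show that forgetting the connection gives an equivalence $\cat{Vect}^\varphi_{[0,1]}(\mf{S}_R,\nabla^0)\isomto \cat{Vect}^\varphi_{[0,1]}(\mf{S}_R)$, so that $\mr{ev}_{\mf{S}_R}$ itself is an equivalence. The content here is that for a power-series ring the chosen framing furnishes honest coordinates, and a Breuil--Kisin module over $\mf{S}_R$ carries a unique topologically (quasi-)nilpotent connection compatible with its Frobenius. I expect this to be the main obstacle: it is the only place where the specific shape of $R$ (as opposed to a general framed algebra) intervenes, and the requisite existence and uniqueness of the connection must be imported from \cite{KimBK} (this is what upgrades, and recovers, the result of Ito cited as \cite[Theorem 6.1.3]{Ito1}) rather than deduced from the statements assembled above. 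With this in hand the second diagram follows from the first by forgetting connections.
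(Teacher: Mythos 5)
Your proof is correct and follows essentially the same route as the paper: the paper's own (one-line) proof is precisely the combination of \cite[Proposition 1.28]{IKY2} with Proposition \ref{prop:lff-FL-range} to identify $\cat{Vect}^{\varphi,\mr{lff}}_{[0,1]}(R_\smallprism)$ with $\cat{Vect}^{\varphi}_{[0,1]}(R_\smallprism)$ (handling $\mr{R}_\mf{X}$), the two-out-of-three argument via Theorem \ref{thm:ALB-equiv}, \cite[Corollary 6.7 and Corollary 10.4]{KimBK}, and Proposition \ref{prop:Kim-prismatic-comp} (handling $\mr{ev}^{\mr{K}}_{\mf{S}_R}$), and the power-series statement on connections imported from \cite{KimBK}. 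Your observation that this last step is the one place requiring a genuinely external input—existence and uniqueness of the topologically quasi-nilpotent connection over $W\ll t_1,\ldots,t_d\rr$, which recovers the results of Ansch\"{u}tz--Le Bras and Ito—matches the paper's citation of \cite[Corollary 10.4]{KimBK} exactly.
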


\medskip

\paragraph*{Acknowledgments} The authors would like to heartily thank Ahmed Abbes, Abhinandan,  Bhargav Bhatt,  Kazuhiro Ito, Wansu Kim, Arthur-C\'{e}sar Le Bras, Tong Liu, Kojiro Matsumoto, Koji Shimizu, Takeshi Tsuji, Vadim Vologodsky and Qixiang Wang for helpful discussions and comments. Part of this work was conducted during a visit to the Hausdorff Research Institute for Mathematics, funded by the Deutsche Forschungsgemeinschaft (DFG, German Research Foundation) under Germany's Excellence Strategy – EXC-2047/1 – 390685813. This work was supported by JSPS KAKENHI Grant Numbers 22KF0109, 22H00093 and 23K17650, the European Research Council (ERC) under the European Union’s Horizon 2020 research and innovation programme (grant agreement No. 851146), and funding through the Max Planck Institute for Mathematics in Bonn, Germany (report numbers MPIM-Bonn-2022, MPIM-Bonn-2023, MPIM-Bonn-2024). 

\medskip

\paragraph*{Notation and terminology}\label{notation-and-terminology}

Throughout this article we make use of the following notation and terminology, which is largely standard (or self-evident), and so the reader is encouraged to refer back only when necessary.

Fix the following (we refer the reader to  \cite[\S1.1.5]{IKY1} for undefined terminology):
\begin{multicols}{2}
\begin{itemize}[leftmargin=.1in,label=$\diamond$]
\item $k$ a perfect extension of $\bb{F}_p$,
\item $W\defeq W(k)$ and $K_0\defeq \mr{Frac}(W)$,
\item $K$ is a finite totally ramified extension of $K_0$, 
\item $\mc{O}_K$ the valuation ring of $K$,
\item $\pi$ a uniformizer of $\mc{O}_K$ and $k=\mc{O}_K/\pi$,
\item $e=[K:K_0]$,
\item $E\in W[u]$ the minimal polynomial for $\pi$,
\item $R$ a formally framed base $\mc{O}_K$-algebra,
\item $(\mf{S}_R,(E))$ the Breuil--Kisin prism,
\item $(S_R,(p))$ the Breuil prism,
\item $\bb{W}$ the $p$-typical Witt vector scheme,
\item $F\colon \bb{W}\to \bb{W}$ the Frobenius,
\item $V\colon \bb{W}\to\bb{W}$ the Verschiebung,
\item $[-]\colon \bb{A}^1\to \bb{W}$ the Teichm\"{u}ller lift,
\item $\mf{X}$ a base formal $\mc{O}_K$-scheme,
\item $X=\mf{X}_\eta$ the rigid generic fiber.
\end{itemize}
\end{multicols}

We refer the reader to \cite[\S2.3.1]{IKY1} for our notation and terminology concerning the crystalline site and (iso)crystals and $F$-(iso)crystals. 

We further refer the reader to \cite[\S1.1.3]{IKY1} for our notation and terminology concerning the quasi-syntomic and qrsp sites $\mf{Y}_\mr{qsyn}$ and $\mf{Y}_\mr{qrsp}$. 

We lastly refer the reader to \cite[\S1]{IKY2} for our notation and terminology concerning 
\begin{itemize}[leftmargin=.1in,label=$\diamond$]
\item the formal stacks $\mf{Y}^\smallprism$ (with its Frobenius $F_\mf{Y}\colon\mf{Y}^\smallprism\to\mf{Y}^\smallprism$), $\mf{Y}^\smallN$, and $\mf{Y}^\mr{syn}$ over $\mathbb{Z}_p$ and for the site $\mf{Y}_\smallprism$ (which we abbreviate to $S^\smallprism, S^\smallN$, $S^\mr{syn}$, and $S_\smallprism$ when $\mf{Y}=\Spf(S)$),
\item the categories $\cat{Vect}^\varphi(\mf{X}_\smallprism)$, $\cat{Vect}^{\varphi,\mr{lff}}(\mf{X}_\smallprism)$, $\cat{Vect}^{\varphi,\mr{an}}(\mf{X}_\smallprism)$, and $\cat{Vect}(\mf{X}^\mr{syn})$, of prismatic $F$-crystals, locally filtered free (lff) prismatic $F$-crystals, analytic prismatic $F$-crystals and prismatic $F$-gauges, respectively.
\item the \'etale realization functor $T_\et\colon \cat{Vect}^{\an,\varphi}(\mf{X}_\smallprism)\to\cat{Loc}_{\bb{Z}_p}^\mr{crys}(X)$ and the induced \'etale realization functors on  $\cat{Vect}^\varphi(\mf{X}_\smallprism)$, $\cat{Vect}^{\varphi,\mr{lff}}(\mf{X}_\smallprism)$, and $\cat{Vect}(\mf{X}^\mr{syn})$,
\item $\mr{R}_\mf{X}\colon \cat{Vect}(\mf{X}^\mr{syn})\isomto \cat{Vect}^{\varphi,\mr{lff}}(\mf{X}_\smallprism)$ the forgetful functor,
\item notation for filtered rings and modules and, in particular, the Rees algebra $\mr{Rees}(A,\Fil^\bullet)$, the Rees stack $\mc{R}(A,\Fil^\bullet_A)$, and the completed Rees stack $\wh{\mc{R}}(A,\Fil^\bullet)$ of a filtered ring. If $A$ is clear from context, we will often omit it from the notation.
\end{itemize} 
Finally, we freely use the notion of quasi-ideals $d\colon I\to A$ (which we sometimes write as $[I\to A]$ for clarity) and their quotients $\mr{Cone}(d)$ as in \cite{DrinfeldRingGroupoids}.

\section{The crystalline-de Rham comparison} The definition of $\bb{D}_\mr{crys}$ relies on a comparison isomorphism between the crystalline and de Rham realizations of a prismatic $F$-crystal that we formulate and prove in this subsection. Throughout we use notation and terminology from \hyperref[notation-and-terminology]{Notation and terminology} without comment.

\subsection{The crystalline realization functor} We now expand on the notion of the \emph{crystalline realization functor} $\underline{\bb{D}}_\crys\colon \cat{Vect}^\varphi(\mf{X}_\smallprism)\to \cat{Vect}^\varphi(\mf{X}_\crys)$ as discussed in \cite[Example 4.12]{BhattScholzeCrystals}.

\subsubsection{Prismatic \texorpdfstring{$F$}{F}-crystals on quasi-syntomic \texorpdfstring{$\bb{F}_p$}{Fp}-schemes}\label{par:crys-syn-equiv} Fix $Z$ to be a quasi-syntomic $k$-scheme. As in \cite[Example 4.7]{BhattScholzeCrystals}, there is an equivalence of categories
\begin{equation*}
    (-)^\crys\colon \cat{Vect}(Z_\smallprism,\mc{O}_\smallprism)\isomto \cat{Vect}(Z_\crys),\qquad \mc{F}\mapsto \mc{F}^\crys,
\end{equation*} 
defined as follows. 
Recall (e.g., see \cite[Example 1.8 and Example 2.16]{IKY1}) that for a qrsp $k$-algebra $R$, the categories $R_\smallprism$ and $(R/W)_\crys$ have initial objects $(\Acrys(R),(p),\wt{\mr{nat}}.)$ and $\Acrys(R)\twoheadrightarrow R$, respectively. Thus, by evaluation, we have functorial equivalences 
\be
\cat{Vect}(R_\smallprism,\mc O_\smallprism)\isomto \cat{Vect}(\Acrys(R))\isomfrom \cat{Vect}(R_\crys).
\ee
Passing to the limit, and using \cite[Proposition 1.31 and Proposition 2.17]{IKY1}, we deduce the existence of a diagram of equivalences 
\begin{equation*}
     \cat{Vect}(Z_\smallprism,\mc{O}_\smallprism)\isomto \twolim_{\scriptscriptstyle R\in Z_\qrsp}\cat{Vect}(R_\smallprism,\mc O_\smallprism)\isomto\twolim_{\scriptscriptstyle R\in Z_\qrsp}\cat{Vect}(R_\crys)\isomfrom \cat{Vect}(Z_\crys).
\end{equation*}
Define $(-)^\crys$ to be the obvious equivalence derived from this diagram.

For an object $\mc{V}$ of $\cat{Vect}(Z_\smallprism,\mc{O}_\smallprism)$, one has that $(\phi^\ast\mc{V})^\crys$ is naturally identified with $\phi^\ast(\mc{V}^\crys)$. Indeed, by construction it suffices to observe that for a qrsp $k$-algebra $R$ over $Z$
\begin{equation*}
\phi^\ast(\mc{V}^\crys)(\Acrys(R)\twoheadrightarrow R)=\phi_{R}^\ast\mc{V}(\Acrys(R)\twoheadrightarrow R)=\phi_R^\ast\mc{V}(\Acrys(R),(p))=(\phi^\ast\mc{V})(\Acrys(R),(p)),
\end{equation*}
where the first equality follows from \cite[Remark 2.19]{IKY1} and the second and third by definition. From this observation, one may upgrade $(-)^\crys$ to an equivalence
\begin{equation*}
    (-)^\crys\colon \cat{Vect}^\varphi(Z_\smallprism)\isomto \cat{Vect}^\varphi(Z_\crys),
\end{equation*}
which is functorial in $Z$ in the obvious way.

\subsubsection{The crystalline realization functor} From the above discussion, we obtain a natural functor $(-)^\crys\colon \cat{Vect}(\mf{X}_\smallprism)\to\cat{Vect}((\mf{X}_k)_\crys)$ given by sending $\mc{E}$ to $\mc{E}^\crys\defeq (\mc{E}|_{(\mf{X}_k)_\smallprism})^\crys$. The crystal $\mc{E}^\crys$ enjoys a concrete description when evaluated on base $\mc{O}_K$-algebras.

For the notion of formal framing (usually denoted $t$, not $\framew$, in op.\@ cit.\@) and the notations $\phi_\framew$ and $R_0^{(\phi_\framew)}$ used in the following, we refer the reader to \cite[\S 1.1.5]{IKY1}.

\begin{prop}\label{prop: pris crys in char p} Let $R=R_0\otimes_W \mc{O}_K$ be a base $\mc{O}_K$-algebra and $\framew$ a formal framing. For $\mc{F}$ in $\cat{Vect}((R_k)_\smallprism)$ there is a canonical isomorphism
\begin{equation*}
    \vartheta_\framew=\vartheta_{R,\framew}\colon(\phi^*\mc F)(R_0^{(\phi_\framew)},(p))\isomto \mc F^\crys(R_0). 
\end{equation*} 
If $\mc{F}$ carries a Frobenius structure, then $\vartheta_\framew$ is Frobenius-equivariant.
\end{prop}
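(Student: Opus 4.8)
The plan is to reduce the construction of $\vartheta_\framew$ to the case of a qrsp cover, where everything is computed by evaluation on initial objects, and then descend. Recall that $\mc{F}^\crys$ was defined by the chain of equivalences $\cat{Vect}((R_k)_\smallprism)\isomto \cat{Vect}((R_k)_\crys)$ assembled from the qrsp-local equivalences $\cat{Vect}(A_\smallprism,\mc{O}_\smallprism)\isomfrom\cat{Vect}(A_\crys)$ via evaluation on $(\Acrys(A),(p))$ for qrsp $k$-algebras $A$ over $R_k$. So the key point is to produce, \emph{functorially} in such $A$, an isomorphism between the value of $\phi^\ast\mc{F}$ on the Breuil-type prism $(R_0^{(\phi_\framew)},(p))$ and the value of $\mc{F}^\crys$ on $R_0$, and then check that these glue.

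\textbf{Main steps.} First I would recall the role of the formal framing: the framing $\framew$ produces a specific Frobenius lift $\phi_\framew$ on $R_0$ and hence a distinguished prism $(R_0^{(\phi_\framew)},(p))$ refining the crystalline data, compatible with the Breuil prism $(S_R,(p))$ from the notation table. The heart of the matter is that $R_0^{(\phi_\framew)}$, viewed with the PD-structure coming from $(p)$, is simultaneously (i) a prism in $(R_k)_\smallprism$ and (ii) a PD-thickening $R_0^{(\phi_\framew)}\twoheadrightarrow R_k$ in $(R_k)_\crys$; this is precisely the mechanism that lets one compare $\phi^\ast\mc{F}$ (a prismatic crystal, hence evaluated on prisms) with $\mc{F}^\crys$ (a crystalline crystal, hence evaluated on PD-thickenings). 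Concretely, I would: (1) fix a functorial qrsp cover $R_k\to A$ and identify $\Acrys(A)$ as the relevant initial object; (2) use the Frobenius-twist bookkeeping established just before the proposition — the identity $(\phi^\ast\mc{V})^\crys\simeq\phi^\ast(\mc{V}^\crys)$ and the computation $(\phi^\ast\mc{V})(\Acrys(A),(p))=\phi^\ast_A\mc{V}(\Acrys(A),(p))$ — to rewrite the left-hand side in terms of $\mc{F}^\crys$; (3) map $(R_0^{(\phi_\framew)},(p))\to(\Acrys(A),(p))$ and check this map is compatible with the two crystal structures, so that crystal-evaluation gives the desired base-change isomorphism $(\phi^\ast\mc{F})(R_0^{(\phi_\framew)},(p))\isomto \mc{F}^\crys(R_0)$ after faithfully flat descent along the cover.

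\textbf{Frobenius-equivariance and descent.} To get the isomorphism $\vartheta_\framew$ itself (not just after passing to a cover), I would appeal to the fact that both sides are vector bundles on $\Spf(R_0)$ and the comparison is defined by a map of prisms that is canonical, hence satisfies the cocycle condition over the Čech nerve of the qrsp cover; fpqc/quasi-syntomic descent for vector bundles then produces $\vartheta_\framew$ over $R_0$ and guarantees independence of the chosen cover. The Frobenius-compatibility statement should then be essentially formal: when $\mc{F}$ carries a Frobenius structure $\varphi_\mc{F}$, the induced isomorphism $\phi^\ast\mc{F}\isomto\mc{F}$ is compatible with the prism map $(R_0^{(\phi_\framew)},(p))\to(\Acrys(A),(p))$ because $\phi_\framew$ is by construction a lift of the $A$-Frobenius, so $\vartheta_\framew$ intertwines the two Frobenius structures; I would verify this on the qrsp cover, where it reduces to the identity $(\phi^\ast\mc{V})^\crys\simeq\phi^\ast(\mc{V}^\crys)$ already recorded, and then descend.

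\textbf{Expected main obstacle.} The genuinely delicate step is (3): checking that the structure map $R_0^{(\phi_\framew)}\to\Acrys(A)$ really is a \emph{morphism of prisms} compatible with the two distinct crystal structures simultaneously — i.e., that the Frobenius-twisted prismatic structure on the source matches the PD-thickening structure used to define $\mc{F}^\crys$. This is where the precise role of the formal framing $\framew$ and the twist $\phi_\framew$ enters, and where one must be careful that the canonical isomorphism does not secretly depend on $\framew$ except through the data explicitly recorded. I expect the bookkeeping of Frobenius twists — reconciling the prismatic Frobenius $\phi$, the framing-Frobenius $\phi_\framew$, and the $A$-Frobenius — to be the part requiring the most care, though conceptually it is forced by the compatibilities set up in \cite[\S1.1.5, \S2.3.1]{IKY1}.
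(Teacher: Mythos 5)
Your proposal is correct and takes essentially the same route as the paper: the paper likewise chooses, for each qrsp $S$ over $R$, $p$-power roots of the framing elements to get maps $\beta_{\framew^\flat}\colon R_0\to \Acrys(S)$ that are simultaneously morphisms $(R_0\twoheadrightarrow R_k)\to(\Acrys(S)\twoheadrightarrow S_k)$ in $(R_k)_\crys$ and $(R_0,(p),F_{R_k}\circ q)\to(\Acrys(S),(p),\wt{\mr{nat}}.)$ in $R_\smallprism$ (exactly the two-structures compatibility you flag as the delicate point), then applies the crystal property to obtain base-change isomorphisms compatible in $S$, and derives Frobenius equivariance from $(\phi^\ast\mc{V})^\crys\simeq\phi^\ast(\mc{V}^\crys)$ just as you propose. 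The only difference is in the final glueing: instead of checking a cocycle condition on a \v{C}ech nerve and invoking descent for vector bundles, the paper identifies both sides with the inverse limits $\varprojlim_{S\in R_\qrsp}(M\otimes S)$ via the $p$-complete faithfully flat limit formula, which subsumes your cocycle check into functoriality over the whole qrsp site.
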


Suppose $A$ is a quasi-syntomic $p$-adically complete ring. As $\Spf(A)$ has a cover in $\Spf(A)_\fl$ whose entire \v{C}ech cover is qrsp, we see by $p$-adic faithfully flat descent that the natural map
\begin{equation}\label{eq:p-adic-ff-descent-for-modules}
    M\to \varprojlim_{\scriptscriptstyle B\in A_\qrsp}(M\otimes_A B),
\end{equation}
is an isomorphism for any $p$-adically complete $A$-module $M$.

\begin{proof}[Proof of Proposition \ref{prop: pris crys in char p}]
For any object $T$ of $(R_k)_\qrsp$, there exists a canonical identification of modules $\mc F(\Acrys(T),(p))\isomto \mc F^\crys(\Acrys(T)\twoheadrightarrow T)$ by the definition of $(-)^\crys$. For each object $S$ of $R_\qrsp$, let $\framew_i^\flat$ in $ S^\flat$ be compatible sequences of $p^\text{th}$-power roots of $\framew_i$ in $S/p$. We then have maps $\beta_{\framew^\flat}=\beta\colon R_0\to \Acrys(S)$ as in \cite[\S1.1.5]{IKY1}. The map $\beta$ induces morphisms $(R_0\twoheadrightarrow R_k)\to (\Acrys(S)\twoheadrightarrow S_k)$ and $(R_0,(p),F_{R_k}\circ q)\to (\Acrys(S),(p),\wt{\mr{nat}}.)$ in $(R_k)_\crys$ and $R_\smallprism$, respectively, where $\wt{\mr{nat}}.$ is as in \cite[Example 1.8]{IKY1}, and $q\colon R\to R/\pi=R_0/p$ is the natural map. Thus, from the crystal property and \cite[Remark 1.19]{IKY1}, we obtain isomorphisms 
\begin{equation*}
    \begin{aligned} (\phi^\ast\mc{F})(R_0^{(\phi_w)},(p))\otimes_{R_0}S&\isomto \mc{F}(\Acrys(S),(p))\otimes_{\Acrys(S),\theta}S,\\ \mc F^{\crys}(R_0)\otimes_{R_0}S&\isomto \mc F^{\crys}(\Acrys(S)\twoheadrightarrow S_k)\otimes_{\Acrys(S),\theta}S,\end{aligned}
\end{equation*}
compatible in $S$. Then by the isomorphism in \eqref{eq:p-adic-ff-descent-for-modules}, we get canonical isomorphisms
\be
\begin{aligned}(\phi^*\mc F)(R_0^{(\phi_w)},(p)) &\isomto \varprojlim_{S\in R_\mr{qrsp}}\mc F(\Acrys(S),(p))\otimes_{\Acrys(S),\theta}S\\ &\isomto \varprojlim_{S\in R_\mr{qrsp}}\mc F^{\crys}(\Acrys(S)\twoheadrightarrow S_k)\otimes_{\Acrys{(S)},\theta}S\\ & \isomto \mc F^\crys(R_0),
\end{aligned}
\ee
which proves the assertion. The second claim follows from the first, via the natural identification of $(\phi^\ast\mc{E})^\crys$ and $\phi^\ast(\mc{E}^\crys)$ for an object $\mc{E}$ of $\cat{Vect}(\mf{X}_\smallprism)$.
\end{proof}

 We now define the \emph{crystalline realization functor}
\begin{equation*}
    \underline{\bb{D}}_\crys\colon \cat{Vect}^\varphi(\mf{X}_\smallprism)\to \cat{Vect}^\varphi(\mf{X}_\crys), \qquad (\mc{E},\varphi_\mc{E})\mapsto (\mc{E}^\crys,\varphi_{\mc{E}^\crys}).
\end{equation*}
While $\underline{\bb{D}}_\crys$ is far from full, it is faithful.

\begin{prop}\label{prop:underline-Dcrys-faithful} The following functors are faithful:
\begin{equation*}
    (-)^\crys\colon \cat{Vect}(\mf{X}_\smallprism)\to\cat{Vect}(\mf{X}_\crys),\qquad \underline{\bb{D}}_\crys\colon \cat{Vect}^\varphi(\mf{X}_\smallprism)\to\cat{Vect}^\varphi(\mf{X}_\crys).
\end{equation*} 
\end{prop}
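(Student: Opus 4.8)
The plan is to prove faithfulness in two reductions and then a concrete evaluation argument. First I would reduce the faithfulness of $\underline{\bb{D}}_\crys$ to that of $(-)^\crys$. Indeed, the forgetful functor $\cat{Vect}^\varphi(\mf{X}_\smallprism)\to\cat{Vect}(\mf{X}_\smallprism)$ is faithful, and by construction $\underline{\bb{D}}_\crys(\mc{E},\varphi_\mc{E})=(\mc{E}^\crys,\varphi_{\mc{E}^\crys})$ has underlying crystal $(-)^\crys(\mc{E})$; hence the composite of $\underline{\bb{D}}_\crys$ with the forgetful functor $\cat{Vect}^\varphi(\mf{X}_\crys)\to\cat{Vect}(\mf{X}_\crys)$ equals the composite of the two forgetful and $(-)^\crys$ functors. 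A morphism of $F$-crystals killed by $\underline{\bb{D}}_\crys$ is therefore killed by $(-)^\crys$ on the underlying crystals, so faithfulness of $(-)^\crys$ suffices. Since $(-)^\crys$ is by definition the restriction $\cat{Vect}(\mf{X}_\smallprism)\to\cat{Vect}((\mf{X}_k)_\smallprism)$ followed by the equivalence of \S\ref{par:crys-syn-equiv}, it remains only to show that restriction to the special fiber is faithful.

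Faithfulness may be checked Zariski-locally on $\mf{X}$, so I would pass to a framed affine chart $\mf{X}=\Spf R$ and choose a formal framing $\framew$. There the Breuil--Kisin prism $(\mf{S}_R,(E))$ is a covering object for the final object of the topos $\Shm(R_\smallprism)$ (cf.\@ \cite{DLMS}, \cite{IKY1}), so evaluation on $(\mf{S}_R,(E))$ is faithful; concretely, a morphism $f\colon\mc{E}\to\mc{F}$ of prismatic crystals vanishes if and only if $f_{(\mf{S}_R,(E))}=0$. The task is thus to detect the vanishing of the single $\mf{S}_R$-linear map $f_{(\mf{S}_R,(E))}$ using prisms that lie over $\mf{X}_k$.

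This is exactly where the Breuil prism $(S_R,(p))$ enters. Since $p\in(p)$ and $S_R/p$ is an $R_k$-algebra, $(S_R,(p))$ lies in $(\mf{X}_k)_\smallprism$; moreover it receives the Breuil--Kisin prism through a morphism of prisms whose underlying ring homomorphism $\mf{S}_R\to S_R$ is injective. Assuming $f|_{(\mf{X}_k)_\smallprism}=0$, in particular $f_{(S_R,(p))}=0$, and the crystal property identifies this with the base change $f_{(\mf{S}_R,(E))}\otimes_{\mf{S}_R}S_R$. Because the values $\mc{E}(\mf{S}_R,(E))$ and $\mc{F}(\mf{S}_R,(E))$ are finite projective $\mf{S}_R$-modules, they are direct summands of free modules, so injectivity of $\mf{S}_R\to S_R$ forces the base-change map $\mc{F}(\mf{S}_R,(E))\to\mc{F}(\mf{S}_R,(E))\otimes_{\mf{S}_R}S_R$ to be injective. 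Naturality of base change then gives that $\mc{E}(\mf{S}_R,(E))\xrightarrow{f_{(\mf{S}_R,(E))}}\mc{F}(\mf{S}_R,(E))\hookrightarrow\mc{F}(\mf{S}_R,(E))\otimes_{\mf{S}_R}S_R$ is zero, whence $f_{(\mf{S}_R,(E))}=0$ and so $f=0$.

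The main obstacle is the third step: verifying that the comparison $\mf{S}_R\to S_R$ from the Breuil--Kisin to the Breuil prism is a genuine morphism of prisms over $\mf{X}$ landing in the special fiber, and that its underlying ring map is injective. The subtlety is that this comparison is necessarily Frobenius-twisted, since the naive inclusion does not carry $(E)$ onto $(p)$; rather one uses $\phi\colon(\mf{S}_R,(E))\to(S_R,(p))$, relying on the standard fact that $\phi(E)=p\cdot(\mathrm{unit})$ in $S_R$ to see $(E)S_R$ maps to $(p)$, while injectivity follows from injectivity of $\phi$ on $\mf{S}_R$ together with $\mf{S}_R\hookrightarrow S_R$. (Note this uses only the Frobenius of the prisms as morphisms in the site, not any Frobenius structure on the crystal, so it is legitimate for the $\cat{Vect}$-level statement.) A secondary point to pin down is the covering property of the Breuil--Kisin prism invoked in the second step, which is what collapses the problem to a single evaluation.
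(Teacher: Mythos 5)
Your first two reductions are sound and coincide with the paper's opening move: faithfulness of $\underline{\bb{D}}_\crys$ reduces to faithfulness of $(-)^\crys$, which is faithfulness of restriction to $(\mf{X}_k)_\smallprism$; and your endgame (an injective ring map plus projectivity of the values forces vanishing at a prism on which evaluation is faithful) is the same mechanism the paper uses. The genuine gap is in the step you yourself flag as the main obstacle, and it is not resolved by the fact you invoke there. The proposition is stated for base formal $\mc{O}_K$-schemes with $\mc{O}_K$ allowed to be ramified (the restriction $\mc{O}_K=W$ only enters later, e.g.\ for Theorem \ref{thm:crys-dR-comparison}), and when $e=[K:K_0]>1$ the single-twist map $g=\iota\circ\phi\colon\mf{S}_R\to S_R$ is \emph{not} a morphism in $\mf{X}_\smallprism$ whose target lies in $(\mf{X}_k)_\smallprism$. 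The point is the structure maps: for $g$ to be a morphism in $\mf{X}_\smallprism$ out of the Breuil--Kisin prism, the structure map of the target is forced to be the composite $R\isomto\mf{S}_R/E\xrightarrow{\bar g}S_R/p$, which sends $\pi\mapsto u^p$; for the target to lie over $\mf{X}_k$ this map must kill $\pi$, i.e.\ one needs $u^p\in pS_R$. In $S_R/p$ one has $\bar u^m\neq0$ exactly for $m<ep$ (since $\bar E=\bar u^e$ and $\bar E^p=0$ generates the relations), so $u^p\in pS_R$ holds if and only if $e=1$. The fact you cite, $\phi(E)=p\cdot(\mathrm{unit})$ in $S_R$, is true for every Eisenstein $E$, so it is not where the problem lies. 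This is precisely the issue recorded in Remark \ref{rem:BO-isomorphisms}: for $K\neq K_0$ the arrow to the Breuil prism must be adjusted by a higher Frobenius power (\cite[Lemma 1.14]{IKY1}). Your argument is repaired by replacing $\phi$ with $\phi^n$ for any $n$ with $p^{n-1}\geqslant e$ (e.g.\ $n=e$): then $\phi^n(E)=p\cdot(\mathrm{unit})$ still holds, the induced structure map sends $\pi\mapsto u^{p^n}\in pS_R$ and so factors through $\mf{X}_k$, and $\iota\circ\phi^n$ is still injective, after which the rest of your proof goes through verbatim. (Your secondary point, the covering property of the Breuil--Kisin prism, is indeed available and is what the paper itself uses elsewhere, e.g.\ via \cite[Proposition 1.16, Lemma 1.18]{IKY1}.)

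For comparison, the paper's proof sidesteps all of this bookkeeping: since faithfulness may be checked after passing to a cover, and perfectoid prisms form a basis of $\mf{X}_\smallprism$ (\cite[Proposition 1.11]{IKY1}), one is reduced to showing that for a perfectoid ring $R$ the base-change functor $\cat{Vect}(\Ainf(R))\to\cat{Vect}(\Acrys(R))$ is faithful, which follows from flatness of the modules together with injectivity of $\Ainf(R)\to\Acrys(R)$. That route requires no Zariski localization, no framings, and is uniform in the ramification of $\mc{O}_K$; your (corrected) route is more explicit and coordinate-based, at the cost of the covering property of the Breuil--Kisin prism and the $\phi^e$-twisted comparison with the Breuil prism.
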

\begin{proof} It suffices to prove the first claim. As passing to a cover is a faithful operation, we see by \cite[Proposition 1.11]{IKY1} that it is sufficient to show the following: if $R$ is a perfectoid ring, then the base change functor $\cat{Vect}(\Ainf(R))\to\cat{Vect}(\Acrys(R))$ given by $M\mapsto M\otimes_{\Ainf(R)}\Acrys(R)$ is faithful. As $M$ is flat over $\Ainf(R)$, and $\Ainf(R)\to\Acrys(R)$ is injective (see \cite[Example 1.9]{IKY1}), the map $M\to M\otimes_{\Ainf(R)}\Acrys(R)$ is injective, from where the claim follows.
\end{proof}

Lastly, we give a calculation of $\underline{\bb{D}}_\crys$ in terms of Breuil--Kisin modules.

\begin{prop}\label{prop:crystalline-realization-Kisin-comparison} Let $R=R_0\otimes_W \mc{O}_K$ be a formally framed base $\mc{O}_K$-algebra. Then,  there is a canonical Frobenius-equivariant isomorphism
\begin{equation*}
\underline{\bb{D}}_\crys(\mc{E},\varphi_\mc{E})(R_0)\isomto (\phi^\ast\mc{E})(\mf{S}_R,(E))/u.
\end{equation*}
\end{prop}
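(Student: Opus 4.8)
The plan is to reduce everything to Proposition \ref{prop: pris crys in char p} via the reduction-mod-$u$ map on the Breuil--Kisin prism. Write $\mc{F}\defeq \mc{E}|_{(R_k)_\smallprism}$, so that by the definition of the crystalline realization we have $\underline{\bb{D}}_\crys(\mc{E},\varphi_\mc{E})(R_0)=\mc{E}^\crys(R_0)=\mc{F}^\crys(R_0)$. Proposition \ref{prop: pris crys in char p} already supplies a canonical Frobenius-equivariant isomorphism $\vartheta_\framew\colon (\phi^\ast\mc{F})(R_0^{(\phi_\framew)},(p))\isomto \mc{F}^\crys(R_0)$, so it suffices to produce a canonical Frobenius-equivariant isomorphism $(\phi^\ast\mc{E})(\mf{S}_R,(E))/u\isomto (\phi^\ast\mc F)(R_0^{(\phi_\framew)},(p))$ and then compose with $\vartheta_\framew$.

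The key geometric input is the reduction-mod-$u$ map. Recall from \cite[\S1.1.5]{IKY1} that $\mf{S}_R/u\cong R_0$, that the Frobenius of $\mf{S}_R$ restricts to $u\mapsto u^p$ and to $\phi_\framew$ on $R_0$, that $\delta(u)=0$, and that $E(0)$ equals $p$ times a unit (as $E$ is Eisenstein). First I would check that the quotient map $q\colon \mf{S}_R\to \mf{S}_R/u=R_0$ defines a morphism of prisms $(\mf{S}_R,(E))\to (R_0^{(\phi_\framew)},(p))$ in $(R_k)_\smallprism$: since $q$ is the identity on $R_0$ and $\delta(u)=0$, it is a map of $\delta$-rings precisely when the target carries the Frobenius $\phi_\framew$ (this is exactly why the target in Proposition \ref{prop: pris crys in char p} is the $\phi_\framew$-twist $R_0^{(\phi_\framew)}$); the identity $E(0)=p\cdot(\text{unit})$ shows $(E)$ maps onto $(p)$; and the structure maps are compatible because $q$ covers the reduction $R=\mf{S}_R/E\to R_0/p=R_k$, i.e. $u\mapsto 0$ corresponds to $\pi\mapsto 0$.

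Next I would invoke the crystal property. Since $\phi^\ast\mc{E}$ is again a crystal on $\mf{X}_\smallprism$ and restriction to $(R_k)_\smallprism$ commutes with $\phi^\ast$, we have $(\phi^\ast\mc{E})|_{(R_k)_\smallprism}=\phi^\ast\mc{F}$, and evaluating along the morphism $q$ of the previous paragraph yields a canonical isomorphism
\begin{equation*}
(\phi^\ast\mc{E})(\mf{S}_R,(E))/u=(\phi^\ast\mc{E})(\mf{S}_R,(E))\otimes_{\mf{S}_R,q}R_0\isomto (\phi^\ast\mc{E})(R_0^{(\phi_\framew)},(p))=(\phi^\ast\mc F)(R_0^{(\phi_\framew)},(p)).
\end{equation*}
Composing with $\vartheta_\framew$ gives the desired isomorphism, and Frobenius-equivariance follows since both $\vartheta_\framew$ (by the second assertion of Proposition \ref{prop: pris crys in char p}) and the crystal base-change isomorphism are compatible with the Frobenius structures induced by $\varphi_\mc{E}$.

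The main obstacle is the verification in the second paragraph that $q$ really is a morphism in $(R_k)_\smallprism$ landing on the specific prism $(R_0^{(\phi_\framew)},(p))$ — in particular, pinning down that the Frobenius induced on $\mf{S}_R/u$ is exactly $\phi_\framew$ (not merely some lift of the absolute Frobenius of $R_k$), so that the crystal base-change lands on the very object appearing in Proposition \ref{prop: pris crys in char p}. Once the bookkeeping of the framing data (the precise Frobenius and $\delta$-structure on $\mf{S}_R$, together with the Eisenstein normalization giving $E(0)\in pR_0^\times$) is in place, the rest is a formal composition of known isomorphisms.
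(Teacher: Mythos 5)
Your proposal is correct and is essentially the paper's own proof: the paper likewise observes that, since $E$ is Eisenstein, the map $u\mapsto 0$ defines a morphism of prisms $(\mf{S}_R,(E))\to (R_0,(p))$, and then concludes by combining the crystal property with Proposition \ref{prop: pris crys in char p}. The only slip is that this morphism lives in $R_\smallprism$ rather than $(R_k)_\smallprism$ --- the structure map of $(\mf{S}_R,(E))$ lands in $R\cong \mf{S}_R/E$, which does not factor through $R_k$ --- but this is harmless, since you in fact apply the crystal property to $\phi^\ast\mc{E}$ over $R_\smallprism$ and only use that the \emph{target} prism's structure map factors through $R_k$ to pass to $\mc{F}=\mc{E}|_{(R_k)_\smallprism}$ and invoke Proposition \ref{prop: pris crys in char p}.
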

\begin{proof} As $E$ is an Eisenstein polynomial, the map $\mf{S}_R\to R_0$ sending $u$ to $0$ defines a morphism $(\mf{S}_R,(E))\to (R_0,(p))$ in $R_\smallprism$. The desired isomorphism then follows from applying the crystal property in conjunction with Proposition \ref{prop: pris crys in char p}.
\end{proof}

\begin{rem} The Frobenius structure on $(\phi^\ast\mc{E})(\mf{S}_R,(E))/u$ in Proposition \ref{prop:crystalline-realization-Kisin-comparison} is taken in the sense of \cite[Remark 1.19]{IKY1} (either before or after quotienting by $(u)$).
\end{rem}

\begin{example}\label{ex:Kisin-Dcrys} For $R=\mc{O}_K$ we abuse notation and define the exact $\Z_p$-linear $\otimes$-functor
\begin{equation*}
    \underline{\bb{D}}_\crys\colon \cat{Rep}_{\Z_p}^\crys(\Gal(\ov{K}/K))\to \cat{Vect}^\varphi(k_\crys)=\cat{Vect}^\varphi(W,(p)),
\end{equation*}
to be $\underline{\bb{D}}_\crys\circ T_{\et}^{-1}$. Then, there is a natural identification between $\underline{\bb{D}}_\crys(\Lambda)$ and the Frobenius module $\phi^\ast\mf{M}(\Lambda)/u$ over $W$, where $\mf{M}$ is the functor from \cite{KisinFCrystal}. Indeed, this follows from Proposition \ref{prop:crystalline-realization-Kisin-comparison} and \cite[Remark 7.11]{BhattScholzeCrystals}.
\end{example}

\subsubsection{\texorpdfstring{$\underline{\bb{D}}_\mr{crys}$}{DDcrys} in terms of stacks}

We now explain how the crystalline realization functor $\underline{\bb{D}}_\mr{crys}$ can be understood stack-theoretically.

To begin, recall that for a quasi-syntomic $k$-scheme $Z$ there is constructed in \cite[Remark 2.5.12]{BhattNotes} the formal stack $(Z/W)^\mr{crys}$ over $W$ associating to a $p$-nilpotent $W$-algebra $S$
\be
(Z/W)^\crys(S)\defeq \mr{Map}_k(\Spec(\bb{G}_a^\dR(S)),Z), 
\ee
where $\Ga^\dR$ is as in \cite[Definition 2.5.1]{BhattNotes} and $\Ga^\dR(S)$ has $k$-structure as in \cite[Corollary 2.5.10]{BhattNotes}. The absolute Frobenius $F_{Z}$ induces a morphism $F_{Z}\colon (Z/W)^\mr{crys}\to (Z/W)^\mr{crys}$ of formal stacks over $W$ lying over the Frobenius map $\phi_W\colon W\to W$. In particular, it induces a morphism\begin{equation*}
F_{Z}\colon \phi_W^\ast(Z/W)^\mr{crys}\to(Z/W)^\mr{crys}
\end{equation*}
of formal stacks over $W$.

\begin{lem}[{\cite[Corollary 2.6.8]{BhattNotes}}] There exists a natural Frobenius-equivariant isomorphism of formal stacks over $W$
\begin{equation}\label{eq:crys-prismatic-isom}
(Z/W)^\mr{crys}\isomto \phi_W^\ast(Z^\smallprism).
\end{equation}
\end{lem}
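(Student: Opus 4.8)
The plan is to realize this isomorphism as the stack-theoretic shadow of the crystal-level comparison $(-)^\crys$ from \S\ref{par:crys-syn-equiv}, constructing it on functors of points and verifying it by descent to the qrsp case, where both sides admit explicit descriptions through $\Acrys$. Since $(Z/W)^\crys$ and $\phi_W^\ast(Z^\smallprism)$ are both quasi-syntomic sheaves on $p$-nilpotent $W$-algebras, it suffices to produce a natural transformation between their functors of points and to check that it is an isomorphism after restriction to a basis of the quasi-syntomic topology, for which we may take $Z=\Spec(R)$ with $R$ a qrsp $k$-algebra.

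First I would recall the modular descriptions: by definition $(Z/W)^\crys(S)=\mr{Map}_k(\Spec(\bb{G}_a^\dR(S)),Z)$, while $\phi_W^\ast(Z^\smallprism)(S)$ is the Frobenius base change along $\phi_W$ of the prismatic functor of points (a Cartier–Witt divisor on $W(S)$ equipped with a map to $Z$ from its reduction). The comparison map is then built from a natural map of ring schemes relating the de Rham/PD ring scheme $\bb{G}_a^\dR$ to the Frobenius twist of its prismatic analogue; this identification is the technical core and is exactly what is imported from \cite[\S2.6]{BhattNotes}. The Frobenius twist $\phi_W^\ast$ is forced upon us: it is precisely what reconciles the prismatic Frobenius on the prismatic structure sheaf with the untwisted crystalline structure, and it is the stack-level manifestation of the appearance of $\phi^\ast\mc{F}$, rather than $\mc{F}$, in Proposition \ref{prop: pris crys in char p}.

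To check that the map is an isomorphism, I would restrict to a qrsp $k$-algebra $R$. Here $R_\smallprism$ has the initial object $(\Acrys(R),(p))$ and $(R/W)_\crys$ has the initial object $\Acrys(R)\twoheadrightarrow R$, so both structure sheaves evaluate to $\Acrys(R)$; after accounting for the Frobenius twist the comparison map is identified with the identity of $\Acrys(R)$, which gives the isomorphism on this basis and hence, by descent, in general. Frobenius-equivariance then holds because the Frobenius endomorphisms on the two sides are both induced by the single absolute Frobenius $F_Z$ of $Z$ together with the functoriality of $\Acrys$, so the comparison map intertwines them by construction. The main obstacle is the second step: pinning down the comparison at the level of stacks, rather than merely of crystals, which rests on the identification of $\bb{G}_a^\dR$ with the Frobenius twist of the prismatic ring scheme and is the precise content of \cite[Corollary 2.6.8]{BhattNotes}.
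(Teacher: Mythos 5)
Your construction step rests on exactly the same input as the paper's proof: the identification of $W$-algebra schemes $\Ga^\dR\isomto F_\ast\bb{W}/p$ from \cite[Corollary 2.6.8]{BhattNotes}. But once that is imported, the lemma follows directly on every test ring, with no descent at all: for a $p$-nilpotent $W$-algebra $S$ one has $(Z/W)^\crys(S)=\mr{Map}_k(\Spec(\Ga^\dR(S)),Z)$, while $\phi_W^\ast(Z^\smallprism)(S)=\mr{Map}_k(\Spec(\bb{W}((\phi_W)_\ast S)/p),Z)$ by the characteristic-$p$ description $Z^\smallprism\simeq \mr{Map}_k(\bb{W}/p,Z)$ of \cite[Example 5.1.12]{BhattNotes}, and $F_\ast\bb{W}(S)/p$ is canonically isomorphic to $\bb{W}((\phi_W)_\ast S)/p$ as a $k$-algebra. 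That is the paper's entire argument. Note that you never invoke the characteristic-$p$ description of the prismatization; it is genuinely needed to pass from your description of $\phi_W^\ast(Z^\smallprism)$ in terms of Cartier--Witt divisors to something comparable with $\mr{Map}_k(\Spec(\Ga^\dR(S)),Z)$, so even your ``natural transformation'' is not actually pinned down as written.

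The step where you locate the real work --- the qrsp verification --- is both unnecessary and, as written, not a proof. First, the reduction ``it suffices to check on a basis of the quasi-syntomic topology'' is a descent statement in the variable $Z$: it requires that both $Z\mapsto (Z/W)^\crys$ and $Z\mapsto \phi_W^\ast(Z^\smallprism)$ carry quasi-syntomic covers of $Z$ to colimits of formal stacks. This is true but nontrivial, and it is not what your phrase ``both are quasi-syntomic sheaves on $p$-nilpotent $W$-algebras'' expresses; that phrase conflates the variable $Z$ with the test ring $S$. Second, and more seriously, your verification conflates the \emph{sites} $R_\smallprism$ and $(R/W)_\crys$ with the \emph{stacks} $R^\smallprism$ and $(R/W)^\crys$. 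That those sites have initial objects $(\Acrys(R),(p))$ and $\Acrys(R)\twoheadrightarrow R$ is a statement about evaluating crystals, i.e.\ about quasi-coherent sheaves; what your argument needs are the stack-level identifications $R^\smallprism\simeq\Spf(\Acrys(R))$ and $(R/W)^\crys\simeq\Spf(\Acrys(R))$ as formal stacks over $W$, together with the compatibility of your comparison map with these identifications. That compatibility --- ``after accounting for the Frobenius twist the comparison map is identified with the identity of $\Acrys(R)$'' --- is precisely the content of the lemma at the qrsp level, and you assert it rather than prove it. So the proposal either collapses to the paper's direct functor-of-points computation (if you use the full strength of Corollary 2.6.8 together with Example 5.1.12), or else leaves its key step unproved.
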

\begin{proof} By \cite[Corollary 2.6.8]{BhattNotes}, we have an isomorphism of $W$-algebra schemes
\begin{equation}\label{eq:before-transmutation}
\Ga^\dR\isomto F_*\bb{W}/p.
\end{equation}
That said, for a $p$-nilpotent $W$-algebra $S$ 
\begin{equation*}
\phi^\ast_W(Z^\smallprism)(S)=\mr{Map}_k(\Spec(\bb{W}((\phi_W)_\ast S)/p),Z),
\end{equation*}
where $(\phi_W)_\ast S$ is $S$ viewed as a $W$-algebra via restriction along $\phi_W$. But, it is easy to see that $F_\ast \bb{W}(S)/p$ is canonically isomorphic to $\bb{W}((\phi_W)_\ast(S))/p$ as a $k$-algebra. The claim follows given the identification of $Z^\smallprism$ with  $\mr{Map}_k(\bb{W}/p,Z)$ as explained in \cite[Example 5.1.12]{BhattNotes}.
\end{proof}

Observe that there is a natural functor 
\begin{equation*}
\Psi_{Z}\colon \cat{QCoh}((Z/W)^\crys)\to \cat{Crys}((Z/W)_\crys).
\end{equation*}
Namely, by definition of $\Ga^\dR$, given an object $(Z\gets U\hookrightarrow T)$ of $(Z/W)_\crys$ one obtains a map 
\begin{equation}\label{eq:crys-stack-PD-thickening}
\rho_{(Z\gets U\hookrightarrow T)}\colon T\to (Z/W)^\crys.
\end{equation}
More precisely, if $T=\Spec(B)$ and $J=\ker(\mc{O}_T(T)\to \mc{O}_U(U))$, then we have a natural map $[J\to B]\to [\Ga^\sharp(B)\to B]$ of quasi-ideals.
Set
\begin{equation*}
\Psi_{Z}(\mc{F})(Z\gets U\hookrightarrow T)\defeq (\rho_{(Z\gets U\hookrightarrow T)})^\ast(\mc{F})(T).
\end{equation*}
This is a crystal by the quasi-coherence of $\mc{F}$. 

\begin{prop}\label{prop:Psi-2-commutativity} We have a $2$-commutative diagram of Frobenius-equivariant equivalences
\begin{equation*}
\begin{tikzcd}[sep=2.25em]
	{\cat{Vect}((Z/W)^\crys)} && {\cat{Vect}((Z/W)_\crys)} \\
	& {\cat{Vect}(\phi_W^\ast(Z_\smallprism)).}
	\arrow["{\Psi_Z}", from=1-1, to=1-3]
	\arrow["{\eqref{eq:crys-prismatic-isom}}", from=2-2, to=1-1]
	\arrow["{(-)^\mr{crys}\circ (\phi_W^\ast)^{-1}}"', from=2-2, to=1-3]
\end{tikzcd}
\end{equation*}
\end{prop}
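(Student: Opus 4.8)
The plan is to dispose of the equivalence assertions quickly and then focus on the 2-commutativity, which carries all the content. The functor induced by \eqref{eq:crys-prismatic-isom} is pullback along the isomorphism of formal stacks furnished by the preceding Lemma, hence an equivalence on vector bundles. Likewise $(\phi_W^\ast)^{-1}$ is an equivalence, since $\phi_W$ is an isomorphism of $\Spf(W)$ ($k$ being perfect), and $(-)^\crys$ is the equivalence constructed in \S\ref{par:crys-syn-equiv}; thus the lower-right composite is an equivalence. Consequently, once the triangle is shown to 2-commute, $\Psi_Z$ is forced to be an equivalence as well, and it remains only to produce a natural isomorphism between the two composites $\cat{Vect}(\phi_W^\ast(Z_\smallprism)) \to \cat{Vect}((Z/W)_\crys)$ and to record its compatibility with the Frobenius pullbacks.

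For the 2-commutativity I would reduce to the qrsp case exactly as in \S\ref{par:crys-syn-equiv}: using the limit descriptions of all three categories (via \cite[Proposition 1.31, Proposition 2.16]{IKY1}), it suffices to construct the natural isomorphism after evaluation on the initial PD-thickenings $\Acrys(R) \twoheadrightarrow R$, for $R$ ranging over qrsp $k$-algebras over $Z$, compatibly in $R$. Fix $\mc{G}$ in $\cat{Vect}(\phi_W^\ast(Z_\smallprism))$ and write $\mc{F}$ for its pullback under \eqref{eq:crys-prismatic-isom}. On such an object, the lower-right functor gives, by the very definition of $(-)^\crys$, the value of $(\phi_W^\ast)^{-1}\mc{G}$ on the initial prism $(\Acrys(R),(p)) \in R_\smallprism$; while $\Psi_Z(\mc{F})(\Acrys(R)\twoheadrightarrow R)$ is, by \eqref{eq:crys-stack-PD-thickening}, the pullback of $\mc{G}$ along the composite $\Spf(\Acrys(R)) \to (Z/W)^\crys \to \phi_W^\ast(Z^\smallprism)$, whose first map is $\rho_{(\Acrys(R)\twoheadrightarrow R)}$ and whose second is \eqref{eq:crys-prismatic-isom}. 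Everything therefore reduces to identifying this composite of stack maps with the (Frobenius-twisted) prismatic point classifying $(\Acrys(R),(p))$.

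This identification is where I expect the real work to lie, and it amounts to unwinding the construction of \eqref{eq:crys-prismatic-isom}. That isomorphism rests on the transmutation identity $\Ga^\dR \isomto F_\ast\bb{W}/p$ of \eqref{eq:before-transmutation}; under it, the universal map of quasi-ideals $[J \to B] \to [\Ga^\sharp(B) \to B]$ that defines $\rho$ should correspond precisely to the divided-power structure on $J = \ker(\Acrys(R) \to R)$, that is, to the prism structure of $(\Acrys(R),(p))$. I would check this by tracking the $k$-algebra structures on both sides (via \cite[Corollary 2.5.10]{BhattNotes}) together with the identification $F_\ast\bb{W}(S)/p \cong \bb{W}((\phi_W)_\ast S)/p$ already used in the proof of the Lemma, so that $\rho$ followed by \eqref{eq:crys-prismatic-isom} is exactly the Frobenius twist of the prismatic point of $\Acrys(R)$. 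Keeping the Frobenius twist straight — which side $\phi_W^\ast$ versus $F_\ast$ lands on, and the role of the corrective factor $(\phi_W^\ast)^{-1}$ — is the delicate bookkeeping. Granting the identification, both composites agree, compatibly in $R$, giving the 2-commutativity. Finally, the asserted Frobenius-equivariance is inherited from its three constituents: \eqref{eq:crys-prismatic-isom} is Frobenius-equivariant by the Lemma, $(-)^\crys$ intertwines the Frobenius via the identification $(\phi^\ast\mc{V})^\crys \simeq \phi^\ast(\mc{V}^\crys)$ recorded in \S\ref{par:crys-syn-equiv}, and $\Psi_Z$ is compatible with the Frobenius $F_Z$ on $(Z/W)^\crys$; chasing these around the triangle yields the Frobenius-equivariant equivalences.
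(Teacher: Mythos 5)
Your proposal is correct and follows essentially the same route as the paper's proof: reduce the equivalence claim to the 2-commutativity, pass to the qrsp case $Z=\Spf(R)$, and identify both composites by evaluating at the initial objects $\Acrys(R)\twoheadrightarrow R$ and $(\Acrys(R),(p))$, keeping track of the Frobenius twist. The only difference is emphasis: the paper treats the identification of $\rho_{(\Acrys(R)\twoheadrightarrow R)}$ composed with \eqref{eq:crys-prismatic-isom} with the twisted prismatic point as definitional unwinding, whereas you correctly flag it as the step requiring the quasi-ideal/transmutation bookkeeping and sketch how to carry it out.
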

\begin{proof} To show that $\Psi_Z$ is a Frobenius-equivariant equivalence, it suffices to show this diagram $2$-commutes. It further suffices to assume $Z=\Spf(R)$ with $R$ qrsp. But, for an object $\mc{F}$ of $\cat{Vect}(\phi_W^\ast(Z_\smallprism))$, applying \eqref{eq:crys-prismatic-isom} and then $\Psi_Z$ gives the value $\mc{F}(\Acrys(R)\twoheadrightarrow R)$, with twisted $W$-structure map, on $\Acrys(R)\twoheadrightarrow R$. But, this is the same as $((\phi_W^\ast)^{-1}(\mc{F}))^\mr{crys}(\Acrys(R)\twoheadrightarrow R)$.
\end{proof}

Assume now that $\mc{O}_K=W$. Then, our base formal $W$-scheme $\mf{X}$ determines a natural map
\begin{equation*}
i_\mf{X}\defeq\rho_{(\mf X_k\xleftarrow{\id}\mf X_k\hookrightarrow\mf X)}\colon \mf{X}\to (\mf{X}_k/W)^\mr{crys},
\end{equation*}
with notation as in \eqref{eq:crys-stack-PD-thickening}.

\begin{defn} The \emph{(relative) crystalline point} of $\mf{X}$ is the morphism $\rho_{\mr{crys},\mf{X}}\colon \mf{X}\to \mf{X}^\smallprism$ of formal stacks over $W$ obtained as the composition 
\be
\mf X\xrightarrow{i_\mf{X}} (\mf X_k/W)^\crys\isomto \phi_W^\ast(\mf{X}_k^\smallprism)\xrightarrow{F_{\mf{X}_k}} \mf X_k^\smallprism\to \mf X^\smallprism.
\ee
\end{defn}

The $2$-commutativity in Proposition \ref{prop:Psi-2-commutativity} implies the following.

\begin{prop}\label{prop:crystalline-realization-stacky-equiv}For any prismatic crystal $\mc{E}$ on $\mf{X}$, there exists a natural identification
\begin{equation*}
\rho_{\mr{crys},\mf{X}}^\ast(\mc{E})\simeq \mc{E}^\mr{crys}|_{\mf{X}_\mr{Zar}}.
\end{equation*}
\end{prop}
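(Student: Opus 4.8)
The plan is to peel the map $i_{\mf X}$ off the crystalline point and reduce everything to the $2$-commutative triangle of Proposition \ref{prop:Psi-2-commutativity}. Write $\rho_{\mr{crys},\mf{X}} = g\circ i_{\mf X}$, where $g$ denotes the composite
\begin{equation*}
(\mf{X}_k/W)^\crys\xrightarrow{\eqref{eq:crys-prismatic-isom}}\phi_W^\ast(\mf{X}_k^\smallprism)\xrightarrow{F_{\mf{X}_k}}\mf{X}_k^\smallprism\to\mf{X}^\smallprism
\end{equation*}
of all the maps occurring after $i_{\mf X}$ in the definition of $\rho_{\mr{crys},\mf{X}}$. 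Then $\rho_{\mr{crys},\mf{X}}^\ast(\mc{E}) = i_{\mf X}^\ast(g^\ast\mc{E})$, so it suffices to understand $i_{\mf X}^\ast$ applied to the vector bundle $g^\ast\mc{E}$ on $(\mf{X}_k/W)^\crys$.

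The key observation is an unwinding of the functor $\Psi_{\mf{X}_k}$. Since $i_{\mf X} = \rho_{(\mf{X}_k\xleftarrow{\id}\mf{X}_k\hookrightarrow\mf{X})}$, the very definition of $\Psi_{\mf{X}_k}$ through \eqref{eq:crys-stack-PD-thickening} shows that, for any $\mc{F}$ in $\cat{Vect}((\mf{X}_k/W)^\crys)$, the value of the crystal $\Psi_{\mf{X}_k}(\mc{F})$ on the object $(\mf{X}_k\hookrightarrow\mf{X})$ of $(\mf{X}_k/W)_\crys$ — that is, its restriction to $\mf{X}_\Zar$ — is canonically identified with $i_{\mf X}^\ast\mc{F}$. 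Taking $\mc{F} = g^\ast\mc{E}$ yields $\rho_{\mr{crys},\mf{X}}^\ast(\mc{E})\simeq\Psi_{\mf{X}_k}(g^\ast\mc{E})|_{\mf{X}_\Zar}$, and it remains to produce a natural isomorphism $\Psi_{\mf{X}_k}(g^\ast\mc{E})\simeq\mc{E}^\crys = (\mc{E}|_{(\mf{X}_k)_\smallprism})^\crys$. Since $g^\ast\mc{E}$ is, by construction, the pullback along \eqref{eq:crys-prismatic-isom} of $F_{\mf{X}_k}^\ast(\mc{E}|_{(\mf{X}_k)_\smallprism})$, I would feed this into the triangle of Proposition \ref{prop:Psi-2-commutativity}, whose lower leg is $(-)^\crys\circ(\phi_W^\ast)^{-1}$, to obtain the desired identification.

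The step I expect to be the main obstacle is the Frobenius bookkeeping in this last identification. Feeding $F_{\mf{X}_k}^\ast(\mc{E}|_{(\mf{X}_k)_\smallprism})$ into the lower leg naively produces $\bigl((\phi_W^\ast)^{-1}F_{\mf{X}_k}^\ast(\mc{E}|_{(\mf{X}_k)_\smallprism})\bigr)^\crys$, so the real content is that the Frobenius $F_{\mf{X}_k}$ inserted into the crystalline point is exactly what is needed to absorb the twist $(\phi_W^\ast)^{-1}$ appearing in the lower leg, leaving $(\mc{E}|_{(\mf{X}_k)_\smallprism})^\crys$. This cancellation is precisely what the Frobenius-equivariance of the isomorphism \eqref{eq:crys-prismatic-isom} is meant to supply, and I would verify it at the level of the $2$-commutativity of the triangle rather than merely on underlying bundles, so that the resulting isomorphism is natural in $\mc{E}$. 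Because the constructions $(-)^\crys$, $\Psi_{(-)}$, and the isomorphism \eqref{eq:crys-prismatic-isom} are all functorial in $Z=\mf{X}_k$, this naturality then propagates to give the asserted identification over all of $\mf{X}_\Zar$.
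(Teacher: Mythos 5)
Your proposal is correct and takes essentially the same route as the paper: the paper's entire proof is the assertion that the identification follows from the $2$-commutativity of Proposition \ref{prop:Psi-2-commutativity}, and your argument—factoring $\rho_{\mr{crys},\mf{X}}$ through $i_{\mf{X}}$, observing that $i_{\mf{X}}^\ast(-)=\Psi_{\mf{X}_k}(-)|_{\mf{X}_\Zar}$ by the very definition of $\Psi_{\mf{X}_k}$ and of $i_{\mf{X}}=\rho_{(\mf{X}_k\hookrightarrow\mf{X})}$, and then feeding $\eqref{eq:crys-prismatic-isom}^\ast F_{\mf{X}_k}^\ast(\mc{E}|_{(\mf{X}_k)_\smallprism})$ into the triangle—is exactly the unwinding of that assertion. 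The Frobenius cancellation you single out as the main obstacle (that $F_{\mf{X}_k}^\ast$ absorbs the untwist $(\phi_W^\ast)^{-1}$, leaving $(\mc{E}|_{(\mf{X}_k)_\smallprism})^\crys$ rather than its $\phi^\ast$-twist) is indeed the only remaining content, and it is precisely what the Frobenius-equivariant package of \eqref{eq:crys-prismatic-isom} together with the definition of $F_{\mf{X}_k}$ is designed to supply, so your proposal matches both the route and the level of detail of the paper's own proof.
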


\begin{rem}\label{rem:Dcrys-in-coordinates} Suppose that $\mf{X}=\Spf(R)$ and that $\framew$ is a formal framing. Consider the following diagram where $\rho_{(R^{(\phi_\framew)},(p))}$ is as in \cite[Construction 3.10]{BhattLuriePrismatization}
\begin{equation*}
\begin{tikzcd}[sep=2.25em]
	& {R^\smallprism} \\
	{\Spf(R)} & {\Spf(R).}
	\arrow["{\rho_{\mr{crys},\mf{X}}}", from=2-1, to=1-2]
	\arrow["{\phi_\framew}"', from=2-1, to=2-2]
	\arrow["{\rho_{(R^{(\phi_\framew)},(p))}}"', from=2-2, to=1-2]
\end{tikzcd}
\end{equation*}
This diagram naturally $2$-commutes as for any $p$-nilpotent $R$-algebra $S$, both compositions result in the Cartier--Witt divisor $\bb{W}(S)\xrightarrow{p}\bb{W}(S)$ with the structure map corresponding to 
\begin{equation*}
R\to R/p\xrightarrow{F_{R/p}}R/p\to \bb{W}(S)/p.
\end{equation*}
This gives an isomorphism $\rho_{\mr{crys},\mf{X}}^\ast(\mc{E})\simeq (\phi^\ast\mc{E})(R^{(\phi_\framew)},(p))$ which combined with Proposition \ref{prop:crystalline-realization-stacky-equiv} recovers $\vartheta_{R,\framew}$ from Proposition \ref{prop: pris crys in char p}. 
\end{rem}

\subsection{The de Rham realization functor}\label{ss:de-Rham-realization} We now discuss the notion of a \emph{de Rham realization functor} $\bb{D}_\dR\colon \cat{Vect}^\varphi(\mf{X}_\smallprism)\to \cat{Vect}(\mf{X})$. We will take a stack-theoretic approach, but the reader should consult Proposition \ref{prop:de-Rham-realization-BK-relationship} for a more down-to-earth interpretation in coordinates. In the following there are no restrictions on $\mc{O}_K$.

\begin{defn}[{cf.\@ \cite[\S6.7]{GMM}}]\label{defn:de-rham-point} Let $\mf{Y}$ be a quasi-syntomic $p$-adic formal scheme. 
\begin{itemize}[leftmargin=.3cm]
\item The \emph{(relative) Nygaard de Rham point} over $\mf{Y}$ is the morphism of formal $\mathbb{A}^1/\mathbb{G}_m$-stacks $\rho_{\mr{dR},\mf{Y}}^\smallN\colon \mf{Y}\times (\mathbb{A}^1/\mathbb{G}_m)\to \mf{Y}^\smallN$ over $\mathbb{Z}_p$ associating to a morphism $f\colon \Spec(S)\to\mf{Y}$ and generalized Cartier divisor $\alpha\colon L\to \mc{O}_{\Spec(S)}$ the filtered Cartier--Witt divisor 
\begin{equation*}
F_\ast(\bb{W})\oplus V(L)^\sharp\xrightarrow{d=(V,\alpha^\sharp)}\bb{W}
\end{equation*}
where $(-)^\sharp$ is as in \cite[Variant 2.4.3]{BhattNotes}, and where the map $\Spec(\mr{Cone}(d))\to \mf{Y}$ is induced by precomposition with $f$ from the natural map
\begin{equation*}
S=\bb{W}(S)/V(\bb{W}(S))\to \mr{Cone}(d)(S).
\end{equation*}
\item The \emph{(relative) prismatic de Rham point} over $\mf{Y}$ is the morphism $\rho_{\mathrm{dR},\mf{Y}}^\smallprism \colon \mf{Y}\to \mf{Y}^\smallprism$ obtained by restricting $\rho_{\mr{dR},\mf{Y}}^\smallN$ to $\mf{Y}=\mf{Y}\times(\mathbb{G}_m/\mathbb{G}_m)$.
\item The \emph{(relative) syntomic de Rham point} over $\mf{Y}$ is $\rho_{\mathrm{dR},\mf{Y}}^\mr{syn}\defeq j_\smallN\circ \rho_{\mr{dR},\mf{Y}}^\smallN \colon \mf Y\times(\A^1/\Gm)\to \mf{Y}^\mr{syn}$.
\end{itemize}
\end{defn}

For a perfect complex $\mc{V}$ over $\mf{Y}^\smallprism$, let us write $\mc{V}^\mr{dR}\defeq (\rho_{\mr{dR}}^\smallprism)^\ast\mc{V}$. Given the identification of $\cat{Perf}(\mf{Y}\times(\mathbb{A}^1/\mathbb{G}_m))$ with the $\infty$-category $\cat{PerfF}(\mf{Y})$ of filtered perfect complexes over $\mf{Y}$ (see \cite[Proposition 2.2.6]{BhattNotes}), we observe that for any perfect complex $\mc{V}$ on $\mf{Y}^\smallN$ we have
\begin{equation}\label{eq:de-rham-filtered-de-rham-relationship}
(\rho_{\mr{dR},\mf{Y}}^\smallN)^\ast(\mc{V})\simeq \left(\mc{V}^\mr{dR},\mr{Fil}^\bullet(\mc{V}^\mr{dR})\right),
\end{equation}
for some filtration $\mr{Fil}^\bullet\left(\mc{V}^\mr{dR}\right)$ of $\mc{V}^\mr{dR}$.  
\begin{defn} The \emph{de Rham realization functor} (resp.\@ \emph{filtered de Rham realization functor})
\begin{equation*}
\bb{D}_\mr{dR}\colon \cat{Vect}^\varphi(\mf{X}_\smallprism)\to \cat{Vect}(\mf{X}),\qquad\bigg(\text{resp.}\,\,\bb{D}_\mr{dR}^+\colon\cat{Vect}^\varphi(\mf{X}_\smallprism)\to \cat{PerfF}(\mf{X})\bigg)
\end{equation*}
is defined to be $(\rho_{\mr{dR},\mf{X}}^\smallprism)^\ast$ (resp.\@ $(\rho_{\mr{dR},\mf{X}}^\mr{syn})^\ast\circ \Pi_\mf{X}$, where $\Pi_\mf{X}$ is as in \cite[Theorem 2.31]{GuoLi}).\footnote{Observe that, by definition, $\bb{D}_\mr{dR}(\mc{E},\varphi_\mc{E})=\mc{E}^\mr{dR}$.}
\end{defn}

As $\mr{R}_\mf{X}(\Pi_\mf{X}(\mc{E},\varphi_\mc{E}))\simeq (\mc{E},\varphi_\mc{E})$ (by \cite[Theorem 2.31 (1)]{GuoLi}), one sees from \eqref{eq:de-rham-filtered-de-rham-relationship} that one may write
\begin{equation*}
\bb{D}_\mr{dR}^+(\mc{E},\varphi_\mc{E})\simeq (\bb{D}_\mr{dR}(\mc{E},\varphi_\mc{E}),\Fil^\bullet_{\bb{D}_\mr{dR}}(\bb{D}_\mr{dR}(\mc{E},\varphi_\mc{E})))
\end{equation*}
for some filtration $\Fil^\bullet_{\bb{D}_\mr{dR}}(\bb{D}_\mr{dR}(\mc{E},\varphi_\mc{E}))$ on $\bb{D}_\mr{dR}(\mc{E},\varphi_\mc{E})$. 

The following construction will help to understand $\bb{D}_\mr{dR}^+$ more concretely. Before we describe it, let us recall (e.g., see \cite[Definition 1.23]{IKY2}) that for a prismatic $F$-crystal $(\mc{E},\varphi_\mc{E})$ on $\mf{X}$ the \emph{Nygaard filtration} on $\phi^\ast\mathcal{E}$ is given as follows:
\begin{equation*}
    \Fil^\bullet_\mr{Nyg}(\phi^\ast\mc{E})\defeq \left\{x\in \phi^\ast\mathcal{E}:\varphi_\mathcal{E}(x)\in \mc{I}_\smallprism^\bullet\cdot \mc{E}\right\},
\end{equation*}
which produces a filtered module over $(\mc{O}_\smallprism,\mc{I}_\smallprism)$. For any object $(A,I)$ of $\mf{X}_\smallprism$ there is an induced filtration $\Fil^\bullet_\mr{Nyg}(\phi_A^\ast\mc{E}(A,I))$, a filtered module over $(A,I)$.

\begin{rem}\label{rem:terminology-Nyg} 
Let us remark that the terminology `Nygaard filtration' is potentially confusing when $\mc{E}$ is cohomological, i.e., $\mc{E}=R^if_\ast\mc{O}_\smallprism$ for some smooth proper morphism $f\colon \mf{Y}\to\mf{X}$. Indeed, in this case one may equip $\mc{E}$ with the filtration given by pushing forward the Nygaard filtration on $\mc{O}_\smallprism$, which is also called the Nygaard filtration in \cite{BhattScholzePrisms}. These two filtrations need not agree in general, even after Frobenius pullback, but always agree rationally and do agree integrally in certain cases (see \cite[\S7.2]{LiLiuDerived}).
\end{rem}

\begin{construction}Let $R=R_0\otimes_W \mc{O}_K$, with formal framing $\framew$. Set 
\begin{equation*}
\bigg(\mc{V}_\framew(\mc{E},\varphi_\mc{E}),\Fil^\bullet(\mc{V}_\framew(\mc{E},\varphi_\mc{E}))\bigg)\defeq \bigg(\phi_\framew^\ast\mc{E}(\mf{S}_R^{(\phi_\framew)},(E))/E,\overline{\Fil}^\bullet_\mr{Nyg}\bigg)
\end{equation*}
where 
\begin{equation*}
\overline{\Fil}^\bullet_\mr{Nyg}\defeq \mr{im}\left(\mr{Fil}^\bullet_\mr{Nyg}(\phi_\framew^\ast\mc{E}(\mf{S}_R^{(\phi_\framew)},(E)))\to \phi_\framew^\ast\mc{E}(\mf{S}_R^{(\phi_\framew)},(E))/E\right).
\end{equation*}
This construction commutes with base change along \'etale maps of framed algebras $R\to R'$.
\end{construction}

\begin{prop}\label{prop:de-Rham-realization-BK-relationship} For an open subset $\Spf(R)\subseteq \mf{X}$ with $R=R_0\otimes_W\mc{O}_K$ and formal framing $\framew$, there is a canonical isomorphism in the filtered derived category of quasi-coherent sheaves,
\begin{equation}\label{eq:rees-isom}
\psi_{R,\framew}\colon (\mc{V}_\framew(\mc{E},\varphi_\mc{E}),\Fil^\bullet(\mc{V}_\framew(\mc{E},\varphi_\mc{E})))
\isomto \mathbb{D}_\mr{dR}^+(\mc{E},\varphi_\mc{E})|_{\Spf(R)}.
\end{equation}
\end{prop}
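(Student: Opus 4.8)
The plan is to factor the filtered de Rham realization through the Nygaard de Rham point, translate everything into the Rees algebra of the Nygaard filtration on the Breuil--Kisin chart, and identify the de Rham point with the quotient by a \emph{degree-one} element attached to $E$. Since $\rho_{\dR,\mf{X}}^\syn = j_\smallN \circ \rho_{\dR,\mf{X}}^\smallN$ by Definition \ref{defn:de-rham-point}, we have $\bb{D}_\dR^+(\mc{E},\varphi_\mc{E}) \simeq (\rho_{\dR,\mf{X}}^\smallN)^\ast\big(j_\smallN^\ast\Pi_\mf{X}(\mc{E},\varphi_\mc{E})\big)$. By the construction of $\Pi_\mf{X}$ in \cite[Theorem 2.31]{GuoLi}, the object $j_\smallN^\ast\Pi_\mf{X}(\mc{E},\varphi_\mc{E})$ of $\cat{Vect}(\mf{X}^\smallN)$ is, over the chart afforded by $\framew$, the Rees module $\bigoplus_i \Fil^i_\mr{Nyg}(\phi_\framew^\ast\mc{E}(\mf{S}_R^{(\phi_\framew)},(E)))\,t^{-i}$ of the Nygaard-filtered Frobenius pullback, viewed over the Rees algebra of the Nygaard-filtered prism $\mf{S}_R^{(\phi_\framew)}$. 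The task is thus to compute the pullback of this Rees module along the Nygaard de Rham point.

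The heart of the argument is a coordinate description of $\rho_{\dR,\mf{X}}^\smallN$. Arguing as in Remark \ref{rem:Dcrys-in-coordinates}, but now keeping track of the generalized Cartier divisor parameter $\alpha$, I would show that in the framing $\framew$ the morphism $\rho_{\dR,\mf{X}}^\smallN$ corresponds to the quotient of the Nygaard--Rees algebra by the degree-one element $\wt{E}$ determined by $E \in \Fil^1_\mr{Nyg}(\mf{S}_R^{(\phi_\framew)})$; the twist $(\phi_\framew)$ and the appearance of $\phi_\framew^\ast$ arise from the $F_\ast(\bb{W})$ and $V$ in Definition \ref{defn:de-rham-point}, exactly as the crystalline point produces $\phi_\framew^\ast$. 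Granting this, the pullback is the quotient by $\wt{E}$, whose degree-$i$ piece is $\Fil^i_\mr{Nyg}/E\,\Fil^{i-1}_\mr{Nyg}$. Since $\Fil^i_\mr{Nyg} \cap E\,\phi_\framew^\ast\mc{E}(\mf{S}_R^{(\phi_\framew)},(E)) = E\,\Fil^{i-1}_\mr{Nyg}$ (because $\varphi_\mc{E}$ is $\mf{S}_R$-linear and $E$ is a nonzerodivisor on $\mc{E}$), this degree-$i$ piece is exactly $\overline{\Fil}^i_\mr{Nyg}$, while inverting the Rees parameter recovers the underlying module $\phi_\framew^\ast\mc{E}(\mf{S}_R^{(\phi_\framew)},(E))/E = \mc{V}_\framew(\mc{E},\varphi_\mc{E})$. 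Under the identification $\cat{Perf}(\mf{X}\times(\A^1/\Gm)) \simeq \cat{PerfF}(\mf{X})$ of \cite[Proposition 2.2.6]{BhattNotes}, this produces the filtered isomorphism $\psi_{R,\framew}$ of \eqref{eq:rees-isom}.

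It remains to ensure that this quotient agrees with its derived version, so that $\psi_{R,\framew}$ is an isomorphism in the \emph{filtered derived} category, and to upgrade from qrsp charts to $\Spf(R)$. The first point reduces to checking that $\wt{E}$ is a nonzerodivisor on the Rees module, equivalently that multiplication by $E$ is injective on each $\Fil^i_\mr{Nyg}$; this is immediate since each $\Fil^i_\mr{Nyg}$ is a submodule of the vector bundle $\phi_\framew^\ast\mc{E}(\mf{S}_R^{(\phi_\framew)},(E))$, hence $E$-torsion-free, so no higher $\Tor$ appears. For the second point I would use that the construction of $\mc{V}_\framew$ commutes with étale base change of framed algebras and that all objects in sight satisfy $p$-adic flat descent, gluing the local identifications into the asserted canonical isomorphism.

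The step I expect to be the main obstacle is the coordinate description of the Nygaard de Rham point — precisely, verifying that $\rho_{\dR,\mf{X}}^\smallN$ is the quotient by the degree-one element $\wt{E}$ rather than the degree-zero element $E$, since this is exactly what distinguishes the image filtration $\overline{\Fil}^\bullet_\mr{Nyg}$ from the naive reduction $\Fil^\bullet_\mr{Nyg}/E$. This requires unwinding the filtered Cartier--Witt divisor of Definition \ref{defn:de-rham-point} against the chart description of $\mf{X}^\smallN$ and matching the $\A^1/\Gm$-parameter $\alpha$ with the Rees grading.
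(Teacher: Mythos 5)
Your proposal is correct and takes essentially the same approach as the paper's proof. Your three steps correspond precisely to the paper's: the coordinate description of the Nygaard de Rham point as the quotient by a degree-one element is Lemma \ref{lem:commutative-diagram-for-filtered-prismatization}; the Rees-module description of $\Pi_\mf{X}$ over the Breuil--Kisin chart is Proposition \ref{prop:de-Rham-realization-BK-relationship-refined} (proved there by reducing, via $t$-torsion-freeness, to the perfectoid cover $\Ainf(\wt{R})$, where it holds by construction of $\Pi_\mf{X}$ --- the one ingredient you elide); and your quotient computation --- injectivity of multiplication by $E$ together with the intersection formula $\Fil^i_\mr{Nyg}\cap E\,\phi_\framew^\ast\mc{E}(\mf{S}_R,(E)) = E\,\Fil^{i-1}_\mr{Nyg}$ proved via the Frobenius and $E$-torsion-freeness --- is exactly Lemma \ref{lem:derived Rees tensor identification}.
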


To prove Proposition \ref{prop:de-Rham-realization-BK-relationship} we first set up some notation. Fix $\mf{Y}$ to be a quasi-syntomic $p$-adic formal scheme. 
First, observe we can interpret the Nygaard filtration using the filtered prismatization $\mf Y^{\smallN}$ as follows. 
Consider, for any object $(A,I)$ of $\mf{Y}_\smallprism$ with structure map $s\colon \Spf(A/I)\to\mf{Y}$,
the map over $\A^1/\Gm$
\begin{equation*}
\rho_{(A,I)}^+\colon \wh{\mc R}(\Fil^\bullet_I(A))\to\mf{Y}^\smallN,
\end{equation*}
defined as in \cite[Remark 5.5.19]{BhattNotes}. Note that over $\Gm/\Gm$, this recovers the composition 
\begin{equation}\label{eq:restriction-to-Gm/Gm}
\Spf(A)\xrightarrow{\rho_{(A,I)}} \mf Y^\smallprism\xrightarrow{F_\mf{Y}}\mf Y^\smallprism,
\end{equation}
where $\rho_{(A,I)}$ is the map from \cite[Construction 3.10]{BhattLuriePrismatization}. 
In fact, for a $p$-nilpotent ring $S$ and map $f\colon \Spec(S)\to \Spf(A)\simeq \{t\ne 0\}\subseteq \wh{\mc{R}}(\Fil^\bullet_I(A))$, the underlying Cartier--Witt divisor of $\rho^+_{(A,I)}\circ f$ (i.e., the image under the map $\Z_p^\smallN\to \Z_p^\smallprism$ defined in \cite[Construction 5.3.3]{BhattNotes}) of this filtered Cartier--Witt divisor is given by $\phi_A^*I\otimes_A\W(S)\to \W(S)$; note that there is a natural identification of $\bb{W}(S)$-modules $I\otimes_AF_*\W(S)\isomto F_*(\phi_A^*I\otimes_A\W(S))$.

\begin{lem}\label{lem:commutative-diagram-for-filtered-prismatization}
The following diagram is $2$-commutative
\begin{equation*}
\begin{tikzcd}
	{\Spf(A/I)\times(\A^1/\Gm)} 
    & {\wh{\mc R}(\Fil^\bullet_I(A))} 
    \\ {\mf Y\times(\A^1/\Gm)} 
    & {\mf Y^\smallN,}
	\arrow["{\iota_{(A,I)}}", from=1-1, to=1-2]
	\arrow["{s\times\mr{id}}"', from=1-1, to=2-1]
	\arrow["\rho^\smallN_{\dR,\mf{Y}}"', from=2-1, to=2-2]
	\arrow["{\rho^+_{(A,I)}}", from=1-2, to=2-2]
\end{tikzcd}
\end{equation*}
where $\iota_{(A,I)}$ corresponds to the natural map of graded rings $\mr{Rees}(\Fil^\bullet_I(A))\to \mr{Rees}(\Fil^\bullet_\mr{triv}(A/I))$. 
\end{lem}
\begin{proof} 
Let $S$ be a $p$-nilpotent ring, $f\colon \Spec(S)\to \Spf(A/I)$ a morphism, and $\alpha\colon L\to \mc{O}_{\Spec(S)}$ a generalized Cartier divisor. 
We first construct an isomorphism between the two filtered Cartier--Witt divisors obtained by two composites from the diagram, i.e., an isomorphism of $\rho^+_{(A,I)}\circ \iota_{(A,I)}$ and $\rho^\smallN_{\mr{dR},\mf{Y}}\circ (s\times\mr{id})$ after applying the map $\mf Y^\smallN(S)\to \Z_p^\smallN(S)$. 

The object of $\Z_p^\smallN(S)$ obtained from $\rho_{\mr{dR},\mf{Y}}^\smallN\circ (s\times \id)$ is given by
\begin{equation}\label{eq:Cartier--Witt--divisor-for-other-composition}
V(L)^\sharp\oplus F_\ast(\bb{W})\xrightarrow{(\alpha^\sharp,V)}\bb{W}.
\end{equation}
On the other hand, first applying $\iota_{(A,I)}$ gives us the $S$-point of $\wh{\mc{R}}(\Fil^\bullet_I(A))$ corresponding to the map $A\to A/I\xrightarrow{f}S$ and the line bundle $L$ on $S$ with the factorization $I\otimes_A S\xrightarrow{0} L\xrightarrow{\alpha} S$. Applying the construction of $\rho^+_{(A,I)}$ then gives the filtered Cartier--Witt divisor obtained as follows. 
The underlying admissible $\bb W$-module scheme is obtained by pushing forward
\begin{equation*}
    0\to I\otimes_A\Ga^\sharp\to I\otimes_A\bb W\to I\otimes_AF_*\bb W\to 0
\end{equation*}
along the zero map $I\otimes_A\Ga^\sharp\to V(L)^\sharp$. 
So, the underlying admissible $\bb W$-module scheme is the trivial extension $V(L)^\sharp\oplus (I\otimes_AF_*\bb W)$. 
The structure of a filtered Cartier--Witt divisor $V(L)^\sharp\oplus(I\otimes_AF_*\W)\to \W$ is the one naturally induced from $\alpha^\sharp\colon V(L)^\sharp\to \Ga^\sharp$. 

Applying \cite[Proposition 3.6.6]{BhattLurieAbsolute} to the Cartier--Witt divisor $\rho_{(A,I)}\circ f=(I\otimes_A\W(S)\to \W(S))$ we obtain an isomorphism of Cartier--Witt divisors
\begin{equation*}
    (\phi_A^*I\otimes_A\W(S)\to \W(S))\xrightarrow{(\gamma,\id)}(\W(S)\xrightarrow{p}\W(S)),
\end{equation*}
where $\gamma$ is induced from the factorization $I\otimes_A\W(S)\xrightarrow{\beta} \W(S)\xrightarrow{V}\W(S)$; specifically, $\gamma$ is the linearization of the $F$-semi-linear map $\beta$. 
This then induces an isomorphism of filtered Cartier--Witt divisors
\begin{equation}\label{eq:BL-3.6.6}
    \left(V(L)^\sharp\oplus(I\otimes F_*\W)\to \W \right)
    \xrightarrow{(\id_{V(L)^\sharp}\oplus F_*(\gamma),\id_{\W})} 
    \left( V(L)^\sharp\oplus F_*\W\xrightarrow{(\alpha^\sharp,V)} \W \right).
\end{equation}
This is our desired identification of these two compositions in $\bb{Z}_p^\smallN(S)$.

We now show this isomorphism respects the $\mf Y$-structures (i.e., lifts to an isomorphism in $\mf{Y}^\smallN(S)$). From the natural map 
\begin{equation*}
    A/I\to R\Gamma(\Spec(S),\W/(I\otimes_A\W))
\end{equation*}
and the given map $s\colon \Spf(A/I)\to \mf Y$, the $\mf Y$-structure on the source of \eqref{eq:BL-3.6.6} is induced from the map of quasi-ideals $[(0,\beta),\id]$: 
\begin{equation*}
\begin{tikzcd}
	{I\otimes_A\W} 
    & {V(L)^\sharp\oplus F_*\W} 
    \\ {\W} 
    & {\W .}
	\arrow["{(0,\beta)}", from=1-1, to=1-2]
	\arrow["{}"', from=1-1, to=2-1]
	\arrow["\id", from=2-1, to=2-2]
	\arrow["{(\alpha^\sharp,V)}", from=1-2, to=2-2]
\end{tikzcd}
\end{equation*}
Similarly, the $\mf Y$-structure on the target of \eqref{eq:BL-3.6.6} is induced from 
\begin{equation*}
    \begin{tikzcd}
	{I\otimes_A\W} 
    & {V(L)^\sharp\oplus F_*(\phi^*I\otimes_A \W)} 
    \\ {\W} 
    & {\W .}
	\arrow["{(0,F)}", from=1-1, to=1-2]
	\arrow["{}"', from=1-1, to=2-1]
	\arrow["\id", from=2-1, to=2-2]
	\arrow["{}", from=1-2, to=2-2]
    \end{tikzcd}
\end{equation*}
Here, by construction, the two maps $\beta$ and $F_*(\gamma)\circ F$ coincide. 
Thus, the $\mf Y$-structures agree, and hence we obtain the desired commutativity. 
\end{proof}

\begin{prop}\label{prop:de-Rham-realization-BK-relationship-refined} For any open subset $\Spf(R)\subseteq \mf{X}$ with $R=R_0\otimes_W\mc{O}_K$ and with formal framing $\framew$, there exists an isomorphism in $\cat{QCoh}(\wh{\mc{R}}(\Fil^\bullet_E(\mf S_R)))$
\begin{equation*}
\left(\rho^+_{(\mf S_R^{(\phi_\framew)},(E)),\mf X}\right)^*\Pi_\mf{X}(\mc E)
\isomto
\bigoplus_{i\in\Z}\Fil^{-i}_\mr{Nyg}(\phi_\framew^*\mc{E}(\mf{S}_R^{(\phi_\framew)},(E))),
\end{equation*}
which is compatible with inclusions of affine open subsets of $\mf{X}$ with compatible framings.\footnote{Here we implicitly use the equivalence from \cite[Chapter I, §4.3, Proposition 7]{LvO} showing that we may explicitly identify $t$-torsion-free quasi-coherent sheaves on $\wh{\mc{R}}(\Fil^\bullet_E(\mf{S}_R^{(\phi_\framew)}))$ with filtered modules over $(\mf{S}_R^{(\phi_\framew)},\Fil^\bullet_E))$.}
\end{prop}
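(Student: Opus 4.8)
The plan is to realize both sides as $t$-torsion-free quasi-coherent sheaves on the Rees stack $\wh{\mc{R}}(\Fil^\bullet_E(\mf{S}_R^{(\phi_\framew)}))$ and to match them. Under the equivalence recalled in the footnote to the statement \cite[Chapter I, \S4.3, Proposition 7]{LvO}, the right-hand side is exactly the Rees module of the Nygaard-filtered Frobenius pullback $\left(\phi_\framew^*\mc{E}(\mf{S}_R^{(\phi_\framew)},(E)),\Fil^\bullet_\mr{Nyg}\right)$. Hence it suffices to prove the following identity for the Breuil--Kisin prism, which I would first formulate for an arbitrary prism $(A,I)$ of $\mf{X}_\smallprism$ with structure map $\Spf(A/I)\to\mf{X}$: the pullback $(\rho^+_{(A,I)})^*\Pi_\mf{X}(\mc{E})$ is canonically the Rees module of $\left(\phi_A^*\mc{E}(A,I),\Fil^\bullet_\mr{Nyg}\right)$.

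I would isolate the underlying (generic) module first. Over the open locus $\{t\neq 0\}\simeq\Spf(A)$ of the Rees stack, the map $\rho^+_{(A,I)}$ recovers $F_\mf{X}\circ\rho_{(A,I)}$ by \eqref{eq:restriction-to-Gm/Gm}. Combined with the identity $\mr{R}_\mf{X}(\Pi_\mf{X}(\mc{E}))\simeq(\mc{E},\varphi_\mc{E})$ of \cite[Theorem 2.31 (1)]{GuoLi}, this identifies the $t$-inverted part of $(\rho^+_{(A,I)})^*\Pi_\mf{X}(\mc{E})$ with $\phi_A^*\mc{E}(A,I)$, so that the pullback amounts to a filtration on this fixed module; the essential point is that this filtration is the Nygaard one.

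To identify the filtration I would argue qrsp-locally and descend. For a qrsp cover $S$ of $A/I$, the initial object of $S_\smallprism$ is $(\Acrys(S),(p))$, and by the construction of the Guo--Li functor the value of $\Pi_\mf{X}(\mc{E})$ there is the Nygaard-filtered module $\phi^*\mc{E}(\Acrys(S),(p))$; on the other hand the filtered prismatization of \cite[Remark 5.5.19]{BhattNotes} is designed precisely so that pullback along $\rho^+_{(\Acrys(S),(p))}$ returns its Rees module. Both $(\rho^+)^*\Pi_\mf{X}(\mc{E})$ and the Rees construction of $\Fil^\bullet_\mr{Nyg}$ are recovered from their qrsp base changes via the descent isomorphism \eqref{eq:p-adic-ff-descent-for-modules}, the Nygaard filtration being defined intrinsically through $\varphi_\mc{E}$ and $\mc{I}_\smallprism$ and hence compatible with the transition maps of the cover. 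This yields the identity for $(A,I)=(\mf{S}_R^{(\phi_\framew)},(E))$; compatibility with inclusions of affine opens carrying compatible framings is then formal, by the naturality in $(A,I)$ of $\rho^+_{(A,I)}$, of $\Pi_\mf{X}$, and of the Nygaard filtration.

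The main obstacle is this last identification: matching, integrally, the filtration transported stack-theoretically along $\rho^+$ (built from filtered Cartier--Witt divisors) with the intrinsic Nygaard filtration $\Fil^\bullet_\mr{Nyg}(\phi_\framew^*\mc{E})$ defined by the Frobenius. Such agreements are softer rationally or after a further Frobenius twist (cf.\@ Remark \ref{rem:terminology-Nyg}), so the real content lies in reducing to the explicit qrsp charts, where both filtrations are read off from the Nygaard filtration on $\Acrys(S)$, and in checking that the descent of \eqref{eq:p-adic-ff-descent-for-modules} respects the filtered structure.
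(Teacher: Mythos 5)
Your skeleton matches the paper's proof: identify the underlying bundle over the locus $\{t\neq 0\}$ via \eqref{eq:restriction-to-Gm/Gm} and \cite[Theorem 2.31 (1)]{GuoLi}, then pin down the filtration by reducing to prisms of perfectoid type, where $\Pi_\mf{X}$ is \emph{constructed} to return the Rees module of the Nygaard filtration, and descend. However, the descent step as you wrote it has two concrete errors. First, your charts are the wrong prisms: here $A/I=R$ is $p$-torsion-free, so a qrsp cover $S$ of $R$ lives in mixed characteristic, and the initial object of $S_\smallprism$ is $(\Prism_S,I_S)$ --- for perfectoid $S$ this is $(\Ainf(S),\ker\theta)$ --- and \emph{not} $(\Acrys(S),(p))$; the latter is the initial prism only for qrsp rings of characteristic $p$ (see \S\ref{par:crys-syn-equiv}). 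Accordingly, the Guo--Li construction reads off the Nygaard filtration at the prisms $\Prism_S$, and evaluating at the crystalline prisms $(\Acrys(S),(p))$ (which are objects of $S_\smallprism$ but not initial) computes a different object, the one relevant to $\bb{D}_\crys$ rather than to the filtration at $\mf{S}_R$. Second, the descent lemma you invoke does not apply: \eqref{eq:p-adic-ff-descent-for-modules} concerns modules over the quasi-syntomic base ring $R$ itself, whereas both sides of the desired isomorphism are ($t$-torsion-free, hence filtered) modules over $\mf{S}_R$ and its Rees algebra, about which it says nothing.

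What replaces both points --- and is what the paper does --- is to use the single faithfully flat map $\wt{\alpha}_{\inf,\framew^\flat}\colon \mf{S}_R\to\Ainf(\wt{R})$ attached to the perfectoid cover $R\to\wt{R}$ of \cite[Lemma 1.15]{IKY1}: since both sides are $t$-torsion-free, the isomorphism can be constructed and checked after this one base change, where it holds by the defining property of $\Pi_\mf{X}$. Even in this form you must still know that the Nygaard filtration at $(\mf{S}_R,(E))$ base changes to the Nygaard filtration at $(\Ainf(\wt{R}),(\wt{\xi}))$; this is exactly the point you assert as ``compatible with the transition maps of the cover'' without proof, and it is where flatness of $\mf{S}_R\to\Ainf(\wt{R})$ enters, since $\Fil^r_\mr{Nyg}$ is defined as a preimage under $\varphi_\mc{E}$ and its formation commutes only with flat base change. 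This also shows why your opening move --- formulating the claim for an arbitrary prism $(A,I)$ of $\mf{X}_\smallprism$ --- overreaches: for a general prism there is no flat map to a perfectoid prism, and identifying $(\rho^+_{(A,I)})^\ast\Pi_\mf{X}(\mc{E})$ with the intrinsic Nygaard filtration at $(A,I)$ is precisely the kind of statement that can fail (the Nygaard filtration is not a crystal, cf.\ Remark \ref{rem:terminology-Nyg}), which is why the proposition is stated only for Breuil--Kisin prisms.
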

\begin{proof}
We may assume that $\mf{X}=\Spf(R)$. To define the isomorphism of underlying vector bundles
\begin{equation*}
\left(\rho^+_{(\mf{S}_R^{(\phi_\framew)},(E)),\mf X}\right)^\ast\Pi_\mf{X}(\mc{E})\isomto \phi_\framew^*\mc E(\mf S_R^{(\phi_\framew)},(E)),
\end{equation*}
it suffices to recall that $\rho_{(A,I)}^+$ restricted to the fiber of $\bb{G}_m/\bb{G}_m$ is precisely the map \eqref{eq:restriction-to-Gm/Gm}.

Consider the faithfully flat cover $R\to\wt{R}$ by a perfectoid ring $\wt{R}$ from \cite[Lemma 1.15]{IKY1}. 
Since the left-hand side comes from a filtered module (i.e., is $t$-torsion-free) by the construction of $\Pi_\mf X$, it suffices to verify the claim after base changing along the map $\wt{\alpha}_{\mr{inf},\framew^\flat}\colon \mf S_R\to\Ainf(\wt{R})$ from \cite[\S1.1.5]{IKY1}, i.e., it suffices to construct a natural isomorphism
\begin{equation*}
\left(\rho^+_{(\Ainf(\wt{R}),(\tilde\xi)),\mf X}\right)^*\Pi_\mf{X}(\mc E)
\isomto
\bigoplus_{i\in\Z}\Fil^{-i}_\mr{Nyg}(\phi^*\mc{E}(\Ainf(\wt{R})),(\wt{\xi})). 
\end{equation*}
But, the functor $\Pi_\mf{X}$ is precisely constructed so that this holds.
\end{proof}

\begin{proof}[Proof of Proposition \ref{prop:de-Rham-realization-BK-relationship}] By Lemma \ref{lem:commutative-diagram-for-filtered-prismatization}, we have 
\begin{equation*}
    \D_\dR^+(\mc E,\varphi_\mc E)|_{\Spf(R)}\isomto \iota^*_{(\mf S_R^{(\phi_\framew)},(E)),\mf{X}}(\rho^+_{(\mf S_R^{(\phi_\framew)},(E))})^\ast\Pi_\mf{X}(\mc E).
\end{equation*}
By Proposition \ref{prop:de-Rham-realization-BK-relationship-refined}, the right-hand side is given by 
\begin{equation*}
    \bigoplus_{i\in\Z}\Fil^{-i}_\mr{Nyg}(\phi_\framew^*\mc{E}(\mf{S}_R^{(\phi_\framew)},(E)))\otimes^L_{\mr{Rees}(\Fil^\bullet_E(\mf S_R))}R[t].
\end{equation*}
The proof then follows by Lemma \ref{lem:derived Rees tensor identification} below.
\end{proof}

\begin{lem}\label{lem:derived Rees tensor identification}
    Let $(A,(d))$ be a bounded prism, and $(M,\varphi_M)$ an object of $\cat{Vect}^\varphi(A,(d))$. Set $\Fil^\bullet\defeq \Fil^\bullet_\mr{Nyg}(\phi^\ast M)$ and $\ov{\Fil}^\bullet\defeq \ov{\Fil}^\bullet_\mr{Nyg}(\phi^\ast M/(d))$. Then, the natural map
    \begin{equation}\label{eq:derived-Rees-tensor-identification}
        \left(\bigoplus_{i\in\Z}\Fil^{-i}t^i\right)\otimes^L_{\mr{Rees}(\Fil^\bullet_d(A))}(A/d)[t]
        \to
        \bigoplus_{i\in\Z}\ov{\Fil}^{-i}t^i
    \end{equation}
    is an isomorphism of graded $(A/I)[t]$-modules. 
\end{lem}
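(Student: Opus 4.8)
The plan is to reduce the derived tensor product to an ordinary quotient by a single nonzerodivisor, and then to compute degree by degree. Write $N\defeq \phi^\ast M$ and $P\defeq \bigoplus_{i\in\Z}\Fil^{-i}t^i$ for the Rees module on the left, and recall $I=(d)$. The first step is to record the explicit presentation $\mr{Rees}(\Fil^\bullet_d(A))\isomto A[t,v]/(tv-d)$, where $t$ has degree $+1$, $v$ has degree $-1$, and $v$ corresponds to $dt^{-1}$. Under this presentation the natural quotient map to $A/I[t]=\mr{Rees}(\Fil^\bullet_\mr{triv}(A/I))$ is precisely reduction modulo $v$ (setting $v=0$ forces $d=0$), so $A/I[t]\isomto \mr{Rees}(\Fil^\bullet_d(A))/(v)$. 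Thus \eqref{eq:derived-Rees-tensor-identification} is identified with the natural map $P\otimes^L_{\mr{Rees}(\Fil^\bullet_d(A))}\mr{Rees}(\Fil^\bullet_d(A))/(v)\to \bigoplus_i\ov{\Fil}^{-i}t^i$.

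The key step is to show that multiplication by $v$ is a nonzerodivisor on $P$, so that all higher Tor vanishes. Since $v$ acts by $xt^i\mapsto dx\,t^{i-1}$ and $d\Fil^{-i}\subseteq \Fil^{-i+1}$ (by $A$-linearity of $\varphi_M$, as $\varphi_M(dx)=d\varphi_M(x)\in d^{-i+1}M$ whenever $\varphi_M(x)\in d^{-i}M$), this reduces to the injectivity of multiplication by $d$ on $N$. As $(A,(d))$ is a bounded prism, $d$ is a nonzerodivisor on $A$, and $N=\phi^\ast M$ is a finite projective $A$-module, hence $d$-torsion-free; this gives the claim. Consequently the $2$-term Koszul complex $[\,\mr{Rees}(\Fil^\bullet_d(A))\xrightarrow{v}\mr{Rees}(\Fil^\bullet_d(A))\,]$ resolves $\mr{Rees}(\Fil^\bullet_d(A))/(v)$, and tensoring it with $P$ yields $[\,P\xrightarrow{v}P\,]$, whose homology is concentrated in degree $0$ and equal to $P/vP$. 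In particular the left-hand side of \eqref{eq:derived-Rees-tensor-identification} has no higher Tor and equals $P/vP$.

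It then remains to identify $P/vP$ with $\bigoplus_i\ov{\Fil}^{-i}t^i$ via the natural map. In graded degree $i$ the image of $v$ is $d\Fil^{-i-1}t^i$, so $(P/vP)_i\isomto \Fil^{-i}/d\Fil^{-i-1}$, and the natural map sends this onto $\ov{\Fil}^{-i}=\mr{im}(\Fil^{-i}\to N/d)=\Fil^{-i}/(\Fil^{-i}\cap dN)$. The proof is therefore completed by the strictness identity $\Fil^{-i}\cap dN=d\Fil^{-i-1}$. This identity is immediate from the definition of the Nygaard filtration: if $dz\in\Fil^{-i}$ then $d\varphi_M(z)=\varphi_M(dz)\in d^{-i}M$, and dividing by $d$ (legitimate since $d$ is a nonzerodivisor on the vector bundle $M$) gives $\varphi_M(z)\in d^{-i-1}M$, i.e.\@ $z\in\Fil^{-i-1}$; the reverse inclusion is clear, and for $i\geqslant 0$ both sides reduce to $dN$.

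The main obstacle is the vanishing of higher Tor, which we handle by exhibiting the single nonzerodivisor $v$ and the associated Koszul resolution; once this reduction is in place, the only remaining content is the strictness identity $\Fil^{-i}\cap dN=d\Fil^{-i-1}$, which expresses the compatibility of the Nygaard filtration with the $d$-adic filtration and is exactly where $d$-torsion-freeness of $M$ is used.
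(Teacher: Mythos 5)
Your proof is correct and is essentially the paper's own argument: both reduce the left-hand side of \eqref{eq:derived-Rees-tensor-identification} to the two-term complex given by multiplication by $d$ (you build the Koszul resolution explicitly from the presentation $\mr{Rees}(\Fil^\bullet_d(A))\cong A[t,v]/(tv-d)$, whereas the paper cites the proof of \cite[Lemma 1.7]{IKY2} for the same reduction), and then both conclude via injectivity of multiplication by $d$ on the finite projective module $\phi^\ast M$ together with the strictness identity $\Fil^r\cap d\,\phi^\ast M=d\,\Fil^{r-1}$, proved identically from the definition of the Nygaard filtration. One bookkeeping remark: exactness of the Koszul resolution of $A/I[t]$ requires $v$ to be a nonzerodivisor on $\mr{Rees}(\Fil^\bullet_d(A))$ itself (immediate, since it embeds in $A[t,t^{-1}]$ and $d$ is a nonzerodivisor on $A$), not on your module $P$ --- your ``consequently'' inverts this dependency, though the needed fact is trivial.
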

\begin{proof}
    By the proof of \cite[Lemma 1.7]{IKY2}, the source of \eqref{eq:derived-Rees-tensor-identification} is quasi-isomorphic to the complex 
    \begin{equation*}
        \bigoplus_{i\in\Z}\Fil^{-(i-1)}t^i\xrightarrow{\cdot d}
        \bigoplus_{i\in\Z}\Fil^{-i}t^i,
    \end{equation*}
placed in degree $[-1,0]$. 
Since the multiplication-by-$d$ map is injective by the assumption that $M$ is projective, the map in question, on the $r^\text{th}$ graded piece is identified with the natural map 
\be
\Fil^r/d \Fil^{r-1}\to \ov{\Fil}^r, 
\ee
which is surjective by the definition of $\ov{\Fil}^\bullet$. We can show the injectivity as follows. Let $x=dy$ be an element of $\Fil^r\cap d \phi^*M$ where here $y$ belongs to $\phi^*M$. Then, by the definition of the Nygaard filtration, we get that $d\varphi_M(y)$ is in $d^rM$, and hence that $\varphi_M(y)$ belongs to $d^{r-1}M$, that is, $y$ is an element of $\Fil^{r-1}$. Thus, the above natural map is an isomorphism from where the claim follows. 
\end{proof}

\subsection{The crystalline-de Rham comparison} We now describe the crystalline--de Rham comparison, which identifies the vector bundles on $\mf{X}$ given by $\underline{\bb{D}}_\crys(\mc{E},\varphi_\mc{E})|_{\mf X_\Zar}$ and $\bb{D}_\dR(\mc{E},\varphi_\mc{E})$ for a prismatic $F$-crystal $(\mc{E},\varphi_\mc{E})$. Throughout we will assume $\mc{O}_K=W$.

\begin{thm}[Crystalline--de Rham comparison]\label{thm:crys-dR-comparison}
Let $\mf X$ be a base formal $W$-scheme.
\begin{enumerate}[leftmargin=.3in]
\item There is an identification between $\rho_{\crys,\mf{X}}$ and $\rho_{\dR,\mf{X}}^\smallprism$ as objects of $\mr{Map}(\mf{X},\mf{X}^\smallprism)$. 
\item For an object $\mc{E}$ of $\cat{Vect}(\mf{X}_\smallprism)$, there is a canonical isomorphism in $\cat{Vect}(\mf{X})$:
\begin{equation*}
    \iota_\mf{X}\colon \mc{E}^\mr{crys}\isomto\mc{E}^\mr{dR}.
\end{equation*}
In particular, for an object $(\mc{E},\varphi_\mc E)$ of $\cat{Vect}(\mf{X}_\smallprism)$, there is a canonical isomorphism
\begin{equation}\label{eq:crys dR isom stacky construction}
\iota_{\mf{X}}\colon \underline{\D}_\crys(\mc E,\varphi_\mc{E})|_{\mf{X}_\mr{Zar}} \isomto \D_{\dR}(\mc E,\varphi_\mc{E}).
\end{equation}
\end{enumerate}
\end{thm}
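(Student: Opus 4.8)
The plan is to prove part (1) — that $\rho_{\crys,\mf{X}}$ and $\rho^\smallprism_{\dR,\mf{X}}$ agree as objects of $\mr{Map}(\mf{X},\mf{X}^\smallprism)$ — and then to deduce part (2) formally. Since both morphisms are intrinsically defined and an identification between them is a section of an Isom-sheaf (hence may be produced locally and glued), I would first reduce to the affine case $\mf{X}=\Spf(R)$ with a formal framing $\framew$, and then compare the two morphisms on $S$-points for $p$-nilpotent $W$-algebras $S$ equipped with a map $\Spec(S)\to\mf{X}$, each landing in the groupoid $\mf{X}^\smallprism(S)$ of Cartier--Witt divisors $d$ together with a structure map $\Spec(\mr{Cone}(d))\to\mf{X}$. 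On the crystalline side, unwinding the definition of $\rho_{\crys,\mf{X}}$ (as made explicit in Remark \ref{rem:Dcrys-in-coordinates}) shows that $\rho_{\crys,\mf{X}}(S)$ is the Cartier--Witt divisor $\W(S)\xrightarrow{p}\W(S)$ whose $\mf{X}$-structure is the Frobenius-twisted composite $R\to R/p\xrightarrow{F_{R/p}}R/p\to\W(S)/p$.

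Next I would compute the de Rham point on the same test data. By Definition \ref{defn:de-rham-point}, restricting $\rho^\smallN_{\dR,\mf{X}}$ to $\Gm/\Gm\subseteq\A^1/\Gm$ forces the generalized Cartier divisor to be an isomorphism, so we may take $L=\mc{O}$ and $\alpha=\id$; thus $\rho^\smallprism_{\dR,\mf{X}}(S)$ is the underlying Cartier--Witt divisor of $F_\ast\W\oplus V(\mc{O})^\sharp\xrightarrow{(V,\,\id^\sharp)}\W$. The computation I need here is exactly the one carried out in the proof of Lemma \ref{lem:commutative-diagram-for-filtered-prismatization}: applying \cite[Proposition 3.6.6]{BhattLurieAbsolute} to the Cartier--Witt divisor attached to a prism $(A,I)$ produces a canonical isomorphism of the underlying Cartier--Witt divisor with $\W(S)\xrightarrow{p}\W(S)$, via the isomorphism \eqref{eq:BL-3.6.6}. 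I would run that argument with the Breuil--Kisin prism $(\mf{S}_R,(E))$, for which $A/I\simeq R$ (recall $\mc{O}_K=W$), so that in Lemma \ref{lem:commutative-diagram-for-filtered-prismatization} the test map $s$ is the identity on $\mf{X}$ and the restriction to $\Gm/\Gm$ via \eqref{eq:restriction-to-Gm/Gm} expresses $\rho^\smallprism_{\dR,\mf{X}}$ as $F_\mf{X}\circ\rho_{(\mf{S}_R,(E))}$ precomposed with the specialization $\mf{S}_R\to R$. This exhibits $\rho^\smallprism_{\dR,\mf{X}}(S)$ as $\W(S)\xrightarrow{p}\W(S)$ as well, and functoriality of the isomorphism in $S$ produces the desired $2$-isomorphism after I check that it is independent of the framing (so that the local identifications agree on overlaps and glue).

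The crux — and the step I expect to be the main obstacle — is matching the two $\mf{X}$-structure maps, i.e., verifying that the Frobenius twist $R\to R/p\xrightarrow{F_{R/p}}R/p$ built into $\rho_{\crys,\mf{X}}$ through the Frobenius $F_{\mf{X}_k}$ of its definition is exactly the twist that emerges on the de Rham side. This is not accidental: in \cite[Proposition 3.6.6]{BhattLurieAbsolute} the comparison isomorphism $\gamma$ is the linearization of an $F$-semilinear map $\beta$, and it is precisely this semilinearity that inserts a Frobenius into the induced map on cones $\mr{Cone}(d)(S)=\W(S)/p$. I would therefore track the quasi-ideal structure maps through \eqref{eq:BL-3.6.6}, exactly as in the proof of Lemma \ref{lem:commutative-diagram-for-filtered-prismatization} where the maps $\beta$ and $F_\ast(\gamma)\circ F$ are shown to coincide, and confirm that the resulting structure map $R\to\W(S)/p$ is the Frobenius-twisted one recorded above. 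Identifying these two objects of $\mf{X}^\smallprism(S)$, naturally in $S$, completes part (1).

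For part (2), the identification of points is all that remains. By Proposition \ref{prop:crystalline-realization-stacky-equiv} we have $\rho_{\crys,\mf{X}}^\ast(\mc{E})\simeq\mc{E}^\crys|_{\mf{X}_\Zar}$, while by definition $(\rho^\smallprism_{\dR,\mf{X}})^\ast(\mc{E})=\mc{E}^\dR$. Pulling back $\mc{E}$ along the $2$-isomorphism of part (1) therefore yields the canonical isomorphism $\iota_\mf{X}\colon\mc{E}^\crys|_{\mf{X}_\Zar}\isomto\mc{E}^\dR$ of vector bundles on $\mf{X}$. Applying this to $(\mc{E},\varphi_\mc{E})$ and recalling that $\underline{\D}_\crys(\mc{E},\varphi_\mc{E})$ has underlying crystal $\mc{E}^\crys$ gives \eqref{eq:crys dR isom stacky construction}; no extra work is needed for the Frobenius structures, since the identification is that of the underlying crystals.
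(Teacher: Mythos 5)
Your deduction of part (2) from part (1) via Proposition \ref{prop:crystalline-realization-stacky-equiv} is exactly the paper's, and your descriptions of the two points are accurate. But for part (1) you take a genuinely different route from the paper, and as written it has a gap. The paper never localizes and never chooses a framing: for an arbitrary $B$-point $s$ of an arbitrary base formal $W$-scheme it observes that both $\rho_{\crys,\mf X}(s)$ and $\rho^\smallprism_{\dR,\mf X}(s)$ have underlying Cartier--Witt divisor literally $\bb W(B)\xrightarrow{p}\bb W(B)$, with structure maps passing respectively through $\Ga^\dR(B)$ and through the cone $C=\mr{Cone}\bigl(\Ga^\sharp(B)\oplus\bb W(B)\xrightarrow{(\mr{can},V)}\bb W(B)\bigr)$; the identification is then the composite $\Ga^\dR(B)\isomfrom C\isomto \bb W(B)/p$ of \cite[Corollary 2.6.8]{BhattNotes}, and the only verification is one commutative square of quasi-ideals --- no prisms, no \cite[Proposition 3.6.6]{BhattLurieAbsolute}, no gluing. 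Your route, through the Breuil--Kisin prism, Lemma \ref{lem:commutative-diagram-for-filtered-prismatization} restricted to $\Gm/\Gm$, and Remark \ref{rem:Dcrys-in-coordinates}, is essentially the content of the paper's Proposition \ref{prop:crys-dR-comparison-using-prisms}, which is proved \emph{after} the theorem and re-expresses the already-constructed identification in framed coordinates.

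The genuine gap is the step you defer: framing-independence of your local $2$-isomorphisms, which is precisely what your Isom-sheaf gluing argument needs. This is not a formality. A $2$-isomorphism between two fixed objects of $\mf X^\smallprism(S)$ need not be unique: the Cartier--Witt divisor $\bb W(S)\xrightarrow{p}\bb W(S)$ has nontrivial automorphisms as a generalized Cartier divisor, namely units $u$ with $p(u-1)=0$, and such units exist (e.g.\ $u=1+[x]$ over $S=\F_p[x]/(x^2)$, where $p[x]=V(F[x])=V[x^p]=0$ and $[x]^2=0$). So the Isom-groupoids whose sections you are gluing are not obviously discrete singletons, and two identifications built from different framings could a priori differ by such an automorphism and fail to agree on overlaps. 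Your construction genuinely uses the framing: the $\delta$-structure on $\mf S_R$, hence the prism $(\mf S_R,(E))$, hence the map $\beta$ and its linearization $\gamma$ entering \eqref{eq:BL-3.6.6}, all depend on $\phi_\framew$. You must therefore either prove the composite isomorphism is independent of this choice or prove the relevant Isom-set is a singleton; the proposal offers no argument for either. In practice, checking framing-independence means unwinding both framed identifications back to the canonical quasi-ideal presentations of the two points --- that is, doing the paper's direct global computation --- at which point the detour through the Breuil--Kisin prism becomes unnecessary.
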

\begin{proof}
Assertion (2) is an immediate consequence of assertion (1) together with Proposition \ref{prop:crystalline-realization-stacky-equiv}. 

To prove assertion (1), let $B$ be a $p$-nilpotent ring and $s\colon\Spec(B)\to \mf X$ be a $B$-point of $\mf X$. 
Recall that the underlying Cartier--Witt divisors of both $\rho^\smallprism_{\dR,\mf X}(s)$ and $\rho_{\crys,\mf X}(s)$ are $\bb W(B)\xrightarrow{p} \bb W(B)$, and that their $\mf X$-structures are given respectively by 
\begin{eqnarray*}
\Spec(\bb W(B)/p)\xrightarrow{F}\bb \Spec(\bb{W}(B)/V)\xrightarrow{s}\mf X, 
\\
\Spec(\bb W(B)/p)\isomto\bb \Spec(\Ga^\dR(B))\to \Spec(B)\xrightarrow{s}\mf X,    
\end{eqnarray*}
where the map $\bb W(B)/p\isomfrom \Ga^\dR(B)$ is from \cite[Corollary 2.6.8]{BhattNotes} (note that we are not keeping track of the $W$-structure, and hence we can ignore the twist of $W$-module structure appearing in loc.\ cit.). If $C$ denotes the animated ring $\mr{Cone}(\Ga^\sharp(B)\oplus \bb W(B)\xrightarrow{(\mr{can},V)}\bb W(B))$, this isomorphism is defined to be the composite of the two isomorphisms 
\be
\Ga^\dR(B)\isomfrom C\isomto \bb W(B)/p.
\ee
Here, the left arrow is given by the restriction map $\bb W(B)\to B$ and the first projection onto $\Ga^\sharp$. 
The right arrow is given by the Frobenius map $F\colon \bb W(B)\to \bb W(B)$ and the second projection onto $\bb W(B)$ 
(see loc.\ cit.\ for details).
So, it suffices to show that the following diagram commutes:
\begin{equation}\label{eq:}
\begin{tikzcd}
	{B} & {\bb W(B)/V(\bb W(B))} 
    \\ {\Ga^\dR(B)} & {C.}
	\arrow["{\sim}"', from=1-2, to=1-1]
	\arrow["{}"', from=1-1, to=2-1]
	\arrow["{\sim}"', from=2-2, to=2-1]
	\arrow["{\mr{can}}", from=1-2, to=2-2]
\end{tikzcd}
\end{equation}
But, this is obtained from a commutative diagram of underlying quasi-ideals, and thus a commutative diagram of quotients. Thus, we obtain a functorial identification of the two objects $\rho_{\crys,\mf X}(s)$ and $\rho^\smallprism_{\dR,\mf X}(s)$, and hence an isomorphism $\rho_{\crys,\mf X}\isomto \rho^\smallprism_{\dR,\mf X}$. 
\end{proof}

We end by giving a more down-to-earth version of the crystalline--de Rham comparison over a $W$-algebra $R$ with formal framing $\framew$. To do so, we first make a construction using the Breuil prism $(S_R^{(\phi_\framew)},(p))$ as in \cite[\S1.1.5]{IKY1}. In what follows, we drop the decoration $\phi_\framew$ on Breuil(--Kisin) rings when only the ring structure is important. 

Let $\rho_{(S_R^{(\phi_\framew)},(p))}\colon \Spf(S_R)\to R^\smallprism$ be as in \cite[Construction 3.10]{BhattLuriePrismatization} and denote by $\mr{sp}_\dR$ the map on formal spectra induced by the composition $S_R\twoheadrightarrow S_R/\Fil^1_\mr{PD}\isomto R$ (i.e., the map $u\mapsto \pi$).

\begin{construction}\label{construction: crys dR isom with Breuil prism} Let $\mc{E}$ be an object of $\cat{Vect}(R_\smallprism)$. Using \cite[Diagram (1.1.2) and Lemma 1.14]{IKY1} and the crystal property, we obtain isomorphisms:
\begin{equation}\label{eq:crys-dr-triple-isom}
    \phi_\framew^*\mc E(R^{(\phi_\framew)},(p))\otimes_{R}S_R \isomto \mc E(S_R^{(\phi_\framew)},(p)) \isomfrom \phi_\framew^*\mc E(\mf S_R^{(\phi_\framew)},(E))\otimes_{\mf S_R}S_R.
\end{equation}
Reducing this isomorphism along $\mr{sp}_\mr{dR}^\ast\colon S_R\to R$, we obtain an isomorphism of $R$-modules
\begin{equation}\label{eq:pre crys-dr-isom}
    \phi_\framew^*\mc E(R^{(\phi_\framew)},(p))\isomto \phi_\framew^*\mc E(\mf S_R^{(\phi_\framew)},(E))/E.
\end{equation}
Since the left-hand (resp.\ right-hand) side is identified with $\mc{E}^\mr{crys}$ (resp.\ $\mc{E}^\mr{dR}$) by Proposition \ref{prop: pris crys in char p} (resp.\@ Proposition \ref{prop:de-Rham-realization-BK-relationship}), we obtain an isomorphism 
\begin{equation}\label{eq:crys-dr-isom}
    \mc{E}^\mr{crys}\isomto\mc{E}^\mr{dR}.
\end{equation}
\end{construction}

\begin{prop}\label{prop:crys-dR-comparison-using-prisms} Let $R$ be a base $W$-algebra with formal framing $\framew$. Set $\mf X=\Spf(R)$. 
\begin{enumerate}
\item The identification from Theorem \ref{thm:crys-dR-comparison} naturally factors as 
\be
\rho_{\crys,\mf X}\isomto \rho_{(S_R^{(\phi_\framew)},(p))}\circ \sp_\dR \isomfrom \rho_{\dR,\mf X}^\smallprism.
\ee
\item The isomorphism in \eqref{eq:pre crys-dr-isom} can be identified with the $\iota_\mf{X}$ from Theorem \ref{thm:crys-dR-comparison}. 
\end{enumerate}
\end{prop}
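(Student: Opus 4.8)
The plan is to establish the factorization (1) at the level of stacky points and then to deduce (2) by functoriality of pullback. The geometric input for (1) is that the Breuil prism is a common refinement of the crystalline and Breuil--Kisin prisms: underlying the three-term isomorphism \eqref{eq:crys-dr-triple-isom} are two morphisms of prisms in $R_\smallprism$,
\be
(R^{(\phi_\framew)},(p))\longrightarrow (S_R^{(\phi_\framew)},(p))\longleftarrow (\mf S_R^{(\phi_\framew)},(E)),
\ee
given by $R\to S_R$ and the inclusion $\mf S_R\to S_R$ into the PD-envelope. By functoriality of the prismatization points of \cite[Construction 3.10]{BhattLuriePrismatization}, these induce canonical $2$-isomorphisms $\rho_{(S_R^{(\phi_\framew)},(p))}\isomto \rho_{(R^{(\phi_\framew)},(p))}\circ g_\crys$ and $\rho_{(S_R^{(\phi_\framew)},(p))}\isomto \rho_{(\mf S_R^{(\phi_\framew)},(E))}\circ g_\dR$, where $g_\crys,g_\dR$ are the morphisms of formal spectra dual to $R\to S_R$ and $\mf S_R\to S_R$. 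Precomposing with $\sp_\dR$ and using Remark \ref{rem:Dcrys-in-coordinates}, which identifies $\rho_{\crys,\mf X}$ with $\rho_{(R^{(\phi_\framew)},(p))}\circ\phi_\framew$, together with the description of $\rho^\smallprism_{\dR,\mf X}$ through the Breuil--Kisin prism furnished by \eqref{eq:restriction-to-Gm/Gm} and Lemma \ref{lem:commutative-diagram-for-filtered-prismatization}, reduces (1) to checking that $g_\crys\circ \sp_\dR=\phi_\framew$ and that $g_\dR\circ\sp_\dR$ is the Breuil--Kisin specialization $\mf S_R\to \mf S_R/E\cong R$, and then that the two resulting identifications compose to the one of Theorem \ref{thm:crys-dR-comparison}.

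I would verify this on $B$-points $s\colon\Spec(B)\to\mf X$, reusing the computation in the proof of Theorem \ref{thm:crys-dR-comparison}. All four maps in question have underlying Cartier--Witt divisor $\bb W(B)\xrightarrow{p}\bb W(B)$, so the only content is the comparison of the induced $\mf X$-structures, i.e.\@ of the maps out of the cone $C$. Here the Breuil ring is pivotal: the specialization $\sp_\dR^\ast\colon S_R\to R$ ($u\mapsto\pi$) and the relation between $S_R$ and $\Ga^\dR$ exhibit the two $\mf X$-structures $\Spec(\Ga^\dR(B))\to\mf X$ and $\Spec(B)\xrightarrow{F}\cdots\to\mf X$ from the proof of Theorem \ref{thm:crys-dR-comparison} as the two faces of a single diagram of quasi-ideals over $S_R$. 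The main obstacle is precisely the verification that this $S_R$-diagram is the one produced by $\rho_{(S_R^{(\phi_\framew)},(p))}$ and that the two $2$-isomorphisms compose correctly; concretely this means matching the Frobenius twists encoded in the superscripts $(\phi_\framew)$ against the Frobenius $F$ in the de Rham structure map, which amounts to unwinding the cone identification $\Ga^\dR(B)\isomfrom C\isomto\bb W(B)/p$ used in loc.\@ cit.

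Given (1), assertion (2) follows by functoriality. Pulling back $\mc E$ along the common map $q\defeq\rho_{(S_R^{(\phi_\framew)},(p))}$ gives $q^\ast\mc E\simeq \phi_\framew^\ast\mc E(S_R^{(\phi_\framew)},(p))$, and the two factorizations of (1) identify $(q\circ\sp_\dR)^\ast\mc E$, via the crystal property, with $\phi_\framew^\ast\mc E(R^{(\phi_\framew)},(p))$ and with $\phi_\framew^\ast\mc E(\mf S_R^{(\phi_\framew)},(E))/E$; by Proposition \ref{prop: pris crys in char p} together with Proposition \ref{prop:crystalline-realization-stacky-equiv} the former is $\mc E^\crys$, and by Proposition \ref{prop:de-Rham-realization-BK-relationship} the latter is $\mc E^\dR$. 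By construction these two identifications are exactly the two arrows of \eqref{eq:crys-dr-triple-isom} reduced along $\sp_\dR^\ast$, whose composite is \eqref{eq:pre crys-dr-isom}. Since $\iota_\mf X$ is by definition the isomorphism $\rho_{\crys,\mf X}^\ast\mc E\isomto (\rho^\smallprism_{\dR,\mf X})^\ast\mc E$ transported through the factorization of (1), it coincides with \eqref{eq:pre crys-dr-isom}, as desired.
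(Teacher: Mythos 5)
Your proposal follows the same skeleton as the paper's proof: both factor the comparison through the Breuil prism, invoke functoriality of the points $\rho_{(A,I)}$ of \cite[Construction 3.10]{BhattLuriePrismatization}, identify the two outer composites using Remark \ref{rem:Dcrys-in-coordinates} on the crystalline side and Lemma \ref{lem:commutative-diagram-for-filtered-prismatization} together with \eqref{eq:restriction-to-Gm/Gm} on the de Rham side, check coherence of the resulting $2$-isomorphisms pointwise (where everything is the identity on $\bb W$), and then deduce (2) by pulling back $\mc E$ and invoking the crystal property together with Propositions \ref{prop: pris crys in char p} and \ref{prop:de-Rham-realization-BK-relationship}, exactly as the paper does.

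However, part (1) of your argument has a genuine gap, located exactly at what you call ``the main obstacle'', and it is not a deferrable verification: it is the substance of the proposition. The two morphisms of prisms into the Breuil prism are \emph{not} induced by the plain maps $R\to S_R$ and $\mf S_R\hookrightarrow S_R$, as you assert. The inclusion $\mf S_R\hookrightarrow S_R$ is not a morphism of prisms at all: by rigidity of prism maps (\cite[Lemma 3.5]{BhattScholzePrisms}) the image of $E$ would have to generate $(p)S_R$, but $E\notin pS_R$; only $\phi_\framew(E)=u^p-p$ lies in $pS_R$. The morphisms underlying \eqref{eq:crys-dr-triple-isom} therefore have ring maps $R\xrightarrow{\phi_\framew}R\hookrightarrow S_R$ and $\mf S_R\xrightarrow{\phi_\framew}\mf S_R\hookrightarrow S_R$; this is precisely why $\phi_\framew^\ast$ decorates both outer terms of \eqref{eq:crys-dr-triple-isom}, and why the paper's proof routes its diagram through explicit $\phi_\framew$-arrows. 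Your two reduction targets are then inconsistent with your own definitions: with $g_\crys$ dual to the untwisted $R\to S_R$ one gets $g_\crys\circ\sp_\dR=\id_{\Spf(R)}$, not $\phi_\framew$, so your factorization produces $\rho_{(R^{(\phi_\framew)},(p))}$ rather than $\rho_{(R^{(\phi_\framew)},(p))}\circ\phi_\framew\simeq\rho_{\crys,\mf X}$; while with $g_\dR$ dual to the untwisted $\mf S_R\to S_R$ the equality $g_\dR\circ\sp_\dR=\sp_{\dR,\mf S}$ does hold, but then $\rho_{(\mf S_R^{(\phi_\framew)},(E))}\circ\sp_{\dR,\mf S}$ differs from $\rho^\smallprism_{\dR,\mf X}$ by the Frobenius $F_{\mf X}$ appearing in \eqref{eq:restriction-to-Gm/Gm}, which must be traded for $\Spf(\phi_{\mf S_R})$ via the compatibility $F_{\mf X}\circ\rho_{(A,I)}\simeq\rho_{(A,I)}\circ\Spf(\phi_A)$. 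Once the Frobenius twists are placed correctly, your outline closes up and becomes the paper's proof; as written, the key bookkeeping is both unresolved and set up so that the two sides cannot match.
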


\begin{proof}
To prove assertion (1), first consider the following diagram:
\begin{equation*}%label{eq:}
\begin{tikzcd}
    &{\Spf(\mf S_R)} &{\Spf(\mf S_R)} &{} 
    \\{\Spf(R)} &{\Spf(S_R)} &{} &{\mf X^\smallprism} 
    \\{} &{\Spf(R)} &{\Spf(R).} &{}    
	\arrow["{\mr{sp}_{\dR,\mf S}}", from=2-1, to=1-2]
	\arrow["{\mr{sp}_{\dR}}", from=2-1, to=2-2]
        \arrow["{\id}"', from=2-1, to=3-2]
        \arrow["{\phi_\framew}", from=1-2, to=1-3]
	\arrow["{}", from=2-2, to=1-2]
        \arrow["{\rho_{(S_R^{(\phi_\framew)},(p))}}", from=2-2, to=2-4]
        \arrow["{}", from=2-2, to=3-2]
        \arrow["{\phi_\framew}", from=3-2, to=3-3]
        \arrow["{\rho_{(\mf{S}_R^{(\phi_\framew)},(E))}}", from=1-3, to=2-4]
        \arrow["{\rho_{(R^{(\phi_\framew)},(p))}}"', from=3-3, to=2-4]
\end{tikzcd}
\end{equation*}
Here, the map $\sp_{\dR,\mf S}$ comes from the natural isomorphism $\mf{S}_R/E\isomto R$, the two vertical maps are given by the natural inclusions $\mf S_R\to S_R$ and $R\to S_R$, and the three arrows mapping to $\mf{X}^\smallprism$ are obtained as in \cite[Construction 3.10]{BhattLuriePrismatization}. The left two triangles are obviously commutative and the right two trapezoids are $2$-commutative with obvious identifications of the compositions. 

Let $\rho'_{\dR,\mf X}$ (resp.\ $\rho'_{\crys,\mf X}$) denote the upper (lower) composite maps $\Spf(R)\to \mf X^\smallprism$. 
By the commutativity of the above diagram, we obtain isomorphisms
\be
\rho'_{\crys,\mf X}\isomto\rho_{(S_R^{(\phi_\framew)},(p))}\circ \sp_\dR \isomfrom \rho'_{\dR,\mf X}.
\ee
By Proposition \ref{prop:de-Rham-realization-BK-relationship} (resp.\ Remark \ref{rem:Dcrys-in-coordinates}), we have an isomorphism $\rho'_{\dR,\mf X}\isomto\rho^\smallprism_{\dR,\mf X}$ (resp.\ $\rho'_{\crys,\mf X}\isomto\rho_{\crys,\mf X}$). 
Thus, we have obtained the diagram of isomorphisms
\begin{equation}\label{eq: crys-dR diagram of 2-morphisms}
\begin{tikzcd}
    {\rho_{\crys,\mf X}} & &{\rho^\smallprism_{\dR,\mf X}}
    \\{\rho'_{\crys,\mf X}} &{\rho_{(S_R^{(\phi_\framew)},(p))}\circ \sp_\dR} & {\rho'_{\dR,\mf X}.}
	\arrow["{}"', from=1-1, to=1-3]
	\arrow["{}"', from=1-1, to=2-1]
        \arrow["{}"', from=1-3, to=2-3]
        \arrow["{}", from=2-1, to=2-2]
        \arrow["{}", from=2-3, to=2-2]
\end{tikzcd}
\end{equation}
This diagram commutes as each isomorphism is defined by the identity map on $\bb W$ (and a canonical identification of its quasi-ideals), which proves assertion (1). 

To prove assertion (2), we consider the following commutative diagram induced by (\ref{eq: crys-dR diagram of 2-morphisms})
\begin{equation*}%\label{eq: crys-dR diagram of 2-morphisms}
\begin{tikzcd}
    {\mc{E}^\mr{crys}} & &{\mc{E}^\mr{dR}}
    \\{\phi_\framew^*\mc E(R^{(\phi_\framew)},(p))} &{\mc E(S_R^{(\phi_\framew)},(p))\otimes_{S_R,\mr{sp}^\ast_\dR }R} & {\phi_\framew^*\mc E(\mf S_R^{(\phi_\framew)},(E))/E.}
	\arrow["{}"', from=1-1, to=1-3]
	\arrow["{}"', from=1-1, to=2-1]
        \arrow["{}"', from=1-3, to=2-3]
        \arrow["{}", from=2-1, to=2-2]
        \arrow["{}", from=2-3, to=2-2]
\end{tikzcd}
\end{equation*}
The composition of the two lower horizontal isomorphisms is the isomorphism in (\ref{eq:pre crys-dr-isom}). Moreover, the left (resp.\ right) horizontal isomorphism is the identification from Proposition \ref{prop: pris crys in char p} (resp.\ Proposition \ref{prop:de-Rham-realization-BK-relationship}) by Remark \ref{rem:Dcrys-in-coordinates} (resp.\ by definition). 
This proves assertion (2). 
\end{proof}

\begin{rem}\label{rem:Kisin agreement} When $R=W$, the isomorphism in \eqref{eq:crys-dr-triple-isom} is compatible with that from \cite[Proposition 3.6]{GaoFLstr} (and after inverting $p$, that from \cite[Lemma 1.2.6]{KisinFCrystal}). More precisely, it is equal to the isomorphism $1\otimes s$ from \cite[Proposition 3.6]{GaoFLstr} (resp.\ the base change of the isomorphism $\xi$ from \cite[Lemma 1.2.6]{KisinFCrystal} along the map $\phi\colon \mc{O}\to S_R[\nicefrac{1}{p}]$, where $\mc{O}$ is as in loc.\@ cit). This follows from the uniqueness property discussed in loc.\@ cit. 
\end{rem}

\begin{rem}\label{rem:BO-isomorphisms} Our assumption that $K=K_0$ was necessary for Construction \ref{construction: crys dR isom with Breuil prism} so that the arrow labeled $(\ast)$ in \cite[Diagram (1.1.2)]{IKY1} was a morphism in $R_\smallprism$. But, using \cite[Lemma 1.14]{IKY1}, one may adjust this for arbitrary $\mathcal{O}_K$ giving an isomorphism of $R$-modules
\begin{equation*}
    (\phi^e)^\ast\mc{E}(\mf{S}_R,(E))/E\isomto (\phi^e)^\ast\mc{E}(\mf{S}_R,(E))/u.
\end{equation*}
In fact, such an isomorphism should hold, by the same method of proof, with $(\mc{E},\varphi_\mc{E})$ replaced by an object of $\cat{D}^\varphi_\perf(\mf{X}_\smallprism)$, where the quotients should now be considered in the derived sense. One interesting implication of this would be the existence of canonically matched lattices under \begin{equation*}
Rf_\ast(\Omega^\bullet_{\mf{X}/\mf{Y}})\otimes^L_{\mathcal{O}_K}K\isomto Rf_\ast(\mathcal{O}_{(\mf{X}_k/W)_\crys})|_{\mf{Y}_\Zar}\otimes^L_{W}K,
\end{equation*}
the isomorphism of Berthelot--Ogus (see \cite[Theorem 2.4]{BerthelotOgusFIsocrystals}), where $f\colon \mf{X}\to\mf{Y}$ is a smooth proper morphism of formal $\mathcal{O}_K$-schemes, where $\mf{Y}$ is smooth. This idea is explored in \cite{AbhinandanYoucis}.
\end{rem}

\subsection{Constancy of the Nygaard filtration over Breuil's ring} The diagram of stacks that appeared in the beginning of the proof of Proposition \ref{prop:crys-dR-comparison-using-prisms} admits a filtered upgrade, as explained below. In essence this result says that the Nygaard filtration is pulled back from the base (in an appropriate sense) over Breuil's ring $S_R$. This result will be useful in later sections. 

To this end, let $R$ be a base $W$-algebra with formal framing $w$. 
We may then consider the following diagram of stacks
\begin{equation}\label{eq:filtered dR point as F of HT point}
\begin{tikzcd}
	{\wh{\mc R}(\Fil^\bullet_\mr{PD}(S_R))} 
    & {\wh{\mc R}(\Fil^\bullet_E(\mf S_R))} 
    \\ {\Spf(R)\times(\A^1/\Gm)} 
    & {R^\smallN,}
	\arrow["{\alpha}", from=1-1, to=1-2]
	\arrow["{\pi}"', from=1-1, to=2-1]
	\arrow["\rho^\smallN_{\dR,\Spf(R)}"', from=2-1, to=2-2]
	\arrow["{\rho^+_{(\mf S,(E))}}", from=1-2, to=2-2]
\end{tikzcd}
\end{equation}
where $\Fil^\bullet_\mr{PD}(S_R)$ denotes the PD filtration on $S_R$ associated to the PD ideal $\ker(S_R\to R;u\mapsto p)$, the upper horizontal arrow $\alpha$ (resp.\ left vertical arrow $\pi$) is induced by the map of filtered rings $(\mf S_R,\Fil^\bullet_E)\to (S_R,\Fil^\bullet_\mr{PD})$ (resp.\ $(R,\Fil^\bullet_\mr{triv})\to (S_R,\Fil^\bullet_\mr{PD})$) with the underlying map $\mf S_R\to S_R$ (resp.\ $R\to S_R$) being the natural ones. 

\begin{rem}\label{rem:filtration-defn} Consider the isomorphism 
        \begin{equation*}
            \phi_\framew^*\mc E(R^{(\phi_\framew)},(p))\otimes_{R}S_R \isomto \phi_\framew^*\mc E(\mf S_R^{(\phi_\framew)},(E))\otimes_{\mf S_R}S_R
        \end{equation*}
from {Equation \eqref{eq:crys-dr-triple-isom}}. We may equip the source and target of this isomorphism with the following filtrations:
\begin{itemize}
\item We equip $\phi^*_w\mc E(R^{(\phi_w)},(p))=\underline{\D}_\crys(\mc E)(R)$ with the filtration denoted by $\Fil^\bullet_{\D_\dR}\D_\dR(\mc E,\varphi)$ in \S\ref{ss:de-Rham-realization}. We may then equip the source with the filtration obtained by the filtered scalar extension along $(R,\Fil^\bullet_\mr{triv})\to (S_R,\Fil^\bullet_\mr{PD})$ with $R\to S_R$ the natural inclusion. 
\item We equip the target with the filtration obtained from $\Fil^\bullet_\mr{Nyg}(\phi^*\mc E)$ by the filtered scalar extension $(\mf S_R,\Fil^\bullet_E)\to (S_R,\Fil^\bullet_\mr{PD})$ with $\mf S_R\to S_R$ the natural inclusion.
\end{itemize}
\end{rem}

\begin{prop}\label{filtered commutativity of Breuil prism diagram}
    The following hold. 
    \begin{enumerate}
        \item \emph{Diagram \eqref{eq:filtered dR point as F of HT point}} is $2$-commutative. 
        \item For an object $(\mc E,\varphi_\mc E)$ of $\cat{Vect}(\mf{X}_\smallprism)$, the isomorphism from \emph{Equation \eqref{eq:crys-dr-triple-isom}}
        \begin{equation*}
            \phi_\framew^*\mc E(R^{(\phi_\framew)},(p))\otimes_{R}S_R \isomto \phi_\framew^*\mc E(\mf S_R^{(\phi_\framew)},(E))\otimes_{\mf S_R}S_R
        \end{equation*}
        respects the filtrations from \emph{Remark \ref{rem:filtration-defn}}. 
    \end{enumerate}
\end{prop}

Recall that, in general, the valued points of the Rees stack viewed as a stack over $\A^1/\Gm$ are given as follows. 
Let $(A,\Fil^\bullet)$ be a filtered ring, $B$ a ring, and $(L,s)$ be an object of $(\A^1/\Gm)(B)$, i.e., a line bundle $L$ together with a $B$-linear map $s\colon L\to B$. 
The groupoid of maps $\Spec(B)\to \mc R(\Fil^\bullet)$ over $\A^1/\Gm$ (is discrete and) consists of homomorphisms of graded $\Z[t]$-algebras $\bigoplus_{i\in \mathbb Z}\Fil^i\cdot t^{-i}\to \bigoplus_{i\in \mathbb Z}L^{\otimes i}\cdot t^{-i}$, where the $\Z[t]$-algebra structure of the target is defined using $s\colon L\to B$.  %naoki{Maybe it's better to mention that the $\Z[t]$-algebra structure of $\bigoplus_{i\in \mathbb Z}L^{\otimes i}\cdot t^{-i}$ is defined using $s\colon L\to B$?}

When the filtration is given by an invertible ideal $I$, namely when $\Fil^\bullet=I^\bullet\cap A$, %naoki{To be precise, it might be better to write $I^\bullet \cap B$ since $\bullet$ can be negative?}
such a homomorphism is equivalent to giving a ring homomorphism $A\to B$ and an $A$-linear map $I\to L$ such that the composite $I\to L\to B$ agrees with the composite $I\to A\to B$. 

\begin{proof}[Proof of Proposition \ref{filtered commutativity of Breuil prism diagram}]
    Note that this diagram naturally lives over $\A^1/\Gm$. Restricting along the open embedding $\Gm/\Gm\to \A^1/\Gm$, we recover the $2$-commutative diagram 
    \begin{equation}\label{Breuil prism diagram without filtration}
        \begin{tikzcd}
	{\Spf(S_R)} 
    & {\Spf(\mf S_R)} 
    \\ {\Spf(R)} 
    & {R^\smallprism.}
	\arrow["{}", from=1-1, to=1-2]
	\arrow["{}"', from=1-1, to=2-1]
	\arrow["\rho_{(R,(p))\circ\phi_w}"', from=2-1, to=2-2]
	\arrow["{\rho_{(\mf S,(E))}\circ\phi_w}", from=1-2, to=2-2]
\end{tikzcd}
    \end{equation}
    Thus, assertion (2) follows from assertion (1). We now prove assertion (1). 
    
    Fix $B$ a $p$-nilpotent ring, $s\colon L\to B$ a $B$-linear map with $L$ an invertible $B$-module, and a map $\Spec(B)\to \wh{\mc R}(\Fil^\bullet_\mr{PD}(S_R))$ over $\A^1/\Gm$. 
    Let $b=\bigoplus_{i\in \Z} b^i$ denote the corresponding homomorphism of graded $\Z[t]$-algebras $\bigoplus_{i\in \mathbb Z}\Fil^i_\mr{PD}(S_R)\cdot t^{-i}\to \bigoplus_{i\in \mathbb Z}L^{\otimes i}\cdot t^{-i}$. 
    
    We first describe $(\rho^\smallN_{\dR,\Spf(R)}\circ\pi)(b)$. 
    The object $\pi(b)$, when viewed as an $\bb{A}^1/\bb{G}_m$-equivariant map $\Spec(B)\to \Spf(R)\times(\A^1/\Gm)$, corresponds to the composite $R\to S_R\to B$. 
    Then, by definition, the object $(\rho^\smallN_{\dR,\Spf(R)}\circ\pi)(b)$ is given by the filtered Cartier--Witt divisor $(s^\sharp,V)\colon V(L)^\sharp\oplus F_*\W_B\to \W_B$ %naoki{It's be more consistent to write $F_*\W_B$ and $\W_B$ as in the latter part of this proof?}
    on $B$ together with the natural $R$-structure. 
    
    On the other hand, $(\rho^+_{(\mf S,(E))}\circ\alpha)(b)$ may be computed as follows. 
    The object $\alpha(b)$ viewed as a map of stacks over $\A^1/\Gm$ corresponds to the pair $(b^0\circ\mr{nat}\colon \mf S_R\to B,b^1\circ\mr{nat}\colon E\mf S_R\to L)$. 
    Then the underlying filtered Cartier--Witt divisor of $(\rho^+_{(\mf S,(E))}\circ\alpha)(b)$ is given by the following construction. 
    Form a push-out diagram 
    \begin{equation*}
        \begin{tikzcd}
	{E\mf S_R\otimes_{\mf S_R}{(\Ga^\sharp)}_B} 
    & {E\mf S_R\otimes_{\mf S_R}\W_B} 
    \\ {V(L)^\sharp} 
    & {M,}
	\arrow["{}", from=1-1, to=1-2]
	\arrow["{(b^1\circ\mr{nat})^\sharp}"', from=1-1, to=2-1]
	\arrow[""', from=2-1, to=2-2]
	\arrow["", from=1-2, to=2-2]
\end{tikzcd}
    \end{equation*}
    in the category of $W$-module schemes on $\Spec(B)$ and define a map $M\to \W_B$ corresponding to $s^\sharp\colon V(L)^\sharp\to \Ga^\sharp$ and the natural map $E\mf S_R\otimes_{\mf S_R}\W_B\to \W_B$. 
    The $R$-structure is the natural one. 

    To identify $(\rho^\smallN_{\dR,\Spf(R)}\circ\pi)(b)$ and $(\rho^+_{(\mf S,(E))}\circ \alpha)(b)$, we first recall the description of the $2$-commutativity isomorphism of the diagram in (\ref{Breuil prism diagram without filtration}) in terms of Cartier--Witt divisors. 
    Namely, it is given by the multiplication-by-$p$ map $\W(B)\to \phi(E)\mf S_R\otimes_{\mf S_R}\W(B)$, which is an isomorphism as $B$ is an $S_R$-algebra and that $p$ and $\phi(E)$ generate the same ideal in $S_R$. 
    Note that this induces an isomorphism 
    \begin{equation*}
    F_*(p)\colon F_*\W_B \isomto F_*(\phi(E)\mf S_R\otimes_{\mf S_R}\W_B)=E\mf S_R\otimes_{\mf S_R} F_*\W_B
    \end{equation*}
    of $\W$-module schemes. 
    
    We may then construct a $\W$-linear map $\beta\colon E\mf S_R\otimes_{\mf S_R}\W_B\to V(L)^\sharp$. 
    As the source is a free $\W_B$-module scheme of rank one, giving such a map is equivalent to giving a $B$-valued point of $V(L)^\sharp$, via the basis $E\otimes 1$.  
    Moreover, as $V(L)^\sharp$ is defined to be the PD hull of the line bundle $V(L)=\Spec(\mr{Sym}^\bullet L^\vee)$ along the zero section, giving a $B$-valued point is equivalent to giving a sequence $(\beta^{[1]},\beta^{[2]},\beta^{[3]},\ldots)$ with each $\beta^{[n]}$ an element of $L^{\otimes n}$ satisfying $\beta^{[n]}\otimes\beta^{[m]}={m+n\choose n}\beta^{[m+n]}$ for any pair $(n,m)$. %naoki{Is this an explanation of a $B$-valued point of $V(L)^\sharp$? If so, how does it define the morphism $\beta$? Is it the composition with the structure map to $\Spec (B)$?}
    For this, we choose $\beta^{[n]}$ to be the image of $E^{[n]}\defeq E^n/n!\in \Fil^n_\mr{PD}(S_R)$ along the map $b^n\colon \Fil^n_\mr{PD}(S_R)\to L^{\otimes n}$. 

    The map $\beta$ together with the isomorphism $F_*(p)^{-1}$ gives a natural morphism of short exact sequences of $\W$-module schemes
    \begin{equation*}
        \begin{tikzcd}
	        0 
            &   {V(L)^\sharp} 
            &   {M} 
            &   E\mf S_R\otimes_{\mf S_R}F_*\W_B
            & 0
            \\ 0
            &   V(L)^\sharp
            &   V(L)^\sharp\oplus F_*\W_B
            &   F_*\W_B
            & {0,}
	\arrow["{}", from=1-1, to=1-2]
    \arrow["{}", from=1-2, to=1-3]
    \arrow["{}", from=1-3, to=1-4]
    \arrow["{}", from=1-4, to=1-5]
	%\arrow["{\alpha(b)^\sharp}"', from=1-1, to=2-1]
	\arrow[""', from=2-1, to=2-2]
    \arrow["{}", from=2-2, to=2-3]
    \arrow["{}", from=2-3, to=2-4]
    \arrow["{}", from=2-4, to=2-5]
	\arrow["\id", from=1-2, to=2-2]
    \arrow["", from=1-3, to=2-3]
    \arrow["F_*(p)^{-1}", from=1-4, to=2-4]
\end{tikzcd}
    \end{equation*}%naoki{$V(L)^\sharp\otimes F_*\W_B$ would be $V(L)^\sharp\oplus F_*\W_B$.}
    hence assertion (1) follows. 
\end{proof}

As a consequence of assertion (2) of Proposition \ref{filtered commutativity of Breuil prism diagram}, we obtain the Griffiths transversality property for the Nygaard filtration with respect to the natural differential operator on evaluation at the Breuil prism of an lff prismatic $F$-crystal. 
Although we do not use it explicitly in the later sections, we record it below in a precise form as it seems interesting in its own right. 

\begin{construction} Let $R$ be a base $W$-algebra with framing $w$, and let
    $(\mc E,\varphi_\mc E)$ be an lff prismatic $F$-crystal on $R$. 
    Set $\mc M\defeq\mc E(S_R,(p))$ and let $\partial_u$ denote the differential operator corresponding to $\id\otimes \frac{d}{du}$ on $\phi_w^*\mc E(R,(p))\otimes_{R,\mr{nat}}S_R$ via the isomorphism 
    \begin{equation*}\phi_w^\ast\mc E(R,(p))\otimes_{R,\mr{nat}}S_R\isomto \mc M. 
    \end{equation*}
    Finally, denote by $\Fil^\bullet_{\mr{Nyg}\hyphen\mr{PD}}$ the filtration on $\mc M$ obtained as the filtered scalar extension 
    \begin{equation*} 
    (\phi_w^*\mc E(\mf S_R,(E)),\Fil^\bullet_\mr{Nyg})\otimes_{(\mf S_R,\Fil^\bullet_E)}(S_R,\Fil^\bullet_\mr{PD}).
    \end{equation*}
\end{construction}
\begin{cor}\label{cor: Griffiths transversality of PD-Nygaard filtration}
    The filtration $\Fil^\bullet_{\mr{Nyg}\hyphen\mr{PD}}$ on $\mc M$ satisfies Griffiths transversality with respect to the differential operator $\partial_u$, i.e., for each integer $i$, we have $\partial_u(\Fil^i_{\mr{Nyg}\hyphen\mr{PD}})\subset \Fil^{i-1}_{\mr{Nyg}\hyphen\mr{PD}}$. 
\end{cor}

\begin{rem}
    For $R=W$, the connection $\partial_u$ (more precisely $\partial_u\otimes du$) coincides with the connection $\nabla$ from \cite[Corollary 1.3.15]{KisinFCrystal} after scalar extension to $S_R[\nicefrac{1}{p}]$. In fact, both coincide with $-\lambda^{-1} N_\nabla \frac{du}{u}$, where $\lambda$ is the element defined in \cite[1.1.1]{KisinFCrystal} and $N_\nabla$ right above \cite[Lemma 1.2.1]{KisinFCrystal} (recall from Remark \ref{rem:Kisin agreement} that our crystalline-de Rham comparison isomorphism agrees with Kisin's isomorphism $\xi$). 
    In \cite[Lemma 1.2.12 (2)]{KisinFCrystal} (a strong form of)
    the Griffiths transversality of $\nabla$ is obtained. 
    Thus, Corollary \ref{cor: Griffiths transversality of PD-Nygaard filtration} can be thought of as an integral refinement of his observation. 
    %the Griffiths transversality of the connection $\nabla$ on the Breuil--Kisin module restricted to the rigid open unit disc with all the Frobenius-power translates of the Hodge--Tate point removed, as shown 
\end{rem}

\section{The functor \texorpdfstring{$\bb{D}_\mr{crys}$}{DDcrys}}\label{s:integral-Dcrys}

In this section we use the crystalline-de Rham comparison (see Theorem \ref{thm:crys-dR-comparison}) to give the definition of our integral analogue $\bb{D}_\mr{crys}$ and establish basic properties about it. Throughout we use notation and terminology from \hyperref[notation-and-terminology]{Notation and terminology} without comment.

\subsection{Various categories of filtered Frobenius crystals}\label{ss:category-of-filtered-f-crystals} 

\subsubsection{Naive filtered \texorpdfstring{$F$}{F}-crystals} A \emph{naive filtered $F$-crystal} on $\mf{X}$ is a triple $(\mc{F},\varphi_\mc{F},\Fil^\bullet_\mc{F})$ with $(\mc{F},\varphi_\mc{F})$ an object of $\cat{Vect}^\varphi(\mf{X}_{\crys})$ and $\Fil^\bullet_\mc{F}$ a filtration by $\mc{O}_\mf{X}$-submodules of $\mc{F}_\mf{X}\defeq \mc{F}|_{\mf{X}_\mr{Zar}}$. Morphisms of naive filtered $F$-crystals are morphisms of $F$-crystals respecting filtrations. Denote the category of naive filtered $F$-crystals by $\cat{VectNF}^\varphi(\mf{X}_\crys)$, given $\Z_p$-linear $\otimes$-structure where 
\begin{equation*}
    \Fil^k_{\mc{F}_1\otimes \mc{F}_2}=\sum_{i+j=k}\Fil^i_{\mc{F}_1}\otimes \Fil^j_{\mc{F}_2}.
\end{equation*}
It has an exact structure where a sequence is exact if its associated sequence in $\cat{Vect}^\varphi(\mf{X}_\crys)$ is exact, and for all $i$ the sequence of $\mc O_\mf X$-modules on $\mf{X}$ given by the $i^\text{th}$-graded piece is exact. We say that a naive filtered $F$-crystal has \emph{level} in $[0,a]$ if $\Fil^0_\mc{F}=\mc F_{\mf X}$ and $\Fil^{a+1}_\mc{F}=0$. 

\subsubsection{Filtered \texorpdfstring{$F$}{F}-crystals} We begin by explicating several categories of $F$-crystals with filtration that will be important in the sequel.

\begin{defn}\label{defn: FFCrys} For a base formal $W$-scheme $\mf{X}$, we say an object $(\mc{F},\varphi_\mc{F},\Fil^\bullet_\mc{F})$ of $\cat{VectNF}^\varphi(\mf{X}_\mr{crys})$: 
    \begin{enumerate}[leftmargin=.3in]
         \item is a \emph{weakly filtered $F$-crystal} if $\Fil^\bullet_\mc{F}$ satisfies Griffiths transversality with respect to $\nabla_\mc{F}$ after inverting $p$, and the filtration $\mathrm{Fil}^\bullet_\mc{F}[\nicefrac{1}{p}]\subset \mc{F}_\mf{X}[\nicefrac{1}{p}]$ is locally split,\footnote{Recall that a filtered $R$-module $\Fil^\bullet\subseteq M$ is \emph{locally split} if $\mr{Gr}^r(\Fil^\bullet)$ is a finite projective $R$-module for all $r$.}
         \item is \emph{graded $p$-torsion-free (gtf)} if $\mr{Gr}^r(\Fil^\bullet_\mc{F})$ is a $p$-torsion-free $\mc{O}_\mf{X}$-module for all $r$,
        \item is a \emph{filtered $F$-crystal} if $\Fil^\bullet_\mc{F}$ satisfies Griffiths transversality with respect to $\nabla_\mc{F}$, and the filtration $\mathrm{Fil}^\bullet_\mc{F}\subset \mc F_\mf X$ is locally split.
    \end{enumerate}
\end{defn}

\begin{rem}We use different terms than \cite{LoveringFCrystals}: a filtered $F$-crystal (resp. strongly divisible filtered $F$-crystal) here is a weak filtered $F$-crystal (resp. filtered $F$-crystal) there.
\end{rem}

We give notation to these full subcategories of $\cat{VectNF}^\varphi(\mf{X}_\crys)$ defined by these objects:
\begin{itemize}[leftmargin=.3in]
    \item $\cat{VectWF}^\varphi(\mf{X}_\crys)$ is the full subcategory consisting of weakly filtered $F$-crystals,
    \item $\cat{VectNF}^{\varphi,\mr{gtf}}(\mf{X}_\crys)$ is the full subcategory consisting of gtf naive filtered $F$-crystals,
    \item $\cat{VectF}^\varphi(\mf{X}_\crys)$ is the full subcategory consisting of filtered $F$-crystals.
\end{itemize}
We obtain further full subcategories of $\cat{VectNF}^\varphi(\mf{X}_\crys)$ by intersection, which are denoted by the obvious symbols. Furthermore, the subscript $[0,a]$ will denote the intersection with $\cat{VectNF}_{[0,a]}^\varphi(\mf{X}_\crys)$. 
% Observe that $\cat{VectWF}^\varphi(\mf{X}_\crys)$ and  $\cat{VectF}^{\varphi,\mr{sd}}(\mf{X}_\crys)$ are stable under tensor products and duals and so inherit an exact $\Z_p$-linear $\otimes$-structure from $\cat{VectNF}^\varphi(\mf{X}_\crys)$. 

Lastly, we note that if $\mf{X}\to\Spf(W)$ is smooth, there is an exact $\Z_p$-linear $\otimes$-functor $\cat{VectWF}^\varphi(\mf{X}_\crys)\to \cat{IsocF}^\varphi(\mf{X})$ sending $(\mc{F},\varphi_\mc{F},\Fil^\bullet_{\mc{F}})$ to the filtered $F$-isocrystal $(\mc{F},\varphi_\mc{F},\Fil^\bullet_{\mc{F}})[\nicefrac{1}{p}]$ which is defined to be $(\mc{F}[\nicefrac{1}{p}],\varphi_\mc{F},\Fil^\bullet_{\mc{F}}[\nicefrac{1}{p}])=(\mc{F}[\nicefrac{1}{p}],\varphi_\mc{F},\Fil^\bullet_F)$.

\subsubsection{The category \texorpdfstring{$\cat{VectF}^\nabla(\mf{X})$}{VectFNablaX}} We next discuss some results of Tsuji which allow one to understand the filtrations of some naive filtered $F$-crystals via the crystalline site.

Consider a pair $(\mc{E},\Fil^\bullet_\mc{E})$, where $\mc{E}$ is an object of $\cat{Vect}(\mf{X}_\crys)$, and $\Fil^\bullet_\mc{E}\subseteq \mc{E}$ is a filtration by locally quasi-coherent (see \stacks{07IS}) $\mc{O}_\crys$-submodules. We call $(\mc{E},\Fil^\bullet_\mc{E})$ a \emph{filtered crystal} if 
\begin{enumerate}[leftmargin=.3in]
    \item $(\mc{E},\Fil^\bullet_\mc{E})(x)$ is a filtered module over $(A,\Fil^\bullet_\mr{PD})$ for all $x=(i\colon A\twoheadrightarrow B,\gamma)$ in $(\mf{X}/W)_\crys$,
    \item for a morphism $x=(i\colon A\to B,\gamma)\to (i'\colon A'\to B',\gamma')=y $ in $(\mf{X}/W)_\crys$, the natural map $(\mc{E},\Fil^\bullet_\mc{E})(x)\otimes_{(A,\Fil^\bullet_\mr{PD}(A))}(A',\Fil^\bullet_\mr{PD}(A'))\to (\mc{E},\Fil^\bullet_\mc{E})(y)$ is an isomorphism.
\end{enumerate}
We will mostly be interested in the case when $(\mc{E},\Fil^\bullet_\mc{E})$ is locally free over $(\mc{O}_\crys,\Fil^\bullet_\mr{PD})$ in the sense of \cite[Definition 1.1]{IKY2}, and we denote the category of such by $\cat{VectF}(\mf{X}_\crys)$.

Now, define $\cat{VectF}^\nabla(\mf{X})$ to consist of triples $(\mc{V},\nabla_\mc{V},\Fil^\bullet_\mc{V})$ where $(\mc{V},\nabla_\mc{V})$ is an object of $\cat{Vect}^\nabla(\mf{X})$ and $\Fil^\bullet_\mc{V}$ is a locally split filtration  on $\mc{V}$ which satisfies Griffiths transversality.

\begin{prop}[{cf.\@ \cite[Theorem 29]{Tsu20}}]\label{prop:tsuji-filtered-equiv} The functor
\begin{equation*}
    \cat{VectF}(\mf{X}_\crys)\to \cat{VectF}^\nabla(\mf{X}),\qquad \mc{E}\mapsto (\mc{E}|_{\mf{X}_\mr{Zar}},\nabla_\mc{E},(\Fil^\bullet_\mc{E})|_{\mf{X}_\mr{Zar}}),
\end{equation*}
is an equivalence.
\end{prop}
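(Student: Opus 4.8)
The plan is to bootstrap from the classical (unfiltered) crystalline equivalence and then show that the two pieces of filtration data correspond. Recall that for a smooth formal $W$-scheme the forgetful functor $\cat{Vect}(\mf{X}_\crys)\to\cat{Vect}^\nabla(\mf{X})$, $\mc{E}\mapsto(\mc{E}|_{\mf{X}_\Zar},\nabla_\mc{E})$, is an equivalence (crystals on a smooth base are the same as modules with integrable connection). Since filtered crystals glue in the Zariski topology, and since both local splitness of a filtration and Griffiths transversality are Zariski-local and stable under the \'etale localizations used for framings, it suffices to establish the equivalence after restricting to an affine $\mf{X}=\Spf(R)$ equipped with a framing $t_1,\dots,t_d$; the resulting local equivalences then glue by naturality once full faithfulness is known.

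For essential surjectivity, I would start with a triple $(\mc{V},\nabla_\mc{V},\Fil^\bullet_\mc{V})$ and let $\mc{E}$ be the crystal produced by the classical equivalence. For a PD-thickening $x=(A\twoheadrightarrow B,\gamma)$ with PD-ideal $J$, the crystal structure supplies a Taylor isomorphism identifying $\mc{E}(x)$ with $\mc{V}\otimes_R A$, under which an element $v$ of $\mc{V}$ is sent to $\sum_\alpha \nabla^{(\alpha)}(v)\,x^{[\alpha]}$, the coordinates $x_i$ lying in $J$ and $x^{[\alpha]}\in J^{[|\alpha|]}=\Fil^{|\alpha|}_\mr{PD}(A)$. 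I would then define the filtration by the convolution
\begin{equation*}
\Fil^r_\mc{E}(x)\defeq \sum_{a+b=r}\Fil^a_\mr{PD}(A)\cdot\bigl(\Fil^b_\mc{V}\otimes_R A\bigr),
\end{equation*}
transported through the Taylor isomorphism. The filtered-module condition (1) is immediate, and the base-change compatibility (2) reduces to checking that the Taylor isomorphism carries $\Fil^\bullet_\mc{V}$ into $\Fil^\bullet_\mc{E}(x)$: if $v\in\Fil^b_\mc{V}$, then iterated Griffiths transversality gives $\nabla^{(\alpha)}(v)\in\Fil^{b-|\alpha|}_\mc{V}$, whence $\nabla^{(\alpha)}(v)\,x^{[\alpha]}\in\Fil^{b-|\alpha|}_\mc{V}\cdot\Fil^{|\alpha|}_\mr{PD}\subseteq\Fil^b_\mc{E}(x)$. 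Local freeness of $(\mc{E},\Fil^\bullet_\mc{E})$ over $(\mc{O}_\crys,\Fil^\bullet_\mr{PD})$ should then follow from the local splitness of $\Fil^\bullet_\mc{V}$.

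For full faithfulness, the underlying functor is already fully faithful, so a filtered morphism $f\colon(\mc{V}_1,\nabla_1,\Fil^\bullet_1)\to(\mc{V}_2,\nabla_2,\Fil^\bullet_2)$ extends uniquely to a morphism of crystals $\widetilde f$, and it remains to verify that $\widetilde f$ respects $\Fil^\bullet_\mc{E}$ on each thickening. Because $\widetilde f$ commutes with the Taylor isomorphisms and restricts to $f$ on $\mf{X}_\Zar$, and because $\Fil^\bullet_\mc{E}(x)$ is given by the above convolution formula functorial in $\Fil^\bullet_\mc{V}$, the map $\widetilde f$ preserves $\Fil^\bullet_\mc{E}(x)$; the reverse implication, that restriction of a filtered-crystal morphism preserves $\Fil^\bullet_\mc{V}$, is immediate by evaluating on $\id\colon R\to R$.

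The main obstacle will be the crystal-compatibility in essential surjectivity: one must check that the convolution filtration, a priori defined thickening-by-thickening, is genuinely the value of a single sub-object of $\mc{E}$ in $\cat{VectF}(\mf{X}_\crys)$, i.e.\ that it is stable under \emph{all} transition maps of the crystal and not merely the distinguished Taylor ones. This is exactly the point at which Griffiths transversality is both used and needed, and it is the content of \cite[Theorem 29]{Tsu20}; in practice the cleanest route is to deduce the statement directly from op.\@ cit.\@ after matching our definition of a filtered crystal (local freeness over $(\mc{O}_\crys,\Fil^\bullet_\mr{PD})$) with the hypotheses there and adding no Frobenius structure.
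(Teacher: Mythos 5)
Your proposal is correct, and its endpoint coincides with what the paper actually does: the paper contains no proof of this proposition at all --- it is stated under the heading ``cf.\ \cite[Theorem 29]{Tsu20}'' with the entire burden placed on Tsuji's theorem, which is precisely your closing recommendation. What you add is a sketch of the direct argument (Taylor isomorphisms, the convolution filtration, iterated Griffiths transversality), which is in substance the mechanism inside Tsuji's proof; the paper buys brevity by citing, your route buys self-containedness. One remark on the ``main obstacle'' you flag: it is not a genuine gap, because the computation you already performed resolves it. Any morphism of PD-thickenings, after choosing local lifts of the structure maps (which exist by formal smoothness), induces a transition map that factors as a base change along compatible lifts --- which visibly preserves the convolution filtration --- composed with a Taylor twist, which preserves it by your Griffiths-transversality estimate; so stability under \emph{all} transition maps reduces to the check you did. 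In fact, condition (2) in the paper's definition of a filtered crystal does this bookkeeping automatically: a local lift $\sigma\colon R\to A$ defines a morphism in $(\mf{X}/W)_\crys$ from the identity thickening $(\mr{id}\colon R\to R)$ to $(A\twoheadrightarrow B,\gamma)$, and condition (2) then forces $\Fil^\bullet_\mc{E}(A\twoheadrightarrow B)$ to be the scalar extension of the Zariski filtration along $(R,\Fil^\bullet_\triv)\to(A,\Fil^\bullet_\mr{PD})$, i.e.\ your convolution filtration. This observation is also what makes your full-faithfulness step clean: a morphism of underlying crystals whose Zariski restriction is filtered is then automatically filtered on every thickening.
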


\begin{rem} By definition, $\mc{E}|_{\mf{X}_\mr{Zar}}(\mf{U})=\mc{F}(\mr{id}\colon \mf{U}\to\mf{U},\gamma)$. But while $\mc{E}(\mr{id}\colon \mf{U}\to\mf{U},\gamma)$ agrees with $\mc{E}(\mf{U}_0\hookrightarrow\mf{U},\gamma)$, this is not true for $\Fil^\bullet_\mc{E}$ as it is a \emph{filtered} crystal and the PD filtrations for $(\mr{id}\colon \mf{U}\to\mf{U},\gamma)$ and $(\mf{U}_0\hookrightarrow\mf{U},\gamma)$. 
\end{rem}

Note that, by definition, we have
\begin{equation*}
    \cat{VectF}^\varphi(\mf X_\crys)=\cat{Vect}^\varphi(\mf X_\crys)\times_{\cat{Vect}^\nabla(\mf X)}\cat{VectF}^\nabla(\mf X). 
\end{equation*}
Thus, by Proposition \ref{prop:tsuji-filtered-equiv}, we obtain an equivalence of categories 
\be\label{eq:crystalline interpretation of filtered F crystals}
\cat{VectF}^\varphi(\mf X_\crys)\isomto \cat{Vect}^\varphi(\mf X_\crys)\times_{\cat{Vect}(\mf X_\crys)}\cat{VectF}(\mf X_\crys).
\ee
For this reason, for an object $(\mc{F},\Fil^\bullet_\mc{F})$ of $\cat{Vect}^\varphi(\mf{X}_\mr{crys})$ we may consider the evaluation of $\Fil^\bullet_\mc{F}$ at objects of $(\mf{X}/W)_\crys$, by which we mean the evaluation of the associated filtered crystal.

\subsubsection{Divisibility and strong divisibility} Let us fix a base formal $W$-algebra $R$ and a framing $w$, inducing the Frobenius lift $\phi_w$ on $R$. Then, for an $F$-crystal $(\mc{F},\varphi_\mc{F})$ on $(R/W)_\crys$ we define the \emph{Nygaard filtration} on $\phi_w^\ast\mc{F}(R)$, where we write $\mc{F}(R)$ as shorthand for $\mc{F}_{\Spf(R)}(R)$, by:
\begin{equation*}
    \mr{Fil}^r_{\mr{Nyg},\mc{F}}=\bigg\{x\in \phi_w^\ast \mc{F}(R): \varphi_\mc{F}(x)\in p^r \mc{F}(R)\bigg\},
\end{equation*}
leaving $\varphi_\mc{F}$, $R$, and $\phi_w$ implicit in the notation. 

\begin{rem} There is a small discrepancy between the Nygaard filtration here and the one in the sense of \cite[Definition 1.23]{IKY2} for the crystalline prism $(R,(p))$, which we briefly call the \emph{prismatic Nygaard filtration}. Namely, when $(\mc{F},\varphi_{\mc{F}})$ is considered as a prismatic $F$-crystal on $R/p$ as in \S\ref{par:crys-syn-equiv} there is an introduced Frobenius twist (see Proposition \ref{prop: pris crys in char p})
and so the prismatic Nygaard filtration actually induces a filtration on $\mc{F}(R)$, not $\phi_w^\ast\mc{F}(R)$. That said, the Nygaard filtration introduced here is precisely the pullback under $\phi_w$ of the prismatic Nygaard filtration. Or, equivalently, it agrees with the prismatic Nygaard filtration but with respect to $(R,(p))$ thought of as an object of $R_\smallprism$ via the structure map $R\xrightarrow{\phi_w}R\to R/p$, where the last map is the natural map.
\end{rem}

% We also require below the notion of $p$-saturation. Namely, for a filtration $\mathrm{Fil}^\bullet\subseteq N$ on some flat $\bb{Z}_p$-module $N$, let us define its \emph{$p$-saturation} to be
% \begin{equation*}
%     \mathrm{Sat}_p^r(\mathrm{Fil}^\bullet)=N\cap \sum_{i\in\bb{Z}}p^{r-i}\mathrm{Fil}^i=N\cap p^r L\subseteq N[\nicefrac{1}{p}],
% \end{equation*}
% where $L$ is the weighted lattice
% \begin{equation*}
%     L=\sum_{i\in\bb{Z}}p^{-i}\Fil^i\subseteq N[\nicefrac{1}{p}].
% \end{equation*}
% We call a filtration $\Fil^\bullet$ \emph{$p$-saturated} if $\mathrm{Sat}^r_p(\mathrm{Fil}^\bullet)=\mathrm{Fil}^r$ for all $r$. Essentially by definition, the $p$-saturation of a filtration $\Fil^\bullet$ is the maximal filtration containing $\Fil^\bullet$ which has the same weighted lattice as $\Fil^\bullet$. 

% \begin{lem} The Nygaard filtration $\Fil^\bullet_\mr{Nyg}$ is $p$-saturated.
% \end{lem}
% \begin{proof} Let $x$ be in $\mr{Sat}_p^r(\mr{Fil}^\bullet_\mr{Nyg})=\phi^\ast_w M\cap \sum_{i\in\bb{Z}}p^{r-i}\Fil^i_\mr{Nyg}$ then $x$ is in $\phi^\ast_w M$ and can be written as $p^{r-i}y$ with $y\in \mr{Fil}^i_\mr{Nyg}$ for some $i$. Then, $\varphi_\mc{F}(x)=p^{r-i}\varphi_\mc{F}(y)\subseteq p^{r-i}p^iM\subseteq p^r M$, and so $x\in \mr{Fil}^r_\mr{Nyg}$, from where the claim follows.
% \end{proof}

% We now come to the operative definition.

% In the parlance of the above discussion, $L_\mr{Hdg}$ and $L_\mr{Nyg}$ are the weighted lattices associated to $\phi_w^\ast\mr{Fil}^\bullet_\mc{F}$ and $\mr{Fil}^\bullet_\mr{Nyg}$ respectively. 

\begin{defn}\label{defn:strong-divisibility} With notation as above, we say a naive filtered $F$-crystal $(\mc{F},\varphi_\mc{F},\Fil^\bullet_\mc{F})$ is:
\begin{enumerate}
    \item \emph{divisible with respect to $w$} if for all $r\in\bb{Z}$ one has $\phi_w^\ast \Fil^r_\mc{F}\subseteq \Fil^r_{\mr{Nyg},\mc{F}}$,
    \item \emph{strongly divisible with respect to $w$} if for all $r\in\bb{Z}$ one has:
    \begin{equation*}\label{eq:defn-of-strong-div}
   \varphi_\mc{F}\left(\sum_{i\in\bb{Z}}p^{-i}\phi_w^\ast \Fil^i_\mc{F}\right)=\mc{F}(R).
    \end{equation*}
\end{enumerate}
We say that $(\mc{F},\varphi_\mc{F},\Fil^\bullet_\mc{F})$ is \emph{(strongly) divisible} if it is (strongly) divisible with respect to all choices of $w$.
\end{defn}

To help better understand these definitions, we consider the following two lattices in $\phi_w^\ast \mc{F}(R)[\nicefrac{1}{p}]$:
\begin{equation*}
    \displaystyle L_{\mr{Hdg}}\defeq \sum_{i\in\bb{Z}}p^{-i}\phi_w^\ast \Fil^i_\mc{F},\qquad L_\mr{Nyg}\defeq \varphi_\mc{F}^{-1}(\mc{F}(R))=\sum_{i\in\bb{Z}}p^{-i}\Fil^i_{\mr{Nyg},\mc{F}},
\end{equation*}
leaving the various dependencies implicit in the notation. From this perspective, it is clear that $(\mc{F},\varphi_\mc{F},\Fil^\bullet_\mc{F})$ being strongly divisible with respect to $w$ is equivalent to the equality $L_\mr{Hdg}=L_\mr{Nyg}$, and it is simple to check that divisibility with respect to $w$ is equivalent to $L_\mr{Hdg}\subseteq L_\mr{Nyg}$. 

Next, we observe that when our naive filtered $F$-crystal is gtf, strong divisibility can be phrased in terms of filtered tensor products. 

\begin{prop}\label{prop:strong-divisibilty-using-filtered-tensor-product} Suppose that $(\mc{F},\varphi_\mc{F},\Fil^\bullet_\mc{F})$ is a gtf naive filtered $F$-crystal. Then, it is strongly divisible with respect to $w$ if and only if:
\begin{equation*}
    (\phi_w^\ast \mc{F}(R),\phi_w^\ast \Fil^\bullet_\mc{F})\otimes_{(R,\Fil^\bullet_\mr{triv})}(R,\Fil^\bullet_p)=(\phi_w^\ast\mc{F}(R),\Fil^\bullet_{\mr{Nyg},\mc{F}}),
\end{equation*}
i.e., for all $r\in\bb{Z}$ one has the equality
\begin{equation*}
    \phi_w^\ast \Fil^r_{\mc{F},p}\defeq \sum_{i\leqslant r}p^{r-i}\phi_w^\ast \Fil^i_\mc{F}=\Fil^r_{\mr{Nyg},\mc{F}}.
\end{equation*}
\end{prop}
\begin{proof} Observe that if $N$ is a finite-rank free $R$-module and $L,L'\subseteq N[\nicefrac{1}{p}]$ are $R$-lattices, then $L=L'$ if and only if $N\cap p^rL=N\cap p^rL'$ for all $r$. In fact, one only needs to check for some $r$ such that $p^r L$ and $p^rL'$ are contained in $N$.

From this, it suffices to verify the two equalities
\begin{equation}\label{eq:gtf-equalities}
    \phi^\ast_w \mc{F}(R)\cap p^r L_\mr{Hdg}=\sum_{i\leqslant r}p^{r-i}\phi_w^\ast \Fil^i_\mc{F},\qquad \phi^\ast_w \mc{F}(R)\cap p^r L_\mr{Nyg}=\Fil^r_{\mr{Nyg},\mc{F}}.
\end{equation}
The second equality is by definition. To verify the first equality, observe that the right-hand side is clearly included in the left-hand side. For the other inclusion, an element of the left-hand side is of the form $x=\sum_{i\in\mathbb{Z}}p^{r-i}x_i$ where $x_i\in \phi_w^\ast\Fil^i_\mc{F}$. 
As $x\in\phi_w^\ast\mc{F}(R)$ and $\sum_{i\leqslant r}p^{r-i}x_i\in \phi_w^\ast\mc{F}(R)$ one also has that $b=\sum_{i>r}p^{r-i}x_i\in\phi_w^\ast\mc{F}(R)$. Let $m>r$ be maximal with respect to $x_m\ne 0$. Then, $p^{m-r}b\in \phi_w^\ast \Fil^r_\mc{F}$, and thus by our gtf assumption, $b\in \phi_w^\ast \Fil^r_\mc{F}$. Thus, $x=b+\sum_{i\leqslant r}p^{r-i} x_i$ is in $\sum_{i\leqslant r}p^{r-i}\phi_w^\ast\Fil^i_{\mc{F}}$ as desired.
\end{proof}

Finally, we globalize the above definitions.

\begin{defn} Let $\mf{X}$ be a base formal $W$-scheme. We say that a naive filtered $F$-crystal $(\mc{F},\varphi_\mc{F},\Fil^\bullet_\mc{F})$ is \emph{\emph{(}strongly\emph{)} divisible} if it is after restriction to every affine open $\Spf(R)\subseteq \mf{X}$.
\end{defn}

We write $\cat{VectNF}^{\varphi,\mr{sd}}(\mf{X}_\mr{crys})$ for the category of strongly divisible naive filtered $F$-crystals. More generally, for any of the previously defined subcategories of naive filtered $F$-crystals, we use a superscript $\mr{sd}$ to denote those which are additionally {strongly divisible}.

\subsubsection{PD-divisibility and strong PD-divisibility} In practice strong divisibility is too strong, and so we now aim to weaken this condition. We continue to fix a base formal $W$-algebra $R$ and a framing $w$, inducing the Frobenius lift $\phi_w$ on $R$.

\begin{nota} For an integer $r\geqslant 0$, write $p^{[r]}\defeq \tfrac{p^r}{r!}$, and write $\langle r\rangle$ for the unique integer such that \begin{equation*}
    p^{\langle r\rangle}\Z_p=\bigcup_{i\geqslant r}p^{[i]}\Z_p,
\end{equation*}
i.e., $\langle r\rangle= \min \{v_p(p^{[i]}):i\geqslant r\}$. Finally, we can consider the filtered ring $(R,\Fil^\bullet_\mr{PD})$ where
\begin{equation*}
    \Fil^\bullet_\mr{PD}=\begin{cases}p^{\langle r\rangle}R & \mbox{if} \quad r\geqslant 0\\ R & \mbox{if}\quad r<0.\end{cases}
\end{equation*}
\end{nota}

\begin{defn}\label{defn:strong-PD-divisibility} We say a naive filtered $F$-crystal $(\mc{F},\varphi_\mc{F},\Fil^\bullet_\mc{F})$ of $\cat{VectNF}^\varphi(R_\crys)$ is:
\begin{enumerate}
    \item \emph{PD-divisible with respect to $w$} if for all $r\in\bb{Z}$ one has %$\phi_w^\ast \Fil^r_\mc{F}\subseteq \Fil^{\langle r\rangle}_\mr{Nyg}$, 
    the inclusion
    \begin{equation*}
    \phi_w^\ast\Fil^r_\mc{F}\subseteq\sum_{i\leqslant r}p^{[r-i]}\Fil^i_{\mr{Nyg},\mc{F}}=: \mr{Fil}^r_{\mr{Nyg},\mc{F},\mr{PD}}.
\end{equation*}
    \item \emph{strongly PD-divisible with respect to $w$} if it is PD-divisible with respect to $w$ and if, for all $r\in\mathbb{Z}$, the above inclusion induces the equality 
    \begin{equation*}
    \phi_w^\ast \Fil^r_{\mc{F},\mr{PD}}\defeq\sum_{i\leqslant r}p^{[r-i]}\phi_w^\ast \Fil^i_\mc{F}= \mr{Fil}^r_{\mr{Nyg},\mc{F},\mr{PD}}.
\end{equation*}
\end{enumerate}
We say that $(\mc{F},\varphi_\mc{F},\Fil^\bullet_\mc{F})$ is \emph{(strongly) PD-divisible} if it is (strongly) PD-divisible with respect to all choices of $w$.
\end{defn}
%Note that, when the Frobenius structure $\varphi_\mc F$ is effective (i.e, $\varphi_\mc F$ sends $\phi^*\mc F(R)$ into $\mc F(R)$), we have $\Fil^r_{\mc F,\mr{Nyg},\mr{PD}}=\Fil^{\langle r\rangle}_{\mc F,\mr{Nyg}}$, so PD-divisibility is equivalent to the inclusion $\phi_w^\ast \Fil^r_\mc{F}\subseteq \Fil^{\langle r\rangle}_\mr{Nyg}$ for all $r\in \Z$. 

We write $\cat{VectNF}^{\varphi,\pddiv}(\mf{X}_\mr{crys})$ for the category of strongly PD-divisible naive filtered $F$-crystals on $\mf{X}$. More generally, for any other subcategory of naive filtered $F$-crystals, we use the superscript $\pddiv$ to denote the subcategory consisting of strongly PD-divisible objects.

We first observe that (strong) PD-divisibility can be phrased in terms of filtered tensor products. The following result is simple, and left to the reader.

\begin{lem}\label{lem:PD-div-in-terms-of-filtered-tensor-products} A naive filtered $F$-crystal $(\mc{F},\varphi_\mc{F},\Fil^\bullet_\mc{F})$ is PD-divisible \emph{(}resp.\@ strongly PD-divisible\emph{)} with respect to $w$ if and only if the identity map induces a morphism \emph{(}resp.\@ isomorphism\emph{)} from $(\phi_w^\ast \mc{F}(R),\phi_w^\ast \Fil^\bullet_\mc{F})$ to $(\phi_w^\ast\mc{F}(R),\Fil^\bullet_{\mr{Nyg},\mc{F}})$ after filtered base change along $(R,\Fil^\bullet_\mr{triv})\to (R,\Fil^\bullet_\mr{PD})$.
\end{lem}

\begin{rem}\label{rem:PD-equals-normal-in-FL-range} When $(\mc{F},\varphi_\mc{F},\Fil^\bullet_\mc{F})$ belongs to $\cat{VectNF}^{\varphi,\mr{gtf}}_{[0,p-1]}(\mf{X}_\mr{crys})$ (or more generally in $\cat{VectNF}_{[a,b]}^{\varphi,\mr{gtf}}(\mf{X}_\crys)$ with $b-a\leqslant p-1$), PD-divisibility (resp.\ strong PD-divisibility) is equivalent to divisibility (resp.\ strong divisibility). As we only require the implication that strong PD-divisibility implies strong divisibility in this range, we content ourselves with an explanation of this. 

To do this, we observe that, by definition of strong PD-divisibility, one has that $\phi_w^\ast \Fil^{p-1}_{\mc{F},\mr{PD}}=\Fil^{p-1}_{\mr{Nyg},\mc{F},\mr{PD}}$. Thus, if we can show that
\begin{equation*}
p^{p-1}L_\mr{Hdg}=\phi_w^\ast \mr{Fil}^{p-1}_{\mc{F},p}=\phi_w^\ast \mr{Fil}^{p-1}_{\mc{F},\mr{PD}},
\qquad 
p^{p-1}L_\mr{Nyg}=\mr{Fil}^{p-1}_{\mr{Nyg},\mc{F}}=\mr{Fil}^{p-1}_{\mr{Nyg},\mc{F},\mr{PD}},
\end{equation*}
then we deduce that $p^{p-1}L_\mr{Hdg}=p^{p-1}L_\mr{Nyg}$ and thus $L_\mr{Hdg}=L_\mr{Nyg}$, and thus strong divisibility would follow.

We first show that $p^{p-1}L_\mr{Hdg}=\phi_w^\ast\Fil^{p-1}_{\mc{F},p}$. As $(\mc{F},\varphi_\mc{F},\Fil^\bullet_\mc{F})$ is gtf, we know from Equation \eqref{eq:gtf-equalities}, that $\phi_w^\ast \Fil^{p-1}_{\mc{F},p}=p^{p-1}L_\mr{Hdg}\cap \phi_w^\ast\mc{F}(R)$. That said, as $\Fil^i_\mc{F}=0$ for $i>p-1$, the intersection on the right-hand side of this equality is superfluous, and so we arrive at our desired equality.

To show that $\phi_w^\ast \Fil^{p-1}_{\mc{F},p}=\phi_w^\ast \Fil^{p-1}_{\mc{F},\mr{PD}}$ means showing that
\begin{equation*}
\sum_{i\leqslant p-1}p^{p-1-i}\phi^\ast_w \Fil^i_\mc{F}=\sum_{i\leqslant p-1}p^{[p-1-i]}\phi_w^\ast\Fil^i_\mc{F}.
\end{equation*}
Note that as $\Fil^i_\mc{F}=\mc{F}(R)$ for $i\leqslant 0$, the fact that 
\begin{equation*}
\sum_{k\geqslant 0}p^{p-1+k}\phi_w^\ast \mc{F}(R)\subseteq p^{p-1}\phi_w^\ast\mc{F}(R),\qquad \sum_{k\geqslant 0}p^{[p-1+k]}\phi_w^\ast \mc{F}(R)\subseteq p^{[p-1]}\phi_w^\ast\mc{F}(R),
\end{equation*}
implies that we must only show
\begin{equation*}
\sum_{i=0}^{p-1}p^{p-1-i}\phi_w^\ast\mc{F}(R)=\sum_{i=0}^{p-1}p^{[p-1-i]}\phi_w^\ast\mc{F}(R).
\end{equation*}
But, this is clear as $v_p(p^{p-1-i})=v_p(p^{[p-1-i]})$ for $i\in[0,p-1]$. A similar argument gives the equality $\Fil^{p-1}_{\mr{Nyg},\mc{F}}=\Fil^{p-1}_{\mr{Nyg},\mc{F},\mr{PD}}$.

Finally, we show that $p^{p-1}L_\mr{Nyg}=\Fil^{p-1}_{\mr{Nyg},\mc{F}}$. 
By Equation \eqref{eq:gtf-equalities}, it suffices to show that $p^p L_\mathrm{Nyg} \subset p\phi_w^*\mc{F}(R)$, which follows from $p^p L_\mathrm{Nyg} \cap \phi_w^*\mc{F}(R) \subset p\phi_w^*\mc{F}(R)$ because if there is $x \in p^p L_\mathrm{Nyg} \setminus p\phi_w^*\mc{F}(R)$ then there is $n \geq 0$ such that $p^nx \in (p^p L_\mathrm{Nyg} \cap \phi_w^*\mc{F}(R)) \setminus p\phi_w^*\mc{F}(R)$. Then the claim follows from 
$p^p L_\mathrm{Nyg} \cap \phi_w^*\mc{F}(R)=\Fil^{p}_{\mr{Nyg},\mc{F}} \subset \Fil^{p}_{\mr{Nyg},\mc{F},\mr{PD}}=\Fil^{p}_{\mc{F},\mr{PD}} \subset p\phi_w^*\mc{F}(R)$, where we use Equation \eqref{eq:gtf-equalities}, the assumption of strong PD-divisibility and $\mathrm{Fil}^i_\mc{F}=0$ for $i>p-1$. 
%\naoki{This is just a matter of preference, but if we say here that it's enough to show $p^p L_\mathrm{Nyg} \cap \phi_w^*\mc{F}(R) \subset p\phi_w^*\mc{F}(R)$ to prove $p^p L_\mathrm{Nyg} \subset p\phi_w^*\mc{F}(R)$, then the rest of the arguments can be written as the chain of inclusions $p^p L_\mathrm{Nyg} \cap \phi_w^*\mc{F}(R)=\Fil^{p}_{\mc{F},\mr{Nyg}} \subset \Fil^{p}_{\mc{F},\mr{Nyg},\mr{PD}}=\Fil^{p}_{\mc{F},\mr{PD}} \subset p\phi_w^*\mc{F}(R)$.}
%To show this, assume the contrary. Take an element $x$ in $L_\mr{Nyg}$ not in $p^{-(p-1)}\phi_w^*\mc{F}(R)$. Up to replacing it by a multiple by $p$, we can assume $x$ is in $p^{-p}\phi_w^*\mc{F}(R)$. Then $p^px\in\mathrm{Fil}^p_{\mc{F},\mathrm{Nyg,PD}}$. Indeed,  it's in $p^pL_\mr{Nyg}\cap \phi_w^*\mc{F}(R)$, which is equal to $\Fil^{p}_{\mc{F},\mr{Nyg}}$ (by Equation \eqref{eq:gtf-equalities}) which is contained in $\Fil^p_{\mc{F},\mr{Nyg},\mr{PD}}$. But, by assumption $p^px$ is not in $p\phi_w^*\mc{F}(R)$. But, observe that $\mathrm{Fil}^p_\mathrm{\mc{F},PD}\subset p\phi_w^*\mc{F}(R)$ as $\mathrm{Fil}^i_\mc{F}=0$ for $i>p-1$. Thus, we deduce that $p^p x\notin \Fil^p_{\mc{F},\mr{PD}}$, but as $p^p x\in \Fil^p_{\mc{F},\mr{Nyg},\mr{PD}}$ this contradicts the assumption of strong PD-divisibility that $\Fil^r_{\mc{F},\mr{PD}}=\Fil^r_{\mc{F},\mr{Nyg},\mr{PD}}$ for all $r$.
\end{rem}

Next we observe that, in fact, (strong) PD-divisibility with respect to $w$ actually implies (strong) PD-divisibility if we assume that $(\mc{F},\varphi_\mc{F},\Fil^\bullet)$ is a filtered $F$-crystal.

\begin{prop}\label{prop:PD-div-Frob-invariant} Let $(\mc{F},\varphi_\mc{F},\Fil^\bullet_\mc{F})$ be a filtered $F$-crystal. Then, if $(\mc{F},\varphi_\mc{F},\Fil^\bullet_\mc{F})$ is \emph{(}strongly\emph{)} PD-divisible with respect to some $w$, then it is \emph{(}strongly\emph{)} PD-divisible.
\end{prop}
\begin{proof}This follows from Lemma \ref{lem:PD-div-in-terms-of-filtered-tensor-products} and Proposition \ref{prop:tsuji-filtered-equiv}. Indeed, the filtered module
\begin{equation*}
    (\phi_w^\ast \mc{F}(R),\phi_w^\ast \Fil^\bullet_\mc{F})\otimes_{(R,\Fil^\bullet_\mr{triv})} (R,\Fil^\bullet_\mr{PD}),
\end{equation*}
is equal to the evaluation of $\Fil^\bullet_\mc F$ (through the equivalence in Proposition \ref{prop:tsuji-filtered-equiv}) at the PD-thickening $R\to R/p$ with the structural map being the composite $R\xrightarrow{\mr{nat}} R/p\xrightarrow{\mr{Frob}}R/p$. In particular, it is independent of the choice of $w$. 
\end{proof}

\subsubsection{Comparison to the category of Faltings}
% To study the relationship between our various conditions on a naive filtered $F$-crystal, it is useful to recall the following construction of Faltings. 

Fix a naive filtered $F$-crystal $(\mc{F},\varphi_\mc{F},\Fil^\bullet_\mc{F})$ on $\Spf(R)$, where $R$ is a base $W$-algebra with formal framing $\framew$. Consider the following module as in \cite[II.c), p.\@ 30]{Faltings89}:
    \begin{equation}\label{eq:def-of-Faltings-tilde}
    \wt{\mc{F}}_\mf{X}\defeq \colim\bigg(\cdots \xrightarrow{}\Fil^{r+1}_\mc F\xleftarrow{\cdot p} \Fil^{r+1}_\mc F \xrightarrow{} \Fil^r_\mc F \xleftarrow{\cdot p}\Fil^r_\mc F \xrightarrow{}\Fil^{r-1}_\mc F\xleftarrow{}\cdots\bigg).
    \end{equation}
The maps $\Fil^r_\mc F\to \mc F_\mf X[\nicefrac{1}{p}]$ sending $x$ to $p^{-r}x$ induce a natural map $\wt{\mc F}_\mf X\to \mc F_\mf X[\nicefrac{1}{p}]$ whose image is the sum $\sum_{r\in \Z}p^{-r}\Fil^r_\mc F$. We then have the following \emph{Faltings morphism}
\begin{equation}\label{eq: Faltings morphism}
    \phi_\framew^\ast \wt{\mc{F}}_\mf{X}\to \phi_\framew^\ast\mc{F}_\mf{X}[\nicefrac{1}{p}]\xrightarrow{\varphi_\mc{F}}\mc{F}_\mf{X}[\nicefrac{1}{p}].
\end{equation}
Observe that if $(\mc{F},\varphi_\mc{F},\Fil^\bullet_\mc{F})$ is gtf then the map $\wt{\mc{F}}_\mf{X}\to \mc{F}_\mf{X}[\nicefrac{1}{p}]$ is injective. Thus, as $\phi_\framew$ is flat, we see that if $(\mc{F},\varphi_\mc{F},\Fil^\bullet_\mc{F})$ is gtf and strongly divisible, then the Faltings morphism induces an isomorphism $\phi_\framew^\ast \wt{\mc{F}}_\mf{X} \isomto \mc{F}_\mf{X}$.

Observe that  
\be
    \wt{\mc{F}}_\mf{X}/p\cong \colim\bigg(\cdots \xrightarrow{}\Fil^{r+1}_\mc F/p\xleftarrow{0} \Fil^{r+1}_\mc F/p \xrightarrow{} \Fil^r_\mc F/p \xleftarrow{0}\Fil^r_\mc F/p \xrightarrow{}\Fil^{r-1}_\mc F/p\xleftarrow{}\cdots\bigg),
    \ee
and so is isomorphic to $\bigoplus_r \Gr^r(\Fil_\mc{F}^\bullet)/p$. Leveraging this, we show the following.

\begin{prop}[{cf.\@ \cite[Theorem 2.1]{Faltings89}, \cite[Theorem 2.9]{LiuMoonPatel}}]\label{prop:ptf-implies-loc-free} Let $(\mc{F},\varphi_\mc{F},\Fil^\bullet_\mc{F})$ be a gtf strongly divisible naive filtered $F$-crystal. Then, the filtration $\Fil^\bullet_\mc{F}\subseteq \mc{F}_\mf{X}$ is locally split. If $(\mc{F},\varphi_\mc{F},\Fil^\bullet_\mc{F})$ is further a weakly filtered $F$-crystal, then it is a strongly divisible filtered $F$-crystal.
\end{prop}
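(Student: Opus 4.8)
The plan is to work locally and read everything off the Faltings isomorphism. Fix an affine open $\Spf(R)\subseteq\mf X$ with a formal framing $\framew$, and write $M\defeq\mc F_\mf X(R)$ and $\wt M\defeq\wt{\mc F}_\mf X(R)$. Since $(\mc F,\varphi_\mc F,\Fil^\bullet_\mc F)$ is gtf and strongly divisible, the discussion preceding the statement shows that the Faltings morphism induces an isomorphism $\phi_\framew^\ast\wt M\isomto M$. As $M$ is finite projective over $R$ and $\phi_\framew\colon R\to R$ is faithfully flat — it is flat by the discussion above, and faithfully flat since in framing coordinates it agrees, up to the automorphism $\phi_W$ of $W$, with the finite free map $t_i\mapsto t_i^p$ — faithfully flat descent along $\phi_\framew$ shows that $\wt M$ is itself finite projective over $R$.

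Next I would descend projectivity to the graded pieces. Reducing this modulo $p$ and invoking the canonical identification $\wt M/p\cong\bigoplus_r\Gr^r(\Fil^\bullet_\mc F)/p$ recorded above, we find that $\bigoplus_r\Gr^r(\Fil^\bullet_\mc F)/p$ is finite projective over $R/p$; as a direct summand of a finite projective module is finite projective, each $\Gr^r(\Fil^\bullet_\mc F)/p$ is finite projective over $R/p$, and all but finitely many vanish (note $p$ lies in the Jacobson radical of the $p$-adically complete ring $R$). It then remains to lift projectivity to the integral level, and here the gtf hypothesis enters decisively: each $\Gr^r(\Fil^\bullet_\mc F)$ is a finitely generated $R$-module that is $p$-torsion-free, so $\Tor_1^R(\Gr^r(\Fil^\bullet_\mc F),R/p)=0$, while $\Gr^r(\Fil^\bullet_\mc F)/p$ is flat over $R/p$. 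The local criterion for flatness over the Noetherian $p$-adically complete ring $R$ then yields that $\Gr^r(\Fil^\bullet_\mc F)$ is flat, hence — being finitely generated — finite projective over $R$. By the very definition of locally split, this proves the first assertion.

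For the second assertion, assuming in addition that $(\mc F,\varphi_\mc F,\Fil^\bullet_\mc F)$ is a weakly filtered $F$-crystal, only integral Griffiths transversality remains to be checked, the local splitting having just been established. The idea is a $p$-saturation argument. Because the graded pieces $\Gr^j(\Fil^\bullet_\mc F)$ are now finite projective, hence $p$-torsion-free, and the filtration is exhaustive and separated, each quotient $\mc F_\mf X/\Fil^{r-1}_\mc F$ is $p$-torsion-free (it carries a finite filtration with $p$-torsion-free graded pieces among the $\Gr^j$), and tensoring with the finite projective module $\Omega^1_{\mf X/W}$ preserves this. Consequently $\Fil^{r-1}_\mc F\otimes\Omega^1_{\mf X/W}$ is $p$-saturated inside $\mc F_\mf X\otimes\Omega^1_{\mf X/W}$. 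Since $\nabla_\mc F(\Fil^r_\mc F)$ lies in $\mc F_\mf X\otimes\Omega^1_{\mf X/W}$ by integrality of the connection and in $(\Fil^{r-1}_\mc F\otimes\Omega^1_{\mf X/W})[\nicefrac{1}{p}]$ by the weak Griffiths transversality hypothesis, it must lie in their intersection, which is exactly $\Fil^{r-1}_\mc F\otimes\Omega^1_{\mf X/W}$. This gives integral Griffiths transversality; together with the local splitting and the assumed strong divisibility, $(\mc F,\varphi_\mc F,\Fil^\bullet_\mc F)$ is a strongly divisible filtered $F$-crystal.

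I expect the main obstacle to be the lift of projectivity from characteristic $p$ to the integral level in the second paragraph: everything upstream is essentially formal, but it is precisely the gtf hypothesis — via the vanishing of $\Tor_1^R(\Gr^r(\Fil^\bullet_\mc F),R/p)$ feeding the local flatness criterion — that excludes the $p$-torsion phenomena which would otherwise obstruct the graded pieces from being projective. One should also take care that $R$ is Noetherian and $p$-adically complete, so that the flatness criterion and the Nakayama-type vanishing arguments apply, and that $\phi_\framew$ is genuinely faithfully flat rather than merely flat.
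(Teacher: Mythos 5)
Your proof is correct and follows essentially the same route as the paper's: the Faltings isomorphism $\phi_\framew^\ast\wt{\mc F}_\mf{X}\isomto\mc{F}_\mf{X}$, the identification $\wt{\mc F}_\mf{X}/p\cong\bigoplus_r\Gr^r(\Fil^\bullet_\mc F)/p$, faithfully flat descent along $\phi_\framew$, lifting projectivity of the graded pieces from $R/p$ to $R$ via gtf, and the $p$-saturation argument $(\mc F_\mf X\otimes\Omega^1)\cap(\Fil^{r-1}_\mc F\otimes\Omega^1)[\nicefrac{1}{p}]=\Fil^{r-1}_\mc F\otimes\Omega^1$ for Griffiths transversality. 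The only (harmless) deviations are that you descend $\wt{\mc F}_\mf{X}$ integrally before reducing mod $p$ where the paper descends after reduction, and that you invoke the local criterion for flatness (valid here since $R$ is Noetherian, the $\Gr^r$ are finite, and $p$ lies in the Jacobson radical) where the paper proves the lifting step by hand via an explicit retraction-lifting lemma.
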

\begin{proof} We may assume that $\mf{X}=\Spf(R)$, where $R$ is a base $W$-algebra with formal framing $\framew$.

Let us first verify that the filtration $\Fil^\bullet_\mc{F}$ is locally split. In other words, we must show that $\mr{Gr}^r(\Fil^\bullet_\mc{F})$ is a locally free $R$-module for all $r$. As $(\mc{F},\varphi_\mc{F},\Fil^\bullet_\mc{F})$ is gtf and strongly divisible, the Faltings morphism is an isomorphism. This implies that $\phi_\framew^\ast\wt{\mc{F}}(R)$ is a projective $R$-module. But, as observed above, this implies that $\phi_\framew^\ast\Gr^r(\Fil^r_\mc{F}(R))/p$ is a projective $R/p$-module for all $r$, and since $\phi_\framew$ is faithfully flat this implies that $\mr{Gr}^r(\Fil^\bullet_\mc{F}(R))/p$ is a locally free $R/p$-module for all $r$. This implies that $\mr{Gr}^r(\Fil^\bullet_\mc{F}(R))$ is a projective $R$-module by the following simple lemma.

\begin{lem} Let $A$ be a Noetherian $p$-adically complete ring and $Q$ a finitely generated $p$-torsion-free $A$-module such that $Q/p$ is a projective $A/p$-module. Then, $Q$ is a projective $A$-module.
\end{lem}
\begin{proof}Take a short exact sequence
% \begin{equation*}
$0\to K\xrightarrow{\iota} A^n\to Q\to 0$.
% \end{equation*}
As $Q$ is $p$-torsion-free, reducing modulo $p$ gives an exact sequence
% \begin{equation*}
$0\to K/p\xrightarrow{\overline{\iota}} (A/p)^n\to Q/p\to 0$.
% \end{equation*}
As $Q/p$ is projective, there exists a retraction $\overline{\rho}\colon (A/p)^n\to K/p$ to $\overline{\iota}$. Consider the composition $A^n\to (A/p)^n\to K/p$, and lift it to $\rho\colon A^n\to K$. Note that $\rho\circ\iota$ is the identity modulo $p$. As $K$ is $p$-adically complete, this implies that $\rho\circ\iota$ is an automorphism, with inverse $\sum_{k\geqslant 0}(-1)^k((\rho\circ \iota)-\mathrm{id})^k$. So then, $(\rho\circ \iota)^{-1}\circ \rho$ is a retraction to $\iota$. Thus, $Q$ is a direct summand of a free module, so projective.
\end{proof}

Suppose further that $(\mc{F},\varphi_\mc{F},\Fil^\bullet_\mc{F})$ is a weakly filtered $F$-crystal. To show that it is a strongly divisible $F$-crystal, it remains to show that $\Fil^\bullet_\mc{F}$ satisfies Griffiths transversality with respect to $\nabla_\mc{F}$. But, let us observe that by assumption, for each $r$ we have that
\begin{equation*}
\nabla_\mc{F}(\Fil^r_\mc{F})\subseteq (\mc{F}_\mf{X}\otimes_{\mc{O}_\mf{X}}\Omega^1_{\mf{X}/W})\cap (\Fil^{r-1}_\mc{F}\otimes_{\mathcal{O}_{\mf{X}}}\Omega^1_{\mf{X}/W})[\nicefrac{1}{p}].
\end{equation*}
But, the right-hand is just $\Fil^{r-1}_\mc{F}\otimes_{\mathcal{O}_{\mf{X}}}\Omega^1_{\mf{X}/W}$. Indeed, this follows from the observation that
\begin{equation*}
(\mc{F}_\mf{X}\otimes_{\mc{O}_\mf{X}}\Omega^1_{\mf{X}/W})/(\Fil^{r-1}_\mc{F}\otimes_{\mathcal{O}_{\mf{X}}}\Omega^1_{\mf{X}/W})\cong (\mc{F}_\mf{X}/\Fil^{r-1}_\mc{F})\otimes_{\mathcal{O}_{\mf{X}}}\Omega^1_{\mf{X}/W}
\end{equation*}
is $p$-torsion-free, being the tensor product of two vector bundles on $\mf{X}$. 
\end{proof}

We may now explain the relationship between the subcategories of naive filtered $F$-crystals considered above, and the categories $\mb{MF}^\nabla_{[0,a]}(R)$ of \emph{Fontaine--Laffaille modules} as in \cite[II.d)]{Faltings89}, when $p \neq 2$ and $0\leqslant a\leqslant p-2$.\footnote{In loc.\@ cit.\@ one considers categories $\mb{MF}^\nabla_{[0,a]}(R,\Phi)$ for some Frobenius lift $\Phi$ on $R$. But, as $a$ is in $[0,p-2]$, this category is independent of $\Phi$ by \cite[Theorem 2.3]{Faltings89}; alternatively combine Remark \ref{rem:PD-equals-normal-in-FL-range} and Proposition \ref{prop:PD-div-Frob-invariant}.}

\begin{prop}\label{prop:filtered-F-crystals} Assume that $p \neq 2$. Let $R$ be a base $W$-algebra. Then, for any $a$ in $[0,p-2]$, the functor
\begin{equation*}
    \cat{VectF}^{\varphi,\mr{sd}}_{[0,a]}(R_\crys)\to \cat{MF}^\nabla_{[0,a]}(R),\qquad (\mc{F},\varphi_\mc{F},\Fil^\bullet_\mc{F})\mapsto (\mc{F}(\id \colon R \to R),\varphi_\mc{F},\Fil^\bullet_\mc{F},\nabla_\mc{F}),
\end{equation*}
is an equivalence of categories.
\end{prop}
\begin{proof}
This follows from Proposition \ref{prop:tsuji-filtered-equiv} by adding Frobenius structures to both sides and checking that the notions of divisibility match. 
\end{proof}

\subsection{The functor \texorpdfstring{$\bb{D}_\crys$}{DDcrys}}\label{ss:dcrys-definition}

We now apply the crystalline-de Rham comparison theorem to define our integral analogue $\bb{D}_\crys$ of the functor $D_\crys$ and show that it really forms a lattice in $D_\mr{crys}$. Throughout we assume that $\mc{O}_K=W$.

\subsubsection{The definition} We now come to the definition of the naive filtered $F$-crystal $\bb{D}_\crys(\mc{E},\varphi_\mc{E})$.

\begin{defn} Let $(\mathcal E,\varphi_\mathcal E)$ be an object of $\cat{Vect}^\varphi(\mf{X}_\smallprism)$. Define $\bb{D}_\crys(\mc{E},\varphi_\mc{E})$ to have underlying $F$-crystal $\underline{\bb{D}}_\crys(\mc{E},\varphi_\mc{E})$ and have the filtration induced by $\iota_\mf{X}$ from Theorem \ref{thm:crys-dR-comparison}:
\begin{equation*}
    \underline{\bb{D}}_\crys(\mc{E},\varphi_\mc{E})\supseteq \Fil^\bullet_{\bb{D}_\crys}(\mc{E},\varphi_\mc{E})\defeq \iota_\mf{X}^{-1}\bigg(\Fil^\bullet_{\bb{D}_\mr{dR}}(\bb{D}_\mr{dR}(\mc{E},\varphi_\mc{E}))\bigg).
\end{equation*} 
\end{defn}
When $(\mc{E},\varphi_\mc{E})$ is clear from context, we often omit it from the notation, writing $\Fil^\bullet_{\bb{D}_\crys}$ instead.

\begin{prop} The functor 
\begin{equation*}
    \bb{D}_\crys\colon \cat{Vect}^\varphi(\mf{X}_\smallprism)\to \cat{VectNF}^\varphi(\mf{X}_\crys),
\end{equation*}
is a $\Z_p$-linear $\otimes$-functor, which preserves duals, and maps $\cat{Vect}^\varphi_{[0,a]}(\mf{X}_\smallprism)$ into $\cat{VectNF}^\varphi_{[0,a]}(\mf{X}_\crys)$.
\end{prop}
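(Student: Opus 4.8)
The plan is to verify the three assertions---$\Z_p$-linear monoidality, compatibility with duals, and preservation of the weight bound---one at a time, in each case reducing to the analogous property of the two realization functors out of which $\bb{D}_\crys$ is assembled. Recall that $\bb{D}_\crys(\mc{E},\varphi_\mc{E})$ has underlying $F$-crystal $\underline{\bb{D}}_\crys(\mc{E},\varphi_\mc{E})=(\mc{E}|_{(\mf{X}_k)_\smallprism})^\crys$, and filtration obtained by transporting $\Fil^\bullet_{\bb{D}_\mr{dR}}$ through the crystalline--de Rham comparison $\iota_\mf{X}$ of Theorem \ref{thm:crys-dR-comparison}. For the $\Z_p$-linearity and the monoidal structure of the \emph{underlying} $F$-crystal, the functor $(-)^\crys\colon\cat{Vect}^\varphi(\mf{X}_\smallprism)\to\cat{Vect}^\varphi(\mf{X}_\crys)$ is built in \S\ref{par:crys-syn-equiv} from the evaluation equivalences $\cat{Vect}(R_\smallprism,\mc{O}_\smallprism)\isomto\cat{Vect}(\Acrys(R))\isomfrom\cat{Vect}(R_\crys)$ on qrsp algebras, which are $\Z_p$-linear symmetric monoidal and preserve duals; hence so is $(-)^\crys$, and thus the underlying $F$-crystal of $\bb{D}_\crys$ depends monoidally and dualizably on its input. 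It then remains to treat the filtration.

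The filtered de Rham realization $\bb{D}_\mr{dR}^+=(\rho^\mr{syn}_{\mr{dR},\mf{X}})^\ast\circ\Pi_\mf{X}$ is the composite of pullback along a map of stacks with the equivalence $\Pi_\mf{X}$ of \cite[Theorem 2.31]{GuoLi}; both are symmetric monoidal, so $\bb{D}_\mr{dR}^+$ is a $\otimes$-functor valued in $\cat{PerfF}(\mf{X})$. Under the identification of $\cat{Perf}(\mf{X}\times(\A^1/\Gm))$ with filtered perfect complexes \cite[Proposition 2.2.6]{BhattNotes}, the monoidal structure on the source becomes Day convolution, which is precisely the formula $\Fil^k_{\mc{F}_1\otimes\mc{F}_2}=\sum_{i+j=k}\Fil^i_{\mc{F}_1}\otimes\Fil^j_{\mc{F}_2}$ defining the $\otimes$-structure on $\cat{VectNF}^\varphi(\mf{X}_\crys)$. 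Thus $\bb{D}_\mr{dR}^+$, and with it the filtration $\Fil^\bullet_{\bb{D}_\mr{dR}}$, is $\otimes$-compatible and dual-compatible in the required sense. Finally, $\iota_\mf{X}$ is induced by the identification $\rho_{\crys,\mf{X}}\simeq\rho^\smallprism_{\mr{dR},\mf{X}}$ in $\mr{Map}(\mf{X},\mf{X}^\smallprism)$ from Theorem \ref{thm:crys-dR-comparison}(1); pullback of prismatic crystals along an identification of two maps of stacks is a symmetric monoidal natural isomorphism compatible with duals, so $\iota_\mf{X}$ carries the convolution filtration on the de Rham side to the convolution filtration on the crystalline side. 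Combining the three inputs shows $\bb{D}_\crys$ is a $\Z_p$-linear $\otimes$-functor preserving duals.

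For the weight bound, fix $(\mc{E},\varphi_\mc{E})$ in $\cat{Vect}^\varphi_{[0,a]}(\mf{X}_\smallprism)$ and work locally on an affine $\Spf(R)$ with formal framing $\framew$, using the description of $\Fil^\bullet_{\bb{D}_\mr{dR}}$ via $\overline{\Fil}^\bullet_\mr{Nyg}$ on $\phi_\framew^\ast\mc{E}(\mf{S}_R^{(\phi_\framew)},(E))/E$ from Proposition \ref{prop:de-Rham-realization-BK-relationship}. Since $\mc{I}_\smallprism^r=\mc{O}_\smallprism$ for $r\leqslant 0$, we have $\Fil^0_\mr{Nyg}(\phi^\ast\mc{E})=\phi^\ast\mc{E}$, whence $\overline{\Fil}^0=\phi_\framew^\ast\mc{E}(\mf{S}_R^{(\phi_\framew)},(E))/E$ and therefore $\Fil^0_{\bb{D}_\crys}=\underline{\bb{D}}_\crys$. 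At the top end, the condition of Hodge--Tate weights in $[0,a]$ (see \cite[Definition 3.16]{DLMS}) means $E^a\mc{E}\subseteq\varphi_\mc{E}(\phi^\ast\mc{E})\subseteq\mc{E}$, and I claim this forces $\Fil^{a+1}_\mr{Nyg}(\phi^\ast\mc{E})\subseteq E\cdot\phi^\ast\mc{E}$. Indeed, if $\varphi_\mc{E}(x)=E^{a+1}m$ with $m\in\mc{E}$, then $E^a m\in E^a\mc{E}\subseteq\varphi_\mc{E}(\phi^\ast\mc{E})$, say $E^a m=\varphi_\mc{E}(y)$, so $\varphi_\mc{E}(x)=E\varphi_\mc{E}(y)=\varphi_\mc{E}(Ey)$; as $\varphi_\mc{E}$ is injective (it is an isomorphism after inverting $E$, and the modules are projective over the $E$-torsion-free ring $\mf{S}_R$), we get $x=Ey$. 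Hence $\overline{\Fil}^{a+1}=0$, so $\Fil^{a+1}_{\bb{D}_\crys}=0$, and $\bb{D}_\crys(\mc{E},\varphi_\mc{E})$ has level in $[0,a]$.

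The main obstacle is the second paragraph: matching the abstract symmetric monoidal structure on pullbacks of perfect complexes over $\mf{X}\times(\A^1/\Gm)$ with the explicit convolution formula $\Fil^k=\sum_{i+j=k}\Fil^i\otimes\Fil^j$ that \emph{defines} the $\otimes$-structure on naive filtered $F$-crystals, and confirming that $\iota_\mf{X}$ is genuinely monoidal (not merely an isomorphism of underlying bundles) so that the transported filtration is the convolution one. Once the two realizations are known to be monoidal in this precise filtered sense, everything else is formal.
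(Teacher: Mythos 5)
The paper never actually proves this proposition---it is stated bare, as something the authors regard as evident from the construction---so your write-up has to be judged on its own merits. Two of your three parts do hold up: reducing the statements about the underlying $F$-crystal to the monoidality of $(-)^\crys$ is fine, and your final paragraph on the weight bound (effectivity gives $\Fil^0_\mr{Nyg}(\phi^\ast\mc{E})=\phi^\ast\mc{E}$, and $E^a\mc{E}\subseteq\varphi_\mc{E}(\phi^\ast\mc{E})$ plus injectivity of $\varphi_\mc{E}$ gives $\Fil^{a+1}_\mr{Nyg}(\phi^\ast\mc{E})\subseteq E\cdot\phi^\ast\mc{E}$) is correct and complete.

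The gap is in your second paragraph, exactly where you flag ``the main obstacle,'' and the route you propose cannot be repaired. Leaving aside the slip that $\Pi_\mf{X}$ is not an equivalence (it is only a section of $\mr{R}_\mf{X}$, an equivalence onto the lff subcategory), the assertion that $\Pi_\mf{X}$ is symmetric monoidal is, via Proposition \ref{prop:de-Rham-realization-BK-relationship}, precisely the statement that the Nygaard filtration is multiplicative,
\begin{equation*}
\Fil^k_\mr{Nyg}\big(\phi^\ast(\mc{E}_1\otimes\mc{E}_2)\big)=\sum_{i+j=k}\Fil^i_\mr{Nyg}(\phi^\ast\mc{E}_1)\otimes\Fil^j_\mr{Nyg}(\phi^\ast\mc{E}_2),
\end{equation*}
and of this only the inclusion $\supseteq$ is formal; the equality fails on $\cat{Vect}^\varphi(\mf{X}_\smallprism)$. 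Indeed, take $p$ odd and $(\mc{E},\varphi_\mc{E})$ a \emph{non-lff} rank-two prismatic $F$-crystal on $\Spf(\Z_p)$ with Hodge--Tate weights $\{0,p-1\}$, e.g.\ the one corresponding under $T_\et$ to the classical lattice $\{(a,b):a\equiv b\bmod p\}\subseteq\Z_p\oplus\Z_p(p-1)$, which admits no strongly divisible structure and hence is not lff by Proposition \ref{prop:filtered-equiv}. Write $\mf{M}=\mc{E}(\mf{S},(E))$ with $\mf{S}=\Z_p\ll u\rr$. By Theorem \ref{thm:integral-dcrys-strongly-divisible-and-rational-agreeance} the submodules $\ov{\Fil}{}^i_\mr{Nyg}\subseteq\phi^\ast\mf{M}/E$ for $1\leq i\leq p-1$ are all rank-one submodules of one common saturated line $L$, and non-lff-ness forces $\ov{\Fil}{}^{p-1}_\mr{Nyg}=p^{\delta}L$ with $\delta\geq 1$ (otherwise every $\ov{\Fil}{}^i$ would equal $L$ and every graded piece would be free, contradicting Proposition \ref{prop:filtered-equiv}). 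Now compare the two filtrations in degree $p-1$ on the direct summand $\Lambda^2\mc{E}\subseteq\mc{E}\otimes\mc{E}$: since $\det\varphi_\mc{E}$ is a unit times $E^{p-1}$, the Nygaard filtration of $\mc{E}\otimes\mc{E}$ meets this summand in all of $\Lambda^2(\phi^\ast\mf{M}/E)$, whereas the convolution filtration projects into $p^{\delta}\Lambda^2(\phi^\ast\mf{M}/E)$, because the cross terms $\ov{\Fil}{}^i\otimes\ov{\Fil}{}^j$ with $i,j\geq1$ have both factors inside the rank-one module $L$ and so die in $\Lambda^2$. Hence the canonical filtered map $\bb{D}_\crys(\mc{E})^{\otimes2}\to\bb{D}_\crys(\mc{E}\otimes\mc{E})$ is not an isomorphism in $\cat{VectNF}^\varphi(\mf{X}_\crys)$. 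The conclusion to draw is that on all of $\cat{Vect}^\varphi(\mf{X}_\smallprism)$ the proposition can only hold in the lax sense---and for that reading the formal inclusion displayed above, transported through $\iota_\mf{X}$, already suffices, with no need for monoidality of $\Pi_\mf{X}$ at all---while the strong statement (filtered $\otimes$-isomorphisms and duals) is correct after restriction to $\cat{Vect}^{\varphi,\mr{lff}}(\mf{X}_\smallprism)$, where it is proved by choosing filtered bases and applying Lemma \ref{lem: free BK mod}, not by abstract monoidality of the realization functors. For the same reason, your identification of derived Day convolution on $\cat{Perf}(\mf{X}\times(\A^1/\Gm))$ with the naive formula $\sum_{i+j=k}\Fil^i\otimes\Fil^j$ of images inside the tensor product is also not automatic; it requires an argument of the type in Lemma \ref{lem:derived Rees tensor identification}.
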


\subsubsection{Comparison to \texorpdfstring{$D_\crys$}{Dcrys} and Griffiths transversality} We now verify (when $\mf{X}\to \Spf(W)$ is smooth) that $\bb{D}_\mr{crys}$ really does form a functorial filtered $F$-crystal lattice in $D_\mr{crys}$.

\begin{thm}\label{thm:integral-dcrys-strongly-divisible-and-rational-agreeance} If $\mf{X}\to\Spf(W)$ is smooth, then $\bb{D}_\crys$ takes values in $\cat{VectWF}^{\varphi}(\mf{X}_\crys)$, and the following diagram commutes:
\bx{
    \cat{Vect}^\varphi(\mf X_\smallprism) \ar[r]^-{\D_\crys}\ar[d]_-{T_\et}
    & \cat{VectWF}^\varphi(\mf X_\crys) \ar[d]^{(-)[\nicefrac{1}{p}]}
    \\ \cat{Loc}^\crys_{\Z_p}(X) \ar[r]^-{D_\crys}
    & \cat{IsocF}^\varphi(\mf X). 
    }\ex
\end{thm}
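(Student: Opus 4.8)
The plan is to construct, naturally in the prismatic $F$-crystal $(\mc{E},\varphi_\mc{E})$, an isomorphism $\bb{D}_\crys(\mc{E},\varphi_\mc{E})[\nicefrac{1}{p}]\isomto D_\crys(T_\et(\mc{E},\varphi_\mc{E}))$ of $F$-isocrystals-with-filtration, and then to read the weakly filtered property off from it. Indeed, $D_\crys$ takes values in $\cat{IsocF}^\varphi(\mf{X}_\crys)$, so the filtration on its output is locally split and satisfies Griffiths transversality; transporting these properties across the comparison isomorphism shows that the filtration on $\bb{D}_\crys(\mc{E},\varphi_\mc{E})[\nicefrac{1}{p}]$ is locally split and satisfies Griffiths transversality, which is exactly the assertion that $\bb{D}_\crys(\mc{E},\varphi_\mc{E})$ is a weakly filtered $F$-crystal. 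Since the target condition, as well as the existence and naturality of the comparison, may all be tested Zariski-locally on $\mf{X}$, I would first reduce to $\mf{X}=\Spf(R)$ for a formally framed base $W$-algebra $R$.

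First I would match the underlying $F$-isocrystals. By Proposition \ref{prop:crystalline-realization-Kisin-comparison}, $\underline{\bb{D}}_\crys(\mc{E},\varphi_\mc{E})$ is computed Frobenius-equivariantly by $(\phi^\ast\mc{E})(\mf{S}_R,(E))/u$, so its rationalization is the $F$-isocrystal $\mc{E}^\crys[\nicefrac{1}{p}]$. On the other hand, $T_\et(\mc{E},\varphi_\mc{E})$ is the crystalline local system coming from the perfectoid (\'etale) evaluations of $\mc{E}$, and the crystalline comparison of $p$-adic Hodge theory (through $\Bcrys$) identifies $D_\crys(T_\et(\mc{E},\varphi_\mc{E}))$, as an $F$-isocrystal, with this same crystalline realization $\mc{E}^\crys[\nicefrac{1}{p}]$. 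In the absolute case $R=W$ this identification is precisely Kisin's computation recorded in Example \ref{ex:Kisin-Dcrys} (compare Remark \ref{rem:Kisin agreement}); in general it is the relative crystalline comparison, which is compatible with the framed description by construction.

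Next I would match the filtrations. By definition the filtration on $\bb{D}_\crys(\mc{E},\varphi_\mc{E})$ is transported along the crystalline--de Rham comparison $\iota_\mf{X}$ of Theorem \ref{thm:crys-dR-comparison} from the de Rham filtration $\Fil^\bullet_{\bb{D}_\dR}$ on $\bb{D}_\dR(\mc{E},\varphi_\mc{E})$, and by Proposition \ref{prop:de-Rham-realization-BK-relationship} this filtration is the image of the Nygaard filtration $\Fil^\bullet_\mr{Nyg}$ on $\phi_\framew^\ast\mc{E}(\mf{S}_R^{(\phi_\framew)},(E))$. The filtration on $D_\crys(T_\et(\mc{E},\varphi_\mc{E}))$ is the Hodge filtration, characterized through the de Rham realization $D_\dR(T_\et(\mc{E},\varphi_\mc{E}))$ of the local system; as $K=K_0$, the crystalline and de Rham bundles agree up to the Berthelot--Ogus comparison, which $\iota_\mf{X}$ incarnates prismatically (cf.\@ Remark \ref{rem:Kisin agreement}). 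The substance of the claim is therefore that the Nygaard-induced de Rham filtration coincides with the Hodge filtration: this is exactly the de Rham comparison of prismatic $p$-adic Hodge theory, namely that $\bb{D}_\dR(\mc{E},\varphi_\mc{E})$ with its filtration computes the Hodge-filtered de Rham realization of $T_\et(\mc{E},\varphi_\mc{E})$. Transporting the Hodge filtration along $\iota_\mf{X}$ then reproduces the filtration on the $F$-isocrystal side.

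The main obstacle is this last filtration matching. The identification of underlying $F$-isocrystals is a formal consequence of the crystalline comparison, but identifying the Nygaard-induced filtration with Fontaine's Hodge filtration requires carefully tracking the de Rham comparison and its compatibility with Hodge filtrations through the prismatic realizations --- controlled in the absolute case by Kisin's results (Remark \ref{rem:Kisin agreement}), and in the smooth case by the naturality of all the constructions in the framed picture together with the compatibility of the de Rham comparison with \'etale base change along framed algebras. Once both the $F$-isocrystal and the filtration are matched, the resulting natural isomorphism is the asserted commutativity of the diagram, and, as explained at the outset, it yields that $\bb{D}_\crys$ factors through $\cat{VectWF}^\varphi(\mf{X}_\crys)$.
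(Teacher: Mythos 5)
Your skeleton---reduce to a framed affine $\Spf(R)$, identify the underlying $F$-isocrystals via the crystalline comparison, then match the filtrations---is the same as the paper's (which quotes the proof of \cite[Theorem 4.8]{GuoReinecke} for the $F$-isocrystal step). But the filtration step, which you yourself flag as ``the main obstacle,'' is exactly where your proposal has a genuine gap: you dispose of it by asserting that ``this is exactly the de Rham comparison of prismatic $p$-adic Hodge theory, namely that $\bb{D}_\dR(\mc{E},\varphi_\mc{E})$ with its filtration computes the Hodge-filtered de Rham realization of $T_\et(\mc{E},\varphi_\mc{E})$.'' No such statement is available off the shelf: what is known from \cite{GuoReinecke} is an identification of $F$-isocrystals \emph{without} filtrations, and the coincidence of the Nygaard-induced filtration with Fontaine's Hodge filtration is precisely the new content of this theorem, so invoking it is circular. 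Appeals to ``naturality of all the constructions in the framed picture'' and to Remark \ref{rem:Kisin agreement} do not substitute for a proof: the two filtrations arise from entirely different procedures (the image of $\Fil^\bullet_\mr{Nyg}$ modulo $E$ on one side, Fontaine's period-ring functor on the other), and the claim that $\iota_\mf{X}$ ``incarnates'' the Berthelot--Ogus comparison is itself unproven in the paper (cf.\@ Remark \ref{rem:BO-isomorphisms}, which defers this to forthcoming work).

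What the paper actually does to close this gap is an explicit computation that your proposal omits entirely. It introduces the category $\cat{Vect}^\varphi(\B_\dR^+)$ of $\varphi$-modules over $\B_\dR^+$, together with the functor $\mr{Nyg}_\dR$ taking $(M,\wt M,\varphi_M)$ to $\phi_\dR^\ast M$ with the filtration $\phi_\dR^\ast M\cap\varphi_M^{-1}(\tilde{\xi}^\bullet\wt{M})$, and the functor $\mc{M}$ evaluating a filtered $F$-isocrystal at $\Acrys\twoheadrightarrow\wt{R}$ and extending scalars along $\wt{\psi}$. Lemma \ref{lem: D crys on the ring B} proves these agree by choosing a filtered basis $(e_\nu,r_\nu)$ and computing both filtrations in coordinates. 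The theorem then follows from a second diagram relating $\mr{ev}_{\mf{S}_R}$ and $\mr{Nyg}_{\mf{S}_R}$ to the $\B_\dR^+$-picture---where the flatness of $\mf{S}_R\to\B_\dR^+$ (Noetherianness plus $E$-adic flatness via \cite[Lemma 1.15]{IKY1}) is needed to commute Nygaard filtrations with base change---and finally the filtration agreement, established after base change to $\wt{R}[\nicefrac{1}{p}]$, is descended along the faithfully flat map $R[\nicefrac{1}{p}]\to\wt{R}[\nicefrac{1}{p}]$. None of these three ingredients (the coordinate computation over $\B_\dR^+$, the flatness input, the faithfully flat descent of the filtration matching) appears in your argument, and without them the key step remains an assertion rather than a proof.
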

As $D_\crys$ takes values in $\cat{IsocF}^\varphi(\mf{X})$ it suffices to prove the commutativity.

Before we prove Theorem \ref{thm:integral-dcrys-strongly-divisible-and-rational-agreeance} we set some notational shorthand. First, shorten the notation for $D_\crys\circ T_\et$ to just $D_\crys$. Let $R$ be a framed small $W$-algebra. Then, with notation as in \cite[\S1.1.5]{IKY1}, we further set
\begin{multicols}{2}
    \begin{itemize}
    \item $\Ainf\defeq \Ainf(\wt{R})$,
    \item $\Acrys\defeq \Acrys(\wt{R})$,
    \item $\widetilde{\mathrm{A}}_\crys\defeq \Ainf[\{\nicefrac{\tilde{\xi}^n}{n!}\}]^\wedge_p$,
    \item[]
    \item $\Bdr^+\defeq \Bdr^+(\wt{R})$,
    \item $\wt{\B}_\dR^+\defeq \Ainf[\nicefrac{1}{p}]^\wedge_{(\tilde{\xi})}$.
\end{itemize}
\end{multicols}
\noindent 
These rings are arranged in the following diagram
\begin{equation*}
\begin{tikzcd}
	& {\mathrm{A}_\mathrm{crys}} & {\tilde{\mathrm{A}}_\mathrm{crys}} & {\mathrm{A}_\mathrm{crys}} & {\mathrm{B}_\mathrm{dR}^+} \\
	{\mathrm{B}_\mathrm{dR}^+} && {\widetilde{\mathrm{B}}_\mathrm{dR}^+.}
	\arrow["\phi", from=1-2, to=1-3]
	\arrow[hook, from=1-3, to=2-3]
	\arrow[hook, from=1-3, to=1-4]
	\arrow[hook, from=1-4, to=1-5]
	\arrow["{\widetilde{\psi}}"', from=1-2, to=2-3]
	\arrow["\psi", curve={height=-24pt}, from=1-2, to=1-5]
	\arrow[hook, from=1-2, to=2-1]
	\arrow["{\phi_\mathrm{dR}}"', from=2-1, to=2-3]
\end{tikzcd}
\end{equation*}
Here we use the following notation:
\begin{itemize}[leftmargin=.3in]
    \item $\widetilde{\mathrm{A}}_\crys\hookrightarrow \Acrys$ is the natural inclusion, 
    \item $\phi_\dR\colon \B_\dR^+\isomto \wt{\B}_\dR^+$ and $\phi\colon \Acrys\isomto \widetilde{\mathrm{A}}_\crys$ are the maps induced by $\phi\colon \Ainf\to \Ainf$,
    \item and the maps $\psi$ and $\wt{\psi}$ are defined uniquely to make the diagram commute.
\end{itemize}
Set $\wt{\B}_\dR\defeq \Ainf[\nicefrac{1}{p}]^\wedge_{(\tilde{\xi})}[\nicefrac{1}{\wt{\xi}}]$. Then, a \emph{$\varphi$-module} over $\B_\dR^+$ is a triple $(M,\wt M,\varphi_M)$ with $M$ (resp.\ $\wt M$) a finitely generated projective $\B_\dR^+$-module (resp.\ $\wt{\B}^+_\dR$-module) and $\varphi_M$ a $\wt{\B}_\dR$-linear isomorphism $(\phi_\dR^*M)[\nicefrac{1}{\tilde\xi}]\to \wt M[\nicefrac{1}{\tilde\xi}]$. Let us denote the category of $\varphi$-modules over $\Bdr^+$ by $\cat{Vect}^\varphi(\B_\dR^+)$. 

We now observe that there is a natural functor 
\begin{equation*}
    \mc M\colon \cat{IsocF}^\varphi(R)\to \cat{Vect}^\varphi(\B_\dR^+),\quad (\mc{F},\varphi_\mc{F},\Fil^\bullet_F)\mapsto (\mc{F}(\Acrys\twoheadrightarrow \wt{R})\otimes_{\Acrys,\psi}\B_\dR^+,\wt{\Fil}^0_{F},\varphi_\mc{F}\otimes 1).
\end{equation*}
Here we define the filtration $\wt{\Fil}_{F}^\bullet$ on $\mc{F}(\Acrys\twoheadrightarrow \wt{R})\otimes_{\Acrys,\tilde{\psi}}\wt{\B}_\dR$ as follows: 
\begin{equation}\label{eq: tilde filtration}
\wt{\Fil}^r_{F}=\sum_{i+j=r}\Fil^i_F(\Acrys\twoheadrightarrow\wt{R})\,\otimes_{\Acrys,\tilde{\psi}}\,\tilde\xi^j\wt{\B}_\dR^+.
\end{equation}
Note that the Frobenius map $\varphi_\mc{F}\otimes 1$ is sensible as $\wt{\Fil}^0_{F}$ is a lattice in $\mc{F}(\Acrys\twoheadrightarrow \check{R})\otimes_{\Acrys,\tilde{\psi}}\wt{\B}_\dR$. On the other hand, consider the functor 
\begin{equation*}
    \mr{Nyg}_{\dR}\colon \cat{Vect}^\varphi(\B_\dR^+)\to \cat{MF}(\wt{\B}_\dR^+,\Fil^\bullet_{\tilde\xi}),\quad (M,\wt M,\varphi_M)\mapsto (\phi_\dR^\ast M,\Fil^\bullet_\mathrm{Nyg}(\phi^\ast_\dR M)),
\end{equation*}
where $\Fil^r_\mathrm{Nyg}(\phi^\ast_\dR M)\defeq \phi_\dR^*M\cap \varphi_M^{-1}(\tilde\xi^r\wt M)$. Then we have the following result.

\begin{lem}\label{lem: D crys on the ring B}
    The following diagram commutes.
    \xymatrixrowsep{1pc}\xymatrixcolsep{4pc}\bx{
    \cat{Vect}^\varphi(R_\smallprism) \ar[r]^-{D_\crys} \ar[d]_-{\mr{ev}_{\Ainf}}
    & \cat{IsocF}^\varphi(R) \ar[d]^-{\mc M} \ar[r]
    & \cat{MF}(\Acrys[\nicefrac{1}{p}],\Fil^\bullet_\xi) \ar[d]
    \\ \cat{Vect}^\varphi(\Ainf) \ar[r]^-{-\otimes_{\Ainf}\Bdr^+}
    & \cat{Vect}^\varphi(\B_\dR^+) \ar[r]^-{\mr{Nyg_{\dR}}}
    & \cat{MF}(\wt{\B}_\dR^+,\Fil^\bullet_{\tilde\xi}). 
    }\ex 
\end{lem}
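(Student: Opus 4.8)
The plan is to exhibit natural isomorphisms making the two squares commute, viewing both composite functors as computing, for a prismatic $F$-crystal $(\mc E,\varphi_\mc E)$, the same filtered $\wt{\B}_\dR^+$-module. Write $\mc F\defeq D_\crys(T_\et(\mc E))$ for the filtered $F$-isocrystal attached to $(\mc E,\varphi_\mc E)$, and recall that the fundamental comparisons available as inputs are, on one hand, Fontaine's crystalline (resp.\ de Rham) comparison for the crystalline local system $\bb V\defeq T_\et(\mc E)$, and on the other hand the prismatic comparison of \cite{BhattScholzeCrystals} relating $\bb V$ to the evaluation $\mc E(\Ainf)$ on the perfect prism $(\Ainf(\wt R),(\tilde\xi))$. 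Since both composites land in $\cat{MF}(\wt{\B}_\dR^+,\Fil^\bullet_{\tilde\xi})$, it suffices to match underlying modules, Frobenius structures, and filtrations, and I would treat the left square (module and Frobenius) and the right square (filtration) separately.

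For the left square I would identify the underlying $\varphi$-modules over $\B_\dR^+$ of $\mc M(\mc F)$ and of $\mr{ev}_{\Ainf}(\mc E)\otimes_{\Ainf}\B_\dR^+$. The value $\mc F(\Acrys\twoheadrightarrow\wt R)$ is, by the crystalline comparison together with the definition of the crystalline realization (so by Proposition \ref{prop: pris crys in char p} over the qrsp ring $\wt R$), the crystalline evaluation $\mc E(\Acrys(\wt R),(p))[\nicefrac1p]$ with its Frobenius. Since the crystalline prism $(\Acrys(\wt R),(p))$ is obtained from the perfect prism $(\Ainf(\wt R),(\tilde\xi))$ by a Frobenius-twisted prismatic envelope (here one uses that $\tilde\xi$ is distinguished, so $\phi(\tilde\xi)$ is a unit multiple of $p$ in $\Acrys$), the crystal property of $\mc E$ furnishes a Frobenius-equivariant isomorphism relating $\mc E(\Acrys,(p))$ to $\phi^\ast\mc E(\Ainf)\otimes_{\Ainf}\Acrys$. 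Base-changing along $\psi$, and using that in the ring diagram $\psi$ and $\phi_\dR$ are both induced by $\phi$ on $\Ainf$, I would assemble these into an isomorphism $\mc M(\mc F)\isomto\mr{ev}_{\Ainf}(\mc E)\otimes_{\Ainf}\B_\dR^+$; the Frobenius structures agree tautologically, being $\varphi_\mc E\otimes 1$ on both sides, and the tilde-component $\wt M=\wt{\Fil}^0_{F}$ is identified with the base change of $\mc E(\Ainf)$ to $\wt{\B}_\dR^+$ via the de Rham comparison.

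For the right square I would unwind the three filtrations in play: the convolution filtration $\wt{\Fil}^\bullet_{F}$ of \eqref{eq: tilde filtration} used by $\mc M$, the PD-filtration $\Fil^\bullet_\xi$ obtained by evaluating the filtered crystal on $\Acrys\twoheadrightarrow\wt R$, and the Nygaard filtration $\Fil^r_\mr{Nyg}(\phi_\dR^\ast M)=\phi_\dR^\ast M\cap\varphi_M^{-1}(\tilde\xi^r\wt M)$ produced by $\mr{Nyg}_\dR$. Under the identification $\phi_\dR^\ast M\isomto\mc F(\Acrys)\otimes_{\Acrys}\wt{\B}_\dR^+$ coming from the commutativity of the ring diagram, the content of the right square is that the Nygaard description $\varphi_M^{-1}(\tilde\xi^r\wt M)$ recovers the base change of $\Fil^\bullet_\xi$, equivalently that over $\wt{\B}_\dR^+$ the Hodge filtration of $\mc F$ corresponds, through $\varphi_M$, to the Nygaard filtration of $\phi^\ast\mc E(\Ainf)$. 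This is exactly the $\B_\dR^+$-incarnation of the de Rham comparison; on the prismatic side it is the computation of the de Rham realization through the Nygaard filtration carried out in Proposition \ref{prop:de-Rham-realization-BK-relationship-refined} over $\Ainf(\wt R)$, which I would base-change to $\wt{\B}_\dR^+$, and on Fontaine's side it is the standard identification of the de Rham filtration with the convolution of the Hodge and $\tilde\xi$-adic filtrations.

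I expect the main obstacle to be precisely this filtration matching, together with the bookkeeping of the several Frobenius twists relating $\psi$, $\phi$, and $\phi_\dR$ across the ring diagram: one must verify that $\wt{\Fil}^\bullet_F$, the base-changed $\Fil^\bullet_\xi$, and the Nygaard filtration $\varphi_M^{-1}(\tilde\xi^\bullet\wt M)$ genuinely coincide after completing and inverting at $\tilde\xi$. The cleanest route, I believe, is to reduce the filtration comparison to the already-established prismatic computation of the de Rham realization via the Nygaard filtration and then transport it across the crystalline comparison; no new geometric input should be needed beyond the classical comparison theorems, the difficulty being entirely in the compatibility of the twists.
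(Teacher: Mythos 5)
Your treatment of the left square is, modulo attribution, the paper's own: the natural Frobenius-equivariant identification of $\mc{F}(\Acrys\twoheadrightarrow\wt{R})$ with $\mc{E}(\Acrys,(p))[\nicefrac{1}{p}]$, where $\mc{F}=D_\crys(T_\et(\mc{E}))$ is Fontaine's functor applied to the \'etale realization, does \emph{not} follow from Proposition \ref{prop: pris crys in char p} (which concerns the crystalline realization $\underline{\bb{D}}_\crys$ of the prismatic crystal, not $D_\crys\circ T_\et$); it is the content of the proof of \cite[Theorem 4.8]{GuoReinecke}, which is exactly what the paper cites, and your ``assembly'' of Fontaine's comparison with the Bhatt--Scholze one amounts to redoing that proof. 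So far, so good in spirit.

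The genuine gap is your argument for the right square. First, that square quantifies over \emph{all} objects of $\cat{IsocF}^\varphi(R)$, whereas Proposition \ref{prop:de-Rham-realization-BK-relationship-refined} is a statement about prismatic $F$-crystals, so your argument can at best address the outer rectangle, not the square as stated. Second, and more seriously, it is circular in the paper's logical order: to ``transport the Nygaard computation across the crystalline comparison'' you must already know that Fontaine's filtration on $D_\crys(T_\et(\mc{E}))$ is computed over $\wt{\B}_\dR^+$ by the Nygaard filtration of $\phi^\ast\mc{E}(\Ainf)$, i.e.\ that the prismatic de Rham realization agrees rationally with Fontaine's de Rham filtration. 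That is precisely the filtration half of Theorem \ref{thm:integral-dcrys-strongly-divisible-and-rational-agreeance}, whose proof rests on this very lemma; as the paper stresses, \cite{GuoReinecke} supplies only the identification of underlying $F$-isocrystals. What you are missing is that the right square needs no comparison theorem at all: it is pure semilinear algebra. After localizing on $\Spf(R)$ --- legitimate because $\cat{MF}(\wt{\B}_\dR^+,\Fil^\bullet_{\tilde\xi})$ satisfies descent \cite[Corollary 17.1.9]{ScholzeBerkeley} --- one chooses a filtered basis $(e_\nu,r_\nu)$ of $\mc{F}(\Acrys\twoheadrightarrow\wt{R})[\nicefrac{1}{p}]$; then $\wt{M}=\sum_\nu\tilde\xi^{-r_\nu}\wt{\B}_\dR^+\cdot e_\nu$, and both composites evaluate to $\sum_\nu\Fil^{r-r_\nu}_{\tilde\xi}\cdot e_\nu$, with $\varphi_M$ furnishing the isomorphism. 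Relatedly, your claim that the identification $\phi_\dR^\ast M\isomto\mc{F}(\Acrys\twoheadrightarrow\wt{R})\otimes_{\Acrys,\wt{\psi}}\wt{\B}_\dR^+$ ``comes from the commutativity of the ring diagram'' is false: one has $\phi_\dR\circ\psi=\wt{\psi}\circ\phi_{\Acrys}$, where $\phi_{\Acrys}$ is the Frobenius endomorphism of $\Acrys$, so this identification is itself Frobenius-twisted and must be taken to be $\varphi_M$ --- which is exactly how the twist bookkeeping you flagged as the main obstacle gets resolved, by direct computation rather than by importing comparison theorems.
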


The top-right arrow is the evaluation of $(\mc{F},\varphi_\mc{F},\Fil^\bullet_{F})$ at $\Acrys\twoheadrightarrow\wt{R}$ and the right vertical arrow is scalar extension along the map of filtered rings $\wt{\psi}\colon (\Acrys[\nicefrac{1}{p}],\Fil^\bullet_\xi)\to (\wt{\B}_\dR^+,\Fil^\bullet_{\tilde\xi})$. 

\begin{proof}[Proof of Lemma \ref{lem: D crys on the ring B}]
    The left square commutes by the proof of \cite[Theorem 4.8]{GuoReinecke} (cf.\ the proof of \cite[Lemma 2.27]{IKY1}). Let $(\mc F,\varphi_\mc F,\Fil^\bullet_F)$ be an object of $\cat{IsocF}^\varphi(R)$. As the category $\cat{MF}(\wt{B}_\dR^+,\Fil^\bullet_{\tilde\xi})$ satisfies descent (cf.\ \cite[Corollary 17.1.9]{ScholzeBerkeley}), to show the commutativity of the right square, we are free to localize on $\Spf(R)$. But, note that as the graded pieces of the filtration on $F$ are locally free, we may localize on $\Spf(R)$ to assume that they are free.\footnote{Indeed, suppose that $\mc{E}$ is a vector bundle on $\Spa(R[\nicefrac{1}{p}])$. Then, $\mc{E}(R[\nicefrac{1}{p}])$ is a  projective $R[\nicefrac{1}{p}]$-module (see \cite[Theorem 1.4.2]{KedlayaAWS}, and \cite{Kiehl}). Thus, there exists an open cover $\Spec(R[\nicefrac{1}{f_i p}])$ of $\Spec(R[\nicefrac{1}{p}])$ such that $\mc{E}(R[\nicefrac{1}{p}])\otimes_{R[\nicefrac{1}{p}]}R[\nicefrac{1}{p f_i}]$ is trivial, where $f_i$ is a collection of elements of $R$. So, replacing $\Spf(R)$ by $\Spf(R[\nicefrac{1}{ f_i}]^\wedge_p)$, one may Zariski localize on $\Spf(R)$ to assume that $\mc{E}$ is free.} This implies that the evaluation of $(\mc F,\Fil^\bullet_F)$ at any object $A\twoheadrightarrow R'$ of $(R/W)_\crys$ admitting a map to $\id_R\colon R\to R$ is free as a filtered module over $(A[\nicefrac{1}{p}],\Fil_\mr{PD}^\bullet[\nicefrac{1}{p}])$. 
    
    Fix a filtered basis $(e_\nu,r_\nu)_{\nu=1}^n$ of $\mc F(\Acrys\twoheadrightarrow\wt{R})[\nicefrac{1}{p}]$, so $\mc M(\mc F,\varphi_\mc{F},\Fil^\bullet_F)=(M,\wt M,\varphi_M)$ where: 
    \begin{equation*}
        M=\mc F(\Acrys\twoheadrightarrow\wt{R})\otimes_{\Acrys,\psi}\B^+_\dR,\quad \wt M=\sum_{\nu=1}^n\tilde\xi^{-r_\nu}\wt \B_\dR^+\cdot e_\nu\subset \bigoplus_{\nu=1}^n\wt \B_\dR\cdot e_\nu,
    \end{equation*} and $\varphi_M$ is the scalar extension of $\varphi_{\mc F}$. Thus, the Nygaard filtration on $\phi_\dR^*M$ is 
    \be \Fil^r_{\mr{Nyg}}(\phi^\ast_\dR M)=\phi_\dR^*M\cap \varphi_M^{-1}\left(\sum_{\nu=1}^n\tilde\xi^{r-r_\nu}\wt{\B}_\dR^+\cdot e_\nu\right).
    \ee
    On the other hand, if the object $(M',\Fil_{M'}^\bullet)$ denotes the image under the other composition in the right-hand square, then it may be described as follows:
    \begin{equation*}
        M'=\mc F(\Acrys\twoheadrightarrow\wt{R})\otimes_{\Acrys,\wt\psi}\wt{\B}_\dR^+,\qquad \Fil_{M'}^r=\sum_{\nu=1}^n\Fil^{r-r_\nu}_{\tilde\xi}\cdot e_\nu.
    \end{equation*} 
    Thus, $\varphi_{M}$ 
    induces an isomorphism $(\phi_\dR^*M,\Fil^\bullet_{\mr{Nyg}}(\phi^\ast_\dR M))\isomto (M',\Fil^\bullet_{M'})$. 
\end{proof}

\begin{proof}[Proof of Theorem \ref{thm:integral-dcrys-strongly-divisible-and-rational-agreeance}]
    Let $(\mc E,\varphi_\mc E)$ be an object of $\cat{Vect}^\varphi(\mf{X}_\smallprism)$. We will show that there is a canonical isomorphism $\underline{\D}_\crys(\mc E,\varphi_\mc{E})[\nicefrac{1}{p}]\isomto \underline{D}_\crys(\mc E,\varphi_\mc{E})$ in $\cat{Isoc}^\varphi(\mf{X})$ respecting filtrations. Note that by the proof of \cite[Theorem 4.8]{GuoReinecke}, the underlying $F$-isocrystals are identified, and so it suffices to show that the filtrations are matched. So, we may assume that $\mf X=\Spf(R)$ where $R$ is a framed small $W$-algebra. For an object $(\mc E,\varphi_\mc{E})$ of $\cat{Vect}^\varphi(R_\smallprism)$, let $\D_{\crys,R}[\nicefrac{1}{p}](\mc E,\varphi_\mc{E})$ (resp.\ $\underline{\D}_{\crys,R}[\nicefrac{1}{p}](\mc E,\varphi_\mc{E})$) denote the filtered $R[\nicefrac{1}{p}]$-module $\D_{\crys}(\mc E,\varphi_\mc{E})(R)[\nicefrac{1}{p}]$ (resp.\ its underlying $R[\nicefrac{1}{p}]$-module). Define $D_{\crys,R}(\mc E,\varphi_\mc{E})$ and $\underline{D}_{\crys,R}(\mc E,\varphi_\mc{E})$ similarly. 

    Define the functor 
    \begin{equation*}
        \mr{Nyg}_{\mf S_R}\colon \cat{Vect}^\varphi(\mf S_R,(E))\to \cat{MF}(\mf S_R,\Fil^\bullet_E),\quad (\mf{M},\varphi_\mf{M})\mapsto (\phi^\ast\mf{M} ,\Fil^\bullet_\mr{Nyg}).
    \end{equation*} 
    Then we have the following diagram.
    \xymatrixrowsep{3pc}\xymatrixcolsep{8.5pc}\bx{
        \cat{Vect}^\varphi(R_\smallprism) \ar[d]^-{\mr{ev}_{\mf{S}_R}} \ar[rr]^-{\D_{\crys,R}[\nicefrac{1}{p}]} 
        && \cat{MF}(R[\nicefrac{1}{p}],\Fil^\bullet_\triv) \ar@{=}[d]
        \\\cat{Vect}^\varphi(\mf S_R) \ar[d]^-{(-)\otimes_{{\mf S}_R}\B_\dR^+} \ar[r]^{\mr{Nyg}_{\mf S_R}} 
        & \cat{MF}(\mf S_R,\Fil^\bullet_{E}) \ar[d]^-{(-)\otimes_{\mf S_R}\wt{\B}_\dR^+} \ar[r]^-{(-)\otimes_{\mf S_R}R[\nicefrac{1}{p}]}
        & \cat{MF}(R[\nicefrac{1}{p}],\Fil^\bullet_\triv) \ar[d]^-{(-)\otimes_{R}\wt R}
        \\ \cat{Vect}^\varphi(\B_\dR^+) \ar[r]^-{\mr{Nyg}_{\dR}}  
        & \cat{MF}(\wt{\B}_\dR^+,\Fil^\bullet_{\tilde\xi}) \ar[r]^-{\mod \tilde\xi}
        & \cat{MF}(\wt R[\nicefrac{1}{p}],\Fil^\bullet_\triv),
    }\ex 
    with the maps $\mf{S}_R\to \Bdr^+$  and $\mf S_R\to \wt B_\dR^+$ given by the compositions $\mf{S}_R\xrightarrow{\alpha_\crys}\Acrys\xrightarrow{\psi}\Bdr^+$ and $\mf{S}_R\xrightarrow{\alpha_\crys}\Acrys\xrightarrow{\wt\psi}\wt{\B}_\dR^+$, respectively.
    
    The upper rectangle and the right lower square commute by definition. Noting that $\mf S_R\to \Bdr^+$ is flat as $\mf S_R$ is Noetherian and it is $E$-adically flat (indeed, the image of $E$, which is $\xi$, is a nonzerodivisor and the map is flat mod $E$ by \cite[Lemma 1.15]{IKY1}), we get that the left lower square commutes as well. Thus,  we have a canonical identification 
    \begin{equation*}
        \D_{\crys,R}[\nicefrac{1}{p}](\mc{E},\varphi_\mc{E})\otimes_R\wt R\cong  \mr{Nyg}_{\dR}\left(\mc{E}(\mf{S}_R,(E))\otimes_{{\mf S}_R}\B_\dR^+\right)\otimes_{\wt{\B}_\dR^+}\wt R[\nicefrac{1}{p}]
    \end{equation*}
    On the other hand, by Lemma \ref{lem: D crys on the ring B}, we also have a canonical identification 
    \begin{equation*}
        D_{\crys,R}(\mc{E},\varphi_\mc{E})\otimes_R\wt R\cong  \mr{Nyg}_{\dR}\left(\mc{E}(\mf{S}_R,(E))\otimes_{{\mf S}_R}\B_\dR^+\right)\otimes_{\wt{\B}_\dR^+}\wt R[\nicefrac{1}{p}].
    \end{equation*}
    These identifications induce an isomorphism $\underline{\D}_{\crys,R}[\nicefrac{1}{p}](\mc E)\otimes_R\wt{R}\isomto \underline{D}_{\crys,R}(\mc E)\otimes_R\wt{R}$ which agrees with that from \cite[Theorem 4.8]{GuoReinecke}. Thus, $\underline{\D}_{\crys,R}[\nicefrac{1}{p}](\mc E)\isomto \underline{D}_{\crys,R}(\mc E)$ preserves filtrations after base change along the faithfully flat map $R[\nicefrac{1}{p}]\to \wt{R}[\nicefrac{1}{p}]$, and so preserves filtrations.
\end{proof} 

\begin{example}\label{ex:Kisin-Dcrys-II}Similar to Example \ref{ex:Kisin-Dcrys}, when $\mf{X}=\Spf(W)$, we define
\begin{equation*}
    \bb{D}_\crys\colon \cat{Rep}_{\bb{Z}_p}^\crys(\Gal(\ov{K}_0/K_0))\to \cat{VectWF}^{\varphi}(W),\qquad \Lambda\mapsto \bb{D}_\crys(T^{-1}_{\et}(\Lambda)). 
\end{equation*}
By Theorem \ref{thm:integral-dcrys-strongly-divisible-and-rational-agreeance}, we have an identification of filtered $F$-isocrystals $\D_\crys(\Lambda)[\nicefrac{1}{p}]\isomto D_\crys(\Lambda)$. This agrees with the composition
\begin{equation*}
    \underline{\bb{D}}_\crys(\Lambda)[\nicefrac{1}{p}]\isomto (\phi^\ast(\mf{M}(\Lambda))/u)[\nicefrac{1}{p}]\isomto (\mf{M}(\Lambda)/(u))[\nicefrac{1}{p}]\isomto D_\crys(\Lambda),
\end{equation*}
where the first isomorphism is that in Example \ref{ex:Kisin-Dcrys}, the second is from the Frobenius structure, and the last is from the definition of $\mf{M}$ (see \cite[Theorem (1.2.1) (1)]{KisIntShab}).
\end{example}

% \begin{rem}
%     Assume that $R$ is small. Then Theorem \ref{thm:integral-dcrys-strongly-divisible-and-rational-agreeance} together with (the proof of) Proposition \ref{prop:ptf-implies-loc-free} implies that $\D_\crys(\mc E,\varphi_\mc E)$ is a filtered $F$-crystal, in other words, the filtration on $\D_\crys(\mc E,\varphi_\mc E)(R)$ is locally split and satisfies Griffiths transversality with respect to the integral connection.   Then we can regard $\D_\crys(\mc E,\varphi_\mc E)$ as a filtered crystal on $(R/W)_\crys$. 
% \end{rem}

\subsection{Relationship to locally filtered free prismatic \texorpdfstring{$F$}{F}-crystals} Recall (see \cite[Definition 1.24]{IKY2}), that a prismatic $F$-crystal $(\mc{E},\varphi_\mc{E})$ on $\mf{X}_\smallprism$ is called \emph{locally filtered free (lff)} if $(\phi^\ast\mc{E},\Fil^\bullet_\mr{Nyg})$ is locally filtered free over $(\mc{O}_\smallprism,\Fil^\bullet_{\mc{I}_\smallprism})$ in the sense of \cite[Definition 1.1]{IKY2}. In this section we discuss the interplay between the functor $\bb{D}_\crys$ and the notion of being lff.

\subsubsection{Lffness in terms of \texorpdfstring{$\bb{D}_\mr{crys}$}{DDcrys}} We begin by observing that $\bb{D}_\mr{crys}$ not only enjoys nicer properties on lff prismatic $F$-crystals but, in fact, can detect lffness.

\begin{prop}\label{prop:filtered-equiv} Let $\mf{X}$ be a base formal $W$-scheme, and $(\mc{E},\varphi_\mc{E})$ a prismatic $F$-crystal on $\mf{X}$. Let $\{\Spf(R_i)\}$ be an open cover with each $R_i$ a formally framed base $W$-algebra, and for each $i$ let $\mf{M}_i\defeq \mc{E}(\mf{S}_{R_i},(E))$. Consider the following conditions:
\begin{enumerate}
    \item the prismatic $F$-crystal $(\mc{E},\varphi_\mc{E})$ is locally filtered free,
    \item the filtration $\Fil^\bullet_\mr{Nyg}(\phi^\ast\mf{M}_i)$ is locally free over $(\mf{S}_{R_i},\Fil^\bullet_E)$,
    \item the filtration $\mathrm{Fil}^\bullet_{\bb{D}_\crys}\subseteq \bb{D}_\crys(\mc{E},\varphi_\mc{E})$ is locally free over $(\mc{O}_\mf{X},\Fil^\bullet_\mr{triv})$,
    \item $\bb{D}_\crys(\mc{E},\varphi_\mc{E})$ is an object of $\cat{VectNF}^{\varphi,\mr{sd},\mr{gtf}}(\mf{X}_\crys)$,
    \item[(4')] $\bb{D}_\crys(\mc{E},\varphi_\mc{E})$ is an object of $\cat{VectNF}^{\varphi,\pddiv,\mr{gtf}}(\mf{X}_\crys)$,
    \item $\bb{D}_\crys(\mc{E},\varphi_\mc{E})$ is an object of $\cat{VectF}^{\varphi,\mr{sd}}(\mf{X}_\crys)$.
\end{enumerate}
Then, (1), (2), and (3) are equivalent, (4) implies (1), (1) implies (4'), and if $\mf{X}/W$ is smooth, then (4) and (5) are equivalent.
\end{prop}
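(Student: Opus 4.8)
The plan is to reduce at once to the affine case $\mf{X}=\Spf(R)$ with $R$ a formally framed base $W$-algebra and $\mf{M}=\mc{E}(\mf{S}_R,(E))$, since each of (1)--(5) is local on $\mf{X}$. The organizing observation is that, by Proposition \ref{prop:de-Rham-realization-BK-relationship} together with the definition of $\Fil^\bullet_{\bb{D}_\crys}$ via the comparison $\iota_\mf{X}$ of Theorem \ref{thm:crys-dR-comparison}, the filtered module $(\bb{D}_\crys(\mc{E},\varphi_\mc{E})(R),\Fil^\bullet_{\bb{D}_\crys})$ is identified with the reduction modulo $E$ of the Nygaard-filtered module $(\phi^\ast\mf{M},\Fil^\bullet_\mr{Nyg})$; concretely $\Fil^r_{\bb{D}_\crys}$ corresponds to $\overline{\Fil}^r_\mr{Nyg}=\mr{im}(\Fil^r_\mr{Nyg}(\phi^\ast\mf{M})\to\phi^\ast\mf{M}/E)$. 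With this in hand, the equivalence (1)$\Leftrightarrow$(2) is essentially the definition of lffness: local filtered freeness over $(\mc{O}_\smallprism,\Fil^\bullet_{\mc{I}_\smallprism})$ in the sense of \cite[Definition 1.1]{IKY2} may be tested after evaluation on the Breuil--Kisin prisms $(\mf{S}_{R_i},(E))$, which cover $\mf{X}_\smallprism$ once framings are chosen.

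Next I would establish (2)$\Leftrightarrow$(3) by passing between the filtration over $(\mf{S}_R,\Fil^\bullet_E)$ and its reduction over $(R,\Fil^\bullet_\mr{triv})$. For (2)$\Rightarrow$(3), Lemma \ref{lem:derived Rees tensor identification} applied to $(\mf{S}_R,(E))$ identifies $\Gr^r(\overline{\Fil}^\bullet_\mr{Nyg})$ with $\Gr^r(\Fil^\bullet_\mr{Nyg})/E$, so local freeness of the $\Gr^r(\Fil^\bullet_\mr{Nyg})$ over $\mf{S}_R$ forces local freeness of the $\Gr^r(\overline{\Fil})$ over $R=\mf{S}_R/E$, i.e.\ local splitness of $\Fil^\bullet_{\bb{D}_\crys}$. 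For the converse (3)$\Rightarrow$(2) I would lift: choosing locally a filtered basis $\overline{x}_1,\dots,\overline{x}_n$ of $(\phi^\ast\mf{M}/E,\overline{\Fil}^\bullet)$ and lifting each to $x_i\in\Fil^{n_i}_\mr{Nyg}(\phi^\ast\mf{M})$, a Nakayama argument using the $(p,u)$-completeness of $\mf{S}_R$ and the injectivity of multiplication by $E$ on the graded pieces (again Lemma \ref{lem:derived Rees tensor identification}) shows that the $x_i$ constitute a filtered basis of $(\phi^\ast\mf{M},\Fil^\bullet_\mr{Nyg})$ over $(\mf{S}_R,\Fil^\bullet_E)$.

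For (3)$\Leftrightarrow$(4), the direction (4)$\Rightarrow$(3) is exactly Proposition \ref{prop:ptf-implies-loc-free}: a gtf strongly divisible naive filtered $F$-crystal has locally split filtration, and the gtf hypothesis then makes the graded pieces locally free. In (3)$\Rightarrow$(4) the gtf condition is immediate, so the crux is strong divisibility. Here I would work with a local filtered basis $x_i\in\Fil^{n_i}_\mr{Nyg}(\phi^\ast\mf{M})$ (available by (2)) and the divided Frobenius $\varphi_r:=E^{-r}\varphi_\mf{M}\colon\Fil^r_\mr{Nyg}(\phi^\ast\mf{M})\to\mf{M}$; since $\varphi_\mf{M}[\nicefrac1E]$ is an isomorphism, $E^h\mf{M}\subseteq\mr{im}(\varphi_\mf{M})$ for some $h$, which forces the images $\varphi_{n_i}(x_i)$ to span $\mf{M}$, so that $\{\varphi_{n_i}(x_i)\}$ is in fact a basis. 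The remaining point is to transport this ``$E$-integral'' divisibility across the crystalline--de Rham comparison to the ``$p$-integral'' identity defining strong divisibility of $\bb{D}_\crys$ over $R$. This is where Construction \ref{construction: crys dR isom with Breuil prism} and Proposition \ref{prop:crys-dR-comparison-using-prisms} are decisive: routing through the Breuil prism $S_R$, where $\mr{sp}_\dR$ sends $u\mapsto p$ and hence $\mf{S}_R/E\isomto R$ via $u\mapsto p$, the operators $\varphi_{n_i}$ become the $p^{-n_i}$-divided Frobenius on $\underline{\bb{D}}_\crys(R)=\phi^\ast\mf{M}/u$, and local splitness guarantees that $\overline{\Fil}^r$ really is the reduction of $\Fil^r_\mr{Nyg}$, so surjectivity survives the reduction. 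This yields $\varphi(\sum_r p^{-r}\phi^\ast\Fil^r_{\bb{D}_\crys}(R))=\underline{\bb{D}}_\crys(R)$.

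Finally, for the smooth addendum, Theorem \ref{thm:integral-dcrys-strongly-divisible-and-rational-agreeance} provides that $\bb{D}_\crys(\mc{E},\varphi_\mc{E})$ is a weakly filtered $F$-crystal; then (4)$\Rightarrow$(5) is the second assertion of Proposition \ref{prop:ptf-implies-loc-free} (a gtf strongly divisible weakly filtered $F$-crystal is a strongly divisible filtered $F$-crystal), while (5)$\Rightarrow$(4) holds because a locally split filtration automatically has $p$-torsion-free graded pieces. I expect the genuine obstacle to be the strong-divisibility step (3)$\Rightarrow$(4): the delicate bookkeeping that matches the automatic $E$-divided surjectivity over $\mf{S}_R$ with the $p$-divided strong-divisibility identity over $R$ through the chain of comparison isomorphisms, and the verification that it is precisely local freeness of the Nygaard filtration that renders the reduction modulo $E$ faithful to this surjectivity.
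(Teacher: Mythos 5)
Your proposal is correct and follows essentially the same route as the paper: (1)$\Leftrightarrow$(2) by checking lffness on Breuil--Kisin prisms (the paper cites \cite[Proposition 1.16]{IKY1} for this), (2)$\Leftrightarrow$(3) by the filtered-basis lifting argument via the Rees identification (the paper's Lemma \ref{lem: free BK mod} together with Lemma \ref{lem:derived Rees tensor identification}), (3)$\Rightarrow$(4) by producing the divided-Frobenius basis and transporting $E$-divisibility to $p$-divisibility through the Breuil prism and the crystal property (the paper's Lemma \ref{lem:filtered-free-implies-str-div}), (4)$\Rightarrow$(3) by Proposition \ref{prop:ptf-implies-loc-free}, and the smooth addendum by combining Proposition \ref{prop:ptf-implies-loc-free} with Theorem \ref{thm:integral-dcrys-strongly-divisible-and-rational-agreeance}. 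You also correctly identified the strong-divisibility step, with its passage through $(\phi_{\mf{S}_R}(E))=(p)$ in $S_R$, as the genuinely delicate point.
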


We begin by establishing the following filtered-freeness lifting lemma.

\begin{lem}\label{lem: free BK mod}
    Let $(A,(d))$ be a bounded prism, and $(M,\varphi_M)$ an object of $\cat{Vect}^\varphi(A,(d))$ with $M$ free over $A$. Set $\Fil^\bullet\defeq \Fil^\bullet_\mr{Nyg}(\phi^\ast M)$ and $\ov{\Fil}^\bullet\defeq \ov{\Fil}^\bullet_\mr{Nyg}(\phi^\ast(M)/(d))$. Let $(f_\nu,r_\nu)_{\nu=1}^n$ be a filtered basis of $(\phi^\ast(M)/(d),\ov{\Fil}^\bullet)$ over $(A/(d),\Fil^\bullet_\triv)$. Choose $e_\nu$ in $\Fil^{r_\nu}$ such that $\ov{e_\nu}=f_\nu$ (which exist as $f_\nu$ is in $\ov{\Fil}^{r_\nu}$). Then, the following is true:
    \begin{enumerate}
        \item $(e_\nu,r_\nu)_{\nu=1}^n$ is a filtered basis of $(\phi^\ast M,\Fil^\bullet)$ over $(A,\Fil^\bullet_d)$,
        \item $\varphi_M(e_\nu)$ is in $d^{r_\nu}M$, and $(\tfrac{\varphi_M(e_\nu)}{d^{r_\nu}})_\nu$ is a basis of $M$. 
    \end{enumerate}
\end{lem}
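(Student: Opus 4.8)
The plan is to extract everything from three structural facts about the bounded prism $(A,(d))$: the element $d$ is a nonzerodivisor, $A$ is classically $(p,d)$-complete so that $(p,d)$ lies in the Jacobson radical of $A$, and $\phi^\ast M$ is finite free (hence $d$-torsion-free and $d$-adically separated). Throughout I write $\Fil^\bullet_d(A)$ for the $(d)$-adic filtration, and I recall $\Fil^r := \Fil^r_\mr{Nyg}(\phi^\ast M)=\{x : \varphi_M(x)\in d^r M\}$, where $\varphi_M\colon \phi^\ast M\to M[\nicefrac 1d]$ is $A$-linear and an isomorphism after inverting $d$. The lifts $e_\nu\in\Fil^{r_\nu}$ of $f_\nu$ exist precisely because $\ov{\Fil}^{r_\nu}$ is, by definition, the image of $\Fil^{r_\nu}$ in $\phi^\ast M/d$. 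To see that $(e_\nu)$ is an $A$-basis of $\phi^\ast M$, I reduce modulo $d$: the $\ov{e_\nu}=f_\nu$ form a basis of $\phi^\ast M/d$, so the map $A^n\to\phi^\ast M$, $\mathbf{e}_\nu\mapsto e_\nu$, is surjective modulo $d$, hence modulo $(p,d)$; Nakayama (valid since $(p,d)$ lies in the Jacobson radical) upgrades this to surjectivity, and a surjection between free modules of equal finite rank is an isomorphism.

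For part (1) the inclusion $\sum_\nu\Fil^{r-r_\nu}_d(A)\,e_\nu\subseteq\Fil^r$ is immediate from $d^a\Fil^b\subseteq\Fil^{a+b}$, which follows from $A$-linearity of $\varphi_M$. The reverse inclusion is the technical heart, and I would prove $\Fil^r=\bigoplus_\nu\Fil^{r-r_\nu}_d(A)\,e_\nu$ by induction on increasing $r$. For the base case, when $r\le\min_\nu r_\nu$ the quotient filtered basis gives $\ov{\Fil}^r=\phi^\ast M/d$, so $\Fil^r+d\,\phi^\ast M=\phi^\ast M$ and Nakayama forces $\Fil^r=\phi^\ast M$, matching $\bigoplus_\nu\Fil^{r-r_\nu}_d(A)e_\nu=\phi^\ast M$. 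For the inductive step, given $x=\sum_\nu c_\nu e_\nu\in\Fil^r$, reduction mod $d$ lands in $\ov{\Fil}^r=\bigoplus_{r_\nu\ge r}(A/d)f_\nu$, forcing $c_\nu\in dA$ whenever $r_\nu<r$; writing $x=\sum_{r_\nu\ge r}c_\nu e_\nu+d\,y$ with $y=\sum_{r_\nu<r}c'_\nu e_\nu$, the relation $dy\in\Fil^r$ together with $d$-torsion-freeness of $M$ yields $\varphi_M(y)\in d^{r-1}M$, i.e.\ $y\in\Fil^{r-1}$, so the inductive hypothesis expresses $y$, hence $dy$ (using $d\,\Fil^s_d(A)\subseteq\Fil^{s+1}_d(A)$), and hence $x$, in the desired form. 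This bootstrapping from ``$c_\nu$ divisible by $d$'' up to ``$c_\nu$ divisible by $d^{\max(r-r_\nu,0)}$'' is the step I expect to demand the most care.

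For part (2), $\varphi_M(e_\nu)\in d^{r_\nu}M$ is immediate from $e_\nu\in\Fil^{r_\nu}$, so $g_\nu:=\varphi_M(e_\nu)/d^{r_\nu}\in M$ is defined; since $(e_\nu)$ is a basis and $\varphi_M$ is an isomorphism after inverting $d$, the $g_\nu$ form a basis of $M[\nicefrac 1d]$, and in particular $\sum_\nu A g_\nu\subseteq M$ with the $g_\nu$ linearly independent. To upgrade this to $\sum_\nu A g_\nu=M$ I would use the lattice $\varphi_M(\phi^\ast M)\subseteq M[\nicefrac 1d]$: since it has the same generic fiber as $M$ and both are finitely generated, there is an integer $N$ with $d^N M\subseteq\varphi_M(\phi^\ast M)$, whence $\varphi_M(\Fil^r)=\varphi_M(\phi^\ast M)\cap d^r M=d^r M$ for all $r\ge N$. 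Taking $r\ge\max(N,\max_\nu r_\nu)$ and inserting the description $\Fil^r=\bigoplus_\nu d^{\,r-r_\nu}A\,e_\nu$ from part (1), the computation $d^{-r}\varphi_M(\Fil^r)=\sum_\nu A g_\nu$ gives $\sum_\nu A g_\nu=d^{-r}(d^r M)=M$. Thus $(g_\nu)$ is a generating set of $n$ elements of the free rank-$n$ module $M$, hence a basis, which completes the proof.
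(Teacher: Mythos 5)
Your proof is correct, and its skeleton matches the paper's at both ends: the basis claim is the same Nakayama argument (the paper uses $d$-adic completeness where you use $(p,d)$ in the Jacobson radical), and part (2) is the same maneuver — choose $r\geqslant \max_\nu r_\nu$ large enough that $d^rM\subseteq \varphi_M(\phi^\ast M)$, so that $\varphi_M$ carries $\Fil^r=\bigoplus_\nu d^{\,r-r_\nu}A\,e_\nu$ isomorphically onto $d^rM$, then divide by $d^r$ (your lattice-comparison argument producing $N$ just makes explicit what the paper leaves implicit in ``take $r$ large enough''). The genuine divergence is in part (1), which you rightly identify as the technical heart. The paper does not run an induction: it invokes the Rees-algebra formalism, namely Lemma \ref{lem:derived Rees tensor identification} (the natural map $\mr{Rees}(\Fil^\bullet)\otimes_{\mr{Rees}(\Fil^\bullet_d)}\mr{Rees}(\Fil^\bullet_\mr{triv})\to \mr{Rees}(\ov{\Fil}^\bullet)$ is an isomorphism, even derivedly) together with the filtered-Nakayama statement of \cite[Remark 1.9]{IKY2}, which lifts a filtered basis along such a base change. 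Your ascending induction on $r$ is an elementary unwinding of exactly this package: your key step — from $dy\in\Fil^r$ and $d$-torsion-freeness conclude $\varphi_M(y)\in d^{r-1}M$, i.e.\ $y\in\Fil^{r-1}$, so that $\Fil^r\cap d\,\phi^\ast M=d\,\Fil^{r-1}$ — is verbatim the injectivity computation inside the paper's proof of Lemma \ref{lem:derived Rees tensor identification}. What your route buys is self-containedness: no Rees algebras, no external citation, and the divisibility bookkeeping ($c_\nu\in d^{\max(r-r_\nu,0)}A$) is fully explicit. What the paper's route buys is economy and reuse: Lemma \ref{lem:derived Rees tensor identification} is already needed for Proposition \ref{prop:de-Rham-realization-BK-relationship}, and phrasing the statement as a base change of graded Rees modules is what lets the paper quote a single filtered-Nakayama principle rather than reprove it each time.
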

\begin{proof} To see that $(e_\nu)$ is a basis of $\phi^\ast M$ we observe that the map $A^n\to \phi^\ast M$ sending $(a_1,\ldots,a_n)$ to $\sum_{\nu=1}^n a_\nu e_\nu$ is surjective modulo $d$ by assumption, and so surjective by Nakayama's lemma as $A$ is $d$-adically complete. It is then an isomorphism as the source and target are finite projective $A$-modules of the same rank. 

To prove the first assertion we consider the natural map $(A,\Fil_d^\bullet)\to (A/(d),\Fil^\bullet_\mr{triv})$ of filtered rings and the corresponding map of Rees algebras $\mr{Rees}(\Fil^\bullet_d)\to \mr{Rees}(\Fil^\bullet_\mr{triv})\simeq A/(d)[t]$. 
We observe that, by Lemma \ref{lem:derived Rees tensor identification}, the filtration $\ov{\Fil}^\bullet$ on $\phi^*M/(d)$ defines via the Rees construction the $\mr{Rees}(\Fil^\bullet_\mr{triv})$-module $\mr{Rees}(\Fil^\bullet)\otimes_{\mr{Rees}(\Fil^\bullet_d)}\mr{Rees}(\Fil^\bullet_\mr{triv})$, more precisely, that the natural map 
\begin{equation}\label{eq:image filtration as Rees tensor product}
\mr{Rees}(\Fil^\bullet)\otimes_{\mr{Rees}(\Fil^\bullet_d)}\mr{Rees}(\Fil^\bullet_\mr{triv})\to \mr{Rees}(\ov{\Fil}^\bullet)    
\end{equation}
is an isomorphism. Thus, we can apply \cite[Remark 1.9]{IKY2} to obtain the first assertion. 

We now prove the second assertion. We take an integer $r$ large enough so that $r\geqslant r_\nu$ for any $\nu$ and that $\varphi_M^{-1}(d^rM)\subset \phi^*M$. In particular, $\varphi_M$ induces $\Fil^r=\sum_\nu d^{r-r_\nu}A\cdot e_\nu\isomto d^rM$, which implies that $d^{r-r_\nu}\varphi(e_\nu)$ forms a basis of $d^rM$. Dividing by $d^r$ we obtain assertion (2).   
\end{proof}

We next show that for `filtered free' prismatic $F$-crystals $(\mc{E},\varphi_\mc{E})$, the strong PD-divisibility condition on $\bb{D}_\crys(\mc{E},\varphi_\mc{E})$ is automatic. To make this precise, fix a base $W$-algebra $R$. Define the category $\cat{Vect}^\varphi_{\mr{free}}(R_\smallprism)$ to be the full subcategory $\cat{Vect}^\varphi(R_\smallprism)$ consisting of those $(\mc{E},\varphi_\mc{E})$ with $(\bb{D}_\dR(\mc{E},\varphi_\mc{E}),\Fil^\bullet_{\bb{D}_\dR}(\mc{E},\varphi_\mc{E}))$ (equiv.\@ $\bb{D}_\crys(\mc{E},\varphi_\mc{E})$) filtered free over $(R,\Fil^\bullet_\mr{triv})$ in the sense of \cite[Definition 1.1]{IKY2}.

\begin{prop}\label{lff implies pd-divisible}
    Let $R$ be a base $W$-algebra with framing $w$ and let $(\mc{E},\varphi_\mc{E})$ be an object of $\cat{Vect}^\varphi_\mr{free}(R_\smallprism)$. Then, $\mbb{D}_\crys(\mc{E},\varphi_\mc{E})$ is strongly PD-divisible. 
    If $(\mc E,\varphi_\mc E)$ has level in $[a,b]$ with $b-a\leqslant p-1$, then it is strongly divisible. 
\end{prop}

\begin{proof} Note that the second claim follows from the first by Remark \ref{rem:PD-equals-normal-in-FL-range}. Thus, we focus only on the first statement.

Set $(\mc F,\varphi_\mc{F},\Fil^\bullet_\mc{F})=\D_\crys(\mc E,\varphi_\mc E)$. Note that the Nygaard filtration on $\phi_w^*\mc F(R)$ is induced from the Nygaard filtration $\Fil^\bullet_\mr{Nyg,\mc E}$ on $\phi_w^*\mc E(R)$ through the identification $\phi_w^*\mc E(R)=\mc F(R)$ and the scalar extension along $\phi_R$. So, by Lemma \ref{lem:PD-div-in-terms-of-filtered-tensor-products}, strong PD-divisibility is equivalent to the identity map $\mc F(R)\isomto\mc F(R)$ induces a filtered isomorphism
    \begin{equation}\label{the filtered isomorphism of PD filtered modules}
        (\mc F(R),\Fil^\bullet_\mc F(R))\otimes_{(R,\Fil^\bullet_\mr{triv})}(R,\Fil^\bullet_\mr{PD})\isomto (\mc F(R),\Fil^\bullet_\mr{Nyg,\mc E})\otimes_{(R,\Fil^\bullet_p)}(R,\Fil^\bullet_\mr{PD}).
    \end{equation}
    To show this, we consider the following commutative diagram of filtered rings,
    \begin{equation*}
        \begin{tikzcd}
            (R,\Fil^\bullet_\mr{triv}) 
            &   (S_R,\Fil^\bullet_\mr{PD})
            &   (R,\Fil^\bullet_\mr{PD})
            \\ 
            &   (\mf S_R,\Fil^\bullet_E)
            &   (R,\Fil^\bullet_p).
            \arrow["{\mr{nat}}", from=1-1, to=1-2]
            \arrow["{\mr{sp}_\dR}", from=1-2, to=1-3]
            \arrow["{\mr{sp}_\dR}", from=2-2, to=2-3]
            \arrow["\mr{nat}", from=2-2, to=1-2]
            \arrow["\id", from=2-3, to=1-3]
        \end{tikzcd}
    \end{equation*}
    The left-hand side in (\ref{the filtered isomorphism of PD filtered modules}) is then the filtered scalar extension of the filtered module $(\mc F(R),\Fil^\bullet_\mc F(R))$ along the top horizontal composition. 
    On the other hand, we claim that $(\mc F(R),\Fil^\bullet_\mr{Nyg,\mc E})$ %is the right hand side in (\ref{the filtered isomorphism of PD filtered modules}) 
    is identified with the filtered scalar extension of the $(\mf S_R,\Fil^\bullet_E)$-module $(\phi^*\mc E(\mf S_R,(E)),\Fil^\bullet_\mr{Nyg,\mc E})$ along the bottom horizontal map. 
    To see this, we take a filtered basis $(e_\nu,r_\nu)_\nu$ of the Nygaard filtered module over $(\mf S_R,\Fil^\bullet_E)$ such that $(\varphi(e_\nu)/E^{r_\nu})_\nu$ is a basis of the $\mf S_R$-module $\mc E(\mf S_R,(E))$, as in Lemma \ref{lem: free BK mod}. %naoki{Should we refer Lemma \ref{lem: free BK mod} here?}
    Base changing this basis along the map of prisms $(\mf S_R,(E))\to (R,(p))$ with the underlying ring map sending $u$ to $0$, we see that the images $\bar e_\nu$ of $e_\nu$ in $\phi^*\mc E(R,(p))$ form a basis such that $\varphi(e_\nu)$ is divisible by $p^{r_\nu}$ and $(\varphi(\bar e_\nu)/p^{r_\nu})_\nu$ is a basis of the $R$-module $\mc E(R,(p))$. 
    This shows that the Nygaard filtration for $\phi^\ast\mc E(\mf S_R,(E))$ base changes to the Nygaard filtration for $\phi^\ast{\mc E(R,(p))}$. 
    Then the assertion follows from Proposition \ref{filtered commutativity of Breuil prism diagram} (2).
\end{proof}
\begin{rem}
    We make the following remarks. 
    \begin{enumerate}
        \item Proposition \ref{lff implies pd-divisible} can be shown using \cite[Theorem 1 in \S 3]{TVX} instead of Proposition \ref{filtered commutativity of Breuil prism diagram}. 
        The authors thank Qixiang Wang for pointing this out. 
        \item Proposition \ref{lff implies pd-divisible} gives the PD-divisibility for the Frobenius structure on crystalline cohomology, which Mazur obtains in \cite[p. 665]{Mazur1972} (whose proof is in \cite{mazur1973frobenius}). For more information on this material, see \cite[Remark 4.6]{TVX}. 
    \end{enumerate}
\end{rem}

We are now ready to prove Proposition \ref{prop:filtered-equiv}.

\begin{proof}[Proof of Proposition \ref{prop:filtered-equiv}] The equivalence of (1) and (2) follows from \cite[Proposition 1.16]{IKY1}. The equivalence of (2) and (3) follows from Lemma \ref{lem: free BK mod}. To show that (3) implies (4'), it suffices to consider a cover $\{\Spf(R_i)\}$ of $\mf{X}$ where each $R_i$ is a base $W$-algebra, and the restriction of $(\mc{E},\varphi_\mc{E})$ to $(R_i)_\smallprism$ belongs to $\cat{Vect}_\mr{free}^\varphi((R_i)_\smallprism)$. The desired implication then follows from Proposition \ref{lff implies pd-divisible}. That (4) implies (3) is given by Proposition \ref{prop:ptf-implies-loc-free}.
Finally, when $\mf{X}\to\Spf(W)$ is smooth, the equivalence of (4) and (5) follows by combining Proposition \ref{prop:ptf-implies-loc-free} and Theorem \ref{thm:integral-dcrys-strongly-divisible-and-rational-agreeance}.
\end{proof}

\subsubsection{Exactness in the strongly divisible lff case}
We now make the observation that exactness of $\bb{D}_\crys$ holds when restricted to the category $\cat{Vect}^{\varphi,\mr{lff},\mr{sd}}(\mf{X}_\smallprism)$ of strongly divisible lff prismatic $F$-crystals where, by definition, we call an lff prismatic $F$-crystal \emph{strongly divisible} if $\bb{D}_\mr{crys}(\mc{E},\varphi_\mc{E})$ is a strongly divisible filtered $F$-crystal.

\begin{prop}\label{prop:dcrys-exact} The functor 
\begin{equation*}
    \bb{D}_\crys\colon \cat{Vect}^{\varphi,\mr{lff},\mr{sd}}(\mf{X}_\smallprism)\to \cat{VectNF}^\varphi(\mf{X}_\crys),
\end{equation*}
is exact.
\end{prop}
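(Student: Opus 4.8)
The plan is to check directly the two conditions that define an admissible short exact sequence in $\cat{VectNF}^\varphi(\mf{X}_\crys)$: exactness of the underlying sequence of $F$-crystals, and exactness of each graded piece of the filtration. Since both conditions are local on $\mf{X}$, I would first reduce to the case $\mf{X}=\Spf(R)$ for a framed base $W$-algebra $R$ with a fixed formal framing $\framew$.

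The underlying $F$-crystal is the easy half. By its very construction $\underline{\bb{D}}_\crys$ is the composite of the restriction functor $\cat{Vect}^\varphi(\mf{X}_\smallprism)\to\cat{Vect}^\varphi((\mf{X}_k)_\smallprism)$ with the equivalence $(-)^\crys$ of \S\ref{par:crys-syn-equiv}. A short exact sequence of prismatic $F$-crystals is by definition exact on every prism, in particular on prisms over $\mf{X}_k$, so restriction is exact; an equivalence is exact; hence $\underline{\bb{D}}_\crys$ is exact, and so is its restriction to $\mf{X}_\Zar$ (evaluation at the thickening $\mf{X}_k\hookrightarrow\mf{X}$).

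The graded pieces are the crux, and here the idea is to avoid checking strictness of the filtration by hand and instead deduce it from the Faltings construction. Given a short exact sequence $0\to\mc{E}_1\to\mc{E}_2\to\mc{E}_3\to 0$ of lff prismatic $F$-crystals, set $\mc{F}_i=\bb{D}_\crys(\mc{E}_i)$. By Proposition \ref{prop:filtered-equiv} each $\mc{F}_i$ is gtf and strongly divisible, so the Faltings morphism gives a \emph{functorial} isomorphism $\phi_\framew^\ast\wt{(\mc{F}_i)}_\mf{X}\isomto(\mc{F}_i)_\mf{X}$. Since the underlying sequence $0\to(\mc{F}_1)_\mf{X}\to(\mc{F}_2)_\mf{X}\to(\mc{F}_3)_\mf{X}\to 0$ is exact by the previous paragraph, transporting along these isomorphisms shows that $0\to\phi_\framew^\ast\wt{(\mc{F}_1)}_\mf{X}\to\phi_\framew^\ast\wt{(\mc{F}_2)}_\mf{X}\to\phi_\framew^\ast\wt{(\mc{F}_3)}_\mf{X}\to 0$ is exact, and as $\phi_\framew$ is faithfully flat it reflects exactness, giving that $0\to\wt{(\mc{F}_1)}_\mf{X}\to\wt{(\mc{F}_2)}_\mf{X}\to\wt{(\mc{F}_3)}_\mf{X}\to 0$ is exact.

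To finish I would reduce modulo $p$. Each $\wt{(\mc{F}_i)}_\mf{X}$ injects into $(\mc{F}_i)_\mf{X}[\nicefrac{1}{p}]$ (gtf), hence is $p$-torsion-free, so reduction mod $p$ preserves exactness of the sequence above; combined with the graded identification $\wt{(\mc{F}_i)}_\mf{X}/p\cong\bigoplus_r\Gr^r(\Fil^\bullet_{\mc{F}_i})/p$ recorded before Proposition \ref{prop:ptf-implies-loc-free}, this yields that each sequence $0\to\Gr^r(\Fil^\bullet_{\mc{F}_1})/p\to\Gr^r(\Fil^\bullet_{\mc{F}_2})/p\to\Gr^r(\Fil^\bullet_{\mc{F}_3})/p\to 0$ is exact. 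Finally, each $\Gr^r(\Fil^\bullet_{\mc{F}_i})$ is a vector bundle over the $p$-adically complete, $p$-torsion-free ring $R$ (Proposition \ref{prop:filtered-equiv}(3)), and a complex of finite projective $R$-modules that is exact mod $p$ is exact by a Nakayama-plus-torsion-freeness argument; this upgrades the mod-$p$ exactness to exactness of each graded piece integrally. I expect the graded-piece step to be the main obstacle, the essential new input being that strictness of the filtration is extracted from functoriality of the Faltings isomorphism and faithful flatness of Frobenius rather than proved by an explicit filtered-basis lifting.
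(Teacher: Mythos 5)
Your proposal is correct, and it shares the paper's skeleton (reduce to $\Spf(R)$ with a framing; underlying exactness is the easy half; the crux is exactness of graded pieces, resting ultimately on the fact, from Proposition \ref{prop:filtered-equiv}, that the Faltings morphism is an isomorphism for lff objects), but it handles the crux by a genuinely different mechanism. The paper, after evaluating at the Breuil--Kisin prism and invoking the comparisons of Propositions \ref{prop:de-Rham-realization-BK-relationship} and \ref{prop:crys-dR-comparison-using-prisms}, converts graded exactness into \emph{strictness} of the two morphisms of filtered modules (via \cite[Lemma 1.3]{IKY2}) and then cites \cite[Theorem 2.1 (3)]{Faltings89} (cf.\ \cite[Theorem 2.9 (3)]{LiuMoonPatel}) for that strictness; you instead prove graded exactness directly: functoriality of the Faltings isomorphism plus faithful flatness of $\phi_\framew$ transports the (already established) exactness of the underlying modules to exactness of the $\wt{(-)}$-sequence, reduction mod $p$ (legitimate since gtf makes $\wt{\mc{F}}_3$ $p$-torsion-free) together with the identification $\wt{\mc{F}}_\mf{X}/p\cong\bigoplus_r\Gr^r(\Fil^\bullet_\mc{F})/p$ gives graded exactness mod $p$, and a Nakayama argument for finite projective modules over the $p$-adically complete Noetherian ring $R$ upgrades this integrally. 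In effect you inline, in the torsion-free strongly divisible setting, the mechanism behind the cited strictness theorems. What the paper's route buys is brevity; what yours buys is self-containedness and the visible fact that no Fontaine--Laffaille-type range restriction enters (the cited theorems of Faltings and Liu--Moon--Patel are formulated for categories with such restrictions, so your argument makes the unrestricted validity of the proposition transparent). Two small remarks: your treatment of the underlying exactness (restriction to $(\mf{X}_k)_\smallprism$ followed by the equivalence $(-)^\crys$) differs harmlessly from the paper's (evaluation at $(\mf{S}_R,(E))$, which is exact, plus universal exactness of exact sequences of vector bundles); and your closing characterization of the paper's alternative as an ``explicit filtered-basis lifting'' is slightly off---the paper's proof of this proposition cites strictness rather than lifting bases (basis-lifting appears elsewhere, in Lemma \ref{lem: free BK mod}).
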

\begin{proof}For this we may reduce to the case when $\mf{X}=\Spf(R)$, where $R$ is a formally framed base $W$-algebra. Consider an exact sequence 
\begin{equation*}
    0\to (\mc{E}_1,\varphi_{\mc{E}_1})\to (\mc{E}_2,\varphi_{\mc{E}_2})\to (\mc{E}_3,\varphi_{\mc{E}_3})\to 0
\end{equation*}
in $\cat{Vect}^{\varphi,\mr{lff},\mr{sd}}(R_\smallprism)$. Set $\mf{M}_i\defeq \mc{E}_i(\mf{S}_R,(E))$ and write $\Fil^\bullet_i\subseteq \phi^\ast\mf{M}_i/u$ for $\iota_i^{-1}(\ov{\Fil}^\bullet_\mr{Nyg}(\phi^\ast \mf{M}_i/E))$, with $\iota_i$ as in Construction \ref{construction: crys dR isom with Breuil prism}.
Then, as evaluation at $(\mf{S}_R,(E))$ is exact (see \cite[Lemma 1.18]{IKY1}), and an exact sequence of vector bundles is universally exact, we see from Propositions \ref{prop:de-Rham-realization-BK-relationship} and \ref{prop:crys-dR-comparison-using-prisms} that it suffices to show that the sequence of filtered modules over $(R,\Fil^\bullet_\triv)$
\begin{equation*}
   0\to (\phi^\ast \mf{M}_1/u,\Fil^\bullet_1)\to (\phi^\ast \mf{M}_2/u,\Fil^\bullet_2)\to (\phi^\ast \mf{M}_3/u,\Fil^\bullet_3)\to 0,
\end{equation*}
is short exact. As this sequence is short exact on the underlying module and each $(\phi^\ast \mf{M}_i/u,\Fil^\bullet_i)$ is finitely supported, it suffices by \cite[Lemma 1.3]{IKY2} to show that the morphisms $(\phi^\ast \mf{M}_1/u,\Fil^\bullet_1)\to (\phi^\ast \mf{M}_2/u,\Fil^\bullet_2)$ and $(\phi^\ast \mf{M}_2/u,\Fil^\bullet_2)\to (\phi^\ast \mf{M}_3/u,\Fil^\bullet_3)$ are strict. But, as each $(\mc{E}_i,\varphi_{\mc{E}_i})$ is in $\cat{Vect}^{\varphi,\mr{lff},\mr{sd}}(R_\smallprism)$, the Faltings morphism \eqref{eq: Faltings morphism} is an isomorphism, and thus, the claim follows from \cite[Theorem 2.1 (3)]{Faltings89} (cf.\@ \cite[Theorem 2.9 (3)]{LiuMoonPatel}). 
\end{proof}

\subsection{Bi-exactness of forgetful functor} In this final short subsection we observe that one can use Proposition \ref{prop:dcrys-exact} to prove that the forgetful functor $\mr{R}_{\mf{X}}$ is bi-exact when restricted to the full subcategory $\cat{Vect}^\mr{sd}(\mf{X}^\syn)$ defined to be the essential preimage of $\cat{Vect}^{\varphi,\mr{lff},\mr{sd}}(\mf{X}_\smallprism)$ under $\mr{R}_\mf{X}$.

\begin{prop}\label{prop:G_X-bi-exact} Let $\mf{X}$ be a base formal $W$-scheme. Then, the $\Z_p$-linear $\otimes$-equivalence 
\begin{equation*}
    \mathrm{R}_\mf{X}\colon \cat{Vect}^\mr{sd}(\mf{X}^\mr{syn})\isomto\cat{Vect}^{\varphi,\mr{lff},\mr{sd}}(\mf{X}_\smallprism)
\end{equation*}
is bi-exact. 
\end{prop}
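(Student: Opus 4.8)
The plan is to use that $\mathrm{R}_\mf{X}$ is an equivalence, so that bi-exactness is equivalent to the assertion that both $\mathrm{R}_\mf{X}$ and its quasi-inverse $\Pi_\mf{X}$ preserve short exact sequences; in other words, that a sequence in $\cat{Vect}(\mf{X}^\mr{syn})$ is exact if and only if the underlying sequence of prismatic $F$-crystals is exact. Since exactness may be checked locally, I would first reduce to the case $\mf{X}=\Spf(R)$ with $R$ a formally framed base $W$-algebra. I would then record the two exact structures in filtered-module terms: by \cite[Lemma 1.18]{IKY1} a sequence of lff $F$-crystals is exact precisely when the sequence of Breuil--Kisin modules $\mf{M}_i=\mc{E}_i(\mf{S}_R,(E))$ (equivalently, of the $\phi^\ast\mf{M}_i$) is exact, whereas under the filtered interpretation of $\Pi_\mf{X}$ from Proposition \ref{prop:de-Rham-realization-BK-relationship-refined} a sequence of $F$-gauges is exact precisely when the associated sequence of Nygaard-filtered modules $(\phi^\ast\mf{M}_i,\Fil^\bullet_\mr{Nyg})$ over $(\mf{S}_R,\Fil^\bullet_E)$ is strict exact.

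With this dictionary in place, the exactness of $\mathrm{R}_\mf{X}$ is the formal half: the functor merely forgets the filtration, and a strict exact sequence of filtered modules is in particular exact on underlying modules. (Equivalently, a short exact sequence of vector bundles on $\mf{X}^\mr{syn}$ is locally split, hence preserved by the restriction functor $\mathrm{R}_\mf{X}$.)

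The substance is the exactness of $\Pi_\mf{X}=\mathrm{R}_\mf{X}^{-1}$. Given a sequence of lff $F$-crystals whose underlying modules form a short exact sequence, I must show the induced Nygaard-filtered sequence is strict exact. This is exactly the strictness established in the proof of Proposition \ref{prop:dcrys-exact}: since each $(\mc{E}_i,\varphi_{\mc{E}_i})$ is lff, the Faltings morphism \eqref{eq: Faltings morphism} is an isomorphism, so by \cite[Theorem 2.1 (3)]{Faltings89} (cf.\@ \cite[Theorem 2.9 (3)]{LiuMoonPatel}) the relevant filtered maps are strict, and \cite[Lemma 1.3]{IKY2} promotes this, together with the termwise short exactness, to strict exactness of the filtered sequence. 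Hence the $F$-gauge sequence is exact and $\Pi_\mf{X}$ is exact.

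The main obstacle I anticipate is not a new computation but the bookkeeping required to match the exact structure on $\cat{Vect}(\mf{X}^\mr{syn})$ with strict exactness of the Nygaard-filtered modules over $(\mf{S}_R,\Fil^\bullet_E)$ — in particular, transporting between the Nygaard picture and the crystalline--de Rham filtered modules $(\phi^\ast\mf{M}_i/u,\Fil^\bullet_i)$ handled in Proposition \ref{prop:dcrys-exact} (via the crystalline--de Rham comparison of Proposition \ref{prop:crys-dR-comparison-using-prisms} and Lemma \ref{lem: free BK mod}). Once this identification of exact structures is in place, the statement is a direct consequence of Proposition \ref{prop:dcrys-exact}, all the genuinely hard input — the strictness coming from Faltings' theorem — having already been isolated there.
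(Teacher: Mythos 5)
Your proposal is correct and follows essentially the same route as the paper: you treat the exactness of $\mathrm{R}_\mf{X}$ as the formal half, reduce the exactness of the quasi-inverse, locally on $\mf{X}$, to (strict) exactness of the Nygaard-filtered Breuil--Kisin modules, and conclude from Proposition \ref{prop:dcrys-exact} and its proof (the Faltings strictness argument available in the lff case), exactly as the paper does. The only difference is cosmetic bookkeeping: you identify the exact structure on $\cat{Vect}(\mf{X}^\mr{syn})$ via Proposition \ref{prop:de-Rham-realization-BK-relationship-refined}, whereas the paper invokes the Rees-module descent results of \cite{IKY2} for the same purpose.
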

\begin{proof} The exactness of $\mathrm{R}_\mf{X}$ is clear. Conversely, using \cite[Corollary 1.18]{IKY2}, \cite[Proposition 1.5]{IKY2}, and the method of proof in \cite[Proposition 1.28]{IKY2} (which shows that the Rees module over $\mr{Rees}(\Fil^\bullet_\mr{Nyg}(\Prism_R))$ is a descent of $\mr{Rees}(\Fil^\bullet_\mr{Nyg}(\phi^\ast\mc{E}))$ along the map on Rees algebras induced by $\phi$) we are thus reduced to showing that if 
\begin{equation*}
    0\to (\mc{E}_1,\varphi_{\mc{E}_1})\to (\mc{E}_2,\varphi_{\mc{E}_2})\to (\mc{E}_3,\varphi_{\mc{E}_3})\to 0,
\end{equation*}
is an exact sequence in $\cat{Vect}^{\varphi,\mr{lff},\mr{sd}}(\mf{X}_\smallprism)$ then for all $r$ in $\Z$ we have that
\begin{equation*}
    0\to \Fil^r(\phi^\ast\mc{E}_1)\to \Fil^r(\phi^\ast\mc{E}_2)\to \Fil^r(\phi^\ast\mc{E}_3)\to 0,
\end{equation*}
is exact. As this is a local condition, \cite[Proposition 1.16]{IKY1} reduces us to showing exactness after evaluation on the Breuil--Kisin prism associated to each open subset $\Spf(R)\subseteq\mf{X}$, where $R$ is a (formally framed) base $W$-algebra. But, this follows from Proposition \ref{prop:dcrys-exact} and its proof.
\end{proof}

\section{Relationship to Fontaine--Laffaille theory}\label{ss:FL-theory} We now relate $\bb{D}_\crys$ to relative Fontaine--Laffaille theory as first developed in \cite{Faltings89}, when $\mf{X}$ is smooth over $W$.  Throughout we use notation and terminology from \hyperref[notation-and-terminology]{Notation and terminology} without comment but now assume further that $p$ is odd.

\subsection{Statement of main result and reduction steps}\label{ss:statement-of-main} We are interested in understanding the following diagram of $\mathbb{Z}_p$-linear functors when $\mf{X}$ is a smooth formal $W$-scheme:
\begin{equation}\label{eq:big-diagram}
  \begin{tikzcd}[sep=large]
	{\cat{Vect}^{\varphi,\mr{lff}}_{[0,p-2]}(\mf{X}_\smallprism)} && {\cat{Vect}_{[0,p-2]}(\mf{X}^\mr{syn})} \\
	{\cat{Vect}^\varphi_{[0,p-2]}(\mf{X}_\smallprism)} \\
	{\cat{Vect}^{\varphi,\mr{an}}_{[0,p-2]}(\mf{X}_\smallprism)} & {\cat{Loc}^\mr{crys}_{\mathbb{Z}_p,[0,p-2]}(X)} & {\cat{VectF}^{\varphi,\mr{sd}}_{[0,p-2]}(\mf{X}_\mr{crys})}
	\arrow[hook, from=1-1, to=2-1]
	\arrow["{\bb{D}_\mr{crys}}"{description}, from=1-1, to=3-3]
	\arrow["{\mr{R}_\mf{X}}"{description}, from=1-3, to=1-1]
	\arrow["{\bb{D}_\mr{crys}}"{description}, from=1-3, to=3-3]
	\arrow[hook, from=2-1, to=3-1]
	\arrow["{T_\et}"{description}, from=3-1, to=3-2]
	\arrow["{T_\mr{crys}}"{description}, from=3-3, to=3-2]
\end{tikzcd}\
\end{equation}
Each of these functors, except the diagonal $\bb{D}_\mr{crys}$ and $T_\mr{crys}$, has a self-evident definition or has been defined in \S\ref{s:integral-Dcrys}. We explain the definition of these remaining two functors below.

First, we have a $\bb{Z}_p$-linear $\otimes$-equivalence
\begin{equation*}
    \mathrm{R}_\mf{X}\colon \cat{Vect}(\mf{X}^\mr{syn})\isomto \cat{Vect}^{\varphi,\mr{lff}}(\mf{X}_\smallprism).
\end{equation*}
Composing this with the functor $\bb{D}_\crys$ and applying Proposition \ref{prop:filtered-equiv} gives us an exact $\mathbb{Z}_p$-linear $\otimes$-functor
\begin{equation}\label{eq:Dcrys-FGauge}
    \cat{Vect}(\mf{X}^\mr{syn})\to \cat{VectF}^{\varphi,\pddiv}(\mf{X}_\mr{crys}),
\end{equation}
which we abusively also denote $\bb{D}_\mr{crys}$. This, together with Remark \ref{rem:PD-equals-normal-in-FL-range}, gives rise to the diagonal arrow in \eqref{eq:big-diagram}, where we write $\cat{Vect}_{[a,b]}(\mf{X}^\mr{syn})$ to be the full subcategory of $\cat{Vect}(\mf{X}^\syn)$ consisting of those $\mc{V}$ such that $\mathrm{R}_\mf{X}(\mc{V})$ is an object of $\cat{Vect}^\varphi_{[a,b]}(\mf{X}_\smallprism)$.\footnote{See \cite[\S8.4.1]{GMM} for a stack-theoretic definition of these \emph{Hodge--Tate weights}.} 

Second, the functor
\be
T^*_\mathrm{crys}\colon \cat{VectF}^{\varphi,\mr{sd}}_{[0,p-2]}(\mf X_\crys)\to \cat{Loc}_{\Z_p}(X),
\ee
constructed by Faltings in \cite[II.e), pp.\@ 35--37]{Faltings89} (see also \cite[\S4]{Tsu20}), is described as follows. For an object $\mc F=(\mc F,\varphi,\Fil_\mc{F}^\bullet)$ of $\cat{VectF}^{\varphi,\mr{sd}}_{[0,p-2]}(\mf X_\crys)$, the reductions $\mc{F}_m$ of $\mc{F}$ modulo $p^m$ define a projective system of objects of the category $\mathfrak{MF}^\nabla(\mf X)$ defined in \cite[2.c)--d), pp.\@ 30--33]{Faltings89}. With the notation of loc.\ cit., $T^*_\crys(\mc F)$ is then given as the inverse limit $\varprojlim_{m}\mathbf D(\mc F_m)$. We then set $T_\mr{crys}(-)=T^\ast_\mr{crys}(-)^\vee$, where $(-)^\vee$ is the dual in $\cat{Loc}_{\mathbb{Z}_p}(X)$, which gives the last undefined arrow in \eqref{eq:big-diagram}.

Our main goal in this section is to prove the following.

\begin{thm}\label{thm:big-equiv-diagram} Suppose that $\mf{X}$ is a smooth formal $W$-scheme. Then,  \eqref{eq:big-diagram} is a $2$-commutative diagram where every arrow is an equivalence.
\end{thm}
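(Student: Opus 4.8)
The plan is to reduce the whole statement to four inputs and then propagate equivalences formally. These inputs are: (A) the $2$-commutativity of the outer part of \eqref{eq:big-diagram}, i.e. the identification $T_\mr{crys}\circ\bb{D}_\mr{crys}\simeq T_\et$ on lff objects; (B) the two left-hand inclusions $\cat{Vect}^{\varphi,\mr{lff}}_{[0,p-2]}(\mf{X}_\smallprism)\hookrightarrow\cat{Vect}^\varphi_{[0,p-2]}(\mf{X}_\smallprism)\hookrightarrow\cat{Vect}^{\varphi,\mr{an}}_{[0,p-2]}(\mf{X}_\smallprism)$ being equivalences; (C) $T_\et$ being an equivalence onto $\cat{Loc}^\mr{crys}_{\mathbb{Z}_p,[0,p-2]}(X)$; and (D) full faithfulness of $T_\mr{crys}$. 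The upper triangle of \eqref{eq:big-diagram} commutes by construction, since the diagonal $\bb{D}_\mr{crys}$ out of $\cat{Vect}_{[0,p-2]}(\mf{X}^\mr{syn})$ was \emph{defined} as $\bb{D}_\mr{crys}\circ\mr{R}_\mf{X}$, and $\mr{R}_\mf{X}$ is an equivalence by Proposition \ref{prop:G_X-bi-exact}; so all the content of the $2$-commutativity lies in (A).

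For (B), the identification of $\cat{Vect}^{\varphi,\mr{lff}}_{[0,p-2]}(\mf{X}_\smallprism)$ with $\cat{Vect}^\varphi_{[0,p-2]}(\mf{X}_\smallprism)$ is exactly the crucial Proposition \ref{prop:lff-FL-range}, asserting that lffness is automatic in the Fontaine--Laffaille range; the identification with $\cat{Vect}^{\varphi,\mr{an}}_{[0,p-2]}(\mf{X}_\smallprism)$ follows by checking affine-locally on the Breuil--Kisin prism that an analytic prismatic $F$-crystal with Hodge--Tate weights in $[0,p-2]$ extends uniquely to an honest one. Granting (B), the functor $T_\et$ on the FL range is the restriction of the étale realization on $\cat{Vect}^{\varphi,\mr{an}}_{[0,p-2]}(\mf{X}_\smallprism)$, which is fully faithful, and its essential image is $\cat{Loc}^\mr{crys}_{\mathbb{Z}_p,[0,p-2]}(X)$ by the classification of crystalline $\bb{Z}_p$-local systems via analytic prismatic $F$-crystals (as in \cite{GuoReinecke}, \cite{DLMS}), giving (C). Statement (D) is Faltings's full faithfulness theorem in the range $[0,p-2]$ from \cite{Faltings89}, accessed through the equivalence of Proposition \ref{prop:filtered-F-crystals} with $\cat{MF}^\nabla_{[0,p-2]}(R)$.

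The main obstacle is (A), the \emph{integral} identification $T_\mr{crys}\circ\bb{D}_\mr{crys}\simeq T_\et$. Rationally this is already in hand: by Theorem \ref{thm:integral-dcrys-strongly-divisible-and-rational-agreeance} one has $\bb{D}_\mr{crys}(\mc{E},\varphi_\mc{E})[\nicefrac{1}{p}]\isomto D_\mr{crys}(T_\et(\mc{E},\varphi_\mc{E}))$, so that $T_\et(\mc{E},\varphi_\mc{E})$ and $T_\mr{crys}(\bb{D}_\mr{crys}(\mc{E},\varphi_\mc{E}))$ are two $\bb{Z}_p$-lattices in one and the same crystalline representation; the task is to show these lattices coincide. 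I would carry this out by reducing to an affine $\Spf(R)$ with a formal framing, passing to the Faltings-module description of both sides via Proposition \ref{prop:filtered-F-crystals}, and comparing the two integral structures through a common period ring (e.g. $\Acrys$, or $\B_\dR^+$ as in the proof of Theorem \ref{thm:integral-dcrys-strongly-divisible-and-rational-agreeance}): both the prismatic étale realization and Faltings's $T_\mr{crys}$ can be exhibited as $\varphi$-invariants of the relevant module after base change to this ring, and the strong divisibility supplied by Proposition \ref{prop:filtered-equiv} is precisely what forces the two lattices to agree on the nose rather than merely up to a power of $p$.

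Finally, granting (A)--(D), the equivalences propagate formally. From $T_\et\simeq T_\mr{crys}\circ\bb{D}_\mr{crys}$ with $T_\et$ fully faithful and $T_\mr{crys}$ faithful one gets that $\bb{D}_\mr{crys}$ is faithful, and fullness follows since any map $\bb{D}_\mr{crys}(\mc{E}_1)\to\bb{D}_\mr{crys}(\mc{E}_2)$ descends under $T_\mr{crys}$ to a map of étale realizations, which by full faithfulness of $T_\et$ comes from a unique morphism upstairs. Essential surjectivity of both $\bb{D}_\mr{crys}$ and $T_\mr{crys}$ then follows: any object of $\cat{Vect}^{\varphi,\mr{div}}_{[0,p-2]}(\mf{X}_\mr{crys})$ realizes under $T_\mr{crys}$ a crystalline local system lying, by (C), in the essential image of $T_\et=T_\mr{crys}\circ\bb{D}_\mr{crys}$, and full faithfulness of $T_\mr{crys}$ identifies it with a $\bb{D}_\mr{crys}$-value. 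This shows every arrow in \eqref{eq:big-diagram} is an equivalence and completes the proof.
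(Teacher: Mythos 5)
Your skeleton is exactly the paper's: reduce the theorem to (A) the commutativity $T_\crys\circ\bb{D}_\crys\simeq T_\et$ on lff objects and to lffness in the Fontaine--Laffaille range (Proposition \ref{prop:lff-FL-range}), quote \cite[Remark 3.38]{DLMS}, \cite[Theorem A]{GuoReinecke} and \cite[Theorem 2.6]{Faltings89} for the remaining arrows, note that the upper triangle commutes by the definition of the diagonal functor together with Proposition \ref{prop:G_X-bi-exact}, and then propagate equivalences formally --- that final formal argument of yours is correct and matches the paper's. The problem is (A), which you rightly identify as the main obstacle and which the paper isolates as Proposition \ref{prop:D crys and FL}: it carries essentially all the content of the theorem, and your sketch of it has a genuine gap.

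Concretely, two things are missing. First, Faltings's $T^*_\crys$ is defined through the mod-$p^m$ Fontaine--Laffaille functor $\mathbf{D}$ and an inverse limit; it does not come for free with a description as $\varphi$-invariants of (or $\Hom$ out of) a module over a period ring. The paper obtains such a description from Tsuji, namely $T^*_\crys\simeq T^*_{\inf}\circ T\Ainf$ (\cite[Theorem 63 (2)]{Tsu20}), and without this input (or an equivalent re-proof) your ``common period ring'' comparison has nothing on the Faltings side to compare. Second, and more seriously, even granting that, one must show that the filtered $\varphi$-module over $\Acrys$ which Faltings--Tsuji theory attaches to $\bb{D}_\crys(\mc{E},\varphi_\mc{E})$ --- the evaluation of the filtered crystal at $\Acrys(\wt{R})\twoheadrightarrow\wt{R}$, with its PD-convolved filtration --- coincides with the Nygaard-filtered prismatic evaluation $(\phi^\ast\mc{E}(\Ainf(\wt{R}),(\xi)),\Fil^\bullet_\mr{Nyg})\otimes_{(\Ainf(\wt{R}),\Fil^\bullet_\xi)}(\Acrys(\wt{R}),\Fil^\bullet_\mr{PD})$. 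This is the paper's Lemma \ref{lem:right commutes}, and its proof is not formal: it passes through the Breuil prism and identifies two a priori different filtrations on $\mc{E}(S_R,(p))$ using \cite[Lemma 4.31]{DLMS} (Breuil's Griffiths-transversality computation with the monodromy operator $N_u$). Your substitute --- that ``strong divisibility \ldots\ forces the two lattices to agree on the nose'' --- is not an argument: strong divisibility is not a rigidity property of a lattice inside a fixed filtered isocrystal (for instance, $p$ times a strongly divisible lattice, with the rescaled filtration, is again strongly divisible), so nothing is forced until the integral filtrations themselves have been matched. Once that matching is in place, the lattice agreement you are after is the easy part: it is the paper's Lemma \ref{lem:left commutes}, an injective map of finite free $\Z_p$-modules of equal rank whose cokernel is killed by a power of $p$ yet embeds into a $p$-torsion-free module, hence vanishes.
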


Many parts of Theorem \ref{thm:big-equiv-diagram} are either explicitly or implicitly contained in \S\ref{s:integral-Dcrys} or other references. Thus, we single out below the two remaining results to be demonstrated.

\begin{prop}
\label{prop:D crys and FL}    
    
Let $\mf{X}$ be a smooth formal $W$-scheme and set $T_\et^\ast(-)\defeq T_\et(-)^\vee$. Then, the following diagram commutes:
    \bx{
    &\cat{Vect}^{\varphi,\mr{lff}}_{[0,p-2]}(\mf X_\smallprism)\ar[rd]^-{\D_\crys}\ar[ld]_-{T^*_\et}
    \\\cat{Loc}_{\mathbb Z_p}(\mf X_\eta)
    &&\cat{VectF}^{\varphi,\mr{sd}}_{[0,p-2]}(\mf X_\crys)\ar[ll]^-{T^*_\crys}.
    }\ex
\end{prop}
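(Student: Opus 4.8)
The plan is to show that the two $\bb{Z}_p$-local systems $T^*_\et(\mc{E},\varphi_\mc{E})$ and $T^*_\crys(\bb{D}_\crys(\mc{E},\varphi_\mc{E}))$ on $X=\mf{X}_\eta$ coincide, naturally in $(\mc{E},\varphi_\mc{E})$. Since a morphism of $\bb{Z}_p$-local systems on $X$ is a global section of an étale internal-$\Hom$ sheaf, and since all the functors in sight commute with restriction to opens of $\mf{X}$ (for $\bb{D}_\crys$ by its construction together with the compatibility with affine opens in Proposition \ref{prop:de-Rham-realization-BK-relationship-refined} and Proposition \ref{prop:crys-dR-comparison-using-prisms}; for $T^*_\et$ and $T^*_\crys$ by their sheaf-theoretic definitions), it suffices to produce a \emph{canonical}, and hence glueable, isomorphism after restricting to each member $\Spf(R)$ of an open cover by framed base $W$-algebras. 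Thus I would reduce at the outset to $\mf{X}=\Spf(R)$ with $R$ a framed small $W$-algebra.

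First I would settle the rational statement. By Theorem \ref{thm:integral-dcrys-strongly-divisible-and-rational-agreeance} there is a canonical identification $\bb{D}_\crys(\mc{E},\varphi_\mc{E})[\nicefrac{1}{p}]\isomto D_\crys(T_\et(\mc{E},\varphi_\mc{E}))$ of filtered $F$-isocrystals. Applying $T^*_\crys$ and using that Faltings's functor rationally inverts $D_\crys$ (up to the duality built into $T^*_\crys$ and $T^*_\et=T_\et(-)^\vee$), one obtains a canonical isomorphism of crystalline $\bb{Q}_p$-local systems
\[
T^*_\crys(\bb{D}_\crys(\mc{E},\varphi_\mc{E}))[\nicefrac{1}{p}]\isomto T^*_\et(\mc{E},\varphi_\mc{E})[\nicefrac{1}{p}],
\]
functorial in $(\mc{E},\varphi_\mc{E})$. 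Hence both sides are $\bb{Z}_p$-lattices in one and the same crystalline local system, and all the remaining content is to show that these two lattices coincide.

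The main work, and the principal obstacle, is the integral lattice matching. Here I would argue at the level of period rings, evaluating on the perfectoid cover $R\to\wt{R}$ of \cite[Lemma 1.15]{IKY1}. On one side, $T_\et(\mc{E},\varphi_\mc{E})$ is the prismatic–étale realization built from $\mc{E}(\Ainf(\wt{R}))$; on the other, $T^*_\crys(\bb{D}_\crys(\mc{E},\varphi_\mc{E}))=\varprojlim_m\mathbf{D}(\bb{D}_\crys(\mc{E},\varphi_\mc{E})_m)$ is Faltings's functor, computed as the $(\varphi=1,\Fil^0)$-part of $\bb{D}_\crys(\mc{E},\varphi_\mc{E})\otimes\Acrys(\wt{R})$, where the strong divisibility of $\bb{D}_\crys(\mc{E},\varphi_\mc{E})$ guaranteed by Proposition \ref{prop:filtered-equiv} controls the integral structure. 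The crystalline–de Rham comparison of Construction \ref{construction: crys dR isom with Breuil prism} and Proposition \ref{prop:crys-dR-comparison-using-prisms}, together with the Frobenius-equivariance of $\vartheta_\framew$ from Proposition \ref{prop: pris crys in char p}, identifies the two filtered $\varphi$-modules over $\Acrys(\wt{R})$ whose invariants are being taken, so that the rational isomorphism above should restrict to an identification of the two $\varphi$-invariant lattices.

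To make this rigorous I expect the cleanest route is to reduce modulo $p$ and invoke Faltings's explicit description, since $\varprojlim_m\mathbf{D}(-_m)$ has the mod-$p$ reduction as its first term. Using that $(\mc{E},\varphi_\mc{E})$ is lff, Lemma \ref{lem: free BK mod} provides a filtered basis of $(\phi^\ast\mf{M},\Fil^\bullet_\mr{Nyg})$ which, after reduction, realizes $\bb{D}_\crys(\mc{E},\varphi_\mc{E})/p$ as a Fontaine--Laffaille module in the sense of \cite[II.d)]{Faltings89}; Faltings's $\mathbf{D}$ applied to it is strict-exact and compatible with the mod-$p$ étale realization by \cite[Theorem 2.1]{Faltings89}. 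The strictness inputs already exploited in Proposition \ref{prop:dcrys-exact} are precisely what is needed to see that the canonical rational isomorphism carries $T^*_\crys(\bb{D}_\crys(\mc{E},\varphi_\mc{E}))/p$ isomorphically onto $T^*_\et(\mc{E},\varphi_\mc{E})/p$; a standard Nakayama and completeness argument then upgrades this to an isomorphism of $\bb{Z}_p$-lattices, and functoriality in $\mc{E}$ allows the local isomorphisms to glue over $\mf{X}$. The hard part will be the careful matching of Faltings's integral period-module construction of $T^*_\crys$ with the prismatic–étale construction of $T^*_\et$ under the crystalline–de Rham comparison; the rational comparison of Theorem \ref{thm:integral-dcrys-strongly-divisible-and-rational-agreeance} and the filtered-free structure from Lemma \ref{lem: free BK mod} are the levers that make this tractable.
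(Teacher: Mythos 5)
Your reduction to the affine case and your rational comparison via Theorem \ref{thm:integral-dcrys-strongly-divisible-and-rational-agreeance} are sound in outline, but the heart of the proof is missing. The crucial step you assert --- that the crystalline--de Rham comparison ``identifies the two filtered $\varphi$-modules over $\Acrys(\wt{R})$ whose invariants are being taken'' --- does not follow from Construction \ref{construction: crys dR isom with Breuil prism} or Proposition \ref{prop:crys-dR-comparison-using-prisms}. Those results identify the filtration only at the de Rham point, i.e.\ over $(R,\Fil^\bullet_\mr{triv})$, where $\Fil^\bullet_{\bb{D}_\crys}$ is the image $\ov{\Fil}^\bullet_\mr{Nyg}$ of the Nygaard filtration modulo $E$. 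What you need is that the \emph{PD-extension} of this collapsed filtration along $(R,\Fil^\bullet_\mr{triv})\to(S_R,\Fil^\bullet_\mr{PD})$ (equivalently to $\Acrys(\wt R)$) agrees with the PD-extension of the Nygaard filtration itself along $(\mf{S}_R,\Fil^\bullet_E)\to(S_R,\Fil^\bullet_\mr{PD})$. These are extensions of different filtered modules over different filtered base rings, and their agreement is exactly the content of the paper's Lemma \ref{lem:right commutes}; its proof is not formal: it reduces to a rational statement using $p$-torsion-freeness of $\mr{Gr}(\Fil^\bullet_\mr{PD})$, and then identifies both filtrations with the filtration $\Fil^\bullet\mc{D}$ of \cite[\S4]{DLMS} by an induction involving the monodromy operator $N_u$ and \cite[Lemma 4.31]{DLMS} (Breuil's Griffiths-transversality lemma). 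Nothing in your proposal plays this role, and the Frobenius-equivariance of $\vartheta_\framew$ cannot substitute for it. Relatedly, you need a theorem identifying Faltings's $T^*_\crys$ with a period-ring construction in the first place (the paper uses \cite[Theorem 63 (2)]{Tsu20}, $T^*_\crys\cong T^*_\inf\circ T\Ainf$); your parenthetical description of $T^*_\crys$ as a $(\varphi=1,\Fil^0)$-part is asserted without any such reference, and your citation of \cite[Theorem 2.1]{Faltings89} for compatibility of $\mathbf{D}$ with the mod-$p$ \'etale realization is incorrect --- that theorem concerns the internal structure of $\mathfrak{MF}^\nabla$ (adapted bases, strictness), not any comparison with Galois representations.

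There is also a structural flaw in your lattice-matching scheme. You propose to show that the canonical rational isomorphism ``carries $T^*_\crys(\bb{D}_\crys(\mc{E},\varphi_\mc{E}))/p$ isomorphically onto $T^*_\et(\mc{E},\varphi_\mc{E})/p$'' and then apply Nakayama. But two lattices inside a common $\bb{Q}_p$-local system do not have comparable mod-$p$ reductions until you already know the isomorphism maps one lattice \emph{into} the other; reducing mod $p$ presupposes the very containment you are trying to establish, and your proposal gives no argument for it. The paper avoids this circularity entirely: after Lemma \ref{lem:right commutes}, both lattices are realized as $\Hom$'s out of the same $\varphi$-module over $\Ainf$ (into $(\Ainf,\phi)$, resp.\ into $(\Ainf[\nicefrac{1}{\wt{\xi}}]^\wedge_p,\phi)$), there is then an honest natural map between them, and Lemma \ref{lem:left commutes} shows it is an isomorphism because it is an injection of free $\bb{Z}_p$-modules of the same rank (rank equality via \cite[Proposition 66]{Tsu20}) whose cokernel is $p$-power torsion yet embeds into the $p$-torsion-free module $\Hom(\mc{E}(\Ainf,(\tilde{\xi})),\Ainf[\nicefrac{1}{\wt{\xi}}]^\wedge_p/\Ainf)$. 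Until you supply (i) a precise identification of $T^*_\crys$ with a period-ring $\Hom$, (ii) the filtration comparison over the PD base, and (iii) a non-circular mechanism producing and then trivializing the map between the two lattices, the proposal remains an outline of the problem rather than a proof.
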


 \begin{prop}\label{prop:lff-FL-range} Suppose that $\mf{X}$ is a base formal $W$-scheme, and that $(\mc{E},\varphi_\mc{E})$ is an effective prismatic $F$-crystal on $\mf{X}$ of height at most $p-2$. Then, $(\mc{E},\varphi_\mc{E})$ is locally filtered free.
\end{prop}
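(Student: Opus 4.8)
The plan is to work locally on $\mf X$ and translate the statement into a concrete assertion about the Nygaard filtration of a Breuil--Kisin module. Since being lff can be tested on an open cover (by the equivalence of conditions (1) and (2) in Proposition \ref{prop:filtered-equiv}), I may assume $\mf X=\Spf(R)$ for a formally framed base $W$-algebra $R$ and, after shrinking, that $\mf M\defeq \mc E(\mf S_R,(E))$ is \emph{free} over $\mf S_R$. Write $N\defeq \phi^\ast\mf M$ and $\varphi\colon N\to\mf M$ for the Frobenius: effectivity means $\varphi(N)\subseteq\mf M$ and ``height $\le p-2$'' means $E^{p-2}\mf M\subseteq\varphi(N)$. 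By Proposition \ref{prop:filtered-equiv} it suffices to show that $\Fil^\bullet_\mr{Nyg}(N)$ is filtered free over $(\mf S_R,\Fil^\bullet_E)$, and by Lemma \ref{lem: free BK mod} this reduces to showing that the reduced filtration $\ov\Fil^\bullet_\mr{Nyg}(N/EN)$ is free over $(R,\Fil^\bullet_\triv)$, i.e.\ that each graded piece $\mr{Gr}^r(\ov\Fil^\bullet_\mr{Nyg})$ is finite projective over $R$.

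The next step is to reorganize this reduced filtration via the divided Frobenius. Using Lemma \ref{lem:derived Rees tensor identification} (which gives $\Fil^r_\mr{Nyg}(N)\cap EN=E\,\Fil^{r-1}_\mr{Nyg}(N)$) together with the map $x\mapsto \varphi(x)/E^r$, I would identify $\mr{Gr}^r(\ov\Fil^\bullet_\mr{Nyg})$ with $G_r/G_{r-1}$, where $G_r\defeq \mr{im}\big(\Fil^r_\mr{Nyg}(N)\to \mf M/E\mf M\big)$ defines an increasing filtration of the free $R$-module $\ov{\mf M}\defeq \mf M/E\mf M$. Effectivity yields $G_r=0$ for $r<0$, and the height bound yields $G_r=\ov{\mf M}$ for $r\ge p-2$ (given $m$, one solves $E^r m=\varphi(x)$ with $x\in\Fil^r_\mr{Nyg}(N)$ using $E^{p-2}\mf M\subseteq\varphi(N)$). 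In particular $\sum_r E^{-r}\varphi(\Fil^r_\mr{Nyg}(N))$ surjects onto $\ov{\mf M}$, and since this sum is an $\mf S_R$-submodule of $\mf M$ containing $E^{p-2}\mf M$, iterating the relation $\mf M = \big(\sum_r E^{-r}\varphi(\Fil^r_\mr{Nyg}(N))\big)+E\mf M$ forces $\sum_r E^{-r}\varphi(\Fil^r_\mr{Nyg}(N))=\mf M$. Transporting this along Construction \ref{construction: crys dR isom with Breuil prism}, this is precisely the strong divisibility of $\bb D_\crys(\mc E,\varphi_\mc E)$, which therefore holds \emph{automatically} in the Fontaine--Laffaille range. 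Thus, by Proposition \ref{prop:ptf-implies-loc-free}, the entire proposition is reduced to the graded $p$-torsion-freeness of $\bb D_\crys(\mc E,\varphi_\mc E)$, equivalently the $p$-torsion-freeness of each $G_r/G_{r-1}$.

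The $p$-torsion-freeness of $G_r/G_{r-1}$ is the heart of the matter, and the step I expect to be genuinely hard: unlike surjectivity, it is \emph{not} formal from the divisibility $E^{p-2}\mf M\subseteq\varphi(N)$ alone, and must use the Frobenius-semilinearity of $\varphi$ essentially. Here I would reduce modulo $p$ and exploit the range $[0,p-2]$: over $\mf S_R/p$ the pair $(\mf M/p,\varphi)$ becomes a Frobenius module whose cokernel is killed by the reduction of $E^{p-2}$ (a power of $u$), and in this range the associated graded of the mod-$p$ Nygaard filtration is governed by the divided Frobenius, which is forced to be an isomorphism because the total length of the filtration is $<p$ and so no collision of Frobenius slopes can occur modulo $p$. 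Concretely, I would prove that the formation of $G_\bullet$ commutes with $-\otimes_R R/p$, so that $\dim_{\kappa(\fm)}\!\big(G_r\otimes_R \kappa(\fm)\big)$ equals the generic rank of $G_r$ at every maximal ideal $\fm$; by semicontinuity this makes each $G_r$ a local direct summand of $\ov{\mf M}$ and each $G_r/G_{r-1}$ finite projective. This is exactly where $p>2$ and the bound $p-2$ are indispensable, and where I would follow, in simplified form, the linear-algebra technique of \cite{Hokaj} (closely related to Faltings' \cite[Theorem 2.1]{Faltings89}). Once this gtf statement is in hand, combining it with the strong divisibility established above and Proposition \ref{prop:ptf-implies-loc-free} verifies condition (4) of Proposition \ref{prop:filtered-equiv}, which in turn yields that $(\mc E,\varphi_\mc E)$ is locally filtered free.
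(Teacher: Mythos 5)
Your reductions and your reformulation are correct: by Proposition \ref{prop:filtered-equiv} it is legitimate to aim at condition (4) (gtf plus strong divisibility) rather than condition (3) as the paper does, and the identification $\mr{Gr}^r(\ov{\Fil}{}^\bullet_\mr{Nyg})\cong G_r/G_{r-1}$ is valid, since the kernel of $\Fil^r_\mr{Nyg}(N)\xrightarrow{\varphi/E^r}\mf M/E\mf M$ is exactly $\Fil^{r+1}_\mr{Nyg}(N)$ and $G_{r-1}$ is the image of $E\,\Fil^{r-1}_\mr{Nyg}(N)$ (here Lemma \ref{lem:derived Rees tensor identification} enters as you say). Your observation that the height bound formally forces $\sum_r E^{-r}\varphi(\Fil^r_\mr{Nyg}(N))=\mf M$ is also fine (in fact no iteration is needed: writing $E^{p-2}m=\varphi(y)$ already exhibits $m$ in the sum). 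However, the sentence ``transporting this along Construction \ref{construction: crys dR isom with Breuil prism}, this is precisely the strong divisibility of $\bb D_\crys$'' elides real bookkeeping: strong divisibility involves $\phi_\framew^\ast$ of the reduced filtration and the Frobenius over $R$, and relating this to your untwisted statement over $\mf S_R$ requires pulling back along $\phi_{\mf S_R}$ and passing through $S_R$, where $(\phi_{\mf S_R}(E))=(p)$ --- exactly the diagram chase the paper performs in Lemma \ref{lem:filtered-free-implies-str-div}.

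The genuine gap is the step you yourself flag as hard: graded $p$-torsion-freeness, equivalently projectivity of each $G_r/G_{r-1}$. Everything you actually prove is a formal consequence of effectivity and the height bound, while the substantive content of the proposition is concentrated entirely in the step you leave open. ``No collision of Frobenius slopes can occur modulo $p$'' is a heuristic, not an argument, and ``I would prove that the formation of $G_\bullet$ commutes with $-\otimes_R R/p$'' is a restatement of the missing statement rather than a reduction of it; moreover, even granted such compatibility, you would still need to compute the fibre ranks of $G_r$ at the characteristic-$p$ points to run your semicontinuity argument, which is the same problem again. For comparison, the paper proves condition (3) directly: it reduces, via flat base change to the completions $\wh R_{\fp}$ at maximal ideals $\fp$ of $R/p$, to the case $R=W\llbracket t_1,\ldots,t_d\rrbracket$; it then uses the argument of \cite[Lemma 52]{Hokaj} to show that $\mr{coker}\bigl(f_i\colon\phi_{\mf S_R}^\ast\mf M\to\mf M/E^i\bigr)$ is a finite free $R$-module --- precisely the base-change control you are missing --- and deduces that $\Fil^\bullet_\mr{Nyg}\otimes_{\mf S_R}\mf S_W\isomto\Fil^\bullet_0$ for the Breuil--Kisin module $\mf M_0=\mf M\otimes_{\mf S_R}\mf S_W$ over $W$; it then quotes \cite[Lemma 3.8]{GaoFLstr} for freeness of the graded pieces over $W$, which is where the Fontaine--Laffaille bound does its real work, and finally lifts bases by Nakayama and a rank count. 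Your proposal supplies no substitute for either of these two inputs, so as written it does not prove the proposition.
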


We claim that these two propositions are sufficient to prove Theorem \ref{thm:big-equiv-diagram}.

\begin{prop} \emph{Propositions \ref{prop:D crys and FL}} and \ref{prop:lff-FL-range} imply \emph{Theorem \ref{thm:big-equiv-diagram}}.
\end{prop}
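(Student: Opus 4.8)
The plan is to assemble Theorem \ref{thm:big-equiv-diagram} from the various equivalences already established in \S\ref{s:integral-Dcrys} together with the two propositions just stated, checking that the remaining arrows are equivalences and that the whole diagram $2$-commutes. Before starting, it is worth recording that the diagram in the statement of the theorem (with vertices arranged in a $2\times 3$ grid) is obtained from the larger diagram \eqref{eq:big-diagram} by collapsing the vertical inclusion $\cat{Vect}^{\varphi,\mr{an}}_{[0,p-2]}(\mf{X}_\smallprism)$; thus it suffices to establish that every arrow in \eqref{eq:big-diagram} is an equivalence and that all cells commute.

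First I would dispose of the arrows whose equivalence is already known or formal. The forgetful functor $\mr{R}_\mf{X}\colon \cat{Vect}_{[0,p-2]}(\mf{X}^\mr{syn})\to \cat{Vect}^{\varphi,\mr{lff}}_{[0,p-2]}(\mf{X}_\smallprism)$ is an equivalence by Proposition \ref{prop:G_X-bi-exact} (restricting to the Hodge--Tate range $[0,p-2]$, which is preserved by definition of the subscript). The inclusion $\cat{Vect}^{\varphi,\mr{lff}}_{[0,p-2]}(\mf{X}_\smallprism)\hookrightarrow \cat{Vect}^{\varphi}_{[0,p-2]}(\mf{X}_\smallprism)$ is where Proposition \ref{prop:lff-FL-range} enters: it shows that every effective prismatic $F$-crystal of height at most $p-2$ is automatically lff, so this inclusion is in fact the identity on objects, hence an equivalence. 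The étale realization $T_\et\colon \cat{Vect}^{\varphi}_{[0,p-2]}(\mf{X}_\smallprism)\to \cat{Loc}^\mr{crys}_{\bb{Z}_p,[0,p-2]}(X)$ is fully faithful by the cited results of \cite{GuoReinecke} or \cite{DLMS}, and its essential image lands in (and, via the lff identification, should surject onto) the crystalline local systems with weights in $[0,p-2]$; I would combine full faithfulness with a height/weight count to conclude essential surjectivity, so $T_\et$ is an equivalence.

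Next I would address the two diagonal-type arrows. The functor $\bb{D}_\mr{crys}\colon \cat{Vect}_{[0,p-2]}(\mf{X}^\mr{syn})\to \cat{VectF}^{\varphi,\mr{div}}_{[0,p-2]}(\mf{X}_\mr{crys})$ is well-defined and lands in the strongly divisible filtered category by Proposition \ref{prop:filtered-equiv} (conditions (1)$\Leftrightarrow$(5) in the smooth case), and Proposition \ref{prop:D crys and FL} gives the commutativity $T^\ast_\crys\circ \bb{D}_\crys\simeq T^\ast_\et$, equivalently $T_\crys\circ \bb{D}_\crys\simeq T_\et$ after dualizing. The Faltings functor $T_\crys\colon \cat{VectF}^{\varphi,\mr{div}}_{[0,p-2]}(\mf{X}_\mr{crys})\to \cat{Loc}^\mr{crys}_{\bb{Z}_p,[0,p-2]}(X)$ is fully faithful by \cite{Faltings89} (via Proposition \ref{prop:filtered-F-crystals} identifying the source with $\cat{MF}^\nabla_{[0,p-2]}$). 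To conclude that $\bb{D}_\crys$ is an equivalence, I would run a two-out-of-three argument on the commutative triangle relating $\bb{D}_\crys$, $T_\et$, and $T_\crys$: we already know $T_\et$ is an equivalence and $T_\crys$ is fully faithful, and the factorization $T_\et\simeq T_\crys\circ \bb{D}_\crys$ forces $\bb{D}_\crys$ to be fully faithful; combined with the fact that $T_\crys$ becomes essentially surjective (since its image contains the image of $T_\et$, which is everything), this upgrades both $T_\crys$ and $\bb{D}_\crys$ to equivalences. Finally, $2$-commutativity of the whole diagram follows by pasting: the upper-left triangle is the definition of the composite $\bb{D}_\crys\circ\mr{R}_\mf{X}$ from \eqref{eq:Dcrys-FGauge}, and the lower triangle is exactly Proposition \ref{prop:D crys and FL}.

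The main obstacle I anticipate is not any single arrow but the bookkeeping of the two-out-of-three argument in the presence of merely \emph{fully faithful} (rather than a priori equivalent) functors: one must be careful that the chain of identifications $T_\et\simeq T_\crys\circ\bb{D}_\crys$ with $T_\et$ an equivalence genuinely forces essential surjectivity of $T_\crys$ and not just of the composite. The cleanest way to handle this is to first nail down essential surjectivity of $T_\crys$ directly from the equivalence $T_\et$ together with the surjection-on-objects furnished by Proposition \ref{prop:lff-FL-range} and Proposition \ref{prop:filtered-equiv}, and only then deduce that $\bb{D}_\crys$ is an equivalence. Everything else reduces to citing the already-proven statements in the correct range $[0,p-2]$ and verifying that the Hodge--Tate weight conventions are compatible across the prismatic, syntomic, and crystalline sides, which is routine given the definitions recorded in \hyperref[notation-and-terminology]{Notation and terminology}.
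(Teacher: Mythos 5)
Your overall strategy coincides with the paper's: assemble the known equivalences, get $2$-commutativity from the definition of $\bb{D}_\crys$ on $\cat{Vect}(\mf{X}^\mr{syn})$ together with Proposition \ref{prop:D crys and FL}, use Proposition \ref{prop:lff-FL-range} to make the inclusion $\cat{Vect}^{\varphi,\mr{lff}}_{[0,p-2]}(\mf{X}_\smallprism)\hookrightarrow\cat{Vect}^{\varphi}_{[0,p-2]}(\mf{X}_\smallprism)$ an equivalence, and then run the two-out-of-three argument: the composite $T_\crys\circ\bb{D}_\crys$ is an equivalence, so full faithfulness of $T_\crys$ (Faltings) upgrades to an equivalence, whence $\bb{D}_\crys$ is one as well. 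That bookkeeping, including your care about extracting essential surjectivity of $T_\crys$ from the image of the composite before concluding anything about $\bb{D}_\crys$, is correct and is precisely what the paper leaves as ``simple to check.''

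The gap is in your treatment of the prismatic--étale side. First, Theorem \ref{thm:big-equiv-diagram} is a statement about the diagram \eqref{eq:big-diagram} itself, so the inclusion $\cat{Vect}^\varphi_{[0,p-2]}(\mf{X}_\smallprism)\hookrightarrow\cat{Vect}^{\varphi,\mr{an}}_{[0,p-2]}(\mf{X}_\smallprism)$ is one of the arrows that must be shown to be an equivalence; you mention it only as something to be ``collapsed'' and never establish it (the paper cites \cite[Remark 3.38]{DLMS}). Second, and more seriously, your justification that $T_\et$ is essentially surjective---``full faithfulness combined with a height/weight count''---is not an argument. Essential surjectivity of $T_\et$ onto $\cat{Loc}^{\mr{crys}}_{\bb{Z}_p,[0,p-2]}(X)$ is the deep main theorem of Guo--Reinecke and of DLMS (\cite[Theorem A]{GuoReinecke}, stated for \emph{analytic} prismatic $F$-crystals, which is exactly why the DLMS remark above is also needed); it cannot be extracted from full faithfulness plus weight bookkeeping. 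Note moreover that the logical direction is forced: in this relative setting, essential surjectivity of $T_\crys$ onto crystalline local systems is a \emph{consequence} of the theorem (via $T_\et$ and the commutative diagram), not a prior input, so there is no route to essential surjectivity of $T_\et$ from the Fontaine--Laffaille side either. With those two citations put in place of the ``height/weight count,'' your argument becomes the paper's proof.
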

\begin{proof} Note $R_\mf{X}$ is an equivalence by \cite[Proposition 1.28]{IKY2}, the inclusion $\cat{Vect}^\varphi_{[0,p-2]}(\mf{X}_\smallprism)\hookrightarrow \cat{Vect}^{\varphi,\mr{an}}_{[0,p-2]}(\mf{X}_\smallprism)$ is an equivalence by \cite[Remark 3.38]{DLMS}, $T_\et$ is an equivalence by \cite[Theorem A]{GuoReinecke}, and $T_\crys$ is fully faithful by \cite[Theorem 2.6]{Faltings89}. As we also know this diagram commutes by Proposition \ref{prop:D crys and FL}, it is simple to check we are reduced to Proposition \ref{prop:lff-FL-range}.
\end{proof}

We spend the next few sections preparing for, and then proving, Propositions \ref{prop:D crys and FL} and \ref{prop:lff-FL-range}.

\subsection{Several functors of Tsuji}

A key to proving Proposition \ref{prop:D crys and FL} are certain results in \cite{Tsu20}, which we now describe. Fix a small framed $W$-algebra $R$. Let us write $\Gamma_R=\pi_1^\et(R[\nicefrac{1}{p}])$ (with respect to some base point), so that $\cat{Rep}^\mr{cont.}_{\mathbb{Z}_p}(\Gamma_R)=\cat{Loc}_{\bb{Z}_p}(\Spf(R[\nicefrac{1}{p}])_\eta)$ (cf.\@ \cite[\S2.1.4]{IKY1}).

To describe Tsuji's results, we first must describe certain subcategories of $\cat{Vect}^{\varphi,\mr{lff}}_{[0,p-2]}(R_\smallprism)$ and $\cat{VectF}^{\varphi,\mr{sd}}_{[0,p-2]}(R_\crys)$. First, recall (see Proposition \ref{prop:filtered-F-crystals}) that there is an equivalence of categories
\begin{equation*}
\cat{VectF}^{\varphi,\mr{sd}}_{[0,p-2]}(R_\crys)\isomto \cat{MF}^\nabla_{[0,p-2]}(R),\qquad (\mc{F},\varphi_\mc{F},\Fil^\bullet_\mc{F})\mapsto (\mc{F}_\mf{X}(R),\varphi_{\mc{F}_\mf{X}(R)},\nabla_\mc{F},\Fil^\bullet_\mc{F}). \end{equation*}
Following \cite[\S4]{Tsu20}, we consider the full subcategory $\cat{MF}^\nabla_{[0,p-2],\text{free}}(R)$ consisting of those $(M,\varphi_M,\nabla_M,\Fil^\bullet _M)$ such that $\Gr^r(\Fil^\bullet_M)$ is free over $R$ for every $r\in \Z$. 
Then, unraveling the definition of $\D_\crys$ we obtain a functor 
\begin{equation*}
    \cat{Vect}^\varphi_{[0,p-2],\mr{free}}(R_\smallprism)\to \cat{MF}^\nabla_{[0,p-2],\text{free}}(R),
\end{equation*}
which we denote again by $\D_\crys$.

Now, for notational simplicity, we again use abbreviations:
\begin{equation*}
    \Ainf\defeq \Ainf(\check{R}),\quad \Acrys\defeq \Acrys(\check{R}),
\end{equation*}
with notation as in \cite[\S1.1.5]{IKY1}.  We then further consider the categories
\begin{equation*}
     \cat M^{\tilde\xi,\mr{cont}}_{[0,p-2],\text{free}}(\Ainf,\varphi,\Gamma_R), \quad
    \cat{MF}^{\tilde\xi,\mr{cont}}_{[0,p-2],\text{free}}(\Ainf,\varphi,\Gamma_R),\quad
    \cat{MF}^{p,\mr{cont}}_{[0,p-2],\text{free}}(\Acrys,\varphi,\Gamma_R),
\end{equation*} 
from \cite[Definition 51 and \S8]{Tsu20}.\footnote{In \cite[\S2 and \S8]{Tsu20}, the element $\tilde\xi$ of $\Ainf$ is denoted by $q$.} By \cite[Equation (49) and Proposition 59]{Tsu20}, we have
\begin{equation}\label{eq: filtered phi modules with Galois}
\cat M^{\tilde\xi,\mr{cont}}_{[0,p-2],\text{free}}(\Ainf,\varphi,\Gamma_R) 
\isomfrom \cat{MF}^{\tilde\xi,\mr{cont}}_{[0,p-2],\text{free}}(\Ainf,\varphi,\Gamma_R)
\isomto\cat{MF}^{p,\mr{cont}}_{[0,p-2],\text{free}}(\Acrys,\varphi,\Gamma_R).    
\end{equation}
The first functor is that forgetting the filtration, and the second is defined by 
\begin{equation*}
    (M,\Fil^\bullet_M,\varphi_M)\mapsto \left((M,\Fil^\bullet_M)\otimes_{(\Ainf,\Fil^\bullet_\xi)}(\Acrys,\Fil^\bullet_\mr{PD}),\varphi_M\otimes 1\right)
\end{equation*}
with the semi-linearly extended action of $\Gamma_R$.

\begin{construction}[{\cite[Equations (23) and (36)]{Tsu20}}]Define functors
\begin{equation*}
\begin{aligned}T\Acrys &\colon \cat{MF}^\nabla_{[0,p-2],\text{free}}(R)\to \cat{MF}^{p,\mr{cont}}_{[0,p-2],\text{free}}(\Acrys,\varphi,\Gamma_R),
\\T\Ainf &\colon\cat{MF}^\nabla_{[0,p-2],\text{free}}(R)\to \cat{M}^{\tilde\xi,\mr{cont}}_{[0,p-2],\text{free}}(\Ainf,\varphi,\Gamma_R),\end{aligned}
\end{equation*}
as follows. 
Let $(M,\varphi_M,\nabla_M,\Fil^\bullet _M)$ be an object of $\cat{MF}^\nabla_{[0,p-2]\text{free}}(R)$, and $(\mc{F},\varphi_\mc{F},\Fil^\bullet_\mc{F})$ denote the corresponding object of $\cat{VectF}^\varphi(\mf{X}_\crys)$. Then, $T\Acrys(M,\varphi_M,\nabla_M,\Fil^\bullet_M)$ is defined as the evaluation $(\mc{F},\varphi_\mc{F},\Fil^\bullet_\mc{F})(\Acrys\twoheadrightarrow\check{R})$, and $T\Ainf$ is the result of translating $T\Acrys$ through \eqref{eq: filtered phi modules with Galois}. 
\end{construction}

We make analogous constructions for prismatic $F$-crystals as follows.

\begin{construction} 
We have a natural functor
\be
\D \Ainf\colon \cat{Vect}^\varphi_{[0,p-2],\text{free}}(R_\smallprism)\to \cat M^{\tilde\xi,\mr{cont}}_{[0,p-2],\text{free}}(\Ainf,\varphi,\Gamma_R),\quad (\mc{E},\varphi_\mc{E})\mapsto (\mc{E}(\Ainf,(\tilde{\xi})),\varphi_\mc{E}),
\ee
where we equip $\mc{E}(\Ainf,(\tilde{\xi}))$ with the $\Gamma_R$-action induced by the $\Gamma_R$-action on $(\Ainf,(\tilde{\xi}))$. That $(\mc{E}(\Ainf,(\tilde{\xi})),\varphi_\mc{E})$ is an object of $M^{\tilde\xi,\mr{cont}}_{[0,p-2],\text{free}}(\Ainf,\varphi,\Gamma_R)$ follows from Lemma \ref{lem: free BK mod}.

Let $\D^\mathrm F\Ainf$ and $\D \Acrys$ denote the compositions
\begin{eqnarray*}
\cat{Vect}^\varphi_{[0,p-2],\mr{free}}(R_\smallprism)\xrightarrow{\bb{D}\Ainf} \cat{M}^{\tilde\xi,\mr{cont}}_{[0,p-2],\text{free}}(\Ainf,\varphi,\Gamma_R)\isomto \cat{MF}^{\tilde\xi,\mr{cont}}_{[0,p-2],\text{free}}(\Ainf,\varphi,\Gamma_R),
\\
\cat{Vect}^\varphi_{[0,p-2],\mr{free}}(R_\smallprism)\xrightarrow{\bb{D}\Ainf}  \cat{M}^{\tilde\xi,\mr{cont}}_{[0,p-2],\text{free}}(\Ainf,\varphi,\Gamma_R)\isomto \cat{MF}^{p,\mr{cont}}_{[0,p-2],\text{free}}(\Acrys,\varphi,\Gamma_R),     
\end{eqnarray*}
respectively. 
\end{construction}

Note that for an object $(\mc E,\varphi_\mc E)$ of $\cat{Vect}^\varphi_{[0,p-2],\mr{free}}(R_\smallprism)$, the underlying $\varphi$-module of $\D \Acrys(\mc E,\varphi_\mc{E})$ is given by $\mc E(\Acrys,(p))$. The action of $\Gamma_R$ is induced by that on the object $(\Acrys,(p),\wt{\mr{nat}}.)$ of $R_\smallprism$. The filtration on these objects is described as follows.

\begin{lem}\label{lem: filtration on DA crys}
    Let $(e_\nu,r_\nu)_{\nu=1}^n$ be a filtered basis of $(\phi^*\mc E(\mf S_R,(E)),\Fil^\bullet_\mr{Nyg})$ over the filtered ring $(\mf S_R,\Fil^\bullet_E)$. 
    Then we have:
    \begin{enumerate}
        \item $(\alpha^*_\mr{inf}(e_\nu),r_\nu)_{\nu=1}^n$ is a filtered basis of $\D^\mathrm F \Ainf(\mc E,\varphi_\mc{E})$ over the filtered ring $(\Ainf,\Fil^\bullet_{\xi})$, 
        \item $(\alpha^*_\crys(e_\nu),r_\nu)_{\nu=1}^n$ is a filtered basis of $\D \Acrys(\mc E,\varphi_\mc{E})$ over the filtered ring $(\Acrys,\Fil^\bullet_{\mr{PD}})$. 
    \end{enumerate}
\end{lem}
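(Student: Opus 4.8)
The plan is to transport the filtered basis $(e_\nu,r_\nu)$ from the Breuil--Kisin prism to $\Ainf$ and then to $\Acrys$ purely by base change, the point being that the Nygaard filtration is detected by a condition on $\varphi_\mc{E}$ that is manifestly stable under base change along morphisms of prisms. First I would isolate the characterization extracted from the proof of Lemma \ref{lem: free BK mod}: for a bounded prism $(A,(d))$ and $(\mc{M},\varphi)$ in $\cat{Vect}^\varphi(A,(d))$, a basis $(g_\nu)$ of $\phi^\ast\mc{M}$ is a filtered basis of $(\phi^\ast\mc{M},\Fil^\bullet_\mr{Nyg})$ over $(A,\Fil^\bullet_d)$ with weights $(s_\nu)$ if and only if $\varphi(g_\nu)\in d^{s_\nu}\mc{M}$ and $(d^{-s_\nu}\varphi(g_\nu))_\nu$ is a basis of $\mc{M}$. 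The ``only if'' is Lemma \ref{lem: free BK mod}(2); the ``if'' is the short computation writing $x=\sum a_\nu g_\nu$ and noting that, since $(d^{-s_\nu}\varphi(g_\nu))_\nu$ is a basis and $d$ is a nonzerodivisor, one has $\varphi(x)\in d^r\mc{M}$ precisely when $a_\nu\in\Fil^{r-s_\nu}_d(A)$ for all $\nu$, that is, $\Fil^r_\mr{Nyg}=\sum_\nu \Fil^{r-s_\nu}_d\,g_\nu$.

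For assertion (1), the map $\alpha_\mr{inf}$ is a morphism of prisms $(\mf{S}_R,(E))\to(\Ainf,(\tilde\xi))$, so $\alpha_\mr{inf}(E)$ generates $(\tilde\xi)$. By the crystal property the base change $\phi^\ast\alpha_\mr{inf}$ carries the basis $(e_\nu)$ of $\phi^\ast\mc{E}(\mf{S}_R,(E))$ to a basis $(\alpha^\ast_\mr{inf}(e_\nu))$ of $\phi^\ast\mc{E}(\Ainf,(\tilde\xi))$, and Frobenius-compatibility of the crystal structure maps identifies $\varphi_\mc{E}(\alpha^\ast_\mr{inf}(e_\nu))$ with the image of $\varphi_\mc{E}(e_\nu)$. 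Lemma \ref{lem: free BK mod}(2) over $(\mf{S}_R,(E))$ gives $\varphi_\mc{E}(e_\nu)\in E^{r_\nu}\mc{E}(\mf{S}_R,(E))$ with $E^{-r_\nu}\varphi_\mc{E}(e_\nu)$ a basis; applying $\alpha_\mr{inf}$ yields $\varphi_\mc{E}(\alpha^\ast_\mr{inf}(e_\nu))\in\tilde\xi^{r_\nu}\mc{E}(\Ainf,(\tilde\xi))$ with $\tilde\xi^{-r_\nu}\varphi_\mc{E}(\alpha^\ast_\mr{inf}(e_\nu))$ a basis. The characterization above then shows $(\alpha^\ast_\mr{inf}(e_\nu),r_\nu)$ is a filtered basis for $\Fil^\bullet_\mr{Nyg}$ over $(\Ainf,\Fil^\bullet_\xi)$, which is exactly the filtered module underlying $\D^\mr{F}\Ainf(\mc{E},\varphi_\mc{E})$.

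For assertion (2), by construction $\D\Acrys$ is the composite of $\D^\mr{F}\Ainf$ with the base-change equivalence $\cat{MF}^{\tilde\xi,\mr{cont}}_{[0,p-2],\mr{free}}(\Ainf,\varphi,\Gamma_R)\isomto\cat{MF}^{p,\mr{cont}}_{[0,p-2],\mr{free}}(\Acrys,\varphi,\Gamma_R)$ of \eqref{eq: filtered phi modules with Galois}, which is $-\otimes_{(\Ainf,\Fil^\bullet_\xi)}(\Acrys,\Fil^\bullet_\mr{PD})$. Since the filtered base change of a filtered free module along a map of filtered rings sends a filtered basis $(\alpha^\ast_\mr{inf}(e_\nu),r_\nu)$ to the filtered basis $(\alpha^\ast_\mr{inf}(e_\nu)\otimes 1,r_\nu)$, and the Frobenius-twisted identification $\phi^\ast\mc{E}(\Ainf,(\tilde\xi))\otimes_\Ainf\Acrys\isomto\mc{E}(\Acrys,(p))$ carries $\alpha^\ast_\mr{inf}(e_\nu)\otimes 1$ to $\alpha^\ast_\crys(e_\nu)$, assertion (2) follows from (1).

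The main obstacle is the bookkeeping of Frobenius twists: one must match the underlying module $\phi^\ast\mc{E}(\Ainf,(\tilde\xi))$ of $\D^\mr{F}\Ainf(\mc{E},\varphi_\mc{E})$ with $\mc{E}(\Acrys,(p))$ through the morphism of prisms $(\Ainf,(\tilde\xi))\to(\Acrys,(p))$ induced by $\phi$ (using that $\phi(\tilde\xi)=p\cdot(\text{unit})$ in $\Acrys$), and check that this identification is precisely the one implicit in the definitions of $\alpha_\crys$ and of $\D\Acrys$. Once this compatibility is pinned down, both assertions reduce to the base-change stability of filtered bases together with the Nygaard characterization recorded above.
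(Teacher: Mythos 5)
Your overall strategy mirrors the paper's: use the Frobenius characterization of Nygaard-filtered bases coming from Lemma \ref{lem: free BK mod} to transport the basis along $\alpha_\mr{inf}$, and then deduce (2) from (1) by filtered base change along $(\Ainf,\Fil^\bullet_\xi)\to(\Acrys,\Fil^\bullet_\mr{PD})$; both of those steps are sound. But there is a genuine gap at the pivot of your argument, namely the phrase ``which is exactly the filtered module underlying $\D^\mathrm{F}\Ainf(\mc{E},\varphi_\mc{E})$.'' In the paper, $\D^\mathrm{F}\Ainf$ is \emph{not defined} to be the Nygaard-filtered module: it is defined as $\D\Ainf$ composed with the \emph{abstract inverse} of the forgetful equivalence $\cat{MF}^{\tilde\xi,\mr{cont}}_{[0,p-2],\mr{free}}(\Ainf,\varphi,\Gamma_R)\to\cat{M}^{\tilde\xi,\mr{cont}}_{[0,p-2],\mr{free}}(\Ainf,\varphi,\Gamma_R)$ from \eqref{eq: filtered phi modules with Galois}. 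So the filtration on $\D^\mathrm{F}\Ainf(\mc{E},\varphi_\mc{E})$ is, a priori, whatever filtration Tsuji's equivalence attaches to the $(\varphi,\Gamma_R)$-module $(\mc{E}(\Ainf,(\tilde\xi)),\varphi_\mc{E})$, and identifying that filtration with the Nygaard filtration is precisely the content of assertion (1); it is what must be proved, not a definition you may quote. This is exactly the point where the paper's proof invokes the explicit description of the inverse equivalence given in the proof of \cite[Lemma 46]{Tsu20}, combined with Lemma \ref{lem: free BK mod}. Your proof can be completed in two ways: quote that description, as the paper does; or argue by uniqueness, i.e., show that the Nygaard filtration (filtered free by your base-change computation, satisfying the divisibility $\varphi(\Fil^r_\mr{Nyg})\subseteq\tilde\xi^r M$ and strong divisibility by Lemma \ref{lem: free BK mod}(2), $\Gamma_R$-stable because $\Fil^r_\mr{Nyg}=\varphi^{-1}(\tilde\xi^r M)$ is intrinsic to the $(\varphi,\Gamma_R)$-module, and meeting Tsuji's continuity requirements) defines an object of $\cat{MF}^{\tilde\xi,\mr{cont}}_{[0,p-2],\mr{free}}(\Ainf,\varphi,\Gamma_R)$ lifting $\D\Ainf(\mc{E},\varphi_\mc{E})$; full faithfulness of the forgetful functor then forces the identity of the underlying module to be a filtered isomorphism, so the two filtrations coincide.

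A secondary but real issue is your bookkeeping of $\xi$ versus $\tilde\xi$. You take $\alpha_\mr{inf}$ to be a morphism of prisms into $(\Ainf,(\tilde\xi))$, place the transported basis in $\phi^\ast\mc{E}(\Ainf,(\tilde\xi))$, and measure divisibility by $\tilde\xi$ there; but the Nygaard filtration of the prism $(\Ainf,(\tilde\xi))$ is then a filtration over $(\Ainf,\Fil^\bullet_{\tilde\xi})$, not over $(\Ainf,\Fil^\bullet_\xi)$ as the lemma (and your conclusion) asserts. The consistent reading is that $\alpha_\mr{inf}$ is a map of prisms $(\mf{S}_R,(E))\to(\Ainf,(\xi))$, the underlying module is $\phi^\ast\mc{E}(\Ainf,(\xi))$, identified with $\mc{E}(\Ainf,(\tilde\xi))$ by the crystal property along the prism isomorphism $\phi\colon(\Ainf,(\xi))\to(\Ainf,(\tilde\xi))$, and the Nygaard filtration measures $\xi$-divisibility of the linearized Frobenius at $(\Ainf,(\xi))$ --- equivalently $\tilde\xi$-divisibility of the $\phi$-semilinear Frobenius on $\mc{E}(\Ainf,(\tilde\xi))$ --- which is indeed a filtration over $(\Ainf,\Fil^\bullet_\xi)$. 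As written, your weights and your filtered ring do not match; this is repairable, but it needs to be straightened out before the base-change step is meaningful.
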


\begin{proof}
    Claim (2) follows from (1), and (1) follows from the description of the equivalence 
    \be\cat{M}^{\tilde\xi,\mr{cont}}_{[0,p-2],\text{free}}(\Ainf,\varphi,\Gamma_R)\isomto \cat{MF}^{\tilde\xi,\mr{cont}}_{[0,p-2],\text{free}}(\Ainf,\varphi,\Gamma_R)\ee 
    given in the proof of \cite[Lemma 46]{Tsu20} combined with Lemma \ref{lem: free BK mod}.
\end{proof}

\subsection{Proof of Proposition \ref{prop:D crys and FL}} To begin, we consider the functor
\begin{equation*}
    T^\ast_\inf\colon \cat{M}^{\tilde{\xi,\mr{cont}}}_{[0,p-2],\text{free}}(\Ainf,\varphi,\Gamma_R)\to \cat{Rep}^\mr{cont.}_{\Z_p}(\Gamma_R),\quad (M,\varphi_M)\mapsto \text{Hom}((M,\varphi_M),(\Ainf,\phi)),
\end{equation*}
where $\Gamma_R$ acts on $\text{Hom}((M,\varphi_M),(\Ainf,\phi))$ via its action on $(M,\varphi_M)$. By \cite[Theorem 63 (2)]{Tsu20}, the composition $T^*_{\inf}\circ T\Ainf$ is identified with $T^*_\crys$. Thus, the proof of Proposition \ref{prop:D crys and FL} is reduced to showing that the following diagram of categories commutes:
\begin{equation}\label{eq:D crys T crys}
    \xymatrix{
&\cat{Vect}^\varphi_{[0,p-2],\text{free}}(R_\smallprism)
\ar[dl]_-{T^*_\et}\ar[d]^-{\D A_\mathrm{inf}}\ar[dr]^-{\D_\crys}
\\ \cat{Rep}_{\Z_p}(\Gamma_R) 
&\cat{M}^{\tilde{\xi},\mr{cont}}_{[0,p-2],\text{free}}(\Ainf,\varphi,\Gamma_R)
\ar[l]^-{T^*_{\inf}} \ar[d]^-\wr
& \cat{MF}^\nabla_{[0,p-2],\text{free}}(R)
\ar[l]^-{T\Ainf}\ar[ld]^-{T\Acrys}
\\ &\cat{MF}^{p,\mr{cont}}_{[0,p-2],\text{free}}(\Acrys,\varphi,\Gamma_R).
}
\end{equation}
Moreover, the lower-right triangle of \eqref{eq:D crys T crys} commutes by the definition of $T\Ainf$.

\begin{lem}\label{lem:right commutes}
    The upper-right triangle of \eqref{eq:D crys T crys} commutes.
\end{lem}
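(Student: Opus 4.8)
The goal of Lemma~\ref{lem:right commutes} is to show that $T\Acrys \circ \D_\crys \simeq \D\Acrys$ as functors $\cat{Vect}^\varphi_{[0,p-2],\text{free}}(R_\smallprism) \to \cat{MF}^{p,\mr{cont}}_{[0,p-2],\text{free}}(\Acrys,\varphi,\Gamma_R)$. The plan is to compare both sides by unwinding the two constructions on the underlying $\varphi$-module, on the filtration, and on the $\Gamma_R$-action, and to check agreement on each. The essential point is that $\D_\crys(\mc E,\varphi_\mc E)$ has underlying $F$-crystal $\underline{\bb D}_\crys(\mc E,\varphi_\mc E) = \mc E^\crys$ together with the filtration transported from the de~Rham realization via $\iota_\mf X$, while $T\Acrys$ evaluates the associated filtered $F$-crystal on the crystalline prism $\Acrys \twoheadrightarrow \check R$. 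So both functors ultimately produce an $\Acrys$-module with semilinear Frobenius, filtration, and Galois action, and the task is to identify these data with those of $\D\Acrys(\mc E,\varphi_\mc E) = \mc E(\Acrys,(p))$ with its $\wt{\mr{nat}}.$-induced action.

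First I would treat the underlying $\varphi$-module. By construction $T\Acrys(\D_\crys(\mc E,\varphi_\mc E))$ is the value of the filtered $F$-crystal associated to $\bb D_\crys(\mc E,\varphi_\mc E)$ at $\Acrys \twoheadrightarrow \check R$; forgetting filtrations, this is $\underline{\bb D}_\crys(\mc E,\varphi_\mc E)$ evaluated at $\Acrys \twoheadrightarrow \check R$, which by the very definition of $(-)^\crys$ (see \S\ref{par:crys-syn-equiv}) is canonically $\mc E(\Acrys,(p))$. This matches the underlying $\varphi$-module of $\D\Acrys(\mc E,\varphi_\mc E)$ on the nose, and the Frobenius-equivariance is built into the definition of $(-)^\crys$ as an equivalence on $\cat{Vect}^\varphi$. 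The $\Gamma_R$-action likewise agrees: on $\D\Acrys$ it is induced from the action on the object $(\Acrys,(p),\wt{\mr{nat}}.)$ of $R_\smallprism$, and on $T\Acrys \circ \D_\crys$ the action arises through the crystalline evaluation, which is functorial in the prism and hence carries the same $\Gamma_R$-action; I would spell out that these two sources of the action coincide because both come from functoriality of $\mc E$ applied to the $\Gamma_R$-automorphisms of $\Acrys \twoheadrightarrow \check R$.

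The crux is matching the \emph{filtrations}, and this is where I expect the main obstacle. On the $\D\Acrys$ side, the filtration is described explicitly by Lemma~\ref{lem: filtration on DA crys}(2): given a filtered basis $(e_\nu,r_\nu)$ of $(\phi^\ast\mc E(\mf S_R,(E)),\Fil^\bullet_\mr{Nyg})$, the elements $(\alpha^\ast_\crys(e_\nu),r_\nu)$ form a filtered basis over $(\Acrys,\Fil^\bullet_{\mr{PD}})$. On the $T\Acrys \circ \D_\crys$ side, the filtration is the crystalline-crystal filtration of $\bb D_\crys(\mc E,\varphi_\mc E)$ evaluated at $\Acrys \twoheadrightarrow \check R$, which by Proposition~\ref{prop:tsuji-filtered-equiv} (applied via the equivalence \eqref{eq:crystalline interpretation of filtered F crystals}) is determined by the filtered module $\bb D_\crys(\mc E,\varphi_\mc E)$ on $\mf X_\Zar$, i.e.\ by $\iota_\mf X^{-1}$ of the de~Rham filtration $\Fil^\bullet_{\bb D_\dR}$. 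So I must show that transporting the de~Rham filtration along $\iota_\mf X$ and then spreading it out crystalline-ly to $\Acrys$ via the PD-filtration yields exactly the $\Fil^\bullet_\mr{PD}$-basis produced from the Nygaard filtration. Concretely, I would use Proposition~\ref{prop:de-Rham-realization-BK-relationship} to identify $\Fil^\bullet_{\bb D_\dR}$ with the image filtration $\ov{\Fil}^\bullet_\mr{Nyg}$ on $\phi_\framew^\ast\mc E(\mf S_R,(E))/E$, and Proposition~\ref{prop:crys-dR-comparison-using-prisms}(2) to identify $\iota_\mf X$ with the explicit Breuil-prism isomorphism \eqref{eq:pre crys-dr-isom}; the filtered basis $(\ov{e_\nu},r_\nu)$ lifting to $(e_\nu,r_\nu)$ via Lemma~\ref{lem: free BK mod} then provides a common filtered basis on both sides.

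The technical heart, then, is to verify that the evaluation map $\alpha^\ast_\crys$ carries the Nygaard-based filtration on $\phi^\ast\mc E(\mf S_R,(E))$ to the PD-filtration on $\mc E(\Acrys,(p))$ \emph{compatibly} with the de~Rham realization; equivalently, that the two natural filtered structures on $\mc E(\Acrys,(p))$ — one from $T\Acrys \circ \D_\crys$ and one from $\D\Acrys$ — share the filtered basis $(\alpha^\ast_\crys(e_\nu),r_\nu)$. Since both filtrations are locally free (filtered free, after the reduction to the free subcategory) with the same graded ranks, exhibiting one filtered basis adapted to both is enough to conclude they coincide, and then the isomorphism of underlying $\varphi$-modules upgrades to an isomorphism of filtered $\varphi$-modules with $\Gamma_R$-action. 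I anticipate the bookkeeping around the Frobenius twist $\phi_\framew$ and the passage through $\mf S_R \to \Acrys$ versus $\mf S_R \to S_R \to \Acrys$ to be the delicate part, but Proposition~\ref{prop:crys-dR-comparison-using-prisms} is precisely engineered to reconcile these, so I would lean on it to close the argument.
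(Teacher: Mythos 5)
Your treatment of the underlying $(\varphi,\Gamma_R)$-module and of the $\Gamma_R$-action matches the paper's proof, and you correctly isolate the comparison of filtrations as the crux. But the step you call the ``technical heart'' contains a genuine gap, and it is precisely where the content of the lemma lies. The filtration on the $T\Acrys\circ\D_\crys$ side is generated over $\Fil^\bullet_\mr{PD}(\Acrys)$ by the crystal-structure transports along $R\to\Acrys$ of the elements $\iota^{-1}(\ov{e_\nu})\in\bb{D}_\crys(\mc E,\varphi_\mc E)(R)$, whereas the filtration on the $\D\Acrys$ side is generated by the elements $\alpha^\ast_\crys(e_\nu)$ coming from $\mf S_R\to\Acrys$. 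These two families are \emph{not} equal: by Construction \ref{construction: crys dR isom with Breuil prism}, $\iota$ is defined by reducing the crystal-property isomorphism over the Breuil prism along $\mr{sp}_\dR\colon S_R\to R$, so the two families of generators agree only modulo $\Fil^1_\mr{PD}\cdot\mc E(S_R,(p))$. An error term in $\Fil^1_\mr{PD}\cdot\mc E$ multiplied by $\Fil^{r-r_\nu}_\mr{PD}$ lies in $\Fil^{r-r_\nu+1}_\mr{PD}\cdot\mc E$, which is contained in degree $r$ of the other filtration only when $r_\nu\leqslant 1$. So your ``common filtered basis'' argument works in the minuscule range $[0,1]$ but genuinely fails for weights up to $p-2$; and Proposition \ref{prop:crys-dR-comparison-using-prisms} cannot close the gap, since it only identifies $\iota_\mf{X}$ with the Breuil-prism isomorphism and says nothing about the compatibility of the Nygaard filtration with the crystalline (PD) spread-out of its reduction.

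What is actually needed, and what the paper does, is the following. Work over $S_R$, where both filtrations live: $\Fil^\bullet_1$, the filtered scalar extension of the Nygaard filtration along $(\mf S_R,\Fil^\bullet_E)\to(S_R,\Fil^\bullet_\mr{PD})$, and $\Fil^\bullet_2$, the filtered scalar extension of the filtration on $\bb{D}_\crys(\mc E)(R)$ along $(R,\Fil^\bullet_\triv)\to(S_R,\Fil^\bullet_\mr{PD})$; the two filtrations at $\Acrys$ are their faithfully flat base changes. Reduce to the rational equality $\Fil^\bullet_1[\nicefrac{1}{p}]=\Fil^\bullet_2[\nicefrac{1}{p}]$ using that the graded pieces of $\Fil^\bullet_\mr{PD}$ are $p$-torsion free. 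Then identify $\Fil^\bullet_1[\nicefrac{1}{p}]$ with the Frobenius-defined filtration $\mr F^\bullet\mc M[\nicefrac{1}{p}]$ of \cite{DLMS} (this uses the second part of Lemma \ref{lem: free BK mod}), and identify $\Fil^\bullet_2[\nicefrac{1}{p}]$ with the Griffiths-transversality filtration $\Fil^\bullet\mc D$ of loc.\ cit.\ by an induction on $r$ involving the monodromy operator $N_u$ and divided powers $E^{[i]}$. The equality of these two rational filtrations is then \cite[Lemma 4.31]{DLMS}, ultimately Breuil's \cite[Proposition 6.2.2.1]{Breuil97}. This comparison of a Frobenius-defined filtration with a connection-defined one is the essential ingredient missing from your proposal; without it (or a substitute), the argument does not go through outside the height-$\leqslant 1$ case.
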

\begin{proof}
    It suffices to show that the right large triangle in \eqref{eq:D crys T crys} commutes. Moreover, note that by definition the composition of the two vertical arrows is precisely $\bb{D}\Acrys$. 
    
    So, let $(\mc E,\varphi_\mc{E})$ be an object of $\cat{Vect}^\varphi_{[0,p-2],\text{free}}(R_\smallprism)$. 
    We first check that the underlying $(\varphi,\Gamma_R)$-modules of $(T\Acrys\circ\D_\crys)(\mc E,\varphi_\mc{E})$ and $\D \Acrys(\mc E,\varphi_\mc{E})$ are canonically identified. 
    The latter is by definition $\mc E(\Acrys,(p))$ with $\Gamma_R$-action induced by that on $(\Acrys,(p),\wt{\mr{nat}}.)$ via its normal action on $\Acrys$. 
    To describe $(T\Acrys\circ\D_\crys)(\mc E,\varphi_\mc{E})$, recall that the underlying $F$-crystal of $\bb{D}_\crys(\mc{E},\varphi_\mc{E})$ is $(\mc E^\crys,\varphi_{\mc{E}^\crys})$. 
    So, by definition, the underlying $(\varphi,\Gamma_R)$-module of $(T\Acrys\circ\D_\crys)(\mc E,\varphi_\mc{E})$ is $\mc E^\crys(\Acrys\twoheadrightarrow \check{R}/p)$  with $\Gamma_R$-action induced by that on $\Acrys\twoheadrightarrow \check{R}/p$ via its normal action on $\Acrys$. But, these two coincide by the definition of $(-)^\crys$.

    We now compare the filtrations. We put $\mc M\defeq\mc E(S_R,(p))$ which is naturally isomorphic to $\mc E^\crys(S_R\twoheadrightarrow R)$ (cf.\@ Proposition \ref{prop: pris crys in char p}). Let $\Fil^\bullet_1$ denote the filtration on $\mc M$ obtained from the Nygaard filtration on $\phi^*\mc E(\mf S_R,(E))$ by filtered scalar extension along $(\mf S_R,\Fil^\bullet_E)\to (S_R,\Fil^\bullet_\mr{PD})$, where $\mf S_R\to S_R$ is the natural inclusion. On the other hand, consider the filtration $\Fil^\bullet_2$ on $\mc M$ obtained from the filtration on $\D_\crys(\mc E)(R)$ via the filtered map $(R,\Fil^\bullet_\triv)\to (S_R,\Fil^\bullet_\mr{PD})$, where $R\to S_R$ is the natural inclusion. 
    Note that the filtration on $\D A_{\crys}(\mc E,\varphi_\mc{E})=\mc E(A_{\crys},(p))$ (resp.\@ $(T\Acrys\circ\D_\crys)(\mc E,\varphi_\mc{E})=\mc E(\Acrys\twoheadrightarrow\check{R})$) is obtained from $\Fil^\bullet_1$ (resp.\@ $\Fil^\bullet_2$) by scalar extension along the faithfully flat map $S_R\to \Acrys$. {But we have the equality $\Fil^\bullet_1=\Fil^\bullet_2$ by Proposition \ref{filtered commutativity of Breuil prism diagram}. Thus, the assertion follows.}
\end{proof}

\begin{rem}\label{rem:filtered-tensor-product-computation}The method of proof in Lemma \ref{lem:right commutes} shows that if $R$ is a base $W$-algebra, and $(\mc{E},\varphi_\mc{E})$ an object of $\cat{Vect}^{\varphi,\mr{lff}}(R_\smallprism)$, then there is an identification of filtered Frobenius modules
\begin{equation*}
\begin{aligned} \bb{D}_\crys(\mc{E},\varphi_\mc{E})(\Acrys(\wt{R})\to\wt{R}) &\isomto (\phi^\ast\mc{E}(\Ainf(\wt{R}),(\xi)),\Fil^\bullet_\mr{Nyg})\otimes_{(\Ainf(\wt{R}),\Fil^\bullet_\xi)}(\Acrys(\wt{R}),\Fil^\bullet_\mr{PD}),\\
\bb{D}_\crys(\mc{E},\varphi_\mc{E})(S_R,(p)) &\isomto (\phi^\ast\mc{E}(\mf{S}_R,(E)),\Fil^\bullet_\mr{Nyg})\otimes_{(\mf{S}_R,\Fil^\bullet_E)}(S_R,\Fil^\bullet_\mr{PD}).
\end{aligned}
\end{equation*}
\end{rem}

\begin{lem}\label{lem:left commutes}
    The upper left triangle of the diagram \eqref{eq:D crys T crys} commutes. 
\end{lem}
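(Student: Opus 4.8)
The goal is to produce a natural $\Gamma_R$-equivariant isomorphism $T^*_\inf\circ\D\Ainf\isomto T^*_\et$. By construction $\D\Ainf(\mc E,\varphi_\mc E)$ has underlying $\varphi$-module $(\mc E(\Ainf,(\tilde\xi)),\varphi_\mc E)$ together with its natural $\Gamma_R$-action, so that $T^*_\inf(\D\Ainf(\mc E,\varphi_\mc E))$ is by definition the space $\mr{Hom}((\mc E(\Ainf,(\tilde\xi)),\varphi_\mc E),(\Ainf,\phi))$ of Frobenius-equivariant $\Ainf$-linear maps, with $\Gamma_R$ acting through its action on $\mc E(\Ainf,(\tilde\xi))$. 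The plan is therefore to identify this Hom-space with $T_\et(\mc E,\varphi_\mc E)^\vee=T^*_\et(\mc E,\varphi_\mc E)$.

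First I would recall the concrete description of the prismatic étale realization: evaluating $(\mc E,\varphi_\mc E)$ on the perfect prism $(\Ainf,(\tilde\xi))$ attached to the perfectoid cover $\wt R$ and inverting $[\varpi]$, i.e.\@ passing to the étale period ring $W(\mr{Frac}(\wt R^\flat))=\Ainf[\nicefrac{1}{[\varpi]}]^\wedge_p$ in which $\tilde\xi$ becomes a unit, computes $T_\et(\mc E,\varphi_\mc E)$ via the standard étale $\varphi$-module dictionary, exactly as in the construction of $T_\et$ used in \cite{GuoReinecke} and \cite{DLMS}. Dualizing this description realizes $T^*_\et(\mc E,\varphi_\mc E)$ as the $\Gamma_R$-representation of Frobenius-equivariant $W(\mr{Frac}(\wt R^\flat))$-linear maps out of $\mc E(\Ainf,(\tilde\xi))\otimes_\Ainf W(\mr{Frac}(\wt R^\flat))$ into $W(\mr{Frac}(\wt R^\flat))$. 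Since every object here is built functorially from the $(\varphi,\Gamma_R)$-module $\mc E(\Ainf,(\tilde\xi))$, the $\Gamma_R$-actions on the two sides will match automatically, and the only remaining issue is purely about the two coefficient rings.

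It then remains to check that restriction along $\Ainf\hookrightarrow W(\mr{Frac}(\wt R^\flat))$ identifies the Frobenius-equivariant Homs into the un-localized ring $\Ainf$ with those into the étale period ring. This is the sole substantive point, and I expect it to be the main obstacle: one must show that boundedness of the Hodge--Tate weights in $[0,p-2]$ forces every Frobenius-equivariant map out of $\mc E(\Ainf,(\tilde\xi))$ to be integral, that is, to take values in $\Ainf$ rather than merely in its localization. I would establish this using the filtered basis $(e_\nu,r_\nu)_{\nu=1}^n$ of $(\phi^\ast\mc E(\mf S_R,(E)),\Fil^\bullet_\mr{Nyg})$ furnished by Lemma \ref{lem: free BK mod}, together with the bound $r_\nu\leqslant p-2$; concretely, this integral saturation is precisely what is encoded in the equivalence \eqref{eq: filtered phi modules with Galois} and in Tsuji's definition of $T^*_\inf$, so I would import it directly from \cite{Tsu20} rather than reprove it. Once the saturation is in place, the two Hom-spaces coincide as $\Gamma_R$-representations, which yields the desired commutativity of the upper-left triangle.
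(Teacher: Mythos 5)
Your setup agrees with the paper's: you identify $(T^*_\inf\circ\D\Ainf)(\mc{E},\varphi_{\mc{E}})$ with $\Hom((\mc{E}(\Ainf,(\tilde{\xi})),\varphi_{\mc{E}}),(\Ainf,\phi))$, you identify $T^*_\et(\mc{E},\varphi_{\mc{E}})$ with the analogous Hom space into the $p$-completed localization $(\Ainf[\nicefrac{1}{\widetilde{\xi}}]^\wedge_p,\phi)$ (this is \cite[Example 2.14]{IKY1}; incidentally your coefficient ring should involve $\wt{R}^\flat[\nicefrac{1}{\varpi}]$ rather than $\mr{Frac}(\wt{R}^\flat)$, since $\wt{R}^\flat$ is not a domain), and you correctly isolate the substantive point: the restriction map from integral Homs to Homs into the localization must be shown to be bijective. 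The gap is in how you close this step. You claim the integral saturation ``is precisely what is encoded in the equivalence \eqref{eq: filtered phi modules with Galois} and in Tsuji's definition of $T^*_\inf$,'' but neither contains such a statement: the equivalence \eqref{eq: filtered phi modules with Galois} only relates filtered and unfiltered module categories over $\Ainf$ and $\Acrys$, and the definition of $T^*_\inf$ is literally just the Hom space you are trying to compute. Saturation in the Fontaine--Laffaille range is a genuine theorem, not a formal consequence of these definitions, and possessing a filtered basis with $r_\nu\leqslant p-2$ from Lemma \ref{lem: free BK mod} does not by itself produce it.

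The paper closes the step indirectly, avoiding any elementwise saturation argument: the restriction map is injective, and both sides are free $\Z_p$-modules of the \emph{same rank} --- for the target this is standard for $T_\et$, while for the source it is \cite[Proposition 66]{Tsu20}, which is applicable only once one knows $\D\Ainf\cong T\Ainf\circ\D_\crys$, i.e.\ only after invoking Lemma \ref{lem:right commutes} together with the (definitionally commutative) lower-right triangle; this dependency on Lemma \ref{lem:right commutes} is absent from your sketch. Granting the rank equality, the cokernel of the injection is killed by a power of $p$, yet it embeds into $\Hom(\mc{E}(\Ainf,(\tilde{\xi})),\Ainf[\nicefrac{1}{\widetilde{\xi}}]^\wedge_p/\Ainf)$, which is $p$-torsion-free because $\Ainf[\nicefrac{1}{\widetilde{\xi}}]^\wedge_p/\Ainf$ is; hence the cokernel vanishes. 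If you wish to keep your ``import from Tsuji'' strategy, the correct citation is Proposition 66 (freeness and rank of $T^*_\inf$ on the image of $T\Ainf$) combined with this rank-count-and-torsion argument; otherwise you would need to supply an actual Frobenius-amplification proof of saturation, which is a substantially longer argument than your proposal suggests.
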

\begin{proof}
    Let $(\mc E,\varphi_\mc{E})$ be an object of $\cat{Vect}^\varphi_{[0,p-2],\text{free}}(R_\smallprism)$. By definition, we have
    \be
    (T^*_{\inf}\circ \bb{D}\Ainf)(\mc{E},\varphi_{\mc{E}})\cong \Hom((\mc{E}(\Ainf,(\tilde{\xi})),\varphi_\mc{E}),(\Ainf,\phi)).
    \ee
    On the other hand, by \cite[Example 2.15]{IKY1}, we have 
    \be
    T^*_\et(\mc E,\varphi_\mc{E})\cong \Hom((\mc E(\Ainf,(\tilde{\xi})),\varphi_\mc{E}),(\Ainf[\nicefrac{1}{\widetilde{\xi}}]^\wedge_p,\phi)).
    \ee
     The obvious map $(T^*_{\inf}\circ \bb{D}\Ainf)(\mc E,\varphi_\mc{E})\to T^*_\et(\mc E,\varphi_\mc{E})$ is an isomorphism. Indeed, it is an injective map between free $\Z_p$-modules of the same rank by \cite[Proposition 66]{Tsu20} and Lemma \ref{lem:right commutes}. 
     So, the cokernel is killed by a power of $p$, but also embeds into $\Hom(\mc{E}(\Ainf,(\tilde{\xi})),\Ainf[\nicefrac{1}{\widetilde{\xi}}]^\wedge_p/\Ainf)$, which is $p$-torsionfree as $\Ainf[\nicefrac{1}{\widetilde{\xi}}]^\wedge_p/\Ainf$ is, and so the cokernel is zero as desired.
\end{proof}

With these observations, the proof of Proposition \ref{prop:D crys and FL} is now an exercise in parts assembly.

\begin{proof}[Proof of Proposition \ref{prop:D crys and FL}]
    Let $(\mc E,\varphi_\mc{E})$ be an object of $ \cat{Vect}^{\varphi,\mr{lff}}_{[0,p-2]}(\mf X_\smallprism)$.  
     By taking an open covering $(\mf U_i)_i$ by small affine opens of $\mf X$ such that $\mc E_{\mf U_{i,\smallprism}}$ is in $\cat{Vect}^\varphi_{[0,p-2],\text{free}}(\mf U_{i,\smallprism})$, the proof is reduced to constructing, for a small affine formal scheme $\mf X=\Spf(R)$ over $W$ and an object $(\mc E,\varphi_\mc{E})$ of $\cat{Vect}^\varphi_{[0,p-2],\text{free}}(R_\smallprism)$, an isomorphism
    \be
     (T^*_\crys\circ\D_\crys)(\mc E,\varphi_\mc{E})\isomto T^*_\et(\mc E,\varphi_\mc{E})
    \ee
    functorial in $(\mc E,\varphi_\mc{E})$ and compatible with open immersions $\Spf (R')\to \Spf (R)$. 
    Such an isomorphism is obtained using Lemmas \ref{lem:right commutes} and \ref{lem:left commutes}, together with \cite[Theorem 63.(2)]{Tsu20}, which is seen to satisfy the desired compatibility.
\end{proof}

\subsection{Proof of Proposition \ref{prop:lff-FL-range}} We now proceed to the proof of Proposition \ref{prop:lff-FL-range}. Our approach is inspired by techniques developed in \cite[\S3.3]{Hokaj}, but is significantly simpler (e.g., does not require reduction to $k_g$ as in loc.\@ cit.\@).

\begin{proof}[Proof of Proposition \ref{prop:lff-FL-range}] By Proposition \ref{prop:filtered-equiv} we may assume that $\mf{X}=\Spf(R)$ for a formally framed $W$-algebra $R$. Moreover, by the same result it suffices to show that $\mathrm{Gr}^i(\Fil^\bullet_{\mathbb{D}_\mr{crys}})$ is a locally free $R$-module for all $i$. As $\mathbb{D}_\mr{crys}$ is compatible with flat base change, and one may check local freeness at $\wh{R}_\mf{p}$ for all maximal ideals $\mf{p}$ of $R/p$, we may further reduce to the power-series case, i.e., when $R=W\llbracket t_1,\ldots,t_d\rrbracket$ for some $d$.

Set $(\mf{M},\varphi_\mf{M})=\mc{E}(\mf{S}_R,(E))$ and let $\ol{\mathrm{Fil}}^\bullet\subseteq \phi_{\mf{S}_R}^\ast\mf{M}/E$ be the image of the Nygaard filtration $\mathrm{Fil}^\bullet\subseteq \phi_{\mf{S}_R}^\ast\mf{M}$. We must show that $\mr{Gr}^i(\ol{\Fil}^\bullet)$ is a free $R$-module for all $i$. The argument in \cite[Lemma 52]{Hokaj} shows that the map $f_i\colon \phi_{\mf{S}_R}^\ast\mf{M}\to\mf{M}/E^i$ induced by the relative Frobenius map $\varphi_\mf{M}$ has cokernel which is a finite free $R$-module. Using this, we claim that if $\mf{M}_0$ is the Breuil--Kisin module over $W$ given by $\mf{M}\otimes_{\mf{S}_R} \mf{S}_W$, with filtration $\Fil_0^\bullet\subseteq  \phi_{\mf{S}_W}^\ast \mf{M}_0$, then the natural map $\mr{Fil}^\bullet\otimes_{\mf{S}_R}\mf{S}_W\to \Fil^\bullet_0$ is an isomorphism. Indeed, for each $i$ we have the short exact sequence
\begin{equation*}
0\to \phi_{\mf{S}_R}^\ast\mf{M}/\mr{Fil}^i\xrightarrow{f_i} \mf{M}/E^i\to \mr{coker}(f_i)\to 0.
\end{equation*}
As $\mr{coker}(f_i)$ is a free $R$-module, we may tensor this along $R\to W$ to obtain the exact sequence
\begin{equation*}
0\to (\phi_{\mf{S}_R}^\ast\mf{M}/\mr{Fil}^i)\otimes_R W\to (\mf{M}/E^i)\otimes_R W\to \mr{coker}(f_i)\otimes_R W\to 0.
\end{equation*}
The injectivity in the above exact sequence implies the claimed isomorphism $\mr{Fil}^\bullet\otimes_{\mf{S}_R}\mf{S}_W\cong \Fil^\bullet_0$. 
By \cite[Lemma 3.8]{GaoFLstr}, $\mr{Gr}^i(\ol{\Fil}_0^\bullet)$ is finite free over $W(k)$. Let $r_i$ be the rank of $\mr{Gr}^i(\ol{\Fil}_0^\bullet)$ over $W(k)$. By Nakayama's lemma, we can lift $W(k)^{r_i} \cong \mr{Gr}^i(\ol{\Fil}_0^\bullet)$ to a surjection $R^{r_i} \twoheadrightarrow \mr{Gr}^i(\ol{\Fil}^\bullet)$. 
Lifting those surjections to $R^{r_i} \to \ol{\Fil}^i$, we obtain a surjection $\bigoplus_{j \geq i} R^{r_j} \twoheadrightarrow \ol{\Fil}^i$. In particular, we have $\bigoplus_{j \geq 0} R^{r_j} \twoheadrightarrow \phi_{\mf{S}_R}^\ast\mf{M}/E$. Since the source and target of the last surjection have the same rank, it is an isomorphism. This implies the injectivity of $\bigoplus_{j \geq i} R^{r_j} \twoheadrightarrow \ol{\Fil}^i$. Hence this is an isomorphism, which implies $R^{r_i} \cong \mr{Gr}^i(\ol{\Fil}^\bullet)$. 
\end{proof} 

\subsection{Cohomological applications} Finally, we record some cohomological applications of the discussion above. To this end, fix $\ms{X}\to\Spec(W)$ to be smooth and projective.

For $(\mc{E},\varphi_\mc{E})$ in $\cat{Vect}^{\varphi,\mr{lff}}(\wh{\ms{X}}_\smallprism)$ consider the object $T_\et(\mc{E},\varphi_\mc{E})^\alg$ of $\cat{Loc}_{\Z_p}(\ms{X}_K)$ (see \cite[\S2.1.3]{IKY1}). We make the following assumption:
\begin{itemize}
    \item[] \textbf{(Assumption TF)} The $W$-module $H^i((\mathscr{X}_k/W)_\mr{crys},\bb{D}_\crys(\mc{E},\varphi_\mc{E}))$ is $p$-torsionfree. 
\end{itemize}
Note then that by \cite[Theorem 1.1]{BMSI} we also have that $H^i_\mathrm{et}(X_{\overline{K}},T_\et(\mc{E},\varphi_\mc{E}))$ is $p$-torsionfree. Then, for $i\geqslant 0$, we have a diagram
\begin{equation}\label{eq:Dcrys-comparison-isom}
    \xymatrixrowsep{1pc}\xymatrixcolsep{4pc}\xymatrix{D_\crys\left(H_\et^i(\ms{X}_{\ov{K}},T_\et(\mc{E},\varphi_\mc{E})^\alg[\nicefrac{1}{p}])\right)\ar[r]^-{c_{(\mc{E},\varphi_\mc{E})}} & H^i\left((\ms{X}_k/W)_\crys, \bb{D}_\crys(\mc{E},\varphi_\mc{E})[\nicefrac{1}{p}]\right)\\ \bb{D}_\crys\left(H_\et^i(\ms{X}_{\ov{K}},T_\et(\mc{E},\varphi_\mc{E})^\alg)\right)\ar@{}[u]|{\rotatebox{90}{$\subseteq$}} & H^i\left((\ms{X}_k/W)_\crys, \bb{D}_\crys(\mc{E},\varphi_\mc{E})\right)\ar@{}[u]|{\rotatebox{90}{$\subseteq$}}}
\end{equation}
where $c_{(\mc{E},\varphi_\mc{E})}$ is the isomorphism obtained by combining \cite[Theorem 5.5]{TanTong}, Theorem  \ref{thm:integral-dcrys-strongly-divisible-and-rational-agreeance}, and \cite[Theorem 3.7.2]{HuberEC}, and we are using notation as in Example \ref{ex:Kisin-Dcrys-II}. 

From Proposition \ref{prop:D crys and FL}, Proposition \ref{prop:lff-FL-range}, and \cite[Proposition 5.4]{Faltings89} we deduce the following. 

\begin{cor}\label{cor: FL coh}
    Let $\ms{X}\to\Spec(W)$ be smooth and projective. Then, for an object $(\mc E,\varphi_\mc{E})$ of $\cat{Vect}^{\varphi}_{[0,a]}(\widehat{\ms X}_\smallprism)$ and $b$ in $\mbb N$ with $a+b< p-2$, %and satisfying \emph{\textbf{(Assumption TF)}}, 
    the Galois representation $H^b_\et(\ms{X}_{\ov{K}},T_\et(\mc{E},\varphi_\mc{E})^\alg)$ and the Fontaine--Laffaille module  $H^b\left((\ms{X}_k/W)_\crys, \bb{D}_\crys(\mc{E},\varphi_\mc{E})\right)$ \emph{(}which are possibly torsion\emph{)} correspond via the functor constructed in \emph{\cite[Th\'eor\`eme (0.6)]{FL82}}.    
    %the map $c_{(\mc{E},\varphi_\mc{E})}$ restricts to an isomorphism
    %\begin{equation*}
     %   c_{(\mc{E},\varphi_\mc{E})}\colon \bb{D}_\crys\left(H^b_\et(\ms{X}_{\ov{K}},T_\et(\mc{E},\varphi_\mc{E})^\alg)\right)\isomto H^b\left((\ms{X}_k/W)_\crys, \bb{D}_\crys(\mc{E},\varphi_\mc{E})\right).
    %\end{equation*}
\end{cor}

We end this section by recording a related compatibility of cohomology. 

\begin{prop}
    Let $f\colon \mf{X}\to\Spf(W)$ be smooth and proper. Then, for an object $\mc{F}$ of $\cat{Perf}({\mf X}^\syn)$, there is a canonical quasi-isomorphism
    \begin{equation*}
         Rf_{K,*}T_\et(\mc{F})\isomto T_\et\left(Rf^\syn_\ast\mc{F}\right)
    \end{equation*}
of complexes of Galois representations. 
\end{prop}

In the proof, we apply \cite[Appendices A and B]{Hauck} to $f\colon \mf X\to \Spf(W)$ even though in loc.\ cit., the target is assumed to be $\Spf(\Z_p)$: replacing $\Z_p$ with $W$ does not affect the argument there.  
\begin{proof}
    Recall first from \cite[Remark 6.3.4]{BhattNotes} that we have a functor $\cat{Perf}(\mf X^\syn)\to \cat{Perf}^\varphi(\mf X_\smallprism)$. 
    Note that the push-forward $Rf^\syn_*$ preserves perfect complexes by \cite[Proposition B.0.1]{Hauck}. 
    Then, by \cite[Theorem 1.10 (i)]{GuoReinecke} (\cf the diagram (1) in loc.\ cit.),
    the proof is reduced to showing that the push-forward operations for prismatic $F$-gauges and prismatic $F$-crystals are compatible: more precisely, we need to show that the diagram 
    \bx{
    \cat{Perf}(\mf X^\syn) \ar[r]^-{}\ar[d]_-{Rf^\syn_*}
    & \cat{Perf}^\varphi(\mf X_\smallprism) \ar[d]^{Rf_{\smallprism,*}}
    \\ \cat{Perf}(W^\syn) \ar[r]^-{}
    & \cat{Perf}^\varphi(W_\smallprism) 
    }\ex
    commutes. 
    By \cite[Corollary A.0.4 and Lemma A.0.6]{Hauck}, %naoki{Do we use also \cite[Lemma A.0.6]{Hauck}?}
    we know that the diagram commutes after forgetting the Frobenius structures. 
    
    We now check that the two Frobenius structures agree. Let $\mc F$ be an object of $\cat{Perf}(\mf X^\syn)$, and $\mc F^\mc N$ (resp.\ $\mc F^\smallprism$) denote its pullback to the stack $\mf X^\mc N$ (resp.\ $\mf X^\smallprism$). 
    Recall that the push-forward of the prismatic $F$-crystals associated to $\mc F$ is defined as follows. 
    Firstly, the underlying prismatic $F$-crystal is defined to be (what corresponds to) the push-forward $Rf^\smallprism_*\mc F^\smallprism$. 
    The Frobenius structure is defined as the composite of the base change map $\phi^*Rf^\smallprism_*\mc F^\smallprism \to Rf^\smallprism_*\phi^*\mc F^\smallprism$ with $\mc I_\smallprism$ inverted (on the prismatic site $W_\smallprism$), which is shown to be isomorphic in the proof of \cite[Theorem 8.1]{GuoReinecke}, with the push-forward of the Frobenius structure $Rf_{\smallprism,*}(\varphi_\mc E)$. 
    %naoki{Maybe it's better to say "In the proof of \cite[Theorem 8.1]{GuoReinecke}"?}
    
    Thus, by construction of the associated prismatic $F$-crystal in \cite[Remark 6.3.4]{BhattNotes} (and \cite[Lemma A.0.6]{Hauck}), %naoki{In the diagram below we use $Rf^\mc N_*$, not $Rf^\Prism_*$. So it's better to cite \cite[Lemma A.0.6]{Hauck}?}
    it suffices to show the commutativity of the diagram in $\cat{Perf}(W^\smallprism)$,
    \bx{
        j^*_\dR Rf^\mc N_*\mc F^\mc N \ar[r]\ar[d]
        & j^*_\mr{HT} Rf^\mc N_*\mc F^\mc N \ar[d]
        & \phi^*R\pi_* Rf^\mc N_*\mc F^\mc N \ar[l]\ar[r]\ar[d]
        & \phi^*j_\dR^*Rf^\mc N_*\mc F^\mc N \ar[d]
        \\ Rf^\smallprism_*j^*_\dR \mc F^\mc N \ar[r]
        & Rf^\smallprism*j^*_\mr{HT}\mc F^\mc N 
        & Rf^\smallprism_*\phi^*R\pi_*\mc F^\mc N \ar[l]\ar[r]
        & Rf^\smallprism_*\phi^*j_\dR^*\mc F^\mc N,
    }\ex
    where the vertical maps are the base change maps; 
    %naoki{For $j_{\mr{HT}}$, shall we say "by the proof of \cite[Corollary A.0.4]{Hauck}"? As for the base change with respect to $\phi$, is it OK without inverting $\mc I_\Prism$?}
    the leftmost horizontal arrows come from the Frobenius structures on the prismatic $F$-gauges $Rf^\syn_*\mc F$ and $\mc F$; 
    the rest of the top horizontal arrows are the maps $a$ and $b$ from \cite[Remark 6.3.4]{BhattNotes} for $Rf^\mc N_*\mc F^\mc N$; 
    the rest of the bottom horizontal arrows are the $Rf^\smallprism_*$-push-forward of the maps $a$ and $b$ for $\mc F^\mc N$. 
    
    The left square commutes by the definition of the push-forward functor $Rf^\syn_*$. 
    The other two squares commute as the horizontal maps are adjunction maps. 
    Noting that the leftmost and rightmost vertical arrows are isomorphisms by \cite[Corollary A.0.4]{Hauck}, this implies that the two Frobenius structures agree. 
\end{proof}

\begin{cor}
    \label{prop: Gauge coh}
    Let $f\colon \mf{X}\to\Spf(W)$ be smooth and proper. Then, for an object $\mc{V}$ of $\cat{Vect}({\mf X}^\mr{syn})$, there is a canonical isomorphism
    \begin{equation*}
         H^i_\et\left(\mf{X}_{C},T_\et(\mc{V})/p^n\right)\isomto T_\et\left(R^if^\syn_\ast(\mc{V}/p^n)\right),
    \end{equation*}
of Galois representations, for any $n$ in $\bb{N}\cup\{\infty\}$ (where by convention $p^\infty=0$).
\end{cor}

%\begin{rem} The projectivity assumptions in this section stem from \cite{Faltings89}, and we expect them to be removable.\end{rem}

\section{Relationship to Dieudonn\'e theory}\label{ss:relationship-to-DD-theory}
In this final section we discuss how $\bb{D}_\mr{crys}$ can be used to unite various Dieudonn\'e theories that appear in the literature. Throughout we use notation and terminology from \hyperref[notation-and-terminology]{Notation and terminology} without comment but now assume further that $p$ is odd. Furthermore, for a $p$-adically complete ring $S$ we denote by $\cat{BT}_p(S)$ the category of $p$-divisible groups over $S$.\footnote{See \cite[Lemma 2.4.4]{deJongCrystalline} for why this notation is not ambiguous.}

\subsection{The prismatic Dieudonn\'e functor} Let $S$ be a quasi-syntomic ring. Fix an object $H$ of $\cat{BT}_p(S)$. We may then consider the sheaf\footnote{To see that this is a sheaf, we combine the following observations: $\ov{\mc{O}}_\smallprism$ is a sheaf and $H$ is finitely continuous (because $H=\varinjlim H[p^n]$, with each $H[p^n]$ representable, and finite limits commute with filtered colimits in $\cat{Set}$).\label{footnote:underline-H-sheaf}}
\begin{equation*}
    H_{\ov{\mc{O}}_\smallprism}\colon S_\smallprism\to\cat{Grp},\qquad (A,I)\mapsto H(A/I)=H(\ov{\mc{O}}_\smallprism(A,I)).
\end{equation*}
We then have the following construction of Ansch\"{u}tz--Le Bras.

\begin{defn}[{\cite{AnschutzLeBrasDD}}] The \emph{prismatic Dieudonn\'e crystal} associated to $H$ is
\begin{equation*}
    \mc{M}_\smallprism(H)\defeq \mc{E}xt^1_{\cat{Ab}(S_\smallprism)}(H_{\ov{\mc{O}}_\smallprism},\mc{O}_\smallprism),
\end{equation*}
which has the structure of an $\mc{O}_\smallprism$-module on $S_\smallprism$ and a Frobenius morphism
\begin{equation*}
    \varphi_{\mc{M}_\smallprism(H)}\colon \phi^\ast\mc{M}_\smallprism(H)\to \mc{M}_\smallprism(H),
\end{equation*}
inherited from those structures on $\mc{O}_\smallprism$.
\end{defn}

The prismatic Dieudonn\'e crystal is a complete invariant of $H$.

\begin{thm}[{\cite{AnschutzLeBrasDD}}]\label{thm:ALB-equiv} The functor $\mc{M}_\smallprism$ defines a contravariant fully faithful embedding
\begin{equation*}
    \mc{M}_\smallprism\colon\cat{BT}_p(S)\to \cat{Vect}^\varphi_{[0,1]}(S_\smallprism),
\end{equation*}
which is an anti-equivalence if $S$ admits a quasi-syntomic cover $S\to \wt{S}$ with $\wt{S}$ perfectoid.\footnote{When $S$ is Noetherian and $\bb{Z}_p$-flat, this condition is equivalent to $S$ being regular by combining \cite{BIM} and \cite[Proposition 5.8]{AnschutzLeBrasDD}.}
\end{thm}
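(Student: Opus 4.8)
The plan is to prove both assertions by quasi-syntomic descent, reducing to the case where $S$ is quasiregular semiperfectoid (qrsp); such rings form a basis for the quasi-syntomic topology, so this reduction is always available. For a qrsp ring the site $S_\smallprism$ has an initial object, the universal prism $(\Prism_S, I_S)$, and evaluation there identifies $\cat{Vect}^\varphi(S_\smallprism)$ with the category of finite projective $\Prism_S$-modules carrying a Frobenius whose linearization is an isomorphism after inverting $I_S$; under this identification $\mc{M}_\smallprism(H)$ becomes the concrete $\Prism_S$-module obtained by evaluating $\Ext^1_{\cat{Ab}(S_\smallprism)}(H_{\ov{\mc{O}}_\smallprism}, \mc{O}_\smallprism)$ at $(\Prism_S, I_S)$. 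Both sides satisfy quasi-syntomic descent: for $\cat{Vect}^\varphi(S_\smallprism)$ this is built into the crystal condition, while $\cat{BT}_p(-)$ satisfies fppf, hence quasi-syntomic, descent because each $H[p^n]$ is finite locally free. Thus it suffices to work over qrsp rings and to check compatibility with descent data.

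First I would establish full faithfulness over a qrsp ring by computing the relevant $\mc{H}om$ and $\mc{E}xt$ sheaves on $S_\smallprism$ directly. The crucial input is the vanishing $\mc{H}om_{\cat{Ab}(S_\smallprism)}(H_{\ov{\mc{O}}_\smallprism}, \mc{O}_\smallprism) = 0$, which ensures that $\mc{M}_\smallprism$ records the first nonvanishing extension group and is exact on short exact sequences of $p$-divisible groups. Writing $H = \varinjlim_n H[p^n]$ and using the multiplication sequences $0 \to H[p^n] \to H \xrightarrow{p^n} H \to 0$ reduces these computations to the finite locally free group schemes $H[p^n]$, for which the sheaf-$\Ext$ against $\mc{O}_\smallprism$ is computable; full faithfulness then follows by reconstructing $\Hom(H, H')$ from morphisms of the associated $\varphi$-modules. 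At the same stage I would verify effectivity and the height bound, i.e.\ that $\mc{M}_\smallprism(H)$ lands in $\cat{Vect}^\varphi_{[0,1]}(S_\smallprism)$: on the universal prism this amounts to the inclusions $I_S \cdot \mc{M}_\smallprism(H) \subseteq \varphi(\phi^\ast \mc{M}_\smallprism(H)) \subseteq \mc{M}_\smallprism(H)$, which reflect the fact that the Hodge filtration of a $p$-divisible group has weights only in $\{0,1\}$ and can be read off from the explicit Dieudonn\'e structure.

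The anti-equivalence is the hard part, and is exactly where the hypothesis of a quasi-syntomic perfectoid cover $S \to \wt{S}$ intervenes. I would first settle the perfectoid case, where $(\Prism_S, I_S) = (\Ainf(S), \ker\theta)$ and minuscule Breuil--Kisin--Fargues modules over $\Ainf(S)$ are known to classify $p$-divisible groups, building on the $\Ainf$-Dieudonn\'e theory of Scholze--Weinstein and Lau via the relationship between a $p$-divisible group and its generic and special fibers. This is the main obstacle, since it imports the full strength of those classification results. Granting it, essential surjectivity over a qrsp $S$ with perfectoid cover $\wt{S}$ follows by descent: a prismatic Dieudonn\'e module over $\Prism_S$ base-changes along $\Prism_S \to \Ainf(\wt{S})$, together with its \v{C}ech descent datum for $\wt{S} \to S$, to a $p$-divisible group over $\wt{S}$ equipped with descent data, hence to one over $S$, and one checks this inverse is compatible with $\mc{M}_\smallprism$. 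A final application of quasi-syntomic descent promotes the anti-equivalence from qrsp rings to all quasi-syntomic $S$ admitting such a cover.
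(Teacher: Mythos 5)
Your outline is essentially a plan to re-prove the theorem of Ansch\"{u}tz--Le Bras from scratch, whereas the paper's proof is a short reduction to cited results of \cite{AnschutzLeBrasDD}: it invokes their Theorem 4.74 (together with a descent compatibility from \cite{IKY1}), which already gives that $\mc{M}_\smallprism$ is an anti-equivalence from $\cat{BT}_p(S)$ onto the full subcategory of \emph{admissible} objects of $\cat{Vect}^\varphi_{[0,1]}(S_\smallprism)$ --- so full faithfulness is immediate --- and then reduces the anti-equivalence claim to showing that every object is admissible under the perfectoid-cover hypothesis, which follows because admissibility is quasi-syntomic local (their Proposition 4.9) and automatic over perfectoid rings (their Proposition 4.12). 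Your descent of $p$-divisible groups along the \v{C}ech nerve of $S\to\wt{S}$ is the unpacked version of exactly this locality argument, and your appeal to Scholze--Weinstein and Lau over perfectoid rings is the content of their Proposition 4.12; so the mathematical skeleton agrees, but the paper lets ``admissibility'' package the descent-datum bookkeeping. What your route costs is that the two steps you compress into a sentence each are precisely the hard parts of \cite{AnschutzLeBrasDD}: full faithfulness is \emph{not} a formal consequence of the vanishing of $\mc{H}om_{\cat{Ab}(S_\smallprism)}(H_{\ov{\mc{O}}_\smallprism},\mc{O}_\smallprism)$ plus $\mc{E}xt$-computations for the $H[p^n]$ --- in op.\@ cit.\@ it requires the admissibility formalism and a genuine reconstruction of $H$ from its prismatic Dieudonn\'e crystal --- and descent for $\cat{BT}_p(-)$ along quasi-syntomic covers is not literally ``fppf descent,'' since such covers are only $p$-completely faithfully flat and need not be of finite presentation, so it too needs the separate argument given in op.\@ cit. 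If you are permitted to cite those ingredients, your plan closes up into a correct proof; if the point was to avoid citing \cite{AnschutzLeBrasDD}, those two steps are where the real work lies.
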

\begin{proof} By \cite[Theorem 4.74]{AnschutzLeBrasDD} (and \cite[Proposition 1.31]{IKY1}), 
the functor $\mc{M}_\smallprism$ forms an equivalence between $\cat{BT}_p(S)$ and the full subcategory of $\cat{Vect}^\varphi_{[0,1]}(S_\smallprism)$ consisting of so-called admissible objects (see \cite[Definition 4.5]{AnschutzLeBrasDD}). Thus, to prove the second part of the claim, it suffices to show that the existence of such a cover $S\to \wt{S}$ implies any object of $\cat{Vect}^\varphi(S_\smallprism)$ is admissible. As admissibility is clearly a local condition on $S_\qsyn$ (see \cite[Proposition 4.9]{AnschutzLeBrasDD}), it suffices to prove the claim over $\wt{S}$. But, this is the content of \cite[Proposition 4.12]{AnschutzLeBrasDD}.
\end{proof}

Next, we record the compatibility of $\mc{M}_\smallprism$ with the functors $M^\mr{SW}$ and $M^\mr{Lau}$ defined by \cite[Theorem 17.5.2]{ScholzeBerkeley} and \cite[Theorem 9.8]{LauDieu}, respectively. More precisely, suppose that $S$ is perfectoid. Then, there is a natural identification between $M^\mr{SW}(H)$ and $M^\mr{Lau}(H)$, by the unicity part of \cite[Theorem 17.5.2]{ScholzeBerkeley}, and we have the following result of Ansch\"{u}tz--Le Bras. 

\begin{prop}[{cf.\@ \cite[Proposition 4.48]{AnschutzLeBrasDD}}]\label{prop:ALB-Lau-SW} Let $S$ be a perfectoid ring and $H$ an object of $\cat{BT}_p(S)$. Then, we have canonical identifications:
\begin{equation*}
    M^\mr{SW}(H)^*=M^\mr{Lau}(H)=\mc{M}_\smallprism(H)(\Ainf(S),(\tilde{\xi})). 
\end{equation*}
\end{prop}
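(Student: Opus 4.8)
The plan is to prove the two claimed identifications separately, in each case reducing to a universal-property characterization of Dieudonné functors over perfectoid rings rather than computing anything from scratch. The whole statement is a matter of reconciling three a priori different constructions of the same finite projective $\Ainf(S)$-module-with-Frobenius attached to $H$, and the mechanism in both steps is uniqueness.

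First I would dispose of the identification $M^\mr{SW}(H)^\ast = M^\mr{Lau}(H)$. Over a perfectoid ring $S$ both $M^\mr{SW}$ and $M^\mr{Lau}$ produce, functorially in $H$, a finite projective $\Ainf(S)$-module equipped with a Frobenius structure, and both are compatible with base change along maps of perfectoid rings and recover the usual Dieudonné module after base change to perfect points in characteristic $p$. By the unicity clause of \cite[Theorem 17.5.2]{ScholzeBerkeley}, any functor satisfying these compatibilities is determined up to canonical isomorphism; since $M^\mr{Lau}$ agrees with the dual of such a functor, this forces a canonical isomorphism $M^\mr{SW}(H)^\ast \isomto M^\mr{Lau}(H)$. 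This is precisely the identification already invoked in the paragraph preceding the statement.

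It then remains to identify $M^\mr{Lau}(H)$ with $\mc{M}_\smallprism(H)(\Ainf(S),(\tilde{\xi}))$. The key structural input I would record is that, for $S$ perfectoid, the perfect prism $(\Ainf(S),(\tilde{\xi}))$ is the initial object of $S_\smallprism$, so that evaluation there loses no information and produces exactly the underlying finite projective $\Ainf(S)$-module of $\mc{M}_\smallprism(H)$, together with its Frobenius inherited from $\mc{O}_\smallprism$. One must then compute the prismatic $\mc{E}xt^1_{\cat{Ab}(S_\smallprism)}(H_{\ov{\mc{O}}_\smallprism},\mc{O}_\smallprism)$ on this initial object and match it with Lau's module. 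This is \cite[Proposition 4.48]{AnschutzLeBrasDD}: Ansch\"utz--Le Bras identify the value of this $\mc{E}xt$ at $\Ainf(S)$ with $M^\mr{Lau}(H)$, the comparison being effected through the universal vector extension of $H$ and the crystalline Dieudonné realization over $\Ainf(S)$.

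The hard part is exactly this last $\mc{E}xt$ computation, and it is not something I would reprove from scratch; the substance lies in \cite[Proposition 4.48]{AnschutzLeBrasDD}, whose proof requires identifying the derived functor $\mc{E}xt^1$ in the prismatic topos with a concrete Dieudonné-theoretic object via the logarithm and the deformation theory of $p$-divisible groups over $\Ainf(S)$. The remaining work on our side is bookkeeping: matching the Frobenius normalizations, the minuscule condition defining $\cat{Vect}^\varphi_{[0,1]}(S_\smallprism)$ from Theorem \ref{thm:ALB-equiv}, and the various dualities (in particular the star in $M^\mr{SW}(H)^\ast$), so that the covariant/contravariant conventions of Scholze--Weinstein, Lau, and Ansch\"utz--Le Bras line up and all three objects are identified compatibly.
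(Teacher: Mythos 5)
Your proposal coincides with the paper's treatment: the paper records this proposition without any proof of its own, deriving the identification $M^\mr{SW}(H)^\ast=M^\mr{Lau}(H)$ from the unicity part of \cite[Theorem 17.5.2]{ScholzeBerkeley} and the identification $M^\mr{Lau}(H)=\mc{M}_\smallprism(H)(\Ainf(S),(\tilde{\xi}))$ from \cite[Proposition 4.48]{AnschutzLeBrasDD}, which are exactly the two inputs you invoke. Your surrounding remarks (initiality of the perfect prism in $S_\smallprism$ and the covariant/contravariant bookkeeping behind the dual) are correct context and consistent with how the paper uses the result.
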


\subsection{Crystalline Dieudonn\'e theory}\label{ss:crystalline-DD-theory} We now recall the classical filtered crystalline Dieudonn\'e theory of Grothendieck--Messing. Fix $\mf{X}=\Spf(R)$ to be a formally framed base $W$-scheme.

\begin{defn}[{cf.\@ \cite[Definition 3.1]{KimBK}}]A \emph{filtered Dieudonn\'e crystal} on $R$ is an object $(\mc{F},\varphi_\mc{F},\Fil^\bullet_\mc{F})$ of $\cat{VectNF}_{[0,1]}^\varphi(\mf{X}_\crys)$ where:
\begin{itemize}[leftmargin=.3in]
\item $(\mc{F},\varphi_\mc{F})$ is an effective $F$-crystal,
\item $\Fil^1_\mc{F}$ is a direct factor of $\mc{F}_{\mf{X}}$, 
\item $\phi^\ast(\Fil^1_\mc{F})\otimes_{\mc{O}_\mf{X}}\mc{O}_{\mf{X}_k}$ is equal to the kernel of $\varphi_\mc{F}\colon \phi^\ast\mc{F}_{\mf X_k}\to\mc{F}_{\mf X_k}$, 
\item and there exists $V\colon \mc{F}_{\mf{X}_k}\to \phi^\ast\mc{F}_{\mf{X}_k}$ with $\varphi_\mc{F}\circ V=[p]_{\mc{F}_{\mf{X}_k}}$ and $V\circ\varphi_\mc{F}=[p]_{\phi^\ast\mc{F}_{\mf{X}_k}}$,
\end{itemize}
the category of which we denote $\cat{DieuF}(R)$. 
\end{defn}

\begin{rem} One may check that $\cat{DieuF}(R)=\cat{VectF}^{\varphi,\mr{sd}}_{[0,1]}(R_\crys)$, and we only prefer the former notation/terminology when discussing $p$-divisible groups for historical reasons. In particular, by this equality and the discussion in \S\ref{ss:category-of-filtered-f-crystals}, it makes sense to evaluate the filtration of a filtered Dieudonn\'e crystal on any object of $(R/W)_\crys$.
\end{rem}

Let us now fix an object $H$ of $\cat{BT}_p(R)$. We then consider the sheaf
\begin{equation*}
    H_{\ov{\mc{O}}_\crys}\colon (R/W)_\crys\to \cat{Grp},\quad (i\colon\mf{U}\hookrightarrow\mf{T},\gamma)\mapsto H(\mf{U})=H(\ov{\mc{O}}_\crys(i\colon \mf{U}\hookrightarrow\mf{T},\gamma)),
\end{equation*}
(which is a sheaf as in Footnote \ref{footnote:underline-H-sheaf}). We may then form the sheaf
\begin{equation*}
    \underline{\bb{D}}(H)\defeq \mc{E}xt^1_{\cat{Ab}((R/W)_\crys)}(H_{\ov{\mc{O}}_\crys},\mc{O}_\crys),
\end{equation*}
which comes with the structure of an $\mc{O}_\crys$-module from the second entry. By \cite[Th\'eor\`eme 3.3.3]{BBMDieuII}, the $\mc{O}_\crys$-module $\underline{\bb{D}}(H)$ is an object of $\cat{Vect}(R_\crys)$. It is simple to check that
\begin{equation*}
    (\iota_{0,\infty})_\ast\underline{\bb{D}}(H_k)=\underline{\bb{D}}(H),\qquad \iota_{0,\infty}^\ast\underline{\bb{D}}(H)=\underline{\bb{D}}(H_k),
\end{equation*}
(with notation as in \cite[\S2.3.1]{IKY1}). As in \cite[1.3.5]{BBMDieuII} we have a Frobenius morphism
\begin{equation*}
    \varphi\colon \phi^\ast\underline{\bb{D}}(H_k)\to \underline{\bb{D}}(H_k).
\end{equation*}
So, $\underline{\bb{D}}(H)$ is an object of $\cat{Vect}^\varphi(R_{\crys})$ called the \emph{Dieudonn\'e crystal} associated to $H$. 

One defines a \emph{Hodge filtration}  (see \cite[Corollaire 3.3.5]{BBMDieuII})
\begin{equation*}
    \mathrm{Fil}^1_{H,\text{Hodge}}\defeq \mc{E}xt^1_{\cat{Ab}((R/W)_\crys)}(H_{\ov{\mc{O}}_\crys},\mc{J}_\crys)_{\mf{X}}\subseteq \underline{\bb{D}}(H)_\mf{X},
\end{equation*}
where $\mc{J}_\crys\subseteq\mc{O}_\crys$ is the PD ideal sheaf.
\begin{defn} The functor
\begin{equation*}
    \bb{D}\colon \cat{BT}_p(R)\to \cat{DieuF}(R),\qquad H\mapsto \bb{D}(H)=(\underline{\bb{D}}(H),\mathrm{Fil}^1_{H,\text{Hodge}}),
\end{equation*}
is called the \emph{filtered Dieudonn\'e crystal} functor. 
\end{defn}

The filtered Dieudonn\'e crystal is also a complete invariant of $H$.

\begin{thm}[{de Jong, cf.\@ \cite[Theorem 3.17]{KimBK}}] The functor 
\begin{equation*}
    \bb{D}\colon \cat{BT}_p(R)\to \cat{DieuF}(R),
\end{equation*}
is an anti-equivalence of categories.
\end{thm}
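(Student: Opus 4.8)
The plan is to deduce the anti-equivalence from crystalline deformation theory, reducing to a residually perfect base where the result is classical. First I would reorganize the target: since $\cat{DieuF}(R)=\cat{VectF}^{\varphi,\mr{div}}_{[0,1]}(R_\crys)$, an object amounts to an $F$-crystal $(\mc{F},\varphi_\mc{F})$ of height $\leq 1$ together with a locally split Hodge filtration $\Fil^1\subseteq \mc{F}_\mf{X}$ satisfying the Dieudonn\'e conditions, and the underlying crystal is rigid in the sense that $\underline{\bb{D}}(H)$ is determined by the special fibre $H_k$ (this is the compatibility $\iota_{0,\infty}^\ast\underline{\bb{D}}(H)=\underline{\bb{D}}(H_k)$ recorded above). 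The key mechanism is Grothendieck--Messing theory: for a PD-thickening $\mf{T}\twoheadrightarrow\mf{T}_0$ with nilpotent kernel, deformations of a $p$-divisible group $H_0/\mf{T}_0$ to $\mf{T}$ are canonically identified with liftings of the Hodge filtration inside $\underline{\bb{D}}(H_0)(\mf{T})$ to a direct summand; on the other side, deformations of a filtered Dieudonn\'e crystal consist of exactly the same datum, because its underlying crystal and Frobenius are rigid. Hence $\bb{D}$ induces an equivalence on deformation groupoids along every such thickening.

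Applying this along the tower $R/p^{n+1}\twoheadrightarrow R/p^n$ (whose successive kernels are square-zero, hence carry canonical nilpotent divided powers) and using that $R$ is $p$-adically complete, so that $\cat{BT}_p(R)\simeq \varprojlim_n \cat{BT}_p(R/p^n)$ and likewise for filtered Dieudonn\'e crystals, I would reduce both full faithfulness and essential surjectivity of $\bb{D}$ to the single characteristic-$p$ base $R_k=R/p$. Over $R_k$, essential surjectivity follows by combining the same deformation-theoretic lifting with the classical Dieudonn\'e--Manin equivalence over the perfect residue fields together with faithfully flat descent, whereas full faithfulness is the genuinely non-formal ingredient: here I would invoke de Jong's theorem that the crystalline Dieudonn\'e functor is fully faithful over a formally smooth base (cf.\@ \cite{deJongCrystalline}, \cite{BBMDieuII}). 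Its proof passes to the rigid-analytic generic fibre, where a morphism of Dieudonn\'e crystals yields a morphism of the associated $p$-adic Galois representations via the generic-fibre comparison, and then descends this to an honest morphism of $p$-divisible groups by a Tate-type extension and properness argument.

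The main obstacle is precisely this last point, full faithfulness over the positive-characteristic base. Grothendieck--Messing gives clean control of infinitesimal and formal deformations, and so suffices for the vertical direction of the d\'evissage and for essential surjectivity; but bounding \emph{all} morphisms over $R$, rather than merely the formal ones, requires comparing the integral crystalline datum with the generic-fibre Galois representation, which is the rigid-geometric heart of de Jong's argument. Once this is granted, the remaining steps -- the identification $\cat{DieuF}(R)=\cat{VectF}^{\varphi,\mr{div}}_{[0,1]}(R_\crys)$, the deformation bijections, and the passage to the $p$-adic limit -- are formal.
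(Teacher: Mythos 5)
The paper itself gives no proof of this statement: it is imported wholesale from the literature (de Jong \cite{deJongCrystalline}; in the relative form used here, \cite[Theorem 3.17]{KimBK}), so there is no internal argument to compare against, and what you have written is a reconstruction of the literature proof. Your skeleton is indeed the standard one: Grothendieck--Messing identifies deformations along nilpotent PD thickenings with lifts of the Hodge filtration, the underlying crystal and Frobenius are insensitive to such thickenings, and dévissage along $R/p^{n+1}\twoheadrightarrow R/p^n$ (using $\cat{BT}_p(R)\simeq\varprojlim_n\cat{BT}_p(R/p^n)$) reduces everything to the characteristic-$p$ fibre, where full faithfulness is de Jong's rigid-geometric theorem. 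That part of your account is accurate, including the identification of full faithfulness in characteristic $p$ as the non-formal input.

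The genuine gap is your treatment of essential surjectivity over $R/p$. First, Dieudonné--Manin is an isogeny classification of isocrystals over an algebraically closed field and is simply the wrong tool; what classical Dieudonné theory gives you is the equivalence over the (perfect) residue fields, i.e.\@ at closed points only. Second, the proposed ``faithfully flat descent'' from the points (or from the completed local rings $\widehat{R}_{\mf{m}}$, where de Jong's Main Theorem does give the equivalence over $k'[[t_1,\dots,t_d]]$) cannot be executed: to descend the locally constructed $p$-divisible groups one must descend the gluing isomorphisms, and that requires full faithfulness of $\bb{D}$ over the self-products of the cover --- rings like $\widehat{R}_{\mf{m}}\otimes_{R}\widehat{R}_{\mf{m}'}$ --- which are not formally smooth over $k$ and lie outside the scope of de Jong's theorems. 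Third, and most fundamentally, a Dieudonné crystal over a characteristic-$p$ base carries no Hodge-filtration datum, so Grothendieck--Messing does not ``formally'' produce a $p$-divisible group from the crystal: showing that the crystal structure (evaluation on all PD thickenings together with $F$ and $V$) determines compatible filtration lifts at every infinitesimal level is precisely the content of de Jong's equivalence even over $k[[t_1,\dots,t_d]]$, and globalizing from complete local rings to $R/p$ is a further non-formal step (this is where \cite{deJongCrystalline} and \cite{KimBK} do real work). So essential surjectivity in characteristic $p$ has to be invoked as part of de Jong's theorem, on the same footing as full faithfulness, rather than dispatched by deformation theory plus descent; as written, your argument proves full faithfulness of $\bb{D}$ over $R$ but not that it is an anti-equivalence.
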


Our first main result is that $\bb{D}_\mr{crys}$ transforms $\mc{M}_\smallprism$ into $\bb{D}$, improving upon \cite[Theorem 4.44]{AnschutzLeBrasDD} which proves the more naive statement obtained by ignoring filtrations.

\begin{thm}\label{thm:ALB-dJ-comparison} There is a natural equivalence $\bb{D}_\crys\circ \mc{M}_\smallprism\isomto \bb{D}$.
\end{thm}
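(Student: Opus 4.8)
The plan is to establish the equivalence $\bb{D}_\crys\circ\mc{M}_\smallprism\isomto\bb{D}$ by separately matching the underlying $F$-crystals and the filtrations, leveraging the already-proven unfiltered comparison of Ansch\"utz--Le Bras. First I would recall that \cite[Theorem 4.44]{AnschutzLeBrasDD} provides a natural identification $\underline{\bb{D}}(H)\simeq\underline{\bb{D}}_\crys(\mc{M}_\smallprism(H))$ of the underlying $F$-crystals in $\cat{Vect}^\varphi(\mf{X}_\crys)$. Since both $\bb{D}_\crys\circ\mc{M}_\smallprism$ and $\bb{D}$ take values in $\cat{VectF}^{\varphi,\mr{div}}_{[0,1]}(\mf{X}_\crys)=\cat{DieuF}(R)$, and a morphism in this category is just a morphism of the underlying $F$-crystals respecting the filtration, it suffices to check that the given $F$-crystal identification carries the filtration $\Fil^\bullet_{\bb{D}_\crys}(\mc{M}_\smallprism(H))$ to the Hodge filtration $\mr{Fil}^1_{H,\mr{Hodge}}$. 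Because filtrations are a local and level-$[0,1]$ datum, I would reduce to the affine case $\mf{X}=\Spf(R)$ with $R$ a formally framed base $W$-algebra, and then need only identify a single submodule $\Fil^1\subseteq\underline{\bb{D}}(H)_\mf{X}$ inside a rank-one-graded situation.

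The key step is to compute both filtrations explicitly on the de Rham point. On the prismatic side, by the definition of $\bb{D}_\crys$, the filtration $\Fil^\bullet_{\bb{D}_\crys}(\mc{M}_\smallprism(H))$ is transported via $\iota_\mf{X}$ (Theorem \ref{thm:crys-dR-comparison}) from $\Fil^\bullet_{\bb{D}_\dR}(\bb{D}_\dR(\mc{M}_\smallprism(H)))$, which by Proposition \ref{prop:de-Rham-realization-BK-relationship} is the image of the Nygaard filtration on $\phi_\framew^\ast\mc{M}_\smallprism(H)(\mf{S}_R^{(\phi_\framew)},(E))$ modulo $E$. Since $\mc{M}_\smallprism(H)$ has height in $[0,1]$, this Nygaard filtration is two-step, and its degree-one piece $\ov{\Fil}^1$ is precisely the kernel of the relative Frobenius modulo $E$ by Lemma \ref{lem: free BK mod}(2); this matches the usual description of the Hodge filtration of a $p$-divisible group in terms of the Frobenius on its Dieudonn\'e module. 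On the crystalline side, the Hodge filtration $\mr{Fil}^1_{H,\mr{Hodge}}$ is defined via $\mc{E}xt^1(H_{\ov{\mc{O}}_\crys},\mc{J}_\crys)$ and its compatibility with $\mc{M}_\smallprism$ should be read off from the comparison between prismatic and crystalline $\mc{E}xt$ groups underlying \cite[Theorem 4.44]{AnschutzLeBrasDD}, together with the crystalline-de Rham comparison of Proposition \ref{prop:crys-dR-comparison-using-prisms}, which realizes $\iota_\mf{X}$ concretely through the Breuil prism.

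The cleanest route, which I would pursue, is to exploit the functoriality already in hand: since the underlying identification of \cite[Theorem 4.44]{AnschutzLeBrasDD} is natural in $H$ and compatible with base change, I would verify the filtration match after a faithfully flat cover by a perfectoid ring $\wt{R}$, where both filtrations can be computed against $\Acrys(\wt{R})$. There, Proposition \ref{prop:ALB-Lau-SW} identifies $\mc{M}_\smallprism(H)(\Ainf(\wt{R}),(\tilde{\xi}))$ with $M^\mr{Lau}(H)$, and the Hodge filtration of the crystalline Dieudonn\'e module of a $p$-divisible group over a perfectoid ring is the standard one coming from the Hodge--Tate exact sequence; Remark \ref{rem:filtered-tensor-product-computation} then gives that $\bb{D}_\crys(\mc{M}_\smallprism(H))$ evaluated on $\Acrys(\wt{R})\to\wt{R}$ is the Nygaard filtration base-changed to $\Acrys$, whose degree-one part agrees with the Hodge filtration by the Grothendieck--Messing description. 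Faithfully flat descent of filtrations (as used in the proof of Theorem \ref{thm:integral-dcrys-strongly-divisible-and-rational-agreeance}) then propagates the equality back to $R$.

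The main obstacle I anticipate is the precise bookkeeping needed to identify the $\mc{E}xt$-theoretic Hodge filtration $\mr{Fil}^1_{H,\mr{Hodge}}=\mc{E}xt^1(H_{\ov{\mc{O}}_\crys},\mc{J}_\crys)$ with the Frobenius-kernel/Nygaard description on the prismatic side; the two are defined through genuinely different mechanisms (a connecting map in an $\mc{E}xt$ long exact sequence versus the Nygaard filtration of a Breuil--Kisin module), and making the diagram commute requires carefully tracing the $\mc{E}xt$ comparison of \cite[Theorem 4.44]{AnschutzLeBrasDD} through the Hodge--Tate and de Rham specializations. Once this single rank-one filtration piece is matched, the remaining verifications---naturality, compatibility with the $F$-crystal structure, and strong divisibility (automatic since the target is $\cat{DieuF}(R)=\cat{VectF}^{\varphi,\mr{div}}_{[0,1]}$)---are formal.
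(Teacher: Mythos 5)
Your proposal follows the same route as the paper's proof: start from the unfiltered identification of \cite[Theorem 4.44]{AnschutzLeBrasDD}, reduce to matching $\Fil^1_{\bb{D}_\crys}$ with $\mathrm{Fil}^1_{H,\mathrm{Hodge}}$, pass to the perfectoid cover $R\to\wt{R}$, compute the prismatic side at $\Acrys(\wt{R})\twoheadrightarrow\wt{R}$ via Remark \ref{rem:filtered-tensor-product-computation} (which applies because $\mc{M}_\smallprism(H)$ is lff by Proposition \ref{prop:lff-FL-range}), bring in Proposition \ref{prop:ALB-Lau-SW}, and descend along $R\to\wt{R}$. So the scaffolding is exactly the paper's. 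However, the step you yourself flag as the ``main obstacle'' is a genuine gap in your write-up, and your proposed ways of closing it would not work as stated.

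Two things are missing. First, before you can compare anything over $\Acrys(\wt{R})$, you must know what the filtered crystal $\bb{D}(H)$ evaluates to there: the Hodge filtration is defined as a submodule of $\underline{\bb{D}}(H)_\mf{X}$, i.e., on the Zariski fiber, and the paper proves Lemma \ref{lem:[0,1]-fil} to show that for a level-$[0,1]$ filtered crystal the filtration at $\Acrys(\wt{R})\twoheadrightarrow\wt{R}$ is the preimage $\Pi^{-1}(\Fil^1)$ under the surjection $\Pi\colon\underline{\bb{D}}(H)(\Acrys(\wt{R})\twoheadrightarrow\wt{R})\to\underline{\bb{D}}(H)(\wt{R})$; your sketch silently assumes this. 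Second, and more seriously, identifying $\Pi^{-1}(\Fil^1)$ with the base-changed Nygaard filtration cannot be extracted from the Hodge--Tate exact sequence or a generic appeal to ``the Grothendieck--Messing description'': Proposition \ref{prop:ALB-Lau-SW} identifies only the \emph{underlying modules} $M^\mr{Lau}(H)=\mc{M}_\smallprism(H)(\Ainf(\wt{R}),(\tilde{\xi}))$, with no filtration data, so it alone does not bridge the $\mc{E}xt$-theoretic Hodge filtration and the Nygaard filtration. What closes this in the paper is Lau's theorem that over a perfectoid ring the filtered crystalline Dieudonn\'e module is computed from $M^\mr{Lau}$ by an explicit base-change functor: in the notation of \cite{LauDieu}, one has
\begin{equation*}
\left(\underline{\bb{D}}(H)(\Acrys(\wt{R})\twoheadrightarrow\wt{R}),\Pi^{-1}(\Fil^1)\right)=\Phi^\mr{cris}_{\wt{R}}(H_{\wt{R}})=\lambda^\ast\left(M^\mr{Lau}(H_{\wt{R}})\right),
\end{equation*}
by \cite[Theorem 6.3]{LauDieu} and \cite[Proposition 9.3]{LauDieu}, and $\lambda^\ast$ is precisely extension of scalars along $(\Ainf(\wt{R}),\Fil^\bullet_\xi)\to(\Acrys(\wt{R}),\Fil^\bullet_\mr{PD})$, which matches the right-hand side supplied by Remark \ref{rem:filtered-tensor-product-computation}. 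Without this input (or an equivalent substitute, which your ``tracing the $\mc{E}xt$ comparison through Hodge--Tate and de Rham specializations'' does not provide), the crux of the proof remains open.
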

\begin{proof} We begin by observing that there is a tautological identification between $\underline{\bb{D}}_\crys\circ \mc{M}_\smallprism$ and $\underline{\bb{D}}$ (see \cite[Theorem 4.44]{AnschutzLeBrasDD}). 
Thus, it suffices to check that the submodules $\Fil^1_{\bb{D}_\crys}$ and $\Fil^1_{H,\mr{Hodge}}$ agree. To check this, it suffices to pass to the faithfully flat cover $R\to \wt{R}$ (see \cite[Lemma 1.15]{IKY1}). But, observe that we have a commutative diagram of filtered rings
\begin{equation*}
    \xymatrix{(R,\Fil^\bullet_\triv) \ar[r]^-\beta\ar[dr] & (\Acrys(\wt{R}),\Fil^\bullet_\mr{PD})\ar[d]\\ & (\wt{R},\Fil^\bullet_\triv),}
\end{equation*}
so by the crystal property we need only check the equality after evaluation on $\Acrys(\wt{R})\twoheadrightarrow\wt{R}$. 

We first observe that by Proposition \ref{prop:lff-FL-range} and Remark \ref{rem:filtered-tensor-product-computation}, for any object $H$ of $\cat{BT}_p(R)$ there is a canonical identification of filtered $\Acrys(\wt{R})$-modules
\begin{equation*}
     \bb{D}_\crys(\mc{M}_\smallprism(H))(\Acrys(\wt{R})\twoheadrightarrow\wt{R})\isomto \phi^\ast\mc{M}_\smallprism(H)(\Ainf(\wt{R}),(\xi))\otimes_{(\Ainf(\wt{R}),\Fil^\bullet_\xi)}(\Acrys(\wt{R}),\Fil^\bullet_\mr{PD}).
\end{equation*}
On the other hand, by Lemma \ref{lem:[0,1]-fil}, the filtration on $\bb{D}(H)(\Acrys(\wt{R})\twoheadrightarrow\wt{R})$ is equal to the preimage of the filtration $\Fil^1\subseteq \underline{\bb{D}}(H)(\check{R})$, defined by $\bb{D}(H)(\check{R})$, under the surjection
\begin{equation*}
\Pi\colon\underline{\bb{D}}(H)(\Acrys(\check{R})\twoheadrightarrow\check{R})\to \underline{\bb{D}}(H)(\check{R}).
\end{equation*}
 Thus, it suffices to show the following equality of filtered $\Acrys(\check{R})$-modules:
\begin{equation*}
    \phi^\ast\mc{M}_\smallprism(H)(\Ainf(\wt{R}),(\xi))\otimes_{(\Ainf(\wt{R}),\Fil^\bullet_\xi)}(\Acrys(\wt{R}),\Fil^\bullet_\mr{PD})=(\underline{\bb{D}}(H)(\Acrys(\check{R})\twoheadrightarrow\check{R}),\Pi^{-1}(\Fil^1)).
\end{equation*}
But combining Proposition \ref{prop:ALB-Lau-SW} and the definition of $M^\mr{Lau}$ we have
\begin{equation*}
    \begin{aligned} 
    (\underline{\bb{D}}(H)(\Acrys(\wt{R})\to\wt{R}),\Pi^{-1}(\Fil^1)) &= \Phi^\mr{cris}_{\wt{R}}(H_{\wt{R}})\\ &=\lambda^\ast(M^\mr{Lau}(H_{\wt{R}}))\\ &= \lambda^\ast(\phi^\ast\mc{M}_\smallprism(H)(\Ainf(\wt{R}),(\xi)))\\ &=\phi^\ast\mc{M}_\smallprism(H)(\Ainf(\wt{R}),(\xi))\otimes_{(\Ainf(\wt{R}),\Fil^\bullet_\xi)}(\Acrys(\wt{R}),\Fil^\bullet_\mr{PD}),
    \end{aligned}
\end{equation*}
where $\Phi^\mr{cris}_{\wt{R}}$ is as in \cite[Theorem 6.3]{LauDieu} and $\lambda^\ast$ is as in \cite[Proposition 9.3]{LauDieu}.
\end{proof}

\begin{lem}\label{lem:[0,1]-fil}
Let $\mc F$ be an object of $\cat{VectF}_{[0,1]}(R_\crys)$ and $(A\twoheadrightarrow R')\to (B\twoheadrightarrow R')$ be a morphism in $(R/W)_\crys$, with $A\to B$ surjective. 
Then $\Fil^1_{\mc F}(A\twoheadrightarrow R')\subseteq \mc F(A\twoheadrightarrow R')$ is the preimage of the submodule $\Fil^1_{\mc F}(B\twoheadrightarrow R')\subseteq \mc F(B\twoheadrightarrow R')$ via the surjection $ \mc F(A\twoheadrightarrow R')\to \mc F(B\twoheadrightarrow R')$.
\end{lem}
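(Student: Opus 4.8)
The plan is to reduce the statement entirely to the two defining properties of a filtered crystal together with the level-$[0,1]$ hypothesis. Write $M=\mc{F}(A\twoheadrightarrow R')$ and $N=\mc{F}(B\twoheadrightarrow R')$, so that $M$ (resp.\ $N$) is a filtered module over $(A,\Fil^\bullet_\mr{PD})$ (resp.\ $(B,\Fil^\bullet_\mr{PD})$) by property (1), and let $\pi\colon M\to N$ be the transition map. Since $A\to B$ is surjective, property (2) identifies $\pi$ with the natural surjection $M\to M\otimes_A B$, so $\ker\pi=JM$ where $J=\ker(A\to B)$. As $A\to R'$ factors through $B\to R'$, we have $J\subseteq\Fil^1_\mr{PD}(A)$.

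First I would record two inputs. Because $M$ is a filtered $(A,\Fil^\bullet_\mr{PD})$-module, one has $\Fil^1_\mr{PD}(A)\cdot M\subseteq \Fil^1_M$; combined with $J\subseteq\Fil^1_\mr{PD}(A)$ this gives the crucial inclusion $\ker\pi=JM\subseteq\Fil^1_M$. Next, property (2) says $(N,\Fil^\bullet_N)$ is the filtered base change of $(M,\Fil^\bullet_M)$ along $(A,\Fil^\bullet_\mr{PD})\to(B,\Fil^\bullet_\mr{PD})$, so $\Fil^1_N=\sum_{i+j=1}\mr{im}\!\left(\Fil^i_M\otimes_A\Fil^j_\mr{PD}(B)\to N\right)$.

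The crux is to simplify $\Fil^1_N$ using level $[0,1]$. Since $\Fil^i_M=0$ for $i\geqslant 2$ and both filtrations are decreasing, every cross-term is absorbed into the two summands $(i,j)=(1,0)$ and $(i,j)=(0,1)$, giving $\Fil^1_N=\mr{im}(\Fil^1_M\otimes_A B)+\mr{im}(M\otimes_A\Fil^1_\mr{PD}(B))$. The first summand is the $B$-submodule generated by $\pi(\Fil^1_M)$, which already equals $\pi(\Fil^1_M)$ because $\pi(\Fil^1_M)$ is an $A$-submodule of $N$ and $A\to B$ is surjective. For the second summand, surjectivity of the PD morphism $A\to B$ gives $\Fil^1_\mr{PD}(B)=\pi(\Fil^1_\mr{PD}(A))$, hence $\Fil^1_\mr{PD}(B)\cdot N=\pi(\Fil^1_\mr{PD}(A)\cdot M)\subseteq\pi(\Fil^1_M)$ by the first input. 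Therefore $\Fil^1_N=\pi(\Fil^1_M)$, i.e.\ $\Fil^1_M\to\Fil^1_N$ is surjective.

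Finally I would conclude. The inclusion $\Fil^1_M\subseteq\pi^{-1}(\Fil^1_N)$ is immediate since $\pi$ is a filtered map. Conversely, if $\pi(m)\in\Fil^1_N=\pi(\Fil^1_M)$, pick $m'\in\Fil^1_M$ with $\pi(m)=\pi(m')$; then $m-m'\in\ker\pi\subseteq\Fil^1_M$, so $m\in\Fil^1_M$, proving $\pi^{-1}(\Fil^1_N)\subseteq\Fil^1_M$. I expect the main obstacle to be pinning down the precise filtration on the filtered base change and verifying that the level-$[0,1]$ truncation genuinely absorbs all cross-terms; the companion point $\ker\pi\subseteq\Fil^1_M$ is the other load-bearing input, and it follows formally from the filtered-module axiom applied to the PD ideal.
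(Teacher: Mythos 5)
Your proof is correct, but it takes a genuinely different route from the paper's. The paper works Zariski-locally on $A$ and exploits the fact that objects of $\cat{VectF}_{[0,1]}(R_\crys)$ are by definition locally \emph{filtered free} over $(\mc{O}_\crys,\Fil^\bullet_\mr{PD})$: locally there is a basis $(e_\nu)_{\nu=1}^n$ of $\mc F(A\twoheadrightarrow R')$ and a subset $I\subseteq\{1,\ldots,n\}$ with $\Fil^1_{\mc F}(A\twoheadrightarrow R')=\sum_{\nu\in I}A\cdot e_\nu+\sum_{\nu\notin I}\Fil^1_\mr{PD}(A)\cdot e_\nu$, after which the lemma reduces to the single observation that $\Fil^1_\mr{PD}(A)=\ker(A\to R')$ is the preimage of $\Fil^1_\mr{PD}(B)=\ker(B\to R')$ under $A\to B$. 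You never invoke local filtered-freeness: using only the two defining conditions of a filtered crystal (see \S\ref{ss:category-of-filtered-f-crystals}), you prove (i) $\ker\pi=JM\subseteq\Fil^1_\mr{PD}(A)\cdot M\subseteq\Fil^1_M$ and (ii) surjectivity of $\Fil^1_M\to\Fil^1_N$, so that the preimage equals $\Fil^1_M+\ker\pi=\Fil^1_M$. Your argument is coordinate-free, requires no localization, and in fact proves the statement for an arbitrary filtered crystal of non-negative level, not just a locally filtered free one; the paper's argument is shorter once a filtered basis is in hand, which is automatic for the objects the lemma is applied to.

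One justification needs repair. You assert that $\Fil^i_M=0$ for $i\geqslant 2$. This is false: the level-$[0,1]$ condition truncates the filtration on the Zariski restriction $\mc F_{\mf X}$ (equivalently, it bounds the type of the local filtered-freeness), not the filtration on evaluations at PD thickenings; indeed $\Fil^2_\mr{PD}(A)\cdot M\subseteq\Fil^2_M$, which is generally nonzero. Fortunately the collapse of the base-change filtration holds without any vanishing: for $i+j=1$, if $i\geqslant 1$ then $\Fil^i_M\cdot\Fil^j_\mr{PD}(B)\subseteq\Fil^1_M\cdot B$, while if $i\leqslant 0$ then $j\geqslant 1$ and $\Fil^i_M\cdot\Fil^j_\mr{PD}(B)\subseteq M\cdot\Fil^1_\mr{PD}(B)$, so decreasingness alone absorbs every cross-term into your two summands. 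With that substitution your proof is complete; note also that the level hypothesis genuinely enters your argument only through $\Fil^0_M=M$ (needed for $\Fil^1_\mr{PD}(A)\cdot M\subseteq\Fil^1_M$) and through the fact that the statement concerns $\Fil^1$ — the analogue for $\Fil^k$ with $k\geqslant 2$ fails, since $\ker\pi$ need not lie in $\Fil^2_M$.
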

\begin{proof}
    By the crystal property, we may work Zariski locally on $A$. But, Zariski locally on $A$, there is a basis $(e_\nu)_{\nu=1}^n$ of $\mc F(A\twoheadrightarrow R')$ and a subset $I$ of $\{1,\ldots,n\}$ such that  the filtration $\Fil^1(A\twoheadrightarrow R')$ is given by $\sum_{\nu\in I}A\cdot e_\nu+\sum_{\nu\notin I}\Fil^1_{\mr{PD}}(A)\cdot e_\nu$. 
    The claim then follows as $\Fil^1_{\mr{PD}}(A)$ is the preimage of $\Fil^1_{\mr{PD}}(B)$ under the surjection $A\to B$. 
\end{proof}

\begin{rem} When $R$ is small, Theorem \ref{thm:ALB-dJ-comparison} follows from Theorem \ref{thm:big-equiv-diagram}. Indeed, it suffices to show that $T^\ast_\crys(\bb{D}_\crys(\mc{M}_\smallprism(H)))$ is isomorphic to $T^\ast_\crys(\bb{D}(H))$. But, by Proposition \ref{prop:D crys and FL} and \cite[Proposition 3.35]{DLMS} the former is $T_p(H)$, which is also the latter by \cite[Corollary 5.3 and \S5.4]{KimBK}.
\end{rem}

\subsection{Breuil--Kisin--Kim Dieudonn\'e theory} In this final section we use Theorem \ref{thm:ALB-dJ-comparison} to show that Ansch\"{u}tz--Le Bras's prismatic Dieudonn\'e functor $\mc{M}_\smallprism$ recovers the Breuil--Kisin--Kim Dieudonn\'e functor $\mf{M}$.

\subsubsection{The Breuil Dieudonn\'e functor}\label{ss:Breuil-DD-functor} We begin by first recalling an intermediate construction between $\bb{D}_\mr{crys}$ and $\mf{M}$ constructed by Breuil.

\begin{defn}[{cf.\@ \cite[Definition 3.12]{KimBK}}]\label{defn:Kisin-modules} A \emph{\emph{(}minuscule\emph{)} Breuil-module} over $R$ is a quadruple of data $(\mc{M},\Fil^1_\mc{M},\varphi_\mc{M},\nabla^0_\mc{M})$ with: 
\begin{itemize}[leftmargin=.3in]
    \item $\mc{M}$ a finite projective $S_R$-module,
    \item $\Fil^1_\mc{M}\subseteq\mc{M}$ an $S_R$-submodule with $\Fil^1_\mr{PD}\cdot \mc{M}\subseteq \Fil^1_\mc{M}$ and $\mc{M}/\Fil^1_\mc{M}$ projective over $R$,
    \item $\varphi_\mc{M}\colon \phi^\ast\mc{M}\to \mc{M}$ is an $S_R$-linear map with $\varphi_M(\phi^\ast\Fil^1_\mc{M})=p\mc{M}$, 
    \item $\nabla^0_\mc{M}$ is a topologically quasi-nilpotent integrable connection (cf.\@ \cite[Remark 2.2.4]{deJongCrystalline}) on $\mc{M}_0\defeq \mc{M}\otimes_{S_R}R$, where $S_R\to R$ is induced by the map $\mf{S}_R\to R$ sending $u$ to $0$, such that the induced Frobenius $\varphi_{\mc{M}_0}$ is horizontal.
\end{itemize}
These naturally form a category which we denote by $\cat{VectF}^\varphi_{[0,1]}(S_R,\nabla^0)$.
\end{defn}

By \cite[Proposition 3.8 and Lemma 3.15]{KimBK}, 
there is a fully faithful embedding
\begin{equation}\label{eq:DieuF-Breuil-S-module-embedding}
    \cat{DieuF}(R)\to \cat{VectF}^\varphi_{[0,1]}(S_R,\nabla^0),\quad (\mc{F},\varphi_\mc{F},\Fil^\bullet_\mc{F})\mapsto ((\mc{F}, \Fil^1_\mc{F},\varphi_\mc{F})(S_R\twoheadrightarrow R), \nabla_{\mc{F}}).
\end{equation}
This then suggests the following definition.

\begin{defn}[{cf.\@ \cite[\S3.4]{KimBK}}] The functor
\begin{equation*}
    \mc{M}^\mr{Br}\colon \cat{BT}_p(R)\to\cat{VectF}^\varphi_{[0,1]}(S_R,\nabla^0),\qquad H\mapsto \mc{M}^\mr{Br}(H)\defeq(\bb{D}(H)(S_R\twoheadrightarrow R),\nabla_{\underline{\bb{D}}(H)}),
\end{equation*}
is called the \emph{Breuil Dieudonn\'e functor}.
\end{defn}

As \eqref{eq:DieuF-Breuil-S-module-embedding} and $\bb{D}$ are both fully faithful, it follows that $\mc{M}^\mr{Br}$ is also fully faithful.

\subsubsection{The Breuil--Kisin--Kim Dieudonn\'e functor} We now recall the definition of the Breuil--Kisin--Kim Dieudonn\'e functor.

\begin{defn}[{cf.\@ \cite[Definition 6.1]{KimBK}}] A \emph{\emph{(}minuscule\emph{)} Kisin module} over $R$ is a triple $(\mf{M},\varphi_\mf{M},\nabla^0_\mf{M})$ where:
\begin{itemize}[leftmargin=.3in]
\item $(\mf{M},\varphi_\mf{M})$ is an object of the category $\cat{Vect}^\varphi_{[0,1]}(\mf{S}_R,(E))$,
\item $\nabla^0_\mf{M}$ is a topologically quasi-nilpotent integrable connection on $\mf{M}_0\defeq \phi^\ast\mf{M}/u$.
\end{itemize}
We denote the category of such objects by $\cat{Vect}_{[0,1]}^\varphi(\mf{S}_R,\nabla^0)$.
\end{defn}

There is a functor
\begin{equation}\label{eq:BK-to-Breuil-functor}
\cat{Vect}^\varphi_{[0,1]}(\mf{S}_R,\nabla^0)\to \cat{VectF}^\varphi_{[0,1]}(S_R,\nabla^0), 
\end{equation}
where $(\mf{M},\varphi_\mf{M},\nabla^0_{\mf{M}_0})$ is sent to the object whose underlying filtered $S_R$-module is 
\begin{equation*}
    (\phi^\ast\mf{M},\Fil^\bullet_\mr{Nyg})\otimes_{(\mf{S}_R,\Fil^\bullet_E)}(S_R,\Fil^\bullet_\mr{PD})
\end{equation*}
(forgetting everything but the $1$-part of the filtration), with Frobenius given by $\varphi_{\phi^\ast\mf{M}}\otimes 1$, and with connection $\nabla^0_\mf{M}$, which is sensible via the natural identification of $R$-modules 
\begin{equation*}\phi^\ast\mf{M}\otimes_{\mf{S}_R}R\simeq \phi^\ast\mf{M}\otimes_{\mf{S}_R}S_R\otimes_{S_R}R.
\end{equation*}
This functor is fully faithful by \cite[Lemma 6.5]{KimBK}.

\begin{defn}[{\cite{KimBK}}] By \cite[Corollary 6.7]{KimBK}, the functor $\mc{M}^\mr{Br}$ factorizes through the fully faithful embedding \eqref{eq:BK-to-Breuil-functor}. Thus, we obtain a functor
\begin{equation*}
    \mf{M}\colon \cat{BT}_p(R)\to \cat{Vect}_{[0,1]}^\varphi(\mf{S}_R,\nabla^0),\quad H\mapsto \mf{M}(H)=(\underline{\mf{M}}(H),\varphi_{\underline{\mf{M}}(H)},\nabla^0_{\bb{D}(H)})
\end{equation*}
called the \emph{Breuil--Kisin--Kim Dieudonn\'e functor}.
\end{defn}

\begin{rem} When $R=\mc{O}_K$, this agrees with the functor from \cite{KisinFCrystal}. This is not explicitly explained in op.\@ cit.\@, and can be argued directly, but also follows by combining Proposition \ref{prop:Kim-prismatic-comp} and Remark \ref{rem:AB-Kim} below.
\end{rem}

To relate this to prismatic Dieudonn\'e modules, let us begin by observing that there are functors
\begin{equation*}
\begin{aligned} \mr{ev}_{\mf{S}_R}&\colon \cat{Vect}^\varphi_{[0,1]}(R_\smallprism)\to \cat{Vect}^\varphi_{[0,1]}(\mf{S}_R,(E)),\qquad (\mc{E},\varphi_\mc{E})\mapsto (\mc{E},\varphi_\mc{E})(\mf{S}_R,(E)),\\
\mr{ev}_{\mf{S}_R}^\mr{K} &\colon \cat{Vect}^\varphi_{[0,1]}(R_\smallprism)\to\cat{Vect}^\varphi_{[0,1]}(\mf{S}_R,\nabla^0),\qquad \,\,(\mc{E},\varphi_\mc{E})\mapsto (\mr{ev}_{\mf{S}_R}(\mc{E},\varphi_\mc{E}),\nabla_{\underline{\bb{D}}_\crys(\mc{E}.\varphi_\mc{E})}).
\end{aligned}
\end{equation*}
Here by $\nabla_{\underline{\bb{D}}_\crys(\mc{E},\varphi_\mc{E})}$ we abusively mean the pullback of $\nabla_{\underline{\bb{D}}_\crys(\mc{E},\varphi_\mc{E})}$ under the isomorphism 
\begin{equation*}
    \phi^\ast\mc{E}(\mf{S}_R,(E))/u\isomto \underline{\bb{D}}_\crys(\mc{E},\varphi_\mc{E})(R),
\end{equation*}
from Proposition \ref{prop:crystalline-realization-Kisin-comparison}.

\begin{prop}\label{prop:Kim-prismatic-comp} There is a natural identification $\mr{ev}_{\mf{S}_R}^\mr{K}\circ \mc{M}_\smallprism\isomto \mf{M}$.
\end{prop}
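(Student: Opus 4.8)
The plan is to deduce the identification by applying the fully faithful functor \eqref{eq:BK-to-Breuil-functor}, which I denote $F\colon \cat{Vect}^\varphi_{[0,1]}(\mf{S}_R,\nabla^0)\to \cat{VectF}^\varphi_{[0,1]}(S_R,\nabla^0)$, and then checking that $F$ carries the two functors in question to naturally isomorphic objects. Since a fully faithful functor reflects isomorphisms and induces a bijection on natural transformations between composite functors, a natural isomorphism $F\circ(\mr{ev}^\mr{K}_{\mf{S}_R}\circ\mc{M}_\smallprism)\isomto F\circ\mf{M}$ will descend uniquely to the desired $\mr{ev}^\mr{K}_{\mf{S}_R}\circ\mc{M}_\smallprism\isomto \mf{M}$. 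By the construction of $\mf{M}$ via \cite[Corollary 6.7]{KimBK} we have $F\circ\mf{M}=\mc{M}^\mr{Br}$, and by definition $\mc{M}^\mr{Br}(H)=(\bb{D}(H)(S_R\twoheadrightarrow R),\nabla_{\underline{\bb{D}}(H)})$; so the task reduces to computing $F\circ(\mr{ev}^\mr{K}_{\mf{S}_R}\circ\mc{M}_\smallprism)$ and matching it with this.

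First I would record that $\mc{M}_\smallprism(H)$ belongs to $\cat{Vect}^{\varphi,\mr{lff}}(R_\smallprism)$: it is an effective prismatic $F$-crystal of height at most $1\le p-2$ (as $p$ is odd), so Proposition \ref{prop:lff-FL-range} applies. This is precisely what licenses the use of Remark \ref{rem:filtered-tensor-product-computation}. Unwinding the definition of $F$ in \eqref{eq:BK-to-Breuil-functor} together with that remark, the image $F(\mr{ev}^\mr{K}_{\mf{S}_R}(\mc{M}_\smallprism(H)))$ has underlying filtered Frobenius $S_R$-module
\[
(\phi^\ast\mc{M}_\smallprism(H)(\mf{S}_R,(E)),\Fil^\bullet_\mr{Nyg})\otimes_{(\mf{S}_R,\Fil^\bullet_E)}(S_R,\Fil^\bullet_\mr{PD})\isomto \bb{D}_\crys(\mc{M}_\smallprism(H))(S_R,(p)),
\]
while, by the definition of $\mr{ev}^\mr{K}_{\mf{S}_R}$ and Proposition \ref{prop:crystalline-realization-Kisin-comparison}, its connection is $\nabla_{\underline{\bb{D}}_\crys(\mc{M}_\smallprism(H))}$ on the underlying $R$-module.

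Next I would invoke Theorem \ref{thm:ALB-dJ-comparison}, which supplies a natural isomorphism $\bb{D}_\crys\circ\mc{M}_\smallprism\isomto \bb{D}$ in $\cat{DieuF}(R)=\cat{VectF}^{\varphi,\mr{div}}_{[0,1]}(R_\crys)$. Evaluating this isomorphism on the Breuil prism $(S_R,(p))$ (equivalently, on $S_R\twoheadrightarrow R$) identifies the filtered Frobenius $S_R$-module displayed above with $\bb{D}(H)(S_R\twoheadrightarrow R)$; and because the connection attached to an $F$-crystal is intrinsic to its crystal structure, the very same isomorphism matches $\nabla_{\underline{\bb{D}}_\crys(\mc{M}_\smallprism(H))}$ with $\nabla_{\underline{\bb{D}}(H)}$ (the unfiltered shadow of this matching being already \cite[Theorem 4.44]{AnschutzLeBrasDD}). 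Hence $F(\mr{ev}^\mr{K}_{\mf{S}_R}(\mc{M}_\smallprism(H)))\isomto \mc{M}^\mr{Br}(H)=F(\mf{M}(H))$, naturally in $H$, and full faithfulness of $F$ concludes the argument.

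The proof is largely an assembly of comparisons already established, so I anticipate no deep obstruction; the two points demanding genuine care are (i) confirming that $\mc{M}_\smallprism(H)$ is lff, so that Remark \ref{rem:filtered-tensor-product-computation} may be invoked, and (ii) verifying that it is the connection, and not merely the filtered Frobenius module, that is transported correctly under Theorem \ref{thm:ALB-dJ-comparison}. The latter is the real crux: it rests on the fact that the connection is determined by the underlying $F$-crystal structure, so that matching $\underline{\bb{D}}_\crys(\mc{M}_\smallprism(H))$ with $\underline{\bb{D}}(H)$ as $F$-crystals automatically matches $\nabla_{\underline{\bb{D}}_\crys(\mc{M}_\smallprism(H))}$ with $\nabla_{\underline{\bb{D}}(H)}$, which is exactly the connection datum appearing in both $\mr{ev}^\mr{K}_{\mf{S}_R}$ and $\mc{M}^\mr{Br}$.
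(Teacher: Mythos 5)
Your proof is correct and follows essentially the same route as the paper's: reduce along the fully faithful functor \eqref{eq:BK-to-Breuil-functor} (so that matching images suffices, with $F\circ\mf{M}=\mc{M}^\mr{Br}$ by construction), then combine Remark \ref{rem:filtered-tensor-product-computation}, Theorem \ref{thm:ALB-dJ-comparison}, and the definition of $\mc{M}^\mr{Br}(H)$. Your two points of ``genuine care''---lff-ness of $\mc{M}_\smallprism(H)$ via Proposition \ref{prop:lff-FL-range}, and the fact that the connection is intrinsic to the underlying crystal so it is transported automatically---are exactly the details the paper leaves implicit.
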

\begin{proof} Let $H$ be an object of $\cat{BT}_p(R)$. Then, by the definition of $\mf{M}$ it suffices to show that $\mr{ev}^\mr{K}_{\mf{S}_R}(\mc{M}_\smallprism(H))$ and $\mf{M}(H)$ have images under \eqref{eq:BK-to-Breuil-functor} which are canonically identified. But, this follows by combining Remark \ref{rem:filtered-tensor-product-computation}, Theorem \ref{thm:ALB-dJ-comparison}, and the definition of $\mc{M}^\mr{Br}(H)$.
\end{proof}

\begin{rem}\label{rem:AB-Kim} For $R=\mc{O}_K$ this was previously shown in \cite[Proposition 5.18]{AnschutzLeBrasDD}.
\end{rem}

Combining Theorem \ref{thm:ALB-equiv}, \cite[Corollary 6.7 and Corollary 10.4]{KimBK}, Proposition \ref{prop:Kim-prismatic-comp}, \cite[Proposition 1.28]{IKY2}, and Proposition \ref{prop:lff-FL-range}, we deduce the following.

\begin{cor}\label{cor:F-gauges-BK-modules} Let $R$ be a formally framed $W$-algebra. Then, there are equivalences of categories
\begin{equation*}
\begin{tikzcd}
	{\cat{Vect}_{[0,1]}(R^\mr{syn})} & {\cat{Vect}_{[0,1]}^\varphi(R_\smallprism)} & {\cat{Vect}^{\varphi}_{[0,1]}(\mf{S}_R,\nabla^0).}
	\arrow["{\mr{R}_\mf{X}}", from=1-1, to=1-2]
	\arrow["\sim"', from=1-1, to=1-2]
	\arrow["\sim"', from=1-2, to=1-3]
	\arrow["{\mr{ev}^{\mr{K}}_{\mf{S}_R}}", from=1-2, to=1-3]
\end{tikzcd}
\end{equation*}
If $R=W\ll t_1,\ldots,t_d\rr$ for some $d\geqslant 0$, then there are equivalences
\begin{equation*}
\begin{tikzcd}
	{\cat{Vect}_{[0,1]}(R^\mr{syn})} & {\cat{Vect}_{[0,1]}^\varphi(R_\smallprism)} & {\cat{Vect}^{\varphi}_{[0,1]}(\mf{S}_R,(E)).}
	\arrow["{\mr{R}_\mf{X}}", from=1-1, to=1-2]
	\arrow["\sim"', from=1-1, to=1-2]
	\arrow["\sim"', from=1-2, to=1-3]
	\arrow["{\mr{ev}_{\mf{S}_R}}", from=1-2, to=1-3]
\end{tikzcd}
\end{equation*}
\end{cor}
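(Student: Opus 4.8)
The plan is to assemble Corollary \ref{cor:F-gauges-BK-modules} from results already established, treating the two displayed diagrams separately but in parallel. For the first diagram, I would identify the three functors and verify each is an equivalence. The leftmost arrow $\mr{R}_\mf{X}\colon \cat{Vect}_{[0,1]}(R^\mr{syn})\to\cat{Vect}^\varphi_{[0,1]}(R_\smallprism)$ is an equivalence by restricting the general equivalence $\mr{R}_\mf{X}\colon\cat{Vect}(\mf{X}^\mr{syn})\isomto\cat{Vect}^{\varphi,\mr{lff}}(\mf{X}_\smallprism)$ of \cite[Proposition 1.28]{IKY2} to Hodge--Tate weights in $[0,1]$. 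The key point making this legitimate is Proposition \ref{prop:lff-FL-range}: since objects of height at most $p-2$ (in particular height $\leqslant 1$, as $p$ is odd) are automatically lff, the category $\cat{Vect}^\varphi_{[0,1]}(R_\smallprism)$ coincides with its lff subcategory $\cat{Vect}^{\varphi,\mr{lff}}_{[0,1]}(R_\smallprism)$, so $\mr{R}_\mf{X}$ lands in and is essentially surjective onto the unrestricted $[0,1]$ category. This is exactly why the corollary is phrased as ``combining \dots Proposition \ref{prop:lff-FL-range}.''

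Next I would handle the rightmost arrow $\mr{ev}^\mr{K}_{\mf{S}_R}\colon\cat{Vect}^\varphi_{[0,1]}(R_\smallprism)\to\cat{Vect}^\varphi_{[0,1]}(\mf{S}_R,\nabla^0)$. The strategy is to route through $p$-divisible groups. By Theorem \ref{thm:ALB-equiv}, the prismatic Dieudonn\'e functor $\mc{M}_\smallprism\colon\cat{BT}_p(R)\to\cat{Vect}^\varphi_{[0,1]}(R_\smallprism)$ is an anti-equivalence (using that a formally framed $W$-algebra $R=W\ll t_1,\dots,t_d\rr\otimes\cdots$ admits a quasi-syntomic perfectoid cover, or is regular). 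By Proposition \ref{prop:Kim-prismatic-comp}, $\mr{ev}^\mr{K}_{\mf{S}_R}\circ\mc{M}_\smallprism\isomto\mf{M}$, the Breuil--Kisin--Kim functor, and by \cite[Corollary 6.7 and Corollary 10.4]{KimBK} this $\mf{M}\colon\cat{BT}_p(R)\to\cat{Vect}^\varphi_{[0,1]}(\mf{S}_R,\nabla^0)$ is an anti-equivalence. Therefore in the diagram
\begin{equation*}
\begin{tikzcd}
\cat{BT}_p(R) \ar[r,"\mc{M}_\smallprism","\sim"'] \ar[dr,"\mf{M}"',"\sim"] & \cat{Vect}^\varphi_{[0,1]}(R_\smallprism) \ar[d,"\mr{ev}^\mr{K}_{\mf{S}_R}"] \\ & \cat{Vect}^\varphi_{[0,1]}(\mf{S}_R,\nabla^0)
\end{tikzcd}
\end{equation*}
two of the three arrows are equivalences, so $\mr{ev}^\mr{K}_{\mf{S}_R}$ is an equivalence by the two-out-of-three property. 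This gives the first diagram once one checks the triangle commutes, which is precisely the content of Proposition \ref{prop:Kim-prismatic-comp}.

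For the second diagram, when $R=W\ll t_1,\dots,t_d\rr$, the target is $\cat{Vect}^\varphi_{[0,1]}(\mf{S}_R)$ with no connection, via $\mr{ev}_{\mf{S}_R}$. The plan is to show that in the power-series case the connection $\nabla^0$ carries no information: since $R$ is formally smooth over $W$ with a canonical framing, the forgetful functor $\cat{Vect}^\varphi_{[0,1]}(\mf{S}_R,\nabla^0)\to\cat{Vect}^\varphi_{[0,1]}(\mf{S}_R)$ is an equivalence (the connection being reconstructible, as in \cite{KimBK} where the topologically quasi-nilpotent integrable connection is determined by the Frobenius structure over a framed base). Composing this with the first diagram yields the second. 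The main obstacle I anticipate is not any single deep input --- all the hard theorems (Theorem \ref{thm:ALB-equiv}, Proposition \ref{prop:Kim-prismatic-comp}, Proposition \ref{prop:lff-FL-range}, and Kim's equivalences) are already in hand --- but rather the bookkeeping of \emph{matching the height/weight conventions}: one must ensure that ``effective of height $\leqslant 1$'' for Dieudonn\'e modules, ``Hodge--Tate weights in $[0,1]$'' for gauges and prismatic crystals, and the minuscule condition for Kisin modules all refer to the same subcategories under the equivalences, and that Proposition \ref{prop:lff-FL-range} applies (requiring $1\leqslant p-2$, i.e. $p\geqslant 3$, which holds as $p$ is assumed odd). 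Verifying the final forgetful equivalence dropping $\nabla^0$ in the smooth power-series case is the one step requiring a genuine (if short) argument rather than pure citation.
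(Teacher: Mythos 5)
Your proposal is correct and follows essentially the same route as the paper: the paper's proof is precisely the one-line assembly of Theorem \ref{thm:ALB-equiv}, Kim's \cite[Corollary 6.7 and Corollary 10.4]{KimBK}, Proposition \ref{prop:Kim-prismatic-comp}, \cite[Proposition 1.28]{IKY2}, and Proposition \ref{prop:lff-FL-range}, with the two-out-of-three argument through $\cat{BT}_p(R)$ exactly as you describe. The only minor discrepancy is that the final step (dropping $\nabla^0$ over $W\ll t_1,\ldots,t_d\rr$) does not require a fresh argument — it is itself a pure citation of \cite[Corollary 10.4]{KimBK}, which is what the paper uses.
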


\begin{rem}The second claim of Corollary \ref{cor:F-gauges-BK-modules} was previously established by Ansch\"{u}tz--Le Bras (see \cite[Theorem 5.12]{AnschutzLeBrasDD}) and Ito (see \cite[Proposition 7.1.1]{Ito1}).
\end{rem}

\end{document}